\newtheorem*{Thm*}{Theorem}
\newtheorem*{Prop*}{Proposition}
\newtheorem*{Lem*}{Lemma}
\newtheorem*{Cor*}{Corollary}
\newtheorem*{Conj*}{Conjecture}
\newtheorem{Thm}{Theorem}[section]
\newtheorem{Prop}[Thm]{Proposition}
\newtheorem{Lem}[Thm]{Lemma}
\theoremstyle{definition}
\newtheorem{Defi}[Thm]{Definition}
\newtheorem{Defiprop}[Thm]{Definition, Proposition}
\newtheorem{Exa}[Thm]{Example}
\newtheorem{Rem}[Thm]{Remark}
\newtheorem*{Defi*}{Definition}
\newcommand\abs[1]{\left\lvert#1\right\lvert}
\begin{document}

\title{\LARGE{Multidimensional Persistence: \\
Invariants and Parameterization}}

\author{Maximilian Neumann\footnote{E-mail address: neumann.mn@icloud.com}}
\date{}
\maketitle

\noindent \textbf{Abstract.} This article grew out of the theoretical part of my Master’s thesis at the Faculty of Mathematics and Information Science at Ruprecht-Karls-Universität Heidelberg under the supervision of PD Dr.~Andreas Ott. Following the work of G.~Carlsson and A.~Zomorodian on the theory of multidimensional persistence in 2007 and 2009, the main goal of this article is to give a complete classification and parameterization for the algebraic objects corresponding to the homology of a multifiltered simplicial complex. As in the work of G.~Carlsson and A.~Zomorodian, this classification and parameterization result is then used to show that it is only possible to obtain a discrete and complete invariant for these algebraic objects in the case of one-dimensional persistence, and that it is impossible to obtain the same in dimensions greater than one.

\tableofcontents

\section{Introduction}

Persistent homology allows us to analyze topological features of increasing filtrations of topological spaces with a complete and discrete invariant, the so-called \textit{(persistence) barcode}. Many applications of topology lead to multifiltrations, e.g. if we analyze time dependent point cloud datasets as in \cite{AndreasOtt}.

In this article, we investigate the theoretical aspects of multidimensional persistence, the extension of the one dimensional concept of persistent homology. This article is mainly based on the work of G.~Carlsson and A.~Zomorodian in \cite{article} and \cite{Carlsson2009}. \cite{article} and \cite{Carlsson2009} are very similar where the difference is that \cite{Carlsson2009} digs deeper into the details. Both \cite{article} and \cite{Carlsson2009} contain unproven statements. So, one of our main tasks is to fill in the gaps in \cite{article} and \cite{Carlsson2009}. Following \cite{article} and \cite{Carlsson2009}, our main goal is to carry out a complete classification (\cref{Chapter 1}) and parameterization (\cref{Chapter 2}) of the algebraic objects that correspond to the homology of a multifiltered simplicial complex. As in \cite{article} and \cite{Carlsson2009}, we then use the parameterization result to show that it is only possible to obtain a discrete and complete invariant for these algebraic objects  in the case of one-dimensional persistence, and that it is impossible to obtain the same in dimension greater than one. Here, a \textit{discrete invariant} is an invariant which is countable in size and which takes values in the same set independently of the underlying coefficient field (see \cref{discrete invariant}) and a \textit{complete} invariant is an invariant which is injective as a function on the objects of classification (see \cref{complete invariant}). At the end of \cref{Chapter 1}, we discuss a discrete invariant for multidimensional persistence, the so-called \textit{rank invariant} (introduced in \cite{article} and \cite{Carlsson2009}), which is practicable for analyzing multiparamter problems and equivalent to the barcode in dimension one.

Our approach in \cref{Chapter 1}, based on  \cite[Sec.~4]{article} and \cite[Sec.~4]{Carlsson2009}, can be summarized as follows: first of all, we have to determine the algebraic objects which correspond to the homology of a finite $n$-filtered simplicial complex (see \cref{section 1.2}).
As it turns out, any finite $n$-filtered simplicial complex defines a finitely generated $n$-graded $A_n^{\mathbb{F}}$-module $M$ where $A_n^{\mathbb{F}}=\mathbb{F}[x_1, \dots,x_n]$ (see Sections \ref{section 1.3} and \ref{section 1.4}). Moreover, if $\mathbb{F}=\mathbb{F}_p$ where $p \in \mathbb{N}$ is a prime number or $\mathbb{F}= \mathbb{Q}$, any such module $M$ can be realized as a finite $n$-filtered simplicial complex (see \cref{geom rel modules}). 

For the following, denote by $\mathbf{Grf}_n(A_n^{\mathbb{F}})$ the category of finitely generated $n$-graded $A_n^{\mathbb{F}}$-modules together with graded $A_n^{\mathbb{F}}$-module homomorphisms as morphisms. In \cref{Free hulls and free section}, we show that any $M \in \mathbf{Grf}_n(A_n^{\mathbb{F}})$ admits a \textit{free hull}, i.e. there exists a pair $(F,p)$ where ${F \in \mathbf{Grf}_n(A_n^{\mathbb{F}})}$ is graded free and $p: F \to M$ is a graded surjective $A_n^{\mathbb{F}}$-module homomorphism such that
\begin{equation}
\mathrm{id}_{\mathbb{F}} \otimes_{A_n^{\mathbb{F}}} p:\mathbb{F} \otimes_{A_n^{\mathbb{F}}} F \xlongrightarrow{\sim} \mathbb{F} \otimes_{A_n^{\mathbb{F}}} M \notag
\end{equation}
is a graded isomorphism of $n$-graded $\mathbb{F}$-vector spaces. Here $A_n^{\mathbb{F}}$ acts on $\mathbb{F}$ by setting the variable action identical to zero. 
By using an $n$-graded version of \textit{Nakayama's Lemma} (see \cref{graded Naka}), it is possible to show that $(F,p)$ is a free hull of $M$ if and only if $p$ maps a homogeneous $A_n^{\mathbb{F}}$-basis of $F$ to a minimal system of homogeneous generators of $M$ (see \cref{equiv char free hulls}). 

Every graded free $F \in \mathbf{Grf}_n(A_n^{\mathbb{F}})$ admits a graded isomorphism
\begin{equation}
F \cong \mathcal{F}_n^{\mathbb{F}}(\xi):= \bigoplus_{v \in V} A_n^{\mathbb{F}}(v)^{\mu(v)} \notag
\end{equation}
where 
\begin{equation}
\xi=(V, \mu):= \bigcup_{v \in V} \{ (v,1), \dots, (v, \mu(v))\} \notag
\end{equation}
is an $n$-\textit{dimensional multiset} with $V \subseteq \mathbb{N}^n$ and \textit{mulitplicity function} $\mu: V \to \mathbb{N}_{\geq 1}$ (see \cref{multiset defi}) and $A_n^{\mathbb{F}}(v)$ is a polynomial ring shifted upwards in grading by $v$ (see \cref{shift defi}).

The main theorem on free hulls (see \cref{Free hull thm}) states that free hulls are unique up to isomorphism and that every $M \in \mathbf{Grf}_n(A_n^{\mathbb{F}})$ admits a free hull
\begin{equation}
p:\mathcal{F}^{\mathbb{F}}_n(\xi) \longrightarrow M \notag
\end{equation}
for a finite $n$-dimensional multiset $\xi$. If $(\mathcal{F}_n^{\mathbb{F}}(\xi), p)$ is a free hull of $M$, then $\xi$ is also called the \textit{type} of $M$. If $G \subseteq M$ is a minimal set of homogeneous generators of $M$, the multiset $\xi$ tells us in which degrees the generators in $G$ are located. Free hulls can be viewed as the basic building block of a minimal graded free resolution of $M$ (see \cref{min grad res}).

Let us denote by \begin{equation}
\mathbf{Grf}_n(A_n^{\mathbb{F}})/_{\cong} \notag
\end{equation}
the set of all isomorphism classes $[M]$ of objects $M \in  \mathbf{Grf}_n(A_n^{\mathbb{F}})$. Indeed, $\mathbf{Grf}_n(A_n^{\mathbb{F}})/_{\cong}$ is a set (see \cref{is a set}). Denote by \begin{equation}
I_n^{\mathbb{F}}(\xi_0, \xi_1) \subseteq \mathbf{Grf}_n(A_n^{\mathbb{F}})/_{\cong} \notag
\end{equation}
the subset of all $[M]$ such that $M$ admits a free hull $p_0: \mathcal{F}_n^{\mathbb{F}}(\xi_0) \to M$ and $\mathrm{ker}(p_0)$ admits a free hull  $p_1: \mathcal{F}_n^{\mathbb{F}}(\xi_1) \to \mathrm{ker}(p_0)$.

Let $L \subseteq \mathcal{F}_n^{\mathbb{F}}(\xi_0)$ be a graded $A_n^{\mathbb{F}}$-submodule. We say that $L$ satisfies the \textit{tensor-condition} (see \cref{Tensor condition section}) if the image of the induced morphism \begin{equation}
 \mathbb{F} \otimes_{A_n^{\mathbb{F}}} L \longrightarrow \mathbb{F} \otimes_{A_n^{\mathbb{F}}} \mathcal{F}_n^{\mathbb{F}}(\xi_0)  \notag
\end{equation}
is zero. Denote by $S_n^{\mathbb{F}}(\xi_0,\xi_1)$ the set of all graded $A_n^{\mathbb{F}}$-submodules $L\subseteq \mathcal{F}_n^{\mathbb{F}}(\xi_0)$
of type $\xi_1$ which satisfy the tensor-condition.

As in \cite[Sec.~4.5]{Carlsson2009}, we obtain a well-defined set theoretic surjection 
\begin{equation}\label{Map intro}
S_n^{\mathbb{F}}(\xi_0, \xi_1) \longrightarrow I_n^{\mathbb{F}}(\xi_0, \xi_1), \quad L \longmapsto [\mathcal{F}_n^{\mathbb{F}}(\xi_0)/L] \notag.
\end{equation}
Without the tensor-condition this map would not be well-defined. In \cite{article} and \cite{Carlsson2009}, the tensor-condition is missing. This surjection induces a set theoretic bijection
\begin{align}
S_n^{\mathbb{F}}(\xi_0,\xi_1)/{\mathrm{Aut}(\mathcal{F}_n^{\mathbb{F}}(\xi_0))}\xlongrightarrow{\sim} {I}_{n}^{\mathbb{F}}(\xi_0,\xi_1) \notag
\end{align}
which provides a complete classification of ${I}_{n}^{\mathbb{F}}(\xi_0,\xi_1)$ (see \cref{classification bijection} and \cite[Thm.~9]{Carlsson2009}). Now the question is if there exists a discrete parameterization of \begin{equation}
S_n^{\mathbb{F}}(\xi_0,\xi_1)/{\mathrm{Aut}(\mathcal{F}_n^{\mathbb{F}}(\xi_0))} \notag.
\end{equation}

In \cref{Rank invariant section}, we approach this question by considering a possible candidate for a discrete and complete invariant for $\mathbf{Grf}_n(A_n^{\mathbb{F}})/_{\cong}$, the so-called \textit{rank invariant} (introduced in \cite[Sec.~6]{article} and \cite[Sec.~6]{Carlsson2009}). In dimension $n=1$, the rank invariant is equivalent to the barcode (see \cref{Barcode rank equiv}) and defines hence a complete and discrete invariant (see \cite[Thm.~5]{article} and \cite[Thm.~12]{Carlsson2009}). So, the rank invariant can be viewed as a discrete generalization of the barcode to dimension $n \geq 2$. But as we will see in \cref{Chapter 2}, the rank invariant is not complete in dimension $n \geq 2$.

In \cref{Chapter 2}, our first goal is to show that for $n\geq 2$, there exists no discrete and complete invariant for $\mathbf{Grf}_n(A_n^{\mathbb{F}})/_{\cong}$. For this, we parameterize $S_{n}^{\mathbb{F}}(\xi_0, \xi_1)$ as a subset of a product of Grassmannians together with a group action of ${\mathrm{Aut}(\mathcal{F}_n^{\mathbb{F}}(\xi_0))} $. This approach follows \cite[Sec.~5]{article} and \cite[Sec.~5]{Carlsson2009} and is carried out in Sections \ref{Relation Families section}-\ref{Framed relation families section}. Based on \cite[Sec.~5.2]{article} and \cite[Sec.~5.2]{Carlsson2009}, we then give a counterexample in \cref{cont inv} showing the non-existence of a discrete and complete invariant for $\mathbf{Grf}_n(A_n^{\mathbb{F}})/_{\cong}$ if $n\geq 2$: for $n=2$, consider \begin{align}
\xi_0 &=\{((0,0),1),((0,0),2) \} \notag \\
 \xi_1 &=\{((3,0),1),((2,1),1),((1,2),1),((0,3),1) \} \notag.
\end{align}
By using the parameterization result (see Theorems \ref{Paramet thm1} and \ref{all on sets prop}), we see that $S_{2}^{\mathbb{F}}(\xi_0, \xi_1)$ is in bijection to a fourfold product of the Grassmannian $\mathrm{G}_{\mathbb{F}}(1,2)$. Note that $\mathrm{G}_{\mathbb{F}}(1,2)$ equals the projective line $\mathbb{P}_1(\mathbb{F})$. The automorphism group $\mathrm{Aut}(\mathcal{F}_2^{\mathbb{F}}(\xi_0))$ is isomorphic to $\mathrm{GL}_2(\mathbb{F})$ (see \cref{second Autgroup isom}). Thus, we obtain set theoretic bijections
\begin{align}
I_2^{\mathbb{F}}(\xi_0, \xi_1) \cong S_2^{\mathbb{F}}(\xi_0, \xi_1) /\mathrm{Aut}(\mathcal{F}_2^{\mathbb{F}}(\xi_0)) \cong \mathbb{P}_1(\mathbb{F})^4/\mathrm{GL}_2({\mathbb{F}}) \notag
\end{align}
and $I_2^{\mathbb{F}}(\xi_0, \xi_1)  \cong \mathbb{P}_1(\mathbb{F})^4/\mathrm{GL}_2({\mathbb{F}})$ is uncountable if $\mathbb{F}$ is uncountable. For $n > 2$, we just have to append zeros to the entries of $\xi_0$ and $\xi_1$.

Based on and inspired by \cite[Sec.~5]{article} and \cite[Sec.~5]{Carlsson2009}, \cref{The Moduli space} is devoted to the investigation of goal number two in \cref{Chapter 2}: showing that $S_{n}^{\mathbb{F}}(\xi_0, \xi_1)$, which we have paramterized as a subset of a product of Grassmannians, corresponds to the set of $\mathbb{F}$-points of an algebraic variety together with an algebraic group action of ${\mathrm{Aut}(\mathcal{F}_n^{\mathbb{F}}(\xi_0))} $. In other words, we want to give an explicit parameterization of the moduli space of our classification problem.

\vspace{1cm}
\noindent \textbf{Acknowledgements.} I want to thank my supervisor Andreas Ott for his motivating support and the inspiring discussions about TDA and its current applications to the viral evolution of SARS-CoV-2.

\newpage 

\section{Preliminaries}\label{Chapter 0}
This section introduces the basic definitions and notations required for this article. 

\subsection{Multisets}\label{section 1.1}

This section follows \cite{article} and \cite{Carlsson2009}. We just add some details and definitions. The next definition introduces a partial order on ${\mathbb{N}^n}$, where $\mathbb{N}=\{0,1,2,3,\dots\}$ denotes the set of natural numbers.

\begin{Defi}[Partial order]\label{5} \,
\begin{enumerate}
\item We define a partial order $\preceq$ on ${\mathbb{N}^n}$ as follows: let \begin{equation}
s=(s_1,...,s_n), t=(t_1,...,t_n) \in {\mathbb{N}^n} \notag.
\end{equation}
We write $s\preceq t$ if and only if $s_i\leq t_i$ for all $i\in \{1,...,n\}$. We write $s\prec t$ if and only if $s\preceq t$ and $s_i  < t_i$ for some $i\in \{1,\dots,n\}$. If $s \preceq t$ we also write $t \succeq s$ and if $s \prec t$ we also write $t \succ s $.
\item If $n=1$, then $\preceq$ equals the total order $\leq$ on ${\mathbb{N}}$.
\item For $S\subseteq {\mathbb{N}^n}$ and  $c\in \mathbb{N}^n$, define
$S_{\succeq c}:= \{ z \in S \mid z \succeq c \}$ and ${S_{\succ c}:= \{ z \in S \mid z \succ c \}}$.
\item Let $v,w\in {\mathbb{N}^n}$. If $ \neg(v \succeq w)$ we also write $v \nsucceq w$. This means that $v \in {\mathbb{N}^n} \setminus {\mathbb{N}^n_{\succeq w}}$. Note that in general, $v \nsucceq w$ does not imply $v \prec w$ (except only if $n=1$). If $ \neg(v \succ w)$ we also write $v \nsucc w$. This means that $v \in {\mathbb{N}^n} \setminus {\mathbb{N}^n_{\succ w}}$. Note that in general, $v \nsucc w$ does not imply that $v \preceq w$ (except only if $n=1$). For $\prec, \preceq$, the symbols $\nprec,\npreceq$ are defined analogously.
\end{enumerate}
\end{Defi}

Multisets are one of the basic building blocks for the classification of finitely generated $n$-graded $A_n^{\mathbb{F}}$-modules. A multiset may be viewed as a set where elements are allowed to appear more than once.

\begin{Defi}\label{3}\label{multiset defi} Recall that $\mathbb{N}=\{0,1,2,3, \dots\}$. Let $n \in \mathbb{N}_{\geq 1}$. Let $S\subseteq {\mathbb{N}^n}$ and $\mu:S  \to \mathbb{N}_{\geq 1}$ be a map. Then \begin{equation} (S, \mu):= \bigcup_{s \in S} \{(s,1), \dots, (s, \mu(s)) \}\subseteq S \times \mathbb{N}_{\geq 1} \notag
\end{equation}
is called an $n$-\textit{dimensional multiset}.  The map $\mu: S \to \mathbb{N}_{\geq 1}$ is also called \textit{the multiplicity function of} $\xi$ and for $s \in S$, $\mu(s)$ is called the \textit{multiplicity of} $s$.
For ${(s,l), (t,k) \in (S, \mu)}$, define $(s,l)\preceq (t,k)$ if and only if $s\preceq t$ and $(s,l)\prec (t,k)$ if and only if $s\prec t$. Again, if $(s,l) \preceq (t,k)$ we also write $(t,k) \succeq (s,l)$ and if $(s,l) \prec (t,k)$ we also write $(t,k) \succ (s,l) $.
\end{Defi}

\begin{Exa}  We present two examples of multisets:
\begin{enumerate}
\item Consider the one-dimensional multiset
\begin{equation}
\xi=\{(1,1),(1,2),(2,1) \} \notag.
\end{equation}
We have $\xi=(S, \mu)$ where
$S=\{1,2\} \subseteq \mathbb{N}$, $\mu(1)=2$ and $\mu(2)=1$. 
 
\item Consider the two-dimensional multiset\begin{equation}
\xi=\{((1,1),1),((1,1),2),((2,2),1) \} \notag.
\end{equation}
We have $\xi=(S, \mu)$ where
$S=\{(1,1), (2,2)\} \subseteq \mathbb{N}^2$, ${\mu(1,1)=2}$ and $\mu(2,2)=1$. 
\end{enumerate}
\end{Exa}

\begin{Defi} Let $\xi_0, \xi_1$ be two $n$-dimensional multisets. We write $\xi_1 \succ_D \xi_0$ if $v \succ w$ for all $v \in \xi_1$ and all $w \in \xi_0$. In this case, we say that $\xi_1$ \textit{dominates} $\xi_0$.
\end{Defi}

\begin{Exa} Consider \begin{align}
\xi_0&=\{((0,0),1), ((0,0),2) \} \notag\\
\xi_1&=\{((1,1),1),((1,1),2),((2,2),1) \} \notag.
\end{align}
Then $\xi_1 \succ_D \xi_0$.
\end{Exa}

\subsection{Multifiltered simplicial complexes}\label{section 1.2}

For the next two definitions, we follow \cite[Def.~2.1]{harrington2019stratifying} (which is compatible with \cite{article} and \cite{Carlsson2009}). 
\begin{Defi} Let $(X_u)_{u \in {\mathbb{N}^n}}$ be a familiy of simplicial complexes (for a definition of simplicial complex see \cite[§1-3]{munkres2018elements}).
\begin{enumerate}
\item $(X_u)_{u \in {\mathbb{N}^n}}$ is called an $n$-\textit{filtration} (\textit{one-filtration} if $n=1$ and \textit{bifiltration} if $n=2$) if $X_u \subseteq X_v$ for all $u,v \in {\mathbb{N}^n}$ with $u \preceq v$. 
\item If $(X_u)_{u \in {\mathbb{N}^n}}$ is an $n$-filtration, then we say that $(X_u)_{u \in {\mathbb{N}^n}}$ \textit{stabilizes} if there exists a $w \in {\mathbb{N}^n}$ such that $X_u=X_w$ for all $ u \in {\mathbb{N}^n_{\succeq w}}$.
\end{enumerate}
\end{Defi}

\begin{Defi}
Let $X$ be a simplicial complex.
\begin{enumerate}
\item $X$ is called $n$-\textit{filtered} if $X=\bigcup_{u \in {\mathbb{N}^n}} X_u$ for an $n$-filtration of simplicial complexes $(X_u)_{u \in {\mathbb{N}^n}}$ which stabilizes. $X$ is called \textit{multifiltered} if $X$ is $n$-filtered for some $n \in \mathbb{N}$. We often write $X=(X_u)_{u \in {\mathbb{N}^n}}$ to indicate that $X$ is an $n$-filtered simplicial complex with $X= \bigcup_{u\in {\mathbb{N}^n}} X_u$. 

\item $X$ is called a \textit{finite} $n$-\textit{filtered simplicial complex} if $X$ is an $n$-filtered simplicial complex and if $X$ is finite (i.e. the set of vertices of $X$ is finite).
\end{enumerate}
\end{Defi}

\subsection{\textit{n}-graded rings and modules}\label{section 1.3}
This section introduces basic facts and definitions about $n$-graded commutative rings and modules. Our definitions are compatible with \cite{article}, \cite{Carlsson2009} and \cite{harrington2019stratifying}.

\begin{Defi} Let $R$ be a commutative ring and let $n \in \mathbb{N}_{\geq 1}$.
\begin{enumerate}
\item $R$ is called $n$-\textit{graded}, if $R= \bigoplus_{v \in {\mathbb{N}^n}} R_v$ as abelian groups where we have for $v \in {\mathbb{N}^n}$, ${(R_v,+) \subseteq (R,+)}$ is an abelian subgroup such that $R_{u} R_{v} \subseteq R_{u+v}$ for all $u \in {\mathbb{N}^n}$. 
\item A \textit{morphism} of $n$-graded commutative rings $\varphi:R \to  S$ (or \textit{graded} ring homomorphism  or \textit{graded} morphism) is a ring homomorphism such that $\varphi(R_v) \subseteq S_v$ for all $v \in {\mathbb{N}^n}$. 
\item An element $y \in R$ is called \textit{homogeneous} if $y \in R_v$ for some $v \in {\mathbb{N}^n}$. If $y \in R \setminus \{0\}$ is \textit{homogeneous}, then the unique $v \in {\mathbb{N}^n}$ such that $y \in R_v$ is called the \textit{degree of} $y$ and we write $\mathrm{deg}(y)=v$. In this case, $y$ is also called \textit{homogeneous of degree} $v$. Note that $0 \in R$ is homogeneous but its degree is not well-defined since $0 \in R_v$ for all $v \in {\mathbb{N}^n}$.
 \end{enumerate}
\end{Defi}

\begin{Defi}\label{Defi homog} Let $n \in \mathbb{N}_{\geq 1}$. Let $R$ be an $n$-graded commutative ring and let $M$ be an $R$-module. 
\begin{enumerate}
\item $M$ is called $n$-\textit{graded}, if $M= \bigoplus_{v \in {\mathbb{N}^n}} M_v$ as abelian groups where for $v \in {\mathbb{N}^n}$, ${(M_v,+) \subseteq (M,+)}$ is an abelian subgroup such that $R_{u} M_{v} \subseteq M_{u+v}$ for  all $u,v \in {\mathbb{N}^n}$. 
\item A \textit{morphism} of $n$-graded $R$-modules (or \textit{graded} $R$-module homomorphism  or \textit{graded} morphism) $\varphi:M \to  N$  is an  $R$-module  homomorphism such that $\varphi(M_v) \subseteq N_v$ for all $v \in {\mathbb{N}^n}$. 
\item Let $M$ be an $n$-graded $R$-module. An $R$-submodule $N\subseteq M$ is called \textit{graded}, if $N=\bigoplus_{v \in {\mathbb{N}^n}} N \cap M_v$ as abelian groups, i.e. $N$ is an $n$-graded $R$-module again with $n$-grading inherited from $M$. Note that in this case $M/N$ is an $n$-graded $R$-module again with $M/N=\bigoplus_{v \in {\mathbb{N}^n}} M_v /(N \cap M_v)$. If $I \subseteq R$ is a graded submodule (or \textit{homogeneous} ideal), then $R/I$ is an $n$-graded commutative ring with $R/I=\bigoplus_{v \in {\mathbb{N}^n}} R_v /(I \cap R_v)$ as abelian groups.
\item An $n$-graded $R$-module $M$ is called \textit{finitely generated} if $M$ is finitely generated as an $R$-module and \textit{free} if $M$ is free as an $R$-module.
\item An element $y \in M$ is called \textit{homogeneous} if $y \in M_v$ for some $v \in {\mathbb{N}^n}$. If $y \in M \setminus \{0\}$ is \textit{homogeneous}, then the unique $v \in {\mathbb{N}^n}$ such that $y \in M_v$ is called the \textit{degree of} $y$ and we write $\mathrm{deg}(y)=v$. In this case, $y$ is also called \textit{homogeneous of degree} $v$. Note that $0 \in M$ is homogeneous but its degree is not well-defined since $0 \in M_v$ for all $v \in {\mathbb{N}^n}$.
\item $M$ is called \textit{graded free} if $M$ admits a basis of homogeneous  elements.
\end{enumerate}
\end{Defi}

\begin{Defi}[Shift]\label{shift defi} Let $n \in \mathbb{N}_{\geq 1}$. Let $R$ be an $n$-graded commutative ring and let $M$ be an $n$-graded $R$-module. For $v\in {\mathbb{N}^n}$, the \textit{shifted} $n$-graded $R$-module $M(v)$ is defined by setting 
\begin{equation}
M(v)_u:= \begin{cases} M_{u-v}, & u\in \mathbb{N}^n_{\succeq v} \\
0 , & \mathrm{else} 
\end{cases} \notag
\end{equation}
\end{Defi}

\begin{Rem}\label{over comm rings fin} If $S$ is a commutative ring and $N$ a finitely generated $S$-module, then every minimal generating set of $N$ has finite length: let $G$ be a finite generating set of $N$ and $G'$ a minimal generating set. Then any $g \in G$ is a linear combination of finitely many elements $g' \in G'$. Since $G$ is finite and $G'$ is minimal, $G'$ has to be finite. In particular, if $N$ is free, then every basis of $N$ has finite length (every basis is a minimal generating set). 
\end{Rem}

\begin{Rem}\label{Homog generators} Let $n \in \mathbb{N}_{\geq 1}$. Let $R$ be an $n$-graded commutative ring and let $M$ be an $n$-graded $R$-module. Let  $G\subseteq M$ be a generating set. For  every $g  \in G$, we  have \begin{equation}
g=\sum_{v\in {\mathbb{N}^n}} g_v \notag
\end{equation}
for unique $g_v \in M_v$ with $g_v=0$ for all but finitely many $v  \in {\mathbb{N}^n}$. Then  \begin{equation}
G'=\{g_v \mid g\in G, v\in {\mathbb{N}^n} \} \subseteq M \notag
\end{equation}
is a set of homogeneous generators. Assume that $M$ is finitely generated as $R$-module. Then $M$ admits a finite generating set $G \subseteq M$ and hence a finite set of homogeneous generators $G' \subseteq M$. By removing elements of $G'$ we may obtain a minimal set of homogeneous generators $G'' \subseteq G'$ of $M$.
So, we conclude that if $M$ is finitely generated, then $M$ admits a finite minimal set of homogeneous generators and any minimal set of homogeneous generators of $M$ is finite by \cref{over comm rings fin}.
\end{Rem}

\begin{Prop}\label{basics about graded} Let $n \in \mathbb{N}_{\geq 1}$. Let $R$ be an $n$-graded commutative ring. \begin{enumerate}
\item If $M$ is an $n$-graded $R$-module and $N \subseteq M$ an $R$-submodule, then $N$ is graded if and only if $N$ is generated by homogeneous elements.
\item If $(M_i)_{i\in I}$ is a familiy of $n$-graded $R$-modules, then $\bigoplus_{i \in I} M_i$ is an $n$-graded $R$-module via $(\bigoplus_{i \in I} M_i)_u=\bigoplus_{i \in I} (M_i)_u$ for $u \in {\mathbb{N}^n}$.
\item Let $M,N$ be $n$-graded $R$-modules and $f: M \to N$ a graded $R$-module homomorphism. Then $\mathrm{ker}(f) \subseteq M$ and $\mathrm{im}(f) \subseteq N$ are graded submodules.
\item Let $S$ be another $n$-graded  commutative ring and $f:R \to S$ a graded ring homomorphism. Then $\mathrm{ker}(f) \subseteq R$ is a homogeneous ideal. 
\end{enumerate}
\end{Prop}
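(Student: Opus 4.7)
The plan is to establish part (1) first, since the characterization ``a submodule is graded if and only if it is generated by homogeneous elements'' will make the remaining three parts essentially immediate. Part (2) is then a direct structural check, and parts (3) and (4) follow from (1) after producing a homogeneous generating set for the relevant submodules.

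For part (1), the forward direction is immediate from the definition: if $N=\bigoplus_{v\in\mathbb{N}^n} N\cap M_v$, then the nonzero elements of the $N\cap M_v$ form a homogeneous generating set. For the converse, let $\{g_i\}_{i\in I}\subseteq N$ be a homogeneous generating set with $\deg(g_i)=v_i$. Given any $n\in N$, write $n=\sum_{i\in J} r_i g_i$ for a finite subset $J\subseteq I$, and decompose each coefficient as $r_i=\sum_u (r_i)_u$ into its $R$-homogeneous parts. Then $n=\sum_{i,u}(r_i)_u g_i$, and the essential observation is that $(r_i)_u g_i\in N\cap M_{u+v_i}$: it lies in $M_{u+v_i}$ by the grading of $M$, and it lies in $N$ because $g_i\in N$ and $N$ is an $R$-submodule. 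Hence every element of $N$ decomposes as a sum of elements in the various $N\cap M_v$, and uniqueness of this decomposition is inherited from $M=\bigoplus_v M_v$.

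For part (2), the inclusion $R_u\cdot\bigoplus_i (M_i)_v\subseteq\bigoplus_i (M_i)_{u+v}$ reduces componentwise to the graded module structure on each $M_i$, and the identity $\bigoplus_i M_i=\bigoplus_v\bigoplus_i (M_i)_v$ is the standard commutation of direct sums of abelian groups. For part (3), given $m=\sum_v m_v\in\mathrm{ker}(f)$, the equation $0=f(m)=\sum_v f(m_v)$ with $f(m_v)\in N_v$ forces each $f(m_v)=0$ by uniqueness of homogeneous decompositions in $N$, hence each $m_v\in\mathrm{ker}(f)$; this exhibits a homogeneous generating set of $\mathrm{ker}(f)$, so (1) applies. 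For $\mathrm{im}(f)$, any element $f(m)=\sum_v f(m_v)$ is a sum of homogeneous elements $f(m_v)\in\mathrm{im}(f)$, so (1) applies again. For part (4), $f$ in particular defines a graded morphism of the underlying $n$-graded abelian groups, so the same kernel argument produces a homogeneous generating set for $\mathrm{ker}(f)$; since $\mathrm{ker}(f)$ is moreover an ideal of $R$, applying (1) to $R$ as a graded module over itself yields that $\mathrm{ker}(f)$ is a homogeneous ideal.

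The main obstacle, such as it is, lies in part (1): one must notice that the argument requires the generators $g_i$ to lie inside $N$ itself, so that multiplication by the homogeneous pieces of the coefficients keeps each summand inside $N$. Everything else is routine bookkeeping with the uniqueness of homogeneous decompositions in a direct sum.
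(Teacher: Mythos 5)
Your argument is correct and follows essentially the same route as the paper: part (1) via decomposing the coefficients in a homogeneous generating expression, parts (3) and (4) by decomposing elements into homogeneous components and invoking the grading of $f$, and part (2) as a routine check. The only cosmetic difference is that the paper handles part (4) by viewing $S$ as a graded $R$-module via $f$ and citing part (3) directly, whereas you rerun the kernel argument and then appeal to part (1); both are equally valid.
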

\begin{proof} $2$ is clear and $4$ follows from $3$ if we endow $S$ with the $n$-graded $R$-module structure via $f$.

$1.$ If $N$ is graded, then $N$ is generated by homogeneous elements (see \cref{Homog generators}). Now assume that $N$ is generated by homogeneous elements $\{a_{v_i} \}_{i \in I}$ with $a_{v_i} \in M_{v_i}$. Let $y \in N$. Then $y = \sum_{i \in I} \lambda_{i} a_{v_i}$ with $\lambda_i \in R$ and $\lambda_i=0$ for all but finitely many $i \in I$. Since $R$ is $n$-graded we have $\lambda_i=\sum_{v \in {\mathbb{N}^n}} \lambda^{(i)}_v$ for unique $\lambda^{(i)}_v \in R_v$ with $\lambda^{(i)}_v=0$ for all but finitely many $v \in {\mathbb{N}^n}$. Thus,
\begin{equation}
y = \sum_{i \in I} \lambda_{i} a_{v_i}= \sum_{i \in I} \sum_{v \in {\mathbb{N}^n}} \lambda^{(i)}_v a_{v_i} \notag
\end{equation}
is a finite sum with $ \lambda^{(i)}_v a_{v_i} \in N \cap M_{v+v_i}$. Therefore, $N \subseteq \bigoplus_{v \in {\mathbb{N}^n}} (N \cap M_v)$. The other inclusion  $\bigoplus_{v \in {\mathbb{N}^n}} (N \cap M_v) \subseteq N$ is clear. Hence, $N=\bigoplus_{v \in {\mathbb{N}^n}} (N \cap M_v)$.

$3.$ Let $y \in \mathrm{ker}(f)$. Then $y= \sum_{v \in {\mathbb{N}^n}} y_v$ for unique $y_v \in M_v$ with $y_v=0$ for all but finitely many $v \in {\mathbb{N}^n}$. Now
\begin{equation}
0=f(y)=f\left(\sum_{v \in {\mathbb{N}^n}} y_v \right)=\sum_{v \in {\mathbb{N}^n}} f(y_v) \notag
\end{equation}
and $f(y_v) \in N_v$ since $f$ is graded. So, $f(y_v)=0$ for all $v \in {\mathbb{N}^n}$ which implies that $y_v \in \mathrm{ker}(f)$ for all $v \in {\mathbb{N}^n}$. Hence, $\mathrm{ker}(f) \subseteq \bigoplus_{v \in {\mathbb{N}^n}} (\mathrm{ker}(f) \cap M_v)$. We clearly have $ \bigoplus_{v \in {\mathbb{N}^n}} (\mathrm{ker}(f) \cap M_v) \subseteq \mathrm{ker}(f)$ which shows that $ \bigoplus_{v \in {\mathbb{N}^n}} (\mathrm{ker}(f) \cap M_v) = \mathrm{ker}(f)$. 

Let $y \in \mathrm{im}(f)$. Then $y=f(z)$ for some $z \in M$. Now $z=\sum_{v \in {\mathbb{N}^n}} z_v$ for unique $z_v \in M_v$ with $z_v=0$ for all but finitely many $v \in {\mathbb{N}^n}$. Since $f$ is graded, we obtain
\begin{equation}
y=f(z)=f \left (\sum_{v \in {\mathbb{N}^n}} z_v \right)=\sum_{v \in {\mathbb{N}^n}} f(z_v) \notag.
\end{equation}
We have $f(z_v) \in \mathrm{im}(f) \cap N_v$ for all $v \in {\mathbb{N}^n}$. Hence, $\mathrm{im}(f) \subseteq \bigoplus_{v \in {\mathbb{N}^n}} (\mathrm{im}(f) \cap N_v)$. We clearly have $ \bigoplus_{v \in {\mathbb{N}^n}} (\mathrm{im}(f) \cap N_v) \subseteq \mathrm{im}(f)$ which shows that \begin{equation}
\bigoplus_{v \in {\mathbb{N}^n}} (\mathrm{im}(f) \cap N_v) = \mathrm{im}(f) \notag.
\end{equation}
\end{proof}

\begin{Defi} Let $n \in \mathbb{N}_{\geq 1}$. Let $R$ be an $n$-graded commutative ring.
\begin{enumerate}
\item $\mathbf{Gr}_n(R)$ denotes the category of $n$-graded $R$-modules (and $n$-graded $\mathbb{F}$-vector spaces if  $R=\mathbb{F}$ is a field). Morphisms in $\mathbf{Gr}_n(R)$ are graded morphisms of $n$-graded $R$-modules. 
\item $\mathbf{Grf}_n(R)\subseteq \mathbf{Gr}_{n}(R)$ denotes the full  subcategory of finitely generated $n$-graded $R$-modules.
\end{enumerate}
\end{Defi}

\begin{Defi}\label{Definition of the module structure}
Let $\mathbb{F}$ be a field and $n \in \mathbb{N}_{\geq 1}$. 
\begin{enumerate}
\item Denote by 
\begin{equation}
A_n^{\mathbb{F}}:=\mathbb{F}[x_1,\dots,x_n] \notag 
\end{equation}
the polynomial ring in $n$ variables.
\item For $v=(v_1, \dots,v_n)\in \mathbb{N}^n$, we define \begin{equation}
x^v:=x_1^{v_1} {\cdot {\dots}  \cdot} x_n^{v_n} \notag.
\end{equation}
\item $A_n^{\mathbb{F}}$ becomes an $n$-graded commutative ring where for $v \in {\mathbb{N}^n}$, \begin{equation}
{(A_n^{\mathbb{F}})_v:= \braket{x^v}_\mathbb{F}} \notag.
\end{equation}
Note that for $v=0$, $(A_n^{\mathbb{F}})_0= \braket{x^0}_{\mathbb{F}}=\mathbb{F}$. 
\item $\mathbb{F}$ becomes an $n$-graded commutative ring by setting \begin{equation}
\mathbb{F}_v:=\begin{cases} \mathbb{F}, & v=0 \\
0,& \mathrm{else} \end{cases} \notag
\end{equation}
Moreover, $\mathbb{F}$ becomes an $n$-graded $A_n^{\mathbb{F}}$-module by setting \begin{equation}
a \cdot x^v= x^v \cdot a := 0 \notag
\end{equation}
for all $a \in \mathbb{F}$ and all $v \in \mathbb{N}^n_{ \succ 0}$.
\item We usually denote the graded maximal ideal $\braket{x_1, \dots,x_n}_{A_n^{\mathbb{F}}}\subseteq  A_n^{\mathbb{F}}$ by $\mathfrak{m}^{\mathbb{F}}_n$. Note that \begin{equation}
\mathfrak{m}^{\mathbb{F}}_n=\bigoplus_{v \in {\mathbb{N}^n_{\succ 0}}} (A_n^{\mathbb{F}})_v \quad \textrm{and} \quad
A_n^{\mathbb{F}}/\mathfrak{m}^{\mathbb{F}}_n = \mathbb{F} \notag.
\end{equation}
\end{enumerate}
\end{Defi}

\begin{Rem}\label{finite image} Let $M \in \mathbf{Grf}_n(A_n^{\mathbb{F}})$ and $V \in \mathbf{Grf}_n(\mathbb{F})$.
\begin{enumerate}
\item $V_w$ is a finite dimensional $\mathbb{F}$-vector space for all $w \in \mathbb{N}^n$.
\item $M_w$ is a finite dimensional $\mathbb{F}$-vector space for all $w \in \mathbb{N}^n$.
\end{enumerate}
\end{Rem}
\newpage
\subsection{Invariants}\label{Invariants}
For the following, let $\mathbb{F}$ be a field and $n \in \mathbb{N}_{\geq 1} $. Denote by $\mathcal{K}$ the class of all fields. In this article, we are interested in invariants for $ \mathbf{Grf}_n(A_n^{\mathbb{F}})/ _{\cong}$ where $\mathbf{Grf}_n(A_n^{\mathbb{F}})/ _{\cong}$ denotes the set of all isomorphim classes $[M]$ of objects $M \in\mathbf{Grf}_n(A_n^{\mathbb{F}})$. Note that $\mathbf{Grf}_n(A_n^{\mathbb{F}})/ _{\cong}$ is indeed a set (see \cref{is a set}). For the following, let $Q_n^{\mathbb{F}}$ be a set and $X_n^{\mathbb{F}} \subseteq \mathbf{Grf}_n(A_n^{\mathbb{F}})/ _{\cong}$ be a subset.

\begin{Defi}[Invariant] An \textit{invariant for} $X_n^{\mathbb{F}}$ \textit{with values in} $Q_n^{\mathbb{F}}$ is a function
\begin{equation}
f_n^{\mathbb{F}}:  X_n^{\mathbb{F}} \longrightarrow  Q_n^{\mathbb{F}} \notag.
\end{equation}
Sometimes, if the context is clear, we just say that $f_n^{\mathbb{F}}$ is an \textit{invariant for} $X_n^{\mathbb{F}}$ or that $f_n^{\mathbb{F}}$ is an \textit{invariant}.
\end{Defi}

\begin{Defi}[Equivalence] Two invariants $f_n^{\mathbb{F}}: X_n^{\mathbb{F}}\to Q_n^{\mathbb{F}}$ and $g_n^{\mathbb{F}}: X_n^{\mathbb{F}} \to Z_n^{\mathbb{F}}$ are called \textit{equivalent}, write $f_n^{\mathbb{F}} \cong g_n^{\mathbb{F}}$, if there exists a set theoretic bijection \begin{equation}
s_n^{\mathbb{F}}: \mathrm{im}(f_n^{\mathbb{F}}) \xlongrightarrow{\sim} \mathrm{im}(g_n^{\mathbb{F}})\notag 
\end{equation}
such that the diagram
\begin{equation}
\begin{tikzcd}[row sep=1cm, column sep = 1.5cm] & \mathrm{im}(f_n^{\mathbb{F}}) \arrow{dd}[rotate=90,yshift=1ex,xshift=-1ex]{\sim}{s_n^{\mathbb{F}}} \\
X_n^{\mathbb{F}} \arrow[rd, "{g_n^{\mathbb{F}}}"']  \arrow[ru, "{f_n^{\mathbb{F}}}"]  \\
 & \mathrm{im}(g_n^{\mathbb{F}})
\end{tikzcd} \notag
\end{equation} 
commutes. Two classes of invariants $\{f_n^{\mathbb{F}}\}_{\mathbb{F} \in \mathcal{K}}$ and $\{g_n^{\mathbb{F}}\}_{\mathbb{F} \in \mathcal{K}}$ are called \textit{equivalent} if for all $\mathbb{F} \in \mathcal{K}$, $f_n^{\mathbb{F}} \cong g_n^{\mathbb{F}}$. In this case, we write $\{f_n^{\mathbb{F}}\}_{\mathbb{F} \in \mathcal{K}} \cong \{g_n^{\mathbb{F}}\}_{\mathbb{F} \in \mathcal{K}}$.
\end{Defi}

\begin{Defi}[Complete invariant]\label{complete invariant} An invariant
\begin{equation}
f_n^{\mathbb{F}}:  X_n^{\mathbb{F}} \longrightarrow  Q_n^{\mathbb{F}} \notag
\end{equation}
is called \textit{complete} if $f_n^{\mathbb{F}}$ is injective.  A class of invariants $\{f_n^{\mathbb{F}}\}_{\mathbb{F} \in \mathcal{K}}$ is called \textit{complete} if $f_n^{\mathbb{F}}$ is \textit{complete} for all $\mathbb{F} \in \mathcal{K}$.
\end{Defi}

\begin{Prop}\label{Complete invariants equivalent} If $f_n^{\mathbb{F}}: X_n^{\mathbb{F}} \to Q_n^{\mathbb{F}}$ and $g_n^{\mathbb{F}}: X_n^{\mathbb{F}} \to Z_n^{\mathbb{F}}$ are complete invariants, then $f_n^{\mathbb{F}} \cong g_n^{\mathbb{F}}$.
\end{Prop}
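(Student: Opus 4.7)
The plan is straightforward: since both invariants are injective, each factors through a bijection onto its image, and composing one such bijection with the inverse of the other produces the required bijection $s_n^{\mathbb{F}}$.

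More concretely, I would first observe that any injective map $X \to Y$ restricts to a bijection $X \to \mathrm{im}(X)$. Applying this to $f_n^{\mathbb{F}}$ and $g_n^{\mathbb{F}}$, which are complete by hypothesis, yields bijections
\begin{equation}
\bar{f}_n^{\mathbb{F}}: X_n^{\mathbb{F}} \xlongrightarrow{\sim} \mathrm{im}(f_n^{\mathbb{F}}), \qquad \bar{g}_n^{\mathbb{F}}: X_n^{\mathbb{F}} \xlongrightarrow{\sim} \mathrm{im}(g_n^{\mathbb{F}}),
\end{equation}
whose underlying set-maps agree with $f_n^{\mathbb{F}}$ and $g_n^{\mathbb{F}}$ respectively. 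Then I would define
\begin{equation}
s_n^{\mathbb{F}} := \bar{g}_n^{\mathbb{F}} \circ (\bar{f}_n^{\mathbb{F}})^{-1} : \mathrm{im}(f_n^{\mathbb{F}}) \longrightarrow \mathrm{im}(g_n^{\mathbb{F}}),
\end{equation}
which is a bijection as a composition of two bijections.

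It remains only to verify that the diagram in the definition of equivalence commutes, i.e.\ that $s_n^{\mathbb{F}} \circ f_n^{\mathbb{F}} = g_n^{\mathbb{F}}$ as functions $X_n^{\mathbb{F}} \to \mathrm{im}(g_n^{\mathbb{F}})$. This is an immediate unwinding: for any $[M] \in X_n^{\mathbb{F}}$, we have $s_n^{\mathbb{F}}(f_n^{\mathbb{F}}([M])) = \bar{g}_n^{\mathbb{F}}((\bar{f}_n^{\mathbb{F}})^{-1}(\bar{f}_n^{\mathbb{F}}([M]))) = \bar{g}_n^{\mathbb{F}}([M]) = g_n^{\mathbb{F}}([M])$. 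Thus $f_n^{\mathbb{F}} \cong g_n^{\mathbb{F}}$, as required.

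There is no real obstacle here; this proposition is essentially a tautological consequence of unpacking the definitions of \emph{complete} and \emph{equivalent}. The only mild subtlety is that the bijection is required between the images (not between the codomains $Q_n^{\mathbb{F}}$ and $Z_n^{\mathbb{F}}$, which need not have the same cardinality), so one must be careful to perform the construction at the level of images, which is precisely what the factorization through $\bar{f}_n^{\mathbb{F}}$ and $\bar{g}_n^{\mathbb{F}}$ accomplishes.
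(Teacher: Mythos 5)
Your proof is correct and takes the only sensible route: factor each injection through a bijection onto its image and compose. The paper itself simply says the result is immediate from the definitions, so your argument is the spelled-out version of the same observation.
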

\begin{proof} This is immediate from the definition.
\end{proof}

Our next definition is from \cite[Sec.~1.2]{article} (see also \cite[Sec.~1.2]{Carlsson2009}).
\begin{Defi}[Discrete class of invariants]\label{discrete invariant} If we have a class of invariants \begin{equation}
\{f_n^{\mathbb{F}}: X_n^{\mathbb{F}} \longrightarrow Q_n^{\mathbb{F}}\}_{\mathbb{F} \in \mathcal{K}} \notag,
\end{equation}
then $\{f_n^{\mathbb{F}} \}_{\mathbb{F} \in \mathcal{K}}$ is called a \textit{discrete class of invariants} if $\mathrm{im}(f_n^{\mathbb{F}}) \subseteq Q_n^{\mathbb{F}}$ is countable and if $Q_n^{\mathbb{F}}=Q_n^{\mathbb{T}}$ for all $\mathbb{T}, \mathbb{F} \in \mathcal{K}$. 
\end{Defi}

\begin{Defi}[Continuous class of invariants] If $\{f_n^{\mathbb{F}} \}_{\mathbb{F} \in \mathcal{K}}$ is a class of invariants which is not discrete, then $\{f_n^{\mathbb{F}} \}_{\mathbb{F} \in \mathcal{K}}$ is also called a \textit{continuous class of invariants}.
\end{Defi}

\section{Complete Classification}\label{Chapter 1}

As we have already mentioned in the introduction, this section is devoted to the classification of finitely generated $n$-graded $A_n^{\mathbb{F}}$-modules. In \cref{section 1.4}, we answer the question of how the study of the homology of a finite $n$-filtered simplicial complex translates naturally into the study of finitely generated $n$-graded $A_n^{\mathbb{F}}$-modules. In \cref{Free hulls and free section}, we present the theory behind free hulls. Free hulls are the basic building block for the complete classification of finitely generated $n$-graded $A_n^{\mathbb{F}}$-modules. This complete classification is then carried out in \cref{complete classification section}. To approach the question of whether this classification can provide a complete and discrete class of invariants, we first review the one-dimensional case in \cref{One dim pers sec}, where the \textit{barcode} provides a complete and discrete class of invariants. Then we turn our attention to the \textit{rank invariant} for multidimensional persistence, which is discrete and equivalent to the barcode in dimension $n=1$. But as we will see in \cref{Chapter 2}, there is no complete and discrete class of invariants in dimension $n \geq 2$.

\subsection{Correspondence and realization}\label{section 1.4}

The next definition is standard and can be found in \cite{article}, \cite{Carlsson2009} and \cite{harrington2019stratifying}.
\begin{Defi}[Persistence module]\label{6} Let $\mathcal{C}$ be a small category. A \textit{persistence module over} $\mathbb{F}$ is a functor 
\begin{equation}
P:\mathcal{C}\to \mathbf{Vec}_\mathbb{F}  \notag
\end{equation}
where $\mathbf{Vec}_\mathbb{F}$ denotes the category of $\mathbb{F}$-vector spaces with $\mathbb{F}$-linear maps as morphisms. A morphism of persistence modules over $\mathbb{F}$ is a natural transformation of functors. 
\begin{enumerate}
\item $P$ is called $n$-\textit{dimensional} if $\mathcal{C}=({\mathbb{N}^n},\preceq)$.  \item $\mathbf{Pers}_n(\mathbb{F})$ denotes the category of $n$-dimensional persistence modules over $\mathbb{F}$.
\end{enumerate}
\end{Defi}

Our next definition is as in \cite[Def.~2]{article} and \cite[Def.~2]{Carlsson2009}.

\begin{Defi}\label{7} Given $P\in \mathbf{Pers}_n(\mathbb{F})$, we define an $n$-graded $A_n^{\mathbb{F}}$-module
\begin{ceqn}
\begin{align}
\alpha_{n}^{\mathbb{F}}(P):=\bigoplus_{v\in {\mathbb{N}^n}}P_v \in  \mathbf{Gr}_n(A_n^{\mathbb{F}}) \notag
\end{align}
\end{ceqn}
where for $y_v \in P_v$ and $u \in \mathbb{N}^n$, 
\begin{equation}
x^u \cdot y_v:=P(v \preceq v+u)(y_v) \in P_{v+u} \notag.
\end{equation}
$\alpha_n^{\mathbb{F}}(P)$ is also called \textit{the} $n$-\textit{graded} $A_n^{\mathbb{F}}$-\textit{module associated to} $P$.
\end{Defi}

\begin{Thm}[{{\cite[Thm.~2.6] {harrington2019stratifying}}}]\label{8} The correspondence $\alpha_n^{\mathbb{F}} $ defines an isomorphism of categories between
$\mathbf{Pers}_n(\mathbb{F})$ and $\mathbf{Gr}_n(A_n^{\mathbb{F}})$.
\end{Thm}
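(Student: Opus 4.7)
My plan is to construct an explicit inverse functor and verify strict equality of both compositions with $\alpha_n^{\mathbb{F}}$, since the theorem claims an isomorphism of categories, not merely an equivalence. To begin, I would first upgrade the object-level definition of $\alpha_n^{\mathbb{F}}$ into a genuine functor by specifying its action on morphisms: a natural transformation $\eta: P \to Q$ in $\mathbf{Pers}_n(\mathbb{F})$ consists of a family $(\eta_v: P_v \to Q_v)_{v \in \mathbb{N}^n}$, and I would set $\alpha_n^{\mathbb{F}}(\eta) := \bigoplus_{v \in \mathbb{N}^n} \eta_v$. Naturality of $\eta$ for the arrow $v \preceq v+u$ gives $\eta_{v+u} \circ P(v \preceq v+u) = Q(v \preceq v+u) \circ \eta_v$, which translates directly to $\alpha_n^{\mathbb{F}}(\eta)(x^u \cdot y_v) = x^u \cdot \alpha_n^{\mathbb{F}}(\eta)(y_v)$ for homogeneous $y_v \in P_v$; hence $\alpha_n^{\mathbb{F}}(\eta)$ is a graded $A_n^{\mathbb{F}}$-module homomorphism, and preservation of identities and compositions by $\alpha_n^{\mathbb{F}}$ is immediate from the componentwise definition.

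Next I would construct the inverse $\beta_n^{\mathbb{F}}: \mathbf{Gr}_n(A_n^{\mathbb{F}}) \to \mathbf{Pers}_n(\mathbb{F})$. On an object $M = \bigoplus_{v \in \mathbb{N}^n} M_v$, I would set $\beta_n^{\mathbb{F}}(M)(v) := M_v$, and for $v \preceq w$ I would let $\beta_n^{\mathbb{F}}(M)(v \preceq w): M_v \to M_w$ be multiplication by $x^{w-v}$; this is well-defined and $\mathbb{F}$-linear because the grading forces $x^{w-v} \cdot M_v \subseteq M_w$. The identities $x^0 = 1$ and $x^{u-w} \cdot x^{w-v} = x^{u-v}$ in $A_n^{\mathbb{F}}$ show that $\beta_n^{\mathbb{F}}(M)$ is a functor on $(\mathbb{N}^n, \preceq)$. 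On a graded morphism $\varphi: M \to N$, I would define $\beta_n^{\mathbb{F}}(\varphi)_v := \varphi|_{M_v}$; the fact that $\varphi$ commutes with the action of every $x^u$ is precisely the naturality of $\beta_n^{\mathbb{F}}(\varphi)$.

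For the two compositions, chasing definitions gives strict equality on the nose. Applied to $P \in \mathbf{Pers}_n(\mathbb{F})$, the composite $\beta_n^{\mathbb{F}} \circ \alpha_n^{\mathbb{F}}$ reproduces $P_v$ in each degree, and the morphism $v \preceq w$ is sent to multiplication by $x^{w-v}$, which by the definition in \cref{7} is exactly $P(v \preceq w)$. Applied to $M \in \mathbf{Gr}_n(A_n^{\mathbb{F}})$, the composite $\alpha_n^{\mathbb{F}} \circ \beta_n^{\mathbb{F}}$ yields the underlying graded group $\bigoplus_v M_v = M$, and the reconstructed action of $x^u$ on $y_v \in M_v$ is $\beta_n^{\mathbb{F}}(M)(v \preceq v+u)(y_v) = x^u \cdot y_v$, so the $A_n^{\mathbb{F}}$-module structure is recovered unchanged. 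Both compositions act identically on morphisms by the same componentwise inspection.

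The only genuinely non-trivial check in the whole argument, and the step I would flag as the main obstacle, is that the homogeneous rule $x^u \cdot y_v := P(v \preceq v+u)(y_v)$ from \cref{7} really extends $\mathbb{F}$-linearly to a well-defined $A_n^{\mathbb{F}}$-module structure on $\bigoplus_v P_v$; in particular one must verify associativity $x^u \cdot (x^{u'} \cdot y_v) = x^{u+u'} \cdot y_v$. This reduces to the functoriality identity $P(v+u' \preceq v+u+u') \circ P(v \preceq v+u') = P(v \preceq v+u+u')$, so it is forced by the hypothesis that $P$ is a functor, and I would state this as the single structural step that powers the entire correspondence.
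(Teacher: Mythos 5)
Your proposal is correct and fills in precisely the details that the paper declines to write out: the paper only remarks that the proof "mainly consists in reformulating the defining properties of objects and their morphisms" in the two categories, and your explicit construction of the inverse functor $\beta_n^{\mathbb{F}}$, together with the verification that associativity of the $A_n^{\mathbb{F}}$-action reduces to functoriality of $P$, is exactly that reformulation. The only caveat worth keeping in mind (shared by the paper and the standard literature) is that the on-the-nose equality $\bigoplus_v M_v = M$ used in $\alpha_n^{\mathbb{F}} \circ \beta_n^{\mathbb{F}} = \mathrm{id}$ relies on reading the grading as an internal decomposition, which is how \cref{Defi homog} sets things up, so the claim of a strict isomorphism of categories rather than a mere equivalence does go through.
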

In \cite[Thm.~2.6]{harrington2019stratifying}, \cref{8} is formulated (without giving a proof) with a reference to \cite{Carlsson2009}. The proof mainly consists in reformulating the defining properties of objects and their morphisms in $\mathbf{Pers}_n(\mathbb{F})$ and $\mathbf{Gr}_n(A_n^{\mathbb{F}})$ accordingly and is therefore not elaborated here. The next definition is standard material (see \cite{harrington2019stratifying}, \cite{article} and \cite{Carlsson2009}).

\begin{Defi}\label{geom real forward} Let $
X=(X_{u})_{u \in {\mathbb{N}^n}}$ be an $n$-filtered simplicial complex. Let
\begin{equation}
P^{\mathbb{F}}_{n,l}(X): {\mathbb{N}^n} \longrightarrow \mathbf{Vec}_{\mathbb{F}}, \quad u  \longmapsto  H_l(X_{u}, \mathbb{F}) \notag
\end{equation}
where  $H_l(X_{u}, \mathbb{F})$ denotes the $l$-th simplicial homology group of $X_u$ (note that $H_l(X_{u}, \mathbb{F})$ is an $\mathbb{F}$-vector space) and let
\begin{equation}
P^{\mathbb{F}}_{n,l}(u\preceq v):= H_l(i_{u,v}, \mathbb{F}): H_l(X_{u}, \mathbb{F}) \longrightarrow H_l(X_{v}, \mathbb{F}) \notag
\end{equation}
where $i_{u,v}: X_{u} \lhook\joinrel\xlongrightarrow{\subseteq} X_{v}$ denotes the canonical inclusion. Then $P^{\mathbb{F}}_{n,l}(X) \in \mathbf{Per}_n(\mathbb{F})$ defines an $n$-dimensional persistence module over $\mathbb{F}$ for all $l \in \mathbb{N}$. Now define
\begin{equation}
M_{n,l}^{\mathbb{F}}(X):= \alpha_{n}^{\mathbb{F}} ( P_{n,l}^{\mathbb{F}}(X)) \in \mathbf{Gr}_n(A_n^{\mathbb{F}}) \notag.
\end{equation}
Note that if $X$ is finite, then \begin{equation}
M_{n,l}^{\mathbb{F}}(X) \in \mathbf{Grf}_n(A_n^{\mathbb{F}}) \notag.
\end{equation}
\end{Defi}

As we can see, every finite $n$-filtered simplicial complex defines a finitely generated $n$-graded $A_n^{\mathbb{F}}$-module. Moreover, for certain fields, objects in $\mathbf{Grf}_n(A_n^{\mathbb{F}})$ can be realized as a finite $n$-filtered simplicial complex:

\begin{Thm}[{{\cite[Thm.~2.11]{harrington2019stratifying}}}]\label{geom rel modules} Assume that $\mathbb{F}=\mathbb{F}_p$ where $p \in \mathbb{N}$ is a prime number or that $\mathbb{F}=\mathbb{Q}$. Let $M\in \mathbf{Grf}_n(A_n^{\mathbb{F}})$. Then for every $l \geq 1$, there exists an $n$-filtered finite simplicial complex $X$ such that $M_{n,l}^{\mathbb{F}}(X)\cong M$.
\end{Thm}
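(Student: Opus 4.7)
The plan is to mimic a finite graded free presentation of $M$ at the simplicial level: each generator of $M$ will be realised as an $l$-sphere entering the filtration at the prescribed degree, and each relation will be realised as a Moore-space-type $(l+1)$-dimensional subcomplex, entering at its prescribed degree, whose attachment kills precisely the prescribed $\mathbb{F}$-linear combination of spheres in $l$-th homology. Verification then reduces to computing cellular homology and observing that the resulting chain complex recovers the chosen presentation of $M$.

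First I would invoke the fact that $A_n^{\mathbb{F}}$ is noetherian (Hilbert's basis theorem) to obtain a finite graded free presentation
\begin{equation}
\bigoplus_{j=1}^m A_n^{\mathbb{F}}(w_j) \xlongrightarrow{\phi} \bigoplus_{i=1}^k A_n^{\mathbb{F}}(v_i) \xlongrightarrow{p} M \longrightarrow 0 \notag
\end{equation}
with homogeneous bases $r_1,\dots,r_m$ and $e_1,\dots,e_k$, and write $\phi(r_j) = \sum_i p_{ij}\, e_i$ with $p_{ij}$ homogeneous of degree $w_j - v_i$, expanded as $p_{ij} = \sum_\alpha \lambda_{ij}^{\alpha}\, x^{u_{ij}^{\alpha}}$ with $\lambda_{ij}^{\alpha} \in \mathbb{F}$. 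I would then fix a minimal simplicial triangulation $S^l$ of the $l$-sphere and, for each index $i$, place a disjoint copy $S_i \cong S^l$ entering the filtration at level $v_i$, i.e.~$S_i \subseteq X_u$ if and only if $v_i \preceq u$.

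For each relation $r_j$ I would attach a simplicial Moore-type $(l+1)$-subcomplex $D_j$, entering at level $w_j$, so that the inclusion of the spheres into the spheres-plus-$D_j$ has the sole effect on $H_l(\,\cdot\,;\mathbb{F})$ of killing the cycle $\sum_{i,\alpha} \lambda_{ij}^{\alpha}\, [S_i]$, while contributing nothing to $H_{l+1}$. The construction of $D_j$ is the heart of the argument and is where the hypothesis on $\mathbb{F}$ is used: for $\mathbb{F} = \mathbb{F}_p$ one uses finite simplicial models of Moore spaces $M(\mathbb{Z}/p, l)$, which exist because $p$ is prime; for $\mathbb{F} = \mathbb{Q}$ one first clears denominators in the coefficients $\lambda_{ij}^{\alpha}$ (which does not change the graded submodule of relations over a field) and then uses appropriately oriented simplicial $(l+1)$-disks with integer multiplicities, exploiting that integer cellular homology tensored with $\mathbb{Q}$ recovers $\mathbb{Q}$-homology. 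Setting $X_u := \bigcup_{v_i \preceq u} S_i \cup \bigcup_{w_j \preceq u} D_j$, with the various attachments arranged so that each $D_j$ meets the sphere system only along the $S_i$ involved in $r_j$, yields a finite $n$-filtered simplicial complex that stabilises once $u$ dominates all the $v_i$ and $w_j$.

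To verify $M_{n,l}^{\mathbb{F}}(X) \cong M$, I would compute $H_l(X_u;\mathbb{F})$ from the cellular chain complex of $X_u$. Only cells in dimensions $l$ and $l+1$ contribute, and the Moore-type construction forces the relevant differential to be exactly the matrix $(p_{ij})$ restricted to those indices with $v_i \preceq u$ and $w_j \preceq u$; this is precisely the graded component $\phi_u : (F_1)_u \to (F_0)_u$ of $\phi$. Hence $H_l(X_u; \mathbb{F}) \cong \mathrm{coker}(\phi_u) = M_u$, and the inclusions $X_u \subseteq X_{u'}$ for $u \preceq u'$ induce exactly the $A_n^{\mathbb{F}}$-action on $M$, so by \cref{8} we obtain $M_{n,l}^{\mathbb{F}}(X) \cong M$. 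The main obstacle is the Moore-space step: realising an arbitrary $\mathbb{F}$-linear combination of $l$-cycles as the only new boundary of a simplicial $(l+1)$-chain, with no spurious $H_{l+1}$, is precisely what restricts $\mathbb{F}$ to $\mathbb{F}_p$ and $\mathbb{Q}$ (for e.g.\ $\mathbb{F} = \mathbb{R}$ no finite simplicial complex can realise irrational multiplicities). Everything else is combinatorial bookkeeping to keep the attachments compatible with the multifiltration and to avoid interference between distinct $D_j$.
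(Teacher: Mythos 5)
The paper does not prove this theorem itself: it is quoted from \cite[Thm.~2.11]{harrington2019stratifying}, which carries out precisely the kind of construction you outline (finite graded free presentation of $M$, spheres placed at the degrees of the generators, relation complexes attached at the degrees of the relations, cellular homology). So your overall plan is the right one.

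However, your description of the $\mathbb{F}=\mathbb{F}_p$ case is wrong in a way that would derail the argument if carried out literally. Attaching an $(l+1)$-cell to an $l$-sphere by a degree-$p$ map produces the Moore space $M(\mathbb{Z}/p,l)$, and by the universal coefficient theorem its $\mathbb{F}_p$-homology is $H_l\cong\mathbb{F}_p$ and $H_{l+1}\cong\mathbb{F}_p$: such a cell does \emph{not} kill the $l$-cycle over $\mathbb{F}_p$, and it introduces spurious $(l+1)$-homology. What the hypothesis on $\mathbb{F}$ actually buys you is the following: every scalar $\lambda\in\mathbb{F}$ lifts to an integer (for $\mathbb{F}_p$ to some $n\in\{0,\dots,p-1\}$, for $\mathbb{Q}$ after clearing denominators), so one can build a contractible $(l+1)$-complex attached to the relevant spheres with integer incidence numbers reducing to the prescribed $\lambda_{ij}$ modulo $p$ (respectively giving the prescribed rationals after division). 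Then the cellular differential $\partial_{l+1}$ over $\mathbb{F}$ is nonzero on the relevant block, kills exactly the intended cycle, and contributes nothing new to $H_{l+1}$. The Moore space $M(\mathbb{Z}/p,l)$ with its degree-$p$ attaching map is instead the correct building block for the $\mathbb{Z}$-coefficient version of the theorem (also in \cite{harrington2019stratifying}), where the graded pieces $M_u$ may have $p$-torsion; you appear to be conflating the two cases. A smaller slip: by homogeneity each $p_{ij}$ is a single monomial $\lambda_{ij}x^{w_j-v_i}$, so your expansion over $\alpha$ is vacuous and can be dropped.
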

In \cite{harrington2019stratifying}, \cref{geom rel modules} is also stated and proven for $\mathbb{Z}$-coefficients. In \cite[Thm.~2]{article} and \cite[Thm.~2]{Carlsson2009}, \cref{geom rel modules} is stated (without giving a proof) for finite persistence modules with $\mathbb{F}=\mathbb{F}_p$.

\subsection{Free hulls}\label{Free hulls and free section}
In this section, we present the theory of free hulls. This section is mainly based on \cite[Sec.~4]{article} and \cite[Sec.~4.4]{Carlsson2009}.
\subsubsection{Free objects}

The next definition introduces the the canonical model for objects in $\mathbf{Grf}_n(\mathbb{F})$ and graded free objects in $\mathbf{Grf}_n(A_n^{\mathbb{F}})$.

\begin{Defi}\label{Defi Free objects} Let $\xi=(V, \mu)$ be a finite $n$-dimensional multiset. 
Then we associate to $\xi$ a finitely generated $n$-graded $\mathbb{F}$-vector space
\begin{equation}
\mathcal{V}_n^{\mathbb{F}}(\xi):= \bigoplus_{v \in V} \mathbb{F}(v)^{\mu(v)} \in \mathbf{Grf}_n(\mathbb{F}) \notag
\end{equation}
and an $n$-graded finitely generated graded free $A_n^{\mathbb{F}}$-module
\begin{equation}
\mathcal{F}_n^{\mathbb{F}}(\xi):= \bigoplus_{v \in V} A_n^{\mathbb{F}}(v)^{\mu(v)} \in \mathbf{Grf}_n(A_n^{\mathbb{F}}) \notag.
\end{equation}

\end{Defi}

Let us give some examples of \cref{Defi Free objects}.

\begin{Exa}\label{Exa free objects}\,
\begin{enumerate}
\item Consider the trivial one-dimensional multiset $\xi=\{(0,1)\}$. Then \begin{equation}{\mathcal{V}_1^{\mathbb{F}}(\xi)=\mathbb{F}(0) =\mathbb{F}} \quad \textrm{and} \quad 
\mathcal{F}_1^{\mathbb{F}}(\xi)=A_1^{\mathbb{F}}(0)=  A_1^{\mathbb{F}} \notag.
\end{equation}
\item Consider the one-dimensional multiset $\xi=\{(1,1),(2,1)\}$. Then  \begin{equation}
\mathcal{V}_1^{\mathbb{F}}(\xi)=\mathbb{F}(1) \oplus \mathbb{F}(2) \quad
\textrm{and}
\quad \mathcal{F}_1^{\mathbb{F}}(\xi)=A_1^{\mathbb{F}}(1) \oplus A_1^{\mathbb{F}}(2) \notag.
\end{equation}
This example is illustrated in Figure \ref{fig:Free module dim 1}. Recall that $A_1^{\mathbb{F}}=\mathbb{F}[x]$. The blue blocks represent the graded parts of $\mathcal{F}_1^{\mathbb{F}}(\xi)$ and $\mathcal{V}_1^{\mathbb{F}}(\xi)$. The blocks with an $\mathbb{F}$ are the locations of the homogeneous basis elements of $\mathcal{F}_1^{\mathbb{F}}(\xi)$ and $\mathcal{V}_1^{\mathbb{F}}(\xi)$. On the horizontal axis we see the number of direct summands. On the vertical $x$-axis we may read off the the degree of the blocks and we see how the generators are shifted. Moreover, the $x$-axis on the left-hand side of the picture indicates that we can shift the degree or, in other words, that we can navigate through the module via multiplication by $x$.
\begin{figure}[h]
\centering
\includegraphics[scale=0.4]{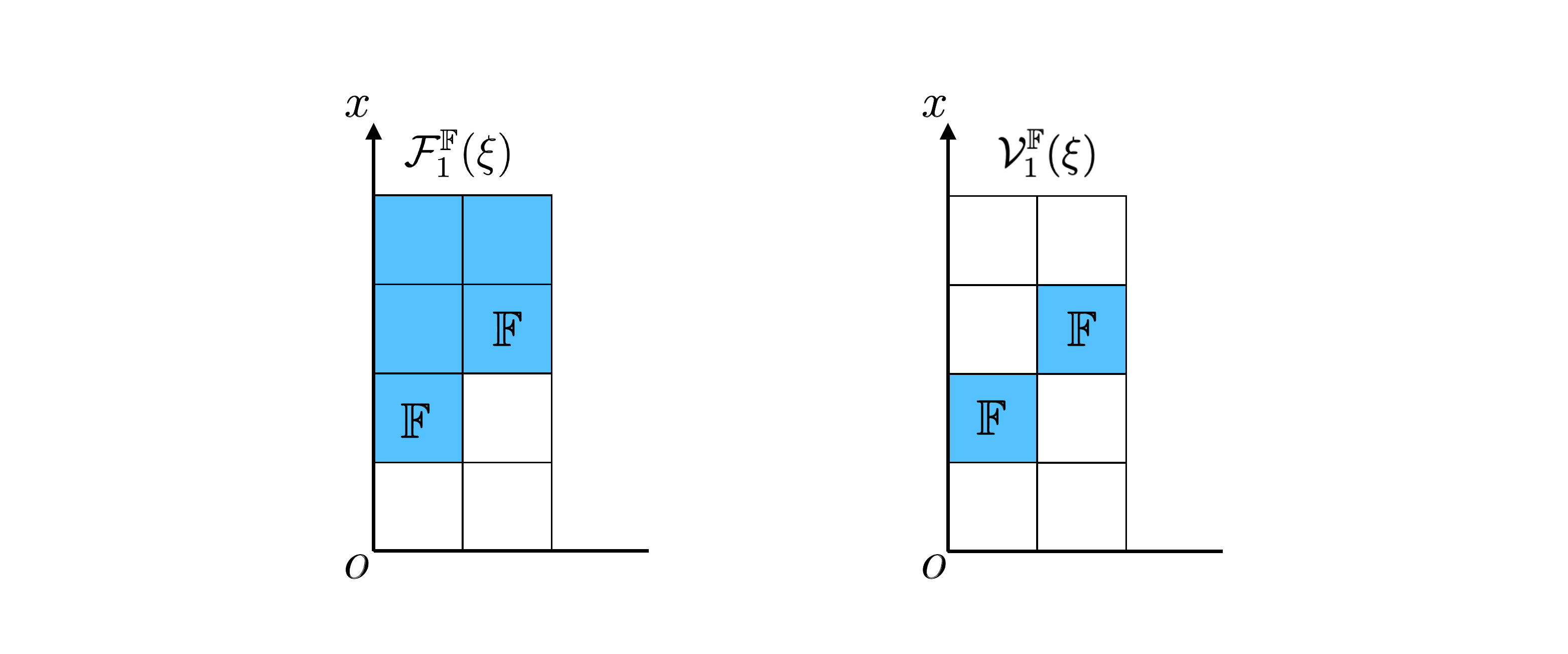} 
\caption{Consider the one-dimensional multiset ${\xi=\{(1,1),(2,1)\}}$. This figure illustrates $\mathcal{F}_1^{\mathbb{F}}(\xi)$ and $\mathcal{V}_1^{\mathbb{F}}(\xi)$ over $\xi$ (see \cref{Exa free objects}).}
\label{fig:Free module dim 1}
\end{figure}
\begin{figure}[h]
\centering
\includegraphics[scale=0.2]{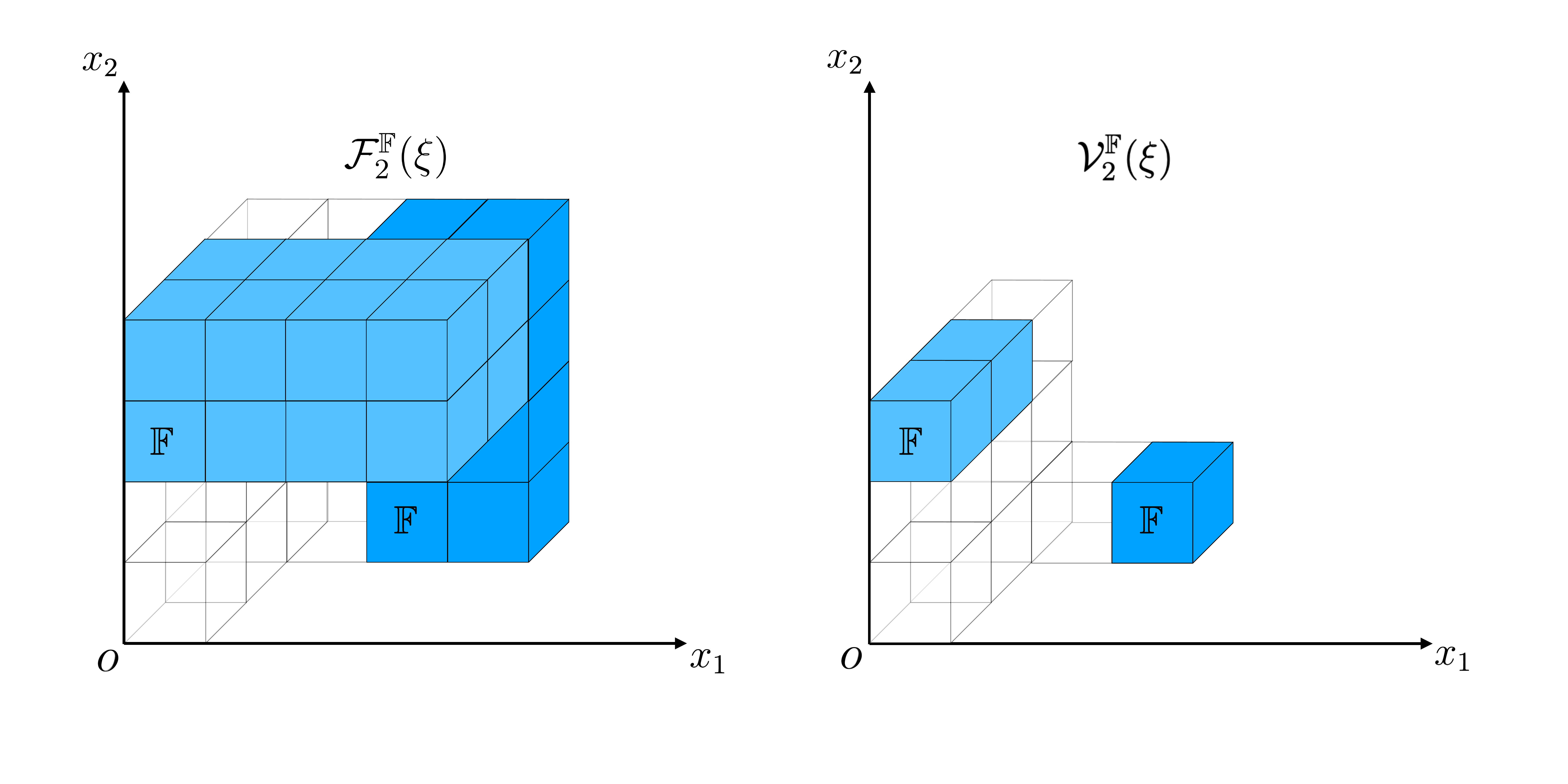} 
\caption{Consider the two-dimensional multiset ${\xi=\{((0,2),1),((0,2),2),((2,0),1)\}}$. This figure illustrates $\mathcal{F}_2^{\mathbb{F}}(\xi)$ and $\mathcal{V}_2^{\mathbb{F}}(\xi)$ over $\xi$ (see \cref{Exa free objects}).}
\label{fig:Freemodule Exa dim 2}
\end{figure}
\newpage
\item Consider the two-dimensional multiset $\xi=\{((0,2),1),((0,2),2),((2,0),1)\}$. Then \begin{equation}
\mathcal{V}_2^{\mathbb{F}}(\xi)=\mathbb{F}(0,2) \oplus \mathbb{F}(0,2) \oplus \mathbb{F}(2,0)\notag
\end{equation}
and
\begin{equation}
\mathcal{F}_2^{\mathbb{F}}(\xi)=A_2^{\mathbb{F}}(0,2) \oplus A_2^{\mathbb{F}}(0,2) \oplus A_2^{\mathbb{F}}(2,0) \notag.
\end{equation}
This example is illustrated in Figure \ref{fig:Freemodule Exa dim 2}. Recall that $A_2^{\mathbb{F}}=\mathbb{F}[x_1,x_2]$. The blue blocks represent the graded parts of $\mathcal{F}_2^{\mathbb{F}}(\xi)$ and $\mathcal{V}_2^{\mathbb{F}}(\xi)$. The blocks with an $\mathbb{F}$ are the locations of the homogeneous basis elements of $\mathcal{F}_2^{\mathbb{F}}(\xi)$ and $\mathcal{V}_2^{\mathbb{F}}(\xi)$. The third dimension parameterizes the number of direct summands. On the $x_1,x_2$-axis we may read off the the degree of the blocks and we see how the generators are shifted. Moreover, the $x_1,x_2$-axes on the left-hand side of the picture indicate that we can shift the degree or, in other words, that we can navigate through the module via multiplication by $x_1,x_2$.
\end{enumerate}
\end{Exa}

As one would expect, we have the following:
\begin{Defiprop}\label{free hull vec}
For any $V \in \mathbf{Grf}_n(\mathbb{F})$, there exists a finite $n$-dimensional multiset $\xi$ and a graded $\mathbb{F}$-vector space isomorphism \begin{equation}
p:  \mathcal{V}_n^{\mathbb{F}}(\xi) \xlongrightarrow{\sim} V \notag.
\end{equation} 
The pair $(\mathcal{V}_n^{\mathbb{F}}(\xi),p)$ is unique in the  following sense: if there exists another pair $(\mathcal{V}_n^{\mathbb{F}}(\xi'),p')$ such that $p':  \mathcal{V}_n^{\mathbb{F}}(\xi') \xlongrightarrow{\sim} V$ is a graded isomorphism, then 
\begin{equation}
p^{-1} \circ p':\mathcal{V}_n^{\mathbb{F}}(\xi') \xlongrightarrow{\sim} \mathcal{V}_n^{\mathbb{F}}(\xi) \notag
\end{equation}
is a graded isomorphism and we have $\xi=\xi'$.
\begin{equation}
\Xi^{\mathbb{F}}_n(V):=\xi \notag
\end{equation}
is called the \textit{type} of $V$.
\end{Defiprop}

\begin{proof} Let $G \subseteq V$ be a finite system of homogenous generators of the $n$-graded $\mathbb{F}$-vector space $V$ (such a set of homogeneous generators exists by \cref{Homog generators}) and choose a basis $B \subseteq G$. Then $B$ is a homogeneous basis of $V$. Now every $b \in B$ is of the form $b=b_v \in V_v$ for some unique $v \in {\mathbb{N}^n}$. This determines a finite $n$-dimensional mulitset $\xi$ and a graded isomorphism $p:\mathcal{V}_n^{\mathbb{F}}(\xi) \to V$. Formally, this is done as follows:

we can subdivide $B=\bigcup_{v\in T} \{b_{1,v}, \dots, b_{d_v,v}\}$ for a suitable subset $T \subseteq {\mathbb{N}^n}$ and suitable $d_v \in \mathbb{N}$ such that $b_{i,v} \in V_v$ for all $i \in \{1, \dots, d_v \}$ and all $v \in T$. Now define ${\xi:=\bigcup_{v \in T}\{(v,1), \dots, (v, d_v) \}}$.
So, $\xi=(T, \mu)$ where \begin{equation}
\mu: T \longrightarrow \mathbb{N}_{\geq 1}, \quad v \longmapsto d_v \notag.
\end{equation}
This leads to a graded isomorphism
\begin{equation}
p: \mathcal{V}_n^{\mathbb{F}}(\xi) \longrightarrow V, \quad e_{i,v} \longmapsto b_{i, v} \notag
\end{equation} 
where $e_{i,v}$ is the $i$-th standard basis vector in $\mathbb{F}(v)^{\mu(v)}$.

 If we have a second finite $n$-dimensional multiset $\xi'$ together with a graded isomorphism $p': \mathcal{V}_n^{\mathbb{F}}(\xi') \to V$, then $p \circ p': \mathcal{V}_n^{\mathbb{F}}(\xi') \to \mathcal{V}_n^{\mathbb{F}}(\xi)$ is a graded isomorphism. So, we conclude that $\xi=\xi'$.
\end{proof}
\begin{Rem}\label{Rank well definde} Let $R$ be a commutative ring and $F$ be a finitely generated free $R$-module. Then any basis of $F$ has finite length and any two bases are of the same length. So, $\mathrm{rank}(F)$ (the rank of $F$) is well-defined and finite.
\end{Rem}

\begin{Defiprop}\label{graded free isom multiset}
For any graded free $F \in \mathbf{Grf}_n(A_n^{\mathbb{F}})$, there exists an $n$-dimensional multiset $\xi$ and a graded $A_n^{\mathbb{F}}$-module isomorphism \begin{equation}
p:  \mathcal{F}_n^{\mathbb{F}}(\xi) \xlongrightarrow{\sim} F \notag.
\end{equation} 
The pair $(\mathcal{F}_n^{\mathbb{F}}(\xi),p)$ is unique in the  following sense: if there exists another pair $(\mathcal{F}_n^{\mathbb{F}}(\xi'),p')$ such that $p':  \mathcal{F}_n^{\mathbb{F}}(\xi') \xlongrightarrow{\sim} F$ is a graded isomorphism, then 
\begin{equation}
p^{-1} \circ p':\mathcal{F}_n^{\mathbb{F}}(\xi') \xlongrightarrow{\sim} \mathcal{F}_n^{\mathbb{F}}(\xi) \notag
\end{equation}
is a graded isomorphism and we have $\xi=\xi'$.
\begin{equation}
\Xi^{\mathbb{F}}_n(F):=\xi \notag
\end{equation}
is called the \textit{type} of $F$.
\end{Defiprop}
\begin{proof} Since $A_n^{\mathbb{F}}$ is commutative and $F$ is finitely generated graded free, $F$ admits a finite homogeneous $A_n^{\mathbb{F}}$-basis $B\subseteq F$. Now the argument is analogous as in the proof of \cref{free hull vec}: Every $b \in B$ is of the form $b=b_v \in F_v$ for some unique $v \in {\mathbb{N}^n}$. This determines a finite $n$-dimensional mulitset $\xi$ and a graded isomorphism $p:\mathcal{F}_n^{\mathbb{F}}(\xi) \to F$. Formally, this is done as follows:

we can subdivide $B=\bigcup_{v\in T} \{b_{1,v}, \dots, b_{d_v,v}\}$ for a suitable subset $T \subseteq {\mathbb{N}^n}$ and suitable $d_v \in \mathbb{N}$ such that $b_{i,v} \in F_v$ for all $i \in \{1, \dots, d_v \}$ and all $v \in T$. Now define ${\xi:=\bigcup_{v \in T}\{(v,1), \dots, (v, d_v) \}}$. So, $\xi=(T, \mu)$ where \begin{equation}
\mu: T \longrightarrow  \mathbb{N}_{\geq 1}, \quad v \longmapsto d_v \notag.
\end{equation}
This leads to an isomorphism of $n$-graded $A_n^{\mathbb{F}}$-modules
\begin{equation}
p: \mathcal{F}_n^{\mathbb{F}}(\xi) \longrightarrow F, \quad e_{i,v} \longmapsto b_{i, v} \notag
\end{equation} 
where $e_{i,v}$ is the $i$-th standard basis vector in $A_n^{\mathbb{F}}(v)^{\mu(v)}$. If we have a second finite $n$-dimensional multiset $\xi'$ together with a graded isomorphism $p': \mathcal{F}_n^{\mathbb{F}}(\xi') \to F$, then $p \circ p': \mathcal{F}_n^{\mathbb{F}}(\xi') \to \mathcal{F}_n^{\mathbb{F}}(\xi)$ is a graded isomorphism. So, we conclude that $\xi=\xi'$.
\end{proof}

\begin{figure}[h!]
\captionsetup{width=1\textwidth}
\centering
\includegraphics[scale=0.4]{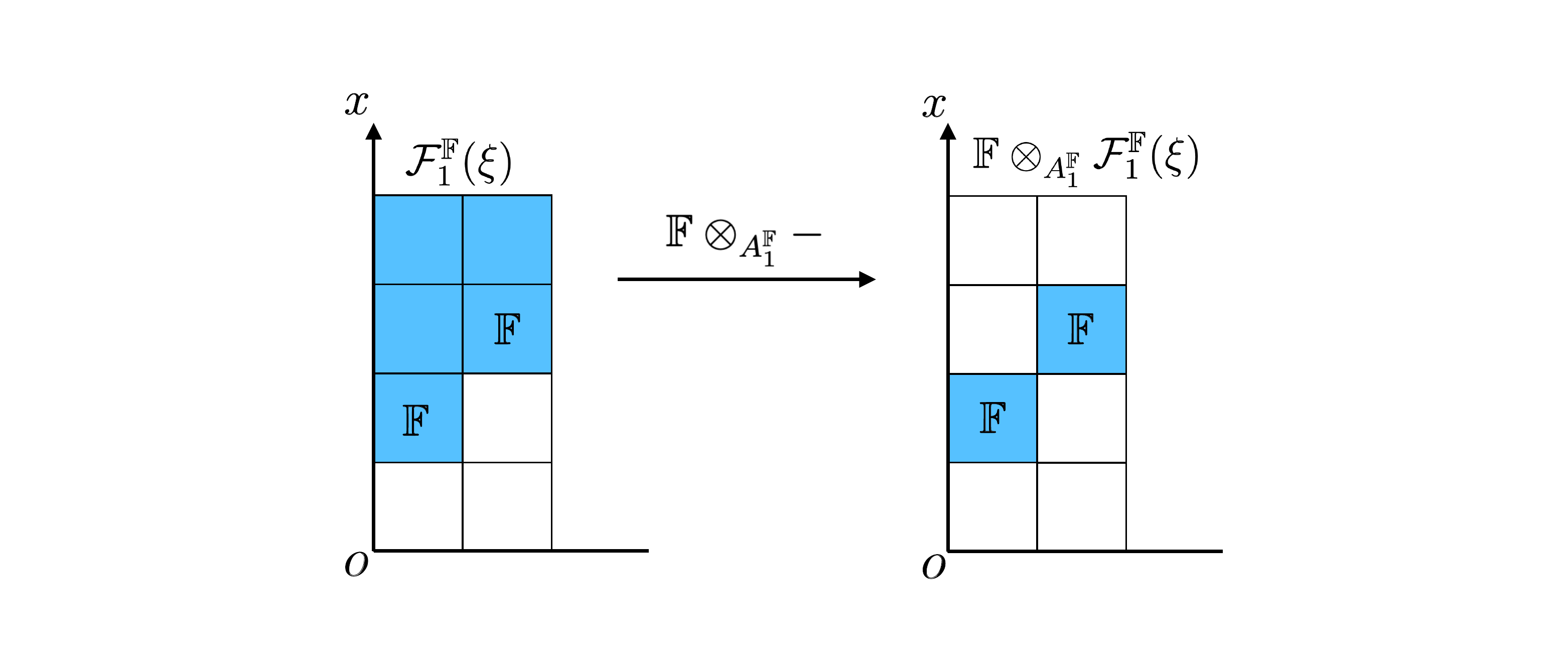} 
\caption{Illustration of the tensor-operation on $\mathcal{F}_1^{\mathbb{F}}(\xi)$ over the one-dimensional multiset $\xi=\{(1,1),(2,1)\}$ (continued from \cref{Exa free objects} and Figure \ref{fig:Free module dim 1}).}
\label{fig:Tensor operation}
\end{figure}

\subsubsection{The tensor-operation}

There is a natural way to map objects in $\mathbf{Grf}_n(A_n^{\mathbb{F}})$ to objects in $\mathbf{Grf}_n(\mathbb{F})$ by means of the so-called \textit{tensor-operation}. This tensor-operation will be necessary for the definition of \textit{free hulls} (see \cref{15}). Let $M \in \mathbf{Grf}_n(A_n^{\mathbb{F}})$.
Then \begin{equation}
\mathbb{F} \otimes_{A_n^{\mathbb{F}}} M \in \mathbf{Grf}_n(\mathbb{F}) \notag
\end{equation}
where for $u \in {\mathbb{N}^n}$, the $n$-grading is given by \begin{equation}\left (\mathbb{F} \otimes_{A_n^{\mathbb{F}}} M \right)_u := \left\{ \sum_{i=1}^l a_i \otimes m_i \, \bigg \vert \,   a_i \in \mathbb{F}, \, m_i \in M_u \right \} \notag
\end{equation}
(see also \cite[p.~153]{miller2004combinatorial}). Now
\begin{equation}
\mathbb{F} \otimes_{A_n^{\mathbb{F}}} -: \mathbf{Grf}_n(A_n^{\mathbb{F}}) \longrightarrow \mathbf{Grf}_n(\mathbb{F}) \notag
\end{equation}
is a right exact functor. We have a canonical isomorphism of finitely generated $n$-graded $\mathbb{F}$-vector spaces
\begin{equation}\label{can iso}
\mathbb{F} \otimes_{A_n^{\mathbb{F}}} M \xlongrightarrow{\sim} M/\mathfrak{m}_n^{\mathbb{F}} M, \quad a \otimes m \longmapsto a \cdot \pi(m)
\end{equation}
where $\pi: M \to M/\mathfrak{m}_n^{\mathbb{F}}M$ denotes the canonical projection. Note that $\pi$ is graded. We have \begin{equation}
\mathcal{V}_n^{\mathbb{F}}(\xi)=\mathcal{F}_n^{\mathbb{F}}(\xi)/\mathfrak{m}_n^{\mathbb{F}} \mathcal{F}_n^{\mathbb{F}}(\xi) \notag
\end{equation}
for any finite $n$-dimensional multiset $\xi$. So, (\ref{can iso}) yields an isomorphism of $n$-graded $\mathbb{F}$-vector spaces
\begin{equation}
\mathbb{F} \otimes_{A_n^{\mathbb{F}}} \mathcal{F}_n^{\mathbb{F}}(\xi) \xlongrightarrow{\sim} \mathcal{V}_n^{\mathbb{F}}(\xi) \notag.
\end{equation}
The tensor-operation is illustrated in Figures \ref{fig:Tensor operation} and \ref{fig:tensorop2}.

\begin{figure}[h!]
\captionsetup{width=1\textwidth}
\centering
\includegraphics[scale=0.2]{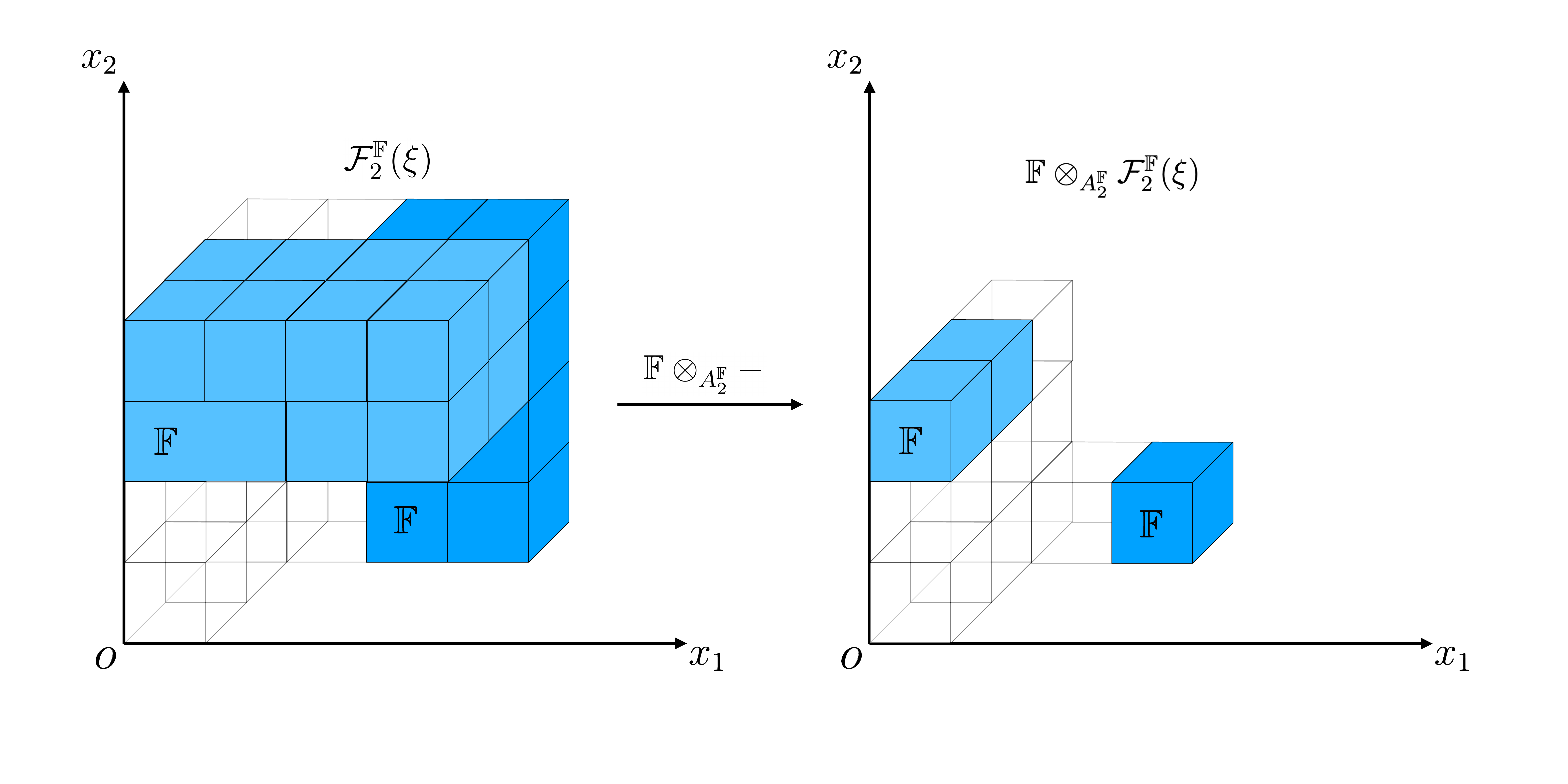} 
\caption{Illustration of the tensor-operation on $\mathcal{F}_2^{\mathbb{F}}(\xi)$ over the two-dimensional multiset $\xi=\{((0,2),1),((0,2),2),((2,0),1)\}$ (continued from \cref{Exa free objects} and Figure \ref{fig:Freemodule Exa dim 2}).}
\label{fig:tensorop2}
\end{figure}

\subsubsection{The main theorem on free hulls}
Using the tensor operation, we can now define \textit{free hulls}:
\begin{Defi}[{{\cite[Def.~4]{Carlsson2009}, \cite[Sec.~4.2]{article}, Free hull}}]\label{15} A \textit{free hull} of ${M\in \mathbf{Grf}_n(A_n^{\mathbb{F}})}$ is a pair $(F, p)$, where $p$ is a surjective graded $A_n^{\mathbb{F}}$-module homomorphism $p:F \to M$ and $F\in \mathbf{Grf}_n(A_n^{\mathbb{F}})$ is graded free such that
\begin{align}
\mathrm{id}_\mathbb{F} \otimes_{A_n^{\mathbb{F}}} p: \mathbb{F} \otimes_{A_n^{\mathbb{F}}} F \xlongrightarrow{\sim} \mathbb{F} \otimes_{A_n^{\mathbb{F}}} M \notag
\end{align}
is an isomorphism of $n$-graded $\mathbb{F}$-vector spaces.
\end{Defi}

The main goal for the rest of \cref{Free hulls and free section} is to prove the following theorem, the main theorem on free hulls, which states that free hulls exsist and that they are unique up to isomorphism.

\begin{Thm}[{{\cite[Thm.~7]{Carlsson2009}, \cite[Sec.~4.2]{article}}}]\label{Free hull thm} Every $M \in \mathbf{Grf}_n(A_n^{\mathbb{F}})$ admits a free hull
\begin{equation}
p:\mathcal{F}^{\mathbb{F}}_n(\xi) \longrightarrow M \notag
\end{equation}
for a finite $n$-dimensional multiset $\xi$. If $M$ is graded free, then $p$ is a graded isomorphism. Free hulls are unique in the following sense: if
$p:F \to M$ and $p':F' \to M$ are free hulls of $M$, then there exists a graded isomorphism $\psi: F \xlongrightarrow{\sim} F'$ such  that the  diagram
\begin{equation}
\begin{tikzcd} [row sep=1.5cm, column sep = 1.5cm]
F \arrow[rd,"p"'] \arrow[rr, "\psi", "\sim"'] && F' \arrow[ld, "p' "]   \\
 &M&
\end{tikzcd} \notag
\end{equation}
commutes. In particular, if $F=\mathcal{F}^{\mathbb{F}}_n(\xi)$ and $F'=\mathcal{F}^{\mathbb{F}}_n(\xi')$ for two finite $n$-dimensional multisets  $\xi$ and $\xi'$, then  $\xi=\xi'$.
\end{Thm}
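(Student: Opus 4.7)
The plan is to split the proof into three parts: existence of a free hull of the stated form $\mathcal{F}^{\mathbb{F}}_n(\xi)$, uniqueness up to graded isomorphism, and the two special consequences (the graded-free case and the equality of multisets).

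For existence, I would begin by invoking \cref{Homog generators} to extract a finite minimal system of homogeneous generators $\{m_1, \dots, m_r\}$ of $M$, with $m_j$ of degree $v_j \in \mathbb{N}^n$. Collecting these degrees with multiplicities produces a finite $n$-dimensional multiset $\xi$; the free module $\mathcal{F}^{\mathbb{F}}_n(\xi)$ then comes with a canonical homogeneous basis $\{e_j\}_{j=1,\dots,r}$ whose degrees match the $v_j$, and the assignment $e_j \mapsto m_j$ extends uniquely to a graded, surjective $A_n^{\mathbb{F}}$-module homomorphism $p : \mathcal{F}^{\mathbb{F}}_n(\xi) \to M$. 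That $p$ is a free hull is then exactly what the criterion \cref{equiv char free hulls} (a consequence of the $n$-graded Nakayama lemma \cref{graded Naka}) asserts: homogeneous bases of a graded free module map under a free hull to minimal homogeneous generating sets, and ours was chosen to be minimal by construction.

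For uniqueness, suppose two free hulls $p : F \to M$ and $p' : F' \to M$ are given. Since $F$ is graded free, I can choose a homogeneous basis $\{e_i\}$ of $F$ with $e_i \in F_{v_i}$, and because $p'$ is a graded surjection it restricts to a surjection $F'_{v_i} \twoheadrightarrow M_{v_i}$ in each degree, so each $p(e_i) \in M_{v_i}$ admits a homogeneous preimage $f_i \in F'_{v_i}$. Extending $A_n^{\mathbb{F}}$-linearly yields a graded morphism $\psi : F \to F'$ with $p' \circ \psi = p$. Applying the right-exact functor $\mathbb{F} \otimes_{A_n^{\mathbb{F}}} -$ and using that $\mathrm{id}_{\mathbb{F}} \otimes p$ and $\mathrm{id}_{\mathbb{F}} \otimes p'$ are isomorphisms by definition of a free hull, a two-out-of-three argument forces $\mathrm{id}_{\mathbb{F}} \otimes \psi$ to be a graded isomorphism as well.

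Promoting this to $\psi$ itself being a graded isomorphism is the step where I expect the main care to be required. Graded Nakayama converts the surjectivity of $\mathrm{id}_{\mathbb{F}} \otimes \psi$ into the surjectivity of $\psi$, because the finitely generated module $F'/\mathrm{im}(\psi)$ then satisfies $F'/\mathrm{im}(\psi) = \mathfrak{m}^{\mathbb{F}}_n \cdot (F'/\mathrm{im}(\psi))$. For injectivity, I use that $F'$ is graded free: lifting a homogeneous basis of $F'$ through the surjection $\psi$ in matching degrees produces a graded section, so $F \cong \ker(\psi) \oplus F'$. Tensoring with $\mathbb{F}$ and comparing against the established isomorphism $\mathrm{id}_{\mathbb{F}} \otimes \psi$ forces $\mathbb{F} \otimes_{A_n^{\mathbb{F}}} \ker(\psi) = 0$; since $\ker(\psi)$ is a direct summand of the Noetherian module $F$ it is finitely generated, and a second application of graded Nakayama yields $\ker(\psi) = 0$. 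The graded-free special case now drops out: if $M$ is itself graded free, then $\mathrm{id}_M$ is a free hull of $M$, so by uniqueness any other free hull must be a graded isomorphism onto $M$. Finally, when both free hulls have the prescribed shape $\mathcal{F}^{\mathbb{F}}_n(\xi)$ and $\mathcal{F}^{\mathbb{F}}_n(\xi')$, combining the graded isomorphism $\psi$ with \cref{graded free isom multiset} forces $\xi = \xi'$.
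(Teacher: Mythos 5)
Your proposal is correct, and the existence part and the two corollaries (the graded-free special case and $\xi=\xi'$) match the paper's argument essentially verbatim. The uniqueness part, however, takes a genuinely different route in the step that promotes ``$\mathrm{id}_{\mathbb{F}}\otimes_{A_n^{\mathbb{F}}}\psi$ is an isomorphism'' to ``$\psi$ is an isomorphism.'' The paper delegates this to \cref{reflect isos}, which is a counting argument: a homogeneous basis $B$ of $F$ is sent by $\psi$ to a set of generators of $F'$ of the same cardinality, which Nakayama forces to be minimal, and then \cref{minimal free graded basis} (which itself rests on \cref{Surj Noeth}, the fact that a surjective endomorphism of a Noetherian module is an automorphism) upgrades that minimal generating set to a basis. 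You instead run a splitting argument: surjectivity of $\psi$ follows from graded Nakayama applied to the finitely generated graded module $F'/\mathrm{im}(\psi)$, and then graded freeness of $F'$ gives a graded section $s\colon F'\to F$, so that $F\cong\ker(\psi)\oplus F'$; tensoring this decomposition with $\mathbb{F}$ and comparing with the known isomorphism $\mathrm{id}_{\mathbb{F}}\otimes\psi$ forces $\mathbb{F}\otimes_{A_n^{\mathbb{F}}}\ker(\psi)=0$, and a second Nakayama gives $\ker(\psi)=0$. Your route is somewhat leaner in its dependency chain, bypassing both \cref{minimal free graded basis} and \cref{Surj Noeth}; the paper's route has the advantage of isolating \cref{reflect isos} as a reusable statement and of running structurally parallel to its proof of \cref{equiv char free hulls}. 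Both are sound.
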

In comparison to \cite[Thm.~7]{Carlsson2009} and \cite[Sec.~4.2]{article}, we just have added a few technical details to the formulation of \cref{Free hull thm}. In \cite[Thm.~7]{Carlsson2009} and \cite[Sec.~4.2]{article}, the authors give a sketch of the proof of \cref{Free hull thm}.
The proof of \cref{Free hull thm} requires some preparations and will be elaborated throughout Sections \ref{n-graded naka sec}-\ref{uniqueness sec}. The key to the proof is an $n$-graded version of \textit{Nakayama's lemma}.

Note that in \cref{Free hull thm}, the map $\psi$ is generally not unique. This is demonstrated by the next example.

\begin{Exa}\label{Exa tensoring dangerous} Recall that $A_1^{\mathbb{F}}=\mathbb{F}[x]$. Consider $F:=\mathbb{F}[x](1) \oplus \mathbb{F}[x](2)$. Then $F$ is graded free and $B:=\{(1,0),(x,1)\}$ and $B':=\{(1,0),(0,1)\}$ are homogeneous $\mathbb{F}[x]$-bases of $F$. Now 
\begin{align}
f:F \xlongrightarrow{\sim} F, & \quad (1,0) \longmapsto (1,0) \notag \\
& \quad (0,1) \longmapsto (0,1)  \notag \\
f':F \xlongrightarrow{\sim} F, & \quad (1,0) \longmapsto (1,0) \notag \\
& \quad (0,1) \longmapsto (x,1) \notag
\end{align}
are graded isomorphisms with $f \neq f'$ and $\mathrm{id}_{\mathbb{F}} \otimes_{\mathbb{F}[x]} f = \mathrm{id}_{\mathbb{F}}  \otimes_{\mathbb{F}[x]} f'$. This is due to the fact that for all $a \in \mathbb{F}$,
\begin{align}
a \otimes (x,1) & =a \otimes (x,0) +a \otimes (0,1)=a \otimes x \cdot  (1,0)  +a \otimes (0,1) \notag \\
& = a \cdot  x \otimes (1,0)+a\otimes (0,1) = 0 \otimes (1,0) +a \otimes (0,1)=a \otimes (0,1) \notag,
\end{align}
where $a\cdot x=x\cdot a=0$ for all $a \in \mathbb{F}$ by definition of the $\mathbb{F}[x]$-module structure on $\mathbb{F}$.
\end{Exa}

Before we proceed with the elaboration of the proof of \cref{Free hull thm}, we give an example which illustrates the concept of free hulls in two dimensions.
\newpage
\begin{Exa}\label{free hulls exa dim 2} This example is illustrated in Figure \ref{fig:Tensoroperation bigraded}. Recall that $A_2^{\mathbb{F}}=\mathbb{F}[x_1, x_2]$. Consider the graded submodule
\begin{align}
M&:=\braket{x_1^3, x_1^2x_2,x_1x_2^2,x_2^3 }_{A^{\mathbb{F}}_2}\subseteq A^{\mathbb{F}}_2 \notag.
\end{align}
Now
\begin{equation}
G:=\{x_1^3, x_1^2x_2,x_1x_2^2,x_2^3\} \subseteq M \notag
\end{equation}
is a minimal set of homogeneous generators of $M$. We have 
\begin{align}
\mathrm{deg}(x_1^3)=(3,0),\quad \mathrm{deg}(x_1^2x_2)=(2,1), \quad
\mathrm{deg}( x_1x_2^2)=(1,2), \quad 
\mathrm{deg}(x_2^3)=(0,3) \notag.
\end{align}
$M$ is not graded free (in \cref{equiv char free hulls}, we will see that for ${M\in \mathbf{Grf}_n(A_n^{\mathbb{F}})}$ graded free is equivalent to free). If $M$ were graded free, then any minimal set of homogeneous generators would be a basis (see \cref{minimal free graded basis}). But consider for example
\begin{equation}
x_1 \cdot x_1 x_2^2=x_1^2 x_2^2= x_2 \cdot x_1^2 x_2 \notag.
\end{equation}
As we can see, $x_1^2 x_2^2$ is not a unique $A_2^{\mathbb{F}}$-linear combination of elements in $G$. Hence, $G$ is not a basis of $M$ and since $G$ is minimal, $M$ is therefore not graded free. The fact that  $x_1^2x_2^2$ can be obtained by shifting different generators is illustrated in Figure \ref{fig:Tensoroperation bigraded} by different shades of the colour blue. These different shades indicate that we may have an overlap if we shift different generators.
\begin{figure}[h]
\centering
\includegraphics[scale=0.25]{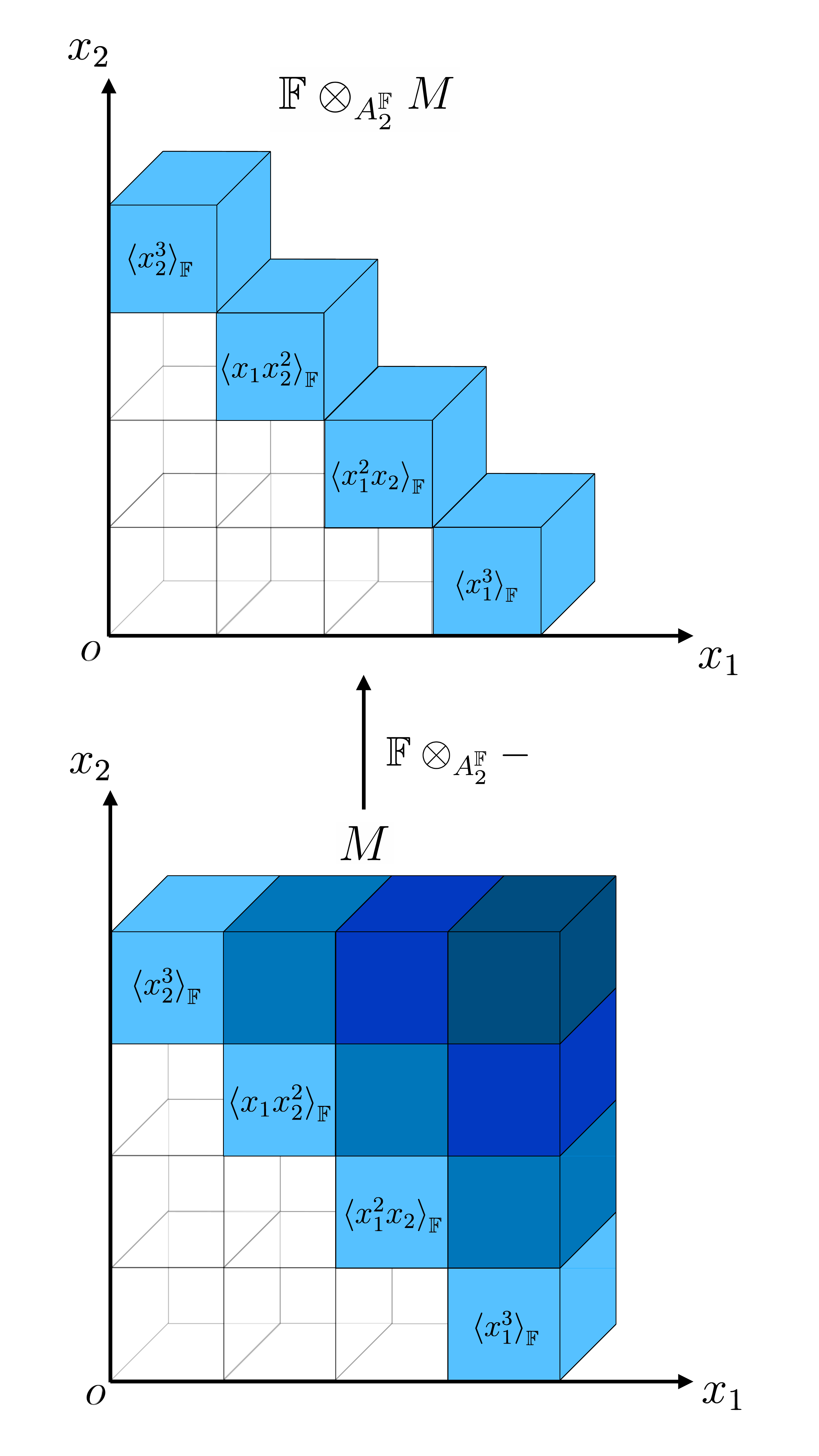} 
\caption{Illustration of \cref{free hulls exa dim 2}.}
\label{fig:Tensoroperation bigraded}
\end{figure}

Now consider the two-dimensional multiset
\begin{align}
\xi=(V, \mu)=\{((3,0),1),((2,1),1),((1,2),1),((0,3),1)\} \notag
\end{align}
where $V=\{(3,0),(2,1),(1,2),(0,3)\} \subseteq \mathbb{N}^2$ and where the mulitplicity function $\mu$ is given by ${\mu(3,0)=\mu(2,1)=\mu(1,2)=\mu(0,3)=1}$. 
We have 
\begin{align}
\mathcal{F}_2^{\mathbb{F}}(\xi)=A_2^{\mathbb{F}}(3,0) \oplus A_2^{\mathbb{F}}(2,1) \oplus A_2^{\mathbb{F}}(1,2) \oplus  A_2^{\mathbb{F}}(0,3) \notag
\end{align}
and
\begin{align}
p:\mathcal{F}_2^{\mathbb{F}}(\xi)& \longrightarrow M, \notag \\
(1,0,0,0) &\longmapsto x_1^3, \notag \\
(0,1,0,0) &\longmapsto x_1^2 x_2, \notag \\
(0,0,1,0) &\longmapsto x_1 x_2^2, \notag \\
(0,0,0,1) &\longmapsto x_2^3 \notag
\end{align}
defines a surjective graded $A_2^{\mathbb{F}}$-module homomorphism. We claim that $(\mathcal{F}_2^{\mathbb{F}}(\xi),p)$ is a free hull of $M$: 

since $p$ is surjective and tensoring is right exact,  
\begin{equation}
\mathrm{id}_{\mathbb{F}} \otimes_{A_2^{\mathbb{F}}} p: \mathbb{F} \otimes_{A_2^{\mathbb{F}}}\mathcal{F}_2^{\mathbb{F}}(\xi) \longrightarrow \mathbb{F} \otimes_{A_2^{\mathbb{F}}} M \notag
\end{equation}
is a surjective graded $\mathbb{F}$-vector space homomorphism. If we want to show that $(\mathcal{F}_2^{\mathbb{F}}(\xi),p)$ is a free hull of $M$ we have to show that $\mathrm{id}_{\mathbb{F}} \otimes_{A_2^{\mathbb{F}}} p$ is an isomorphism. Since $\mathrm{id}_{\mathbb{F}} \otimes_{A_2^{\mathbb{F}}} p$ is graded and surjective, $\mathrm{id}_{\mathbb{F}} \otimes_{A_2^{\mathbb{F}}} p$ induces surjective $\mathbb{F}$-vector space homomorphisms
\begin{equation}
(\mathrm{id}_{\mathbb{F}} \otimes_{A_2^{\mathbb{F}}} p)_v: \left(\mathbb{F} \otimes_{A_2^{\mathbb{F}}}\mathcal{F}_2^{\mathbb{F}}(\xi) \right)_v \longrightarrow \left(\mathbb{F} \otimes_{A_2^{\mathbb{F}}} M \right)_v \notag
\end{equation}  
for all $v \in \mathbb{N}^2$. So, it suffices to show that $(\mathrm{id}_{\mathbb{F}} \otimes_{A_2^{\mathbb{F}}} p)_v$ is an isomorphism for all $v \in \mathbb{N}^2$. For this, it suffices to show that 
\begin{equation}
\mathrm{dim}_{\mathbb{F}}\left( \left(\mathbb{F} \otimes_{A_2^{\mathbb{F}}}\mathcal{F}_2^{\mathbb{F}}(\xi) \right)_v \right) =\mathrm{dim}_{\mathbb{F}}\left(\left(\mathbb{F} \otimes_{A_2^{\mathbb{F}}} M \right)_v \right) \notag
\end{equation}  
for all $v \in \mathbb{N}^2$.

Since $\mathbb{F} \otimes_{A_2^{\mathbb{F}}}\mathcal{F}_2^{\mathbb{F}}(\xi) \cong \mathcal{V}_2^{\mathbb{F}}(\xi)$ as bigraded $\mathbb{F}$-vector spaces and since $(\mathrm{id}_{\mathbb{F}} \otimes_{A_2^{\mathbb{F}}} p)_v$ is surjective, we obtain 
\begin{equation}\label{important equation}
0 \leq \mathrm{dim}_{\mathbb{F}} \left( \left(\mathbb{F} \otimes_{A_2^{\mathbb{F}}} M \right)_v \right) \leq \mathrm{dim}_{\mathbb{F}}\left( \left(\mathbb{F} \otimes_{A_2^{\mathbb{F}}}\mathcal{F}_2^{\mathbb{F}}(\xi) \right)_v \right)= \mathrm{dim}_{\mathbb{F}} \left( \mathcal{V}_2^{\mathbb{F}}(\xi)_v \right)
\end{equation}
for all $v \in \mathbb{N}^2$. Since $\mathrm{dim}_{\mathbb{F}} \left( \mathcal{V}_2^{\mathbb{F}}(\xi)_v \right)=0$ for all $v \in \mathbb{N}^2 \setminus V$, (\ref{important equation}) shows that 
\begin{equation}
\mathrm{dim}_{\mathbb{F}} \left( \left(\mathbb{F} \otimes_{A_2^{\mathbb{F}}} M \right)_v \right)=\mathrm{dim}_{\mathbb{F}}\left( \left(\mathbb{F} \otimes_{A_2^{\mathbb{F}}}\mathcal{F}_2^{\mathbb{F}}(\xi) \right)_v \right)  = \mathrm{dim}_{\mathbb{F}} \left( \mathcal{V}_2^{\mathbb{F}}(\xi)_v \right)=0 \notag
\end{equation}
for all $v \in \mathbb{N}^2 \setminus V$. 

Let $v \in V$. We have \begin{equation}\mathrm{dim}_{\mathbb{F}}\left( \left(\mathbb{F} \otimes_{A_2^{\mathbb{F}}}\mathcal{F}_2^{\mathbb{F}}(\xi) \right)_v \right) =\mathrm{dim}_{\mathbb{F}} \left( \mathcal{V}_2^{\mathbb{F}}(\xi)_v \right)=1 \notag.
\end{equation}
So, we have to show that 
\begin{equation}
\mathrm{dim}_{\mathbb{F}} \left( \left(\mathbb{F} \otimes_{A_2^{\mathbb{F}}} M \right)_v \right)=1 \notag.
\end{equation}
(\ref{important equation}) implies that 
\begin{equation}
0 \leq \mathrm{dim}_{\mathbb{F}} \left( \left(\mathbb{F} \otimes_{A_2^{\mathbb{F}}} M \right)_v \right)  \leq 1 \notag.
\end{equation}
So, it suffices to show that $\left(\mathbb{F} \otimes_{A_2^{\mathbb{F}}} M \right)_v \neq 0$. We have $M_v= \braket{x^v}_{\mathbb{F}}$. Consider \begin{equation}
1 \otimes x^{v} \in \left(\mathbb{F} \otimes_{A_2^{\mathbb{F}}} M \right)_v \notag.
\end{equation}
Then $1 \otimes x^v \neq 0$. This is due to the fact that under the canonical graded isomorphism 
\begin{equation}
\mathbb{F} \otimes_{A_2^{\mathbb{F}}} M \xlongrightarrow{\sim} M/\mathfrak{m}_2^{\mathbb{F}} M, \quad a \otimes m \longmapsto a \cdot \pi(m) \notag
\end{equation}
the image of $1 \otimes x^v$ in $M/\mathfrak{m}_2^{\mathbb{F}} M$ is not zero.
\end{Exa}

As \cref{free hulls exa dim 2} indicates, a free hull of $M \in \mathbf{Grf}_n(A_n^{\mathbb{F}})$ is given as follows:
choose a minimal set of homogeneous generators $G\subseteq M$. This determines a finite $n$-dimensional multiset $\xi$ and a surjective graded morphism $p:\mathcal{F}_n^{\mathbb{F}}(\xi) \to M$. The multiset $\xi$ tells us in which degrees the generators in $G$ are located. Throughout Sections \ref{n-graded naka sec}-\ref{uniqueness sec}, we will see that this approach is the right one. We close this section with the next definition. Since free hulls are unique up to isomorphism by \cref{Free hull thm}, the next definition makes sense.

\begin{Defi}[Type] Let $M \in \mathbf{Grf}_n(A_n^{\mathbb{F}})$ and let $p: \mathcal{F}_n^{\mathbb{F}}(\xi)\to M$ be a free hull of $M$. Then 
\begin{equation}
\Xi_n^{\mathbb{F}}(M):=\xi \notag
\end{equation}
is called the \textit{type of} $M$. Note that 
\begin{equation}
\Xi_n^{\mathbb{F}}(M)=\Xi_n^{\mathbb{F}}(\mathbb{F} \otimes_{A_n^{\mathbb{F}}} M) \notag.
\end{equation}
\end{Defi}

\subsubsection{The \textit{n}-graded Nakayama Lemma}\label{n-graded naka sec}

The following $n$-graded version of \textit{Nakayama's Lemma} can be found in \cite[Ex.~7.8]{miller2004combinatorial}. Note that in \cite[Ex.~7.8]{miller2004combinatorial}, the setting is more general.
\begin{Lem}[{{\cite[Ex.~7.8]{miller2004combinatorial}, Nakayama}}]\label{graded Naka} Let $M \in \mathbf{Grf}_n(A_n^{\mathbb{F}})$, $m_1, \dots,m_d \in M$ be homogeneous elements and denote by $\pi: M \to M/\mathfrak{m}_n^{\mathbb{F}} M$ the canonical projection. Note that $\pi$ is graded. \begin{enumerate}
\item $m_1, \dots,m_d $ generate $M$ if and only if $\pi(m_1), \dots,\pi(m_d) $ generate the finitely generated $n$-graded $\mathbb{F}$-vector space $M/ \mathfrak{m}_n^{\mathbb{F}} M$. 
\item $\{m_1, \dots,m_d\} $ is a minimal generating set of $M$ if and only if $\{\pi(m_1), \dots,\pi(m_d) \}$ is an $\mathbb{F}$-basis of $M/ \mathfrak{m}_n^{\mathbb{F}} M$.
\item Let $G \subseteq M$ be a minimal set of homogeneous generators (such a $G$ always exists and is finite by \cref{Homog generators}). If $C \subseteq M$ is another minimal set of homogeneous generators, then $\abs{C}=\abs{G}$.
\end{enumerate}
\end{Lem}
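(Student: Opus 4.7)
The three parts cascade: (2) follows from (1) by a short bookkeeping argument, and (3) is an immediate counting consequence of (2). So the essential content is part (1), and within (1) only the direction ``generate mod $\mathfrak{m}_n^{\mathbb{F}}\Rightarrow$ generate'' is nontrivial; this is where I would invest the work.

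For part (1), the forward direction is immediate because $\pi$ is a surjective graded map, so the image of a generating set generates the image. For the converse, set $N:=\langle m_1,\dots,m_d\rangle\subseteq M$ and $Q:=M/N$. The hypothesis rephrases as $N+\mathfrak{m}_n^{\mathbb{F}}M=M$, equivalently $\mathfrak{m}_n^{\mathbb{F}}Q=Q$. Since $N\subseteq M$ is generated by homogeneous elements it is a graded submodule (\cref{basics about graded}), so $Q\in\mathbf{Grf}_n(A_n^{\mathbb{F}})$. Suppose $Q\neq 0$ and consider the nonempty set $S:=\{v\in\mathbb{N}^n\mid Q_v\neq 0\}$. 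The key point is that $(\mathbb{N}^n,\preceq)$ is well-founded: any descending chain $v^{(0)}\succ v^{(1)}\succ\cdots$ must terminate because each coordinate lies in $\mathbb{N}$ and some coordinate strictly decreases. Pick $v\in S$ minimal with respect to $\preceq$. From $\mathfrak{m}_n^{\mathbb{F}}=\bigoplus_{u\succ 0}(A_n^{\mathbb{F}})_u$ and $\mathfrak{m}_n^{\mathbb{F}}Q=Q$ we get
\begin{equation}
Q_v=(\mathfrak{m}_n^{\mathbb{F}}Q)_v=\sum_{\substack{u\succ 0\\ u\preceq v}}(A_n^{\mathbb{F}})_u\,Q_{v-u}. \notag
\end{equation}
Each summand with $u\succ 0$ involves $Q_{v-u}$ at $v-u\prec v$, which is zero by minimality of $v$ in $S$. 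Thus $Q_v=0$, contradicting $v\in S$. Hence $Q=0$, i.e.\ $m_1,\dots,m_d$ generate $M$. This well-foundedness step is the heart of the argument and is the main obstacle; everything else is formal.

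For part (2), the ``if'' direction: by (1), the $m_i$ generate $M$. If some proper subset $\{m_j\}_{j\neq i_0}$ still generated $M$, then $\{\pi(m_j)\}_{j\neq i_0}$ would span $M/\mathfrak{m}_n^{\mathbb{F}}M$, contradicting linear independence of the basis $\{\pi(m_i)\}$. For the ``only if'' direction, (1) already gives that $\{\pi(m_i)\}$ spans. Suppose a nontrivial homogeneous linear relation $\sum_{i}a_i\pi(m_i)=0$ with $a_i\in\mathbb{F}$ exists; by projecting onto the homogeneous component of any fixed degree (using that $\mathbb{F}$ is concentrated in degree $0$ and $\pi(m_i)$ is homogeneous of degree $\deg(m_i)$) we may assume all $m_i$ appearing have the same degree. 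Choose $i_0$ with $a_{i_0}\neq 0$; then $\pi(m_{i_0})$ lies in the $\mathbb{F}$-span of $\{\pi(m_j)\}_{j\neq i_0}$, so $\{\pi(m_j)\}_{j\neq i_0}$ still spans $M/\mathfrak{m}_n^{\mathbb{F}}M$, and by (1) the subset $\{m_j\}_{j\neq i_0}$ generates $M$, contradicting minimality.

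For part (3), \cref{Homog generators} gives the existence and finiteness of $G$. By (2), for any minimal homogeneous generating set $C\subseteq M$, the set $\pi(C)$ is an $\mathbb{F}$-basis of $M/\mathfrak{m}_n^{\mathbb{F}}M$; in particular its elements are distinct, so $|\pi(C)|=|C|$. Applying this to both $G$ and $C$ and invoking invariance of $\dim_{\mathbb{F}}(M/\mathfrak{m}_n^{\mathbb{F}}M)$ (finite by \cref{finite image} applied to $\mathbb{F}\otimes_{A_n^{\mathbb{F}}}M\cong M/\mathfrak{m}_n^{\mathbb{F}}M$), we conclude $|C|=\dim_{\mathbb{F}}(M/\mathfrak{m}_n^{\mathbb{F}}M)=|G|$.
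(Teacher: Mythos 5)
Your proof is correct. Parts (2) and (3) follow the paper's route essentially verbatim, but your argument for the hard direction of part (1) is structured differently and is in fact cleaner. The paper sets $\overline{M}:=M/\langle m_1,\dots,m_d\rangle$, chooses a finite set of homogeneous generators $\bigcup_{v\in V}\{z_{1,v},\dots,z_{d_v,v}\}$, and then peels off the $\preceq$-minimal generators one ``layer'' at a time ($Z_1, Z_2,\dots$), showing inductively that each layer consists of zero elements until the generating set is exhausted. You instead consider directly the set $S=\{v\in\mathbb{N}^n\mid Q_v\neq 0\}$, invoke well-foundedness of $(\mathbb{N}^n,\preceq)$ to pick a $\preceq$-minimal degree $v\in S$, and read off a contradiction from $Q_v=(\mathfrak{m}_n^{\mathbb{F}}Q)_v=\sum_{0\prec u\preceq v}(A_n^{\mathbb{F}})_u\,Q_{v-u}=0$. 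The underlying idea (exploit minimality in $\preceq$ together with $\mathfrak{m}Q=Q$) is the same, but your version avoids the combinatorial bookkeeping with the chosen generating set and its layers, does not actually use that $Q$ is finitely generated, and makes the well-foundedness of $\preceq$ on $\mathbb{N}^n$ an explicit lemma rather than an implicit feature of the induction. If anything, it is worth spelling out the well-foundedness a hair more carefully: a strictly $\preceq$-descending chain in $\mathbb{N}^n$ strictly decreases the coordinate sum at every step, which is bounded below; hence every nonempty $S\subseteq\mathbb{N}^n$ has a $\preceq$-minimal element.
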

\begin{proof} The idea of the proof is similar to the $\mathbb{N}$-graded setting (see \cite[Lem.~1.4]{eisenbud2005geometry}). We clearly have that if $m_1, \dots, m_d$ generate $M$, then $\pi(m_1), \dots, \pi(m_d)$ generate $M/\mathfrak{m}_n^{\mathbb{F}} M$. 

So let us prove the converse: Assume that  $\pi(m_1), \dots, \pi(m_d)$ generate $M/\mathfrak{m}_n^{\mathbb{F}}M$. Let ${\overline{M}:=M/ \braket{m_1, \dots, m_d}_{A_n^{\mathbb{F}}}}$. Since ${\braket{m_1, \dots, m_d}_{A_n^{\mathbb{F}}} \subseteq M}$ is graded, $\overline{M}$ is finitely generated and $n$-graded (as it is generated by finitely many homogeneous elements (see \cref{basics about graded})). Since $\pi(m_1), \dots, \pi(m_d)$ generate $M/\mathfrak{m}_n^{\mathbb{F}}M$, $\overline{M}/\mathfrak{m}_n^{\mathbb{F}} \overline{M}=0$. Hence, $\overline{M}= \mathfrak{m}_n^{\mathbb{F}} \overline{M}$. Now $\overline{M}$ has a non-empty finite set of homogeneous generators \begin{equation}
\bigcup_{v \in V} \{z_{1,v}, \dots, z_{d_{v},v} \} \subseteq \overline{M} \notag
\end{equation}
for a suitable finite $V \subseteq \mathbb{N}^n$ with $z_{i,v} \in \overline{M}_{v}$ for all $v \in V$ and all $i \in \{1, \dots, d_v\}$. The goal is to show that $z_{i,u}=0$ for all $u \in V$ and all $i \in \{1, \dots, d_u\}$. 

Let $Z_1 \subseteq V$ be the set of all $v \in V$ that are minimal with respect to the partial order $\preceq$. Then $\left(\mathfrak{m}_n^{\mathbb{F}} \overline{M}\right)_w=\overline{M}_w=0$ for all $w \in {\mathbb{N}^n} \setminus \bigcup_{v \in V} \mathbb{N}^n_{\succeq v}$. Let $u \in Z_1$ and $j \in \{1, \dots, d_u\}$. Since $\overline{M}= \mathfrak{m}_n^{\mathbb{F}} \overline{M}$, we can write \begin{equation}
z_{j,u}= \sum_{v \in V} \sum_{i=1}^{d_v} s^{(i,v)} r^{(i,v)} z_{i,v} \notag
\end{equation}
where $s^{(i,v)} \in A_n^{\mathbb{F}}$ and $r^{(i,v)} \in \mathfrak{m}_n^{\mathbb{F}}$. Since $\mathfrak{m}_n^{\mathbb{F}}=\bigoplus_{v \in {\mathbb{N}^n_{\succ 0}}} (A_n^{\mathbb{F}})_v$, we have 
\begin{equation}
s^{(i,v)} r^{(i,v)} z_{i,v}= \left(\sum_{ w \in \mathbb{N}^n} s^{(i,v)}_w \right)\left(\sum_{ w \in {\mathbb{N}}_{ \succ 0}^n} r^{(i,v)}_w \right) z_{i,v}=\sum_{ u\in {\mathbb{N}^n}} \sum_{ w \succ 0, y+w=u} s^{(i,v)}_y r^{(i,v)}_w z_{i,v} \notag
\end{equation}
for unique $s^{(i,v)}_w, r^{(i,v)}_w \in( A_n^{\mathbb{F}})_w$ with $s^{(i,v)}_w=0$ for all but finitely many $w \in \mathbb{N}^n$ and $r^{(i,v)}_w=0$ for all but finitely many $w \in {\mathbb{N}^n_{\succ 0}}$. If $s^{(i,v)} r^{(i,v)} z_{i,v} \neq 0$, then all nonzero homogeneous parts of $s^{(i,v)} r^{(i,v)} z_{i,v}$ have degree $\succ u$. Since $u$ is minimal, this implies that $z_{j,u}=0$. Therefore, \begin{equation}
\bigcup_{v \in V} \{z_{1,v}, \dots, z_{d_{v},v} \}\setminus \bigcup_{v \in Z_1} \{z_{1,v}, \dots, z_{d_{v},v} \} \subseteq \overline{M} \notag
\end{equation}
is a set of homogeneous generators of $\overline{M}$. Now repeat the same procedure for \begin{equation}
\bigcup_{v \in V} \{z_{1,v}, \dots, z_{d_{v},v} \}\setminus \bigcup_{v \in Z_1} \{z_{1,v}, \dots, z_{d_{v},v} \} \subseteq \overline{M} \notag
\end{equation}
Then $z_{i,v}=0$ for all $v \in Z_1 \cup Z_2$ and all $i \in \{1, \dots, d_v\}$, where $Z_2$ denotes the set of all $v\in V \setminus  Z_1$ that are minimal with respect to $\preceq$. Now proceed inductively. Since $V$ is finite, there exists an $m \in \mathbb{N}$ such that $ V = \bigcup_{i=1}^m Z_i$ and we conclude that $z_{i,v}=0$ for all $v \in V$ and all $i \in \{1, \dots, d_{v}\}$. So $\overline{M} =0$, which proves that $m_1, \dots, m_d$ generate $M$.

For the second part, let us first assume that $\{m_1, \dots, m_d\}\subseteq M$ is minimal and that ${\{\pi(m_1), \dots, \pi(m_d)\}\subseteq M/\mathfrak{m}_n^{\mathbb{F}} M}$ is not minimal. Then there exists an $i \in \{1, \dots, d\}$ such that $\{\pi(m_1), \dots, \pi(m_d)\} \setminus \{\pi(m_i)\}$ is a generating set of $M/\mathfrak{m}_n^{\mathbb{F}} M$. By using the first part of this lemma, this implies that $\{m_1, \dots, m_d \} \setminus \{m_i\}$ is a generating set of $M$. But this is a contradiction to the minimality of $m_1, \dots, m_d$. Thus, $\{\pi(m_1), \dots, \pi(m_d)\}$ is a minimal generating set and hence an $\mathbb{F}$-basis of $M/\mathfrak{m}_n^{\mathbb{F}} M$.

$\{\pi(m_1), \dots, \pi(m_d)\}$ is an $\mathbb{F}$-basis of $M/\mathfrak{m}_n^{\mathbb{F}}$ if and only if $\{\pi(m_1), \dots, \pi(m_d)\}$ is a minimal generating set. As before, if $\{m_1, \dots, m_d\}$ were not minimal this would be a contradiction to the minimality of $\{\pi(m_1), \dots, \pi(m_d)\}$.

The third part is clear by the second part since all minimal generating sets (or equivalently $\mathbb{F}$-bases) of $M/\mathfrak{m}_n^{\mathbb{F}}$ are finite and of the same length.
\end{proof}

\newpage
\subsubsection{Existence of free hulls}

The next Lemma is a standard result about Noetherian modules. 
\begin{Lem}\label{Surj Noeth} Let $R$ be a commutative Noetherian ring and let $N$ be a Noetherian $R$-module. Let $f: N \to N$ be a surjective $R$-module endomorphism. Then $f$ is an automorphism.
\end{Lem}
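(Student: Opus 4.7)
The plan is to use the ascending chain condition on $N$ applied to the sequence of iterated kernels of $f$. The idea is that surjectivity of $f$, together with the stabilization of this chain, will force $\ker(f)$ to be zero, so that $f$ becomes bijective and hence an automorphism of $N$.

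First, I would form the ascending chain of $R$-submodules
\begin{equation}
\ker(f) \subseteq \ker(f^2) \subseteq \ker(f^3) \subseteq \cdots \subseteq N. \notag
\end{equation}
Since $N$ is a Noetherian $R$-module, this chain must stabilize, so there exists some $n \in \mathbb{N}_{\geq 1}$ such that $\ker(f^n) = \ker(f^{n+1})$. Note that at this step the Noetherian hypothesis on $R$ itself is actually not needed; only the fact that $N$ satisfies the ascending chain condition on submodules is used.

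Next, I would exploit surjectivity. Since $f$ is surjective, its $n$-fold composition $f^n: N \to N$ is also surjective. Let $x \in \ker(f)$. By surjectivity of $f^n$, pick $y \in N$ with $f^n(y) = x$. Then
\begin{equation}
f^{n+1}(y) = f(f^n(y)) = f(x) = 0, \notag
\end{equation}
so $y \in \ker(f^{n+1}) = \ker(f^n)$, which gives $x = f^n(y) = 0$. Hence $\ker(f) = 0$, so $f$ is injective. Combined with the hypothesis that $f$ is surjective, this shows $f$ is an $R$-module automorphism, as required.

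There is no real obstacle here: the argument is the classical Noetherian trick, and the only subtlety worth flagging is making sure that surjectivity of $f$ really does transfer to surjectivity of $f^n$, which is immediate from the composition of surjections being surjective. I would present the proof in the concise three-step form above rather than elaborating further.
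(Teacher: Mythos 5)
Your proof is correct and follows essentially the same approach as the paper: both stabilize the ascending chain of kernels $\ker(f^r)$ and combine this with surjectivity of a power of $f$ to force $\ker(f)=0$. Your version is a slightly more streamlined phrasing of the same argument (the paper first establishes $\mathrm{im}(f^m)\cap\ker(f^m)=0$ and then uses $\mathrm{im}(f^m)=N$), and your remark that the Noetherian hypothesis on $R$ is superfluous is a valid and worthwhile observation.
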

\begin{proof} We have to show that $f$ is injective. We have $\mathrm{ker}(f^r) \subseteq \mathrm{ker}(f^{r+1})$ for all $r \in \mathrm{N}$. Since $N$ is Noetherian there exists an $m \in \mathbb{N}$ such that $\mathrm{ker}(f^m)=\mathrm{ker}(f^{m+r})$ for all $r \geq  m$. As $f$  is surjective, we have $\mathrm{im}(f^n)=N$. Now we show that $\mathrm{im}(f^m) \cap \mathrm{ker}(f^m)=0$. Let $x \in \mathrm{im}(f^m) \cap \mathrm{ker}(f^m)$. Then $f^m(x)=0$ and $f^m(y)=x$ for some $y \in N$ since $f$ is surjective. Hence, $f^{2m}(y)=0$. Since $\mathrm{ker}(f^m)=\mathrm{ker}(f^{2m})$, we have $x= f^{m}(y)=0$.
\end{proof}

\begin{Prop}\label{minimal free graded basis} Let $F \in \mathbf{Grf}_n(A_n^{\mathbb{F}})$ be free as an $A_n^{\mathbb{F}}$-module and let $G \subseteq F$ be a minimal set of homogeneous generators of $F$ (such a $G$ always exists and is finite by \cref{Homog generators}). Then $G$ is a homogeneous basis of $F$.
\end{Prop}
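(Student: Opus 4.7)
The plan is to use Nakayama (\cref{graded Naka}) to count, and then invoke \cref{Surj Noeth} to upgrade surjectivity to bijectivity. Write $G=\{g_1,\dots,g_d\}$ with $g_i$ homogeneous of degree $v_i \in \mathbb{N}^n$. First I would form the finite $n$-dimensional multiset $\xi$ determined by the degrees $v_1,\dots,v_d$ (with appropriate multiplicities) and consider the graded surjective $A_n^{\mathbb{F}}$-module homomorphism
\begin{equation}
p: \mathcal{F}_n^{\mathbb{F}}(\xi) \longrightarrow F, \quad e_{i,v_i} \longmapsto g_i \notag
\end{equation}
sending the $i$-th standard homogeneous basis element to $g_i$. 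It suffices to show that $p$ is injective, for then $p$ transports the homogeneous $A_n^{\mathbb{F}}$-basis of $\mathcal{F}_n^{\mathbb{F}}(\xi)$ to $G$, making $G$ a homogeneous basis of $F$.

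Next I would count ranks. Since $F$ is free, by \cref{Rank well definde} $F$ has a well-defined finite rank $r$, and $F \cong (A_n^{\mathbb{F}})^r$ as ungraded $A_n^{\mathbb{F}}$-modules; consequently $F/\mathfrak{m}_n^{\mathbb{F}} F \cong \mathbb{F}^r$ as $\mathbb{F}$-vector spaces. On the other hand, part $2$ of \cref{graded Naka} tells us that $\{\pi(g_1),\dots,\pi(g_d)\}$ is an $\mathbb{F}$-basis of $F/\mathfrak{m}_n^{\mathbb{F}} F$, so $d = r$. Hence both $\mathcal{F}_n^{\mathbb{F}}(\xi)$ and $F$ are free $A_n^{\mathbb{F}}$-modules of the same finite rank $d$, so there exists (ignoring gradings) an $A_n^{\mathbb{F}}$-linear isomorphism $\psi: F \xrightarrow{\sim} \mathcal{F}_n^{\mathbb{F}}(\xi)$.

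Then the composition $\psi \circ p : \mathcal{F}_n^{\mathbb{F}}(\xi) \to \mathcal{F}_n^{\mathbb{F}}(\xi)$ is a surjective $A_n^{\mathbb{F}}$-module endomorphism of a finitely generated module over the Noetherian ring $A_n^{\mathbb{F}}$ (Hilbert's basis theorem), hence a surjective endomorphism of a Noetherian $A_n^{\mathbb{F}}$-module. By \cref{Surj Noeth} it is an automorphism, so in particular injective, which forces $p$ to be injective. Thus $p$ is a graded isomorphism and $G$ is a homogeneous basis of $F$, as required.

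The main obstacle is mostly bookkeeping: one has to be careful that although the isomorphism $\psi$ obtained by comparing ranks need not be graded, this does not matter, because we only need $\psi \circ p$ as an ungraded endomorphism in order to apply \cref{Surj Noeth}. The gradedness of $p$ itself is preserved throughout and ensures that $G$ is a \emph{homogeneous} basis once injectivity is established.
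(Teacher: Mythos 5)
Your proof is correct and follows essentially the same strategy as the paper: count ranks via Nakayama (\cref{graded Naka}) to get $\abs{G}=\mathrm{rank}(F)$, then apply \cref{Surj Noeth} to a surjective endomorphism of a Noetherian module to upgrade surjectivity to bijectivity. The only cosmetic difference is that you route through the model free module $\mathcal{F}_n^{\mathbb{F}}(\xi)$ and an auxiliary ungraded isomorphism $\psi$, whereas the paper directly defines the endomorphism $f_\tau:F\to F$, $\sum a_i b_i\mapsto\sum a_i\tau(b_i)$, from a bijection $\tau:B\to G$ between a basis and $G$.
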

\begin{proof} $A_n^{\mathbb{F}}$ is commutative and $F$ is free and finitely generated. So, $\mathrm{rank}(F)$ is finite and well-defined (see \cref{Rank well definde}). Suppose that $\mathrm{rank}(F)=d$ and let $B=\{b_1, \dots, b_d\}$ be an $A_n^{\mathbb{F}}$-basis of $F$. We have $\mathrm{dim}_{\mathbb{F}} (\mathbb{F} \otimes_{A_n^{\mathbb{F}}} F)=d$ and
\begin{equation}
\mathbb{F} \otimes_{A_n^{\mathbb{F}}} F = A_n^{\mathbb{F}}/\mathfrak{m}^{\mathbb{F}}_n \otimes_{A_n^{\mathbb{F}}} F \cong F/\mathfrak{m}^{\mathbb{F}}_n F \notag
\end{equation}
which shows that $\mathrm{dim}_{\mathbb{F}} (F/\mathfrak{m}^{\mathbb{F}}_n F)=d$. Thus, $\abs{G}=d$ by \cref{graded Naka}. So, there exists a set-theoretic bijection $\tau: B \to G$. Now $\tau$ induces a surjective $A_n^{\mathbb{F}}$-module endomorphism 
\begin{equation}
f_{\tau}: F \longrightarrow F, \quad \sum_{i=1}^d a_i b_i \longmapsto \sum_{i=1}^d a_i \tau(b_i) \notag.
\end{equation}  
Since $A_n^{\mathbb{F}}$ is Noetherian and $F$ is finitely generated, $F$ is a Noetherian $A_n^{\mathbb{F}}$-module. Thus, $f_{\tau}$ has to be an automorphism by \cref{Surj Noeth}. Hence, $G$ is an $A_n^{\mathbb{F}}$-basis of $F$. $G$ consists only of homogeneous elements by assumption. So, $G$ is a homogeneous $A_n^{\mathbb{F}}$-basis of $F$.
\end{proof}

\begin{Prop}\label{graded free mult} For $M \in \mathbf{Grf}_n(A_n^{\mathbb{F}})$, the following statements are equivalent:
\begin{enumerate}
\item $M$ is graded free as an $A_n^{\mathbb{F}}$-module.
\item $M$ is free as an $A_n^{\mathbb{F}}$-module.
\end{enumerate}
\end{Prop}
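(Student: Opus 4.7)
The implication ``graded free $\Rightarrow$ free'' is immediate from the definitions: a homogeneous $A_n^{\mathbb{F}}$-basis of $M$ is in particular an $A_n^{\mathbb{F}}$-basis, hence witnesses that $M$ is free.

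For the converse ``free $\Rightarrow$ graded free'', the plan is to invoke \cref{minimal free graded basis} directly. Assume $M$ is free as an $A_n^{\mathbb{F}}$-module. Since $M \in \mathbf{Grf}_n(A_n^{\mathbb{F}})$ is finitely generated and $n$-graded, by \cref{Homog generators} there exists a finite minimal set $G \subseteq M$ of homogeneous generators of $M$. Applying \cref{minimal free graded basis} to $M$ (which is permitted precisely because we are assuming $M$ is free) tells us that $G$ is in fact a homogeneous $A_n^{\mathbb{F}}$-basis of $M$. Hence $M$ admits a homogeneous basis, i.e.\ $M$ is graded free.

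There is essentially no obstacle once \cref{minimal free graded basis} is in hand, since all the work (the use of \cref{graded Naka} to compare generating set sizes via the rank of the free module, and the Noetherian argument of \cref{Surj Noeth} to upgrade a surjection to an isomorphism) has already been carried out there. The only thing one must be careful about is that \cref{minimal free graded basis} requires $M$ to be in $\mathbf{Grf}_n(A_n^{\mathbb{F}})$ and free, both of which are exactly the standing hypotheses of the present proposition.
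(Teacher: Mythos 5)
Your proposal is correct and matches the paper's proof: the forward direction is immediate, and the converse is obtained by taking a minimal set of homogeneous generators (via \cref{Homog generators}) and invoking \cref{minimal free graded basis} to conclude it is a homogeneous basis. No differences worth noting.
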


\begin{proof} $(1. \Longrightarrow 2.)$ is clear and $(2. \Longrightarrow 1.)$ follows from \cref{minimal free graded basis}.
\end{proof}

\newpage

\begin{Prop}\label{equiv char free hulls} Let $F,M \in \mathbf{Grf}_n(A_n^{\mathbb{F}})$ with $F$ graded free. Let $p: F \to M$ be a surjective graded morphism. The following statements are equivalent:
\begin{enumerate}
\item $p:F \to M$ is a free hull of $M$.
\item for any homogeneous basis $B$ of $F$, $p(B)$ is a minimal set of homogeneous generators of $M$ with $\abs{p(B)}=\abs{B}$.
\item there exists a homogeneous basis $B$ of $F$ such that $p(B)$ is a minimal set of homogeneous generators of $M$ with $\abs{p(B)}=\abs{B}$.
\end{enumerate}
\end{Prop}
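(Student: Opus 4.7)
The plan is to reduce everything to a statement in the quotient modulo $\mathfrak{m}_n^{\mathbb{F}}$, where the $n$-graded Nakayama Lemma (\cref{graded Naka}) provides the dictionary between minimal systems of homogeneous generators of a module and $\mathbb{F}$-bases of its reduction. Concretely, I would use the canonical graded isomorphism $\mathbb{F} \otimes_{A_n^{\mathbb{F}}} N \cong N/\mathfrak{m}_n^{\mathbb{F}} N$ for $N \in \{F, M\}$ established in (\ref{can iso}); by naturality this identifies $\mathrm{id}_\mathbb{F} \otimes_{A_n^{\mathbb{F}}} p$ with the graded $\mathbb{F}$-linear map $\bar{p} : F/\mathfrak{m}_n^{\mathbb{F}} F \to M/\mathfrak{m}_n^{\mathbb{F}} M$ induced by $p$ (which is well-defined since $p$ is graded and thus $p(\mathfrak{m}_n^{\mathbb{F}} F) \subseteq \mathfrak{m}_n^{\mathbb{F}} M$). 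So (1) is equivalent to $\bar{p}$ being an isomorphism. Throughout, write $\pi_F$ and $\pi_M$ for the canonical graded projections.

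For $(1) \Rightarrow (2)$ I would fix an arbitrary homogeneous basis $B$ of $F$ (finite by \cref{over comm rings fin}). Since $B$ is an $A_n^{\mathbb{F}}$-basis of the graded free module $F$, its image $\pi_F(B)$ is an $\mathbb{F}$-basis of $F/\mathfrak{m}_n^{\mathbb{F}} F$ with $|\pi_F(B)| = |B|$. Applying the isomorphism $\bar{p}$, the set $\pi_M(p(B)) = \bar{p}(\pi_F(B))$ is an $\mathbb{F}$-basis of $M/\mathfrak{m}_n^{\mathbb{F}} M$; in particular $\bar{p}$ is injective on $\pi_F(B)$, which forces $|p(B)| = |B|$ (two distinct elements of $B$ with equal image under $p$ would yield equal images under $\pi_M \circ p$). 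The minimality of $p(B)$ as a set of homogeneous generators of $M$ then follows at once from \cref{graded Naka}(2).

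The implication $(2) \Rightarrow (3)$ is immediate: $F$ is finitely generated graded free, hence admits a homogeneous basis by definition, and any such basis is finite by \cref{over comm rings fin}. For $(3) \Rightarrow (1)$, I would fix a homogeneous basis $B$ as in (3) and argue in the same quotient. As before, $\pi_F(B)$ is an $\mathbb{F}$-basis of $F/\mathfrak{m}_n^{\mathbb{F}} F$; by \cref{graded Naka}(2) applied to the minimal homogeneous generating set $p(B)$, the set $\pi_M(p(B))$ is an $\mathbb{F}$-basis of $M/\mathfrak{m}_n^{\mathbb{F}} M$. The map $\bar{p}$ acts by $\pi_F(b) \mapsto \pi_M(p(b))$, and the hypothesis $|p(B)| = |B|$ ensures this is a bijection between the two $\mathbb{F}$-bases. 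Hence $\bar{p}$ is an isomorphism of $n$-graded $\mathbb{F}$-vector spaces, and therefore so is $\mathrm{id}_\mathbb{F} \otimes_{A_n^{\mathbb{F}}} p$.

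I do not expect a serious obstacle here: the argument is essentially careful bookkeeping between three equivalent descriptions (homogeneous bases of $F$, minimal homogeneous generating sets of $M$, and $\mathbb{F}$-bases of the reductions modulo $\mathfrak{m}_n^{\mathbb{F}}$). The only point that deserves real attention is the cardinality condition $|p(B)| = |B|$ appearing in (2) and (3), because this is precisely what promotes the set-level output of Nakayama's lemma into an honest bijection of $\mathbb{F}$-bases and thereby turns surjectivity of $\bar{p}$ into bijectivity.
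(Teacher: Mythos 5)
Your proof is correct and follows essentially the same strategy as the paper: identify $\mathrm{id}_{\mathbb{F}} \otimes_{A_n^{\mathbb{F}}} p$ with the induced map $\bar p: F/\mathfrak{m}_n^{\mathbb{F}} F \to M/\mathfrak{m}_n^{\mathbb{F}} M$ via the canonical graded identifications, and then pass back and forth between homogeneous bases of $F$, minimal homogeneous generating sets of $M$, and $\mathbb{F}$-bases of the two quotients using the $n$-graded Nakayama Lemma. The only cosmetic difference is in $(1)\Rightarrow(2)$: the paper pins down $|p(B)|=|B|$ by a short chain of cardinality inequalities, whereas you argue directly that injectivity of $\bar p$ together with injectivity of $\pi_F$ on $B$ forces $p$ to be injective on $B$; these are the same observation stated two ways.
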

\begin{proof} Since $\mathbb{F} \otimes_{A_n^{\mathbb{F}}}-$ is right exact, we obtain a surjective graded morphism
\begin{align}
\mathrm{id}_\mathbb{F} \otimes_{A_n^{\mathbb{F}}} p: \mathbb{F} \otimes_{A_n^{\mathbb{F}}} F \longrightarrow \mathbb{F} \otimes_{A_n^{\mathbb{F}}} M \notag.
\end{align}
Now we obtain a commutative diagram in $\mathbf{Grf}_n(\mathbb{F})$
\begin{equation}\label{free equ char diag}
\begin{tikzcd}[row sep=1cm, column sep = 1.5cm]
\mathbb{F} \otimes_{A_n^{\mathbb{F}}} F \arrow[d,equals] \arrow[r,"\mathrm{id}_\mathbb{F} \otimes_{A_n^{\mathbb{F}}} p"] & \mathbb{F} \otimes_{A_n^{\mathbb{F}}} M \arrow[d, equals] \\
A_n^{\mathbb{F}}/\mathfrak{m}^{\mathbb{F}}_n \otimes_{A_n^{\mathbb{F}}} F \arrow[r] \arrow[d] & A_n^{\mathbb{F}}/\mathfrak{m}^{\mathbb{F}}_n \otimes_{A_n^{\mathbb{F}}} M \arrow[d]\\
F/\mathfrak{m}^{\mathbb{F}}_n F\arrow[r]  &  M /\mathfrak{m}^{\mathbb{F}}_n M 
\end{tikzcd} 
\end{equation}
where the vertical maps are all graded isomorphisms and the horizontal maps are all graded surjective  morphisms. If we identify $\mathbb{F} \otimes_{A_n^{\mathbb{F}}} F= F/F\mathfrak{m}_n^{\mathbb{F}}$ and ${\mathbb{F} \otimes_{A_n^{\mathbb{F}}} M= M/ M\mathfrak{m}_n^{\mathbb{F}}}$ along the vertical graded ismorphisms in (\ref{free equ char diag}), we obtain a commutative diagram
\begin{equation}
\begin{tikzcd}[row sep=1cm, column sep = 1.5cm]
F \arrow[d, "\pi_F"]\arrow[r,"p"] & M \arrow[d, "\pi_M"] \\
\mathbb{F} \otimes_{A_n^{\mathbb{F}}} F  \arrow[r, "\mathrm{id}_{\mathbb{F}} \otimes_{A_n^{\mathbb{F}}} p"] & \mathbb{F} \otimes_{A_n^{\mathbb{F}}} M
\end{tikzcd}  \notag
\end{equation}
where $\pi_F: F \to  F/\mathfrak{m}_n^{\mathbb{F}} F=\mathbb{F} \otimes_{A_n^{\mathbb{F}}} F$ and $\pi_M: M \to  M/\mathfrak{m}_n^{\mathbb{F}} M=\mathbb{F} \otimes_{A_n^{\mathbb{F}}} M$ denotes the canonical projections.

$(1. \Longrightarrow 2.)$: Since $p:F \to M$ is a free hull, we know that $\mathrm{id} \otimes_{A_n^{\mathbb{F}}} p$ has to be a graded isomorphism. Using the commutativity of (\ref{free equ char diag}), this implies that $F/\mathfrak{m}_n^{\mathbb{F}} \to M/\mathfrak{m}_n^{\mathbb{F}} M$ is a graded isomorphism. Now let $B$ be a homogeneous $A_n^{\mathbb{F}}$-basis of $F$. Then $B$ is finite and $\pi_F(B)$ is a homogenenous $\mathbb{F}$-basis of $\mathbb{F} \otimes_{A_n^{\mathbb{F}}} F$ by \cref{graded Naka} and $\abs{\pi_F(B)}=\abs{B}$. Hence, $C:=(\mathrm{id}_{\mathbb{F}} \otimes_{A_n^{\mathbb{F}}} p)(\pi_F(B))$ is a homogeneous $\mathbb{F}$-basis of $ \mathbb{F} \otimes_{A_n^{\mathbb{F}}} M$. Since $p$ is surjective and graded, $p(B)\subseteq M$ is a set of homogeneous generators of $M$. Since $\pi_M(p(B))=C$, we conclude that
\begin{equation}
\abs{B} \geq \abs{p(B)} \geq  \abs{\pi_M(p(B))} \geq \abs{C}=\abs{(\mathrm{id}_{\mathbb{F}} \otimes_{A_n^{\mathbb{F}}} p)(\pi_F(B))}= \abs{\pi_F(B)}=\abs{B} \notag.
\end{equation}
Hence, $\abs{p(B)}=\abs{\pi_M(p(B))}=\abs{C}=\abs{B}$. By using \cref{graded Naka}, this implies that $p(B)$ is minimal. 

$(2. \Longrightarrow 1.)$: Let $B \subseteq F$ be a homogeneous basis of $F$ such that $p(B)$ is a minimal generating system of $M$ with $\abs{p(B)}=\abs{B}$ ($B$ and $p(B)$ are finite). By \cref{graded Naka}, $\pi_M(p(B))$ is an $\mathbb{F}$-basis of $M/\mathfrak{m}^{\mathbb{F}}_n M$ with $\abs{\pi_M(p(B))}=\abs{p(B)}$ and $\pi_F(B)$ is an $\mathbb{F}$-basis of $F/\mathfrak{m}^{\mathbb{F}}_n F$ with $\abs{\pi_F(B)}=\abs{B}$. Therefore, \begin{equation}
\abs{\pi_F(B)}=\abs{B}=\abs{p(B)}=\abs{\pi_M(p(B))} \notag 
\end{equation} which shows that the horizontal maps in (\ref{free equ char diag}) are graded isomorphisms.

$F$ has a homogenous basis $B$ since $F$ is graded free which shows that $(2. \Longrightarrow 3.)$. $(3. \Longrightarrow 1.)$: Assume that $F$ has a homogenous basis $B \subseteq F$ such that $p(B)$ is a minimal set of homogeneous generators of $M$ with $\abs{B}=\abs{p(B)}$. Now the proof is analogous to $(2. \Longrightarrow 1.)$.
\end{proof}

We are now ready to prove the existence of free hulls:

\begin{proof}[Proof of \ref{Free hull thm}, existence]
Let $G=\{m_1, \dots, m_d\}$ be a minimal set of homogenous generators of $M$. Then we can subdivide $G=\bigcup_{v\in T} \{m_{1,v}, \dots, m_{d_v,v}\}$ for a suitable subset $T \subseteq {\mathbb{N}^n}$ and suitable $d_v \in \mathbb{N}$ such that $m_{i,v} \in M_v$ for all $i \in \{1, \dots, d_v \}$ and all $v \in T$. Now define $\xi:=\bigcup_{v \in T}\{(v,1), \dots, (v, d_v) \}$. So, $\xi=(T, \mu)$ defines an $n$-dimensional multiset where \begin{equation}
\mu: T \longrightarrow  \mathbb{N}_{\geq 1}, \quad v \longmapsto d_v \notag.
\end{equation}
This leads to a surjective graded morphism
\begin{equation}
p: \mathcal{F}_n^{\mathbb{F}}(\xi) \longrightarrow M, \quad e_{i,v} \longmapsto m_{i, v} \notag
\end{equation} 
where $e_{i,v}$ is the $i$-th standard basis vector in $A_n^{\mathbb{F}}(v)^{\mu(v)}$. Now $p:\mathcal{F}_n^{\mathbb{F}}(\xi) \to M$ is a free hull by \cref{equiv char free hulls}. In particular, if $M$ is graded free, then $p$ is a graded isomorphism (see proof of \cref{graded free isom multiset}).
\end{proof}

\newpage

\subsubsection{Uniqueness of free hulls}\label{uniqueness sec}
Before we proof the uniqueness of free hulls, we need the following:

\begin{Prop}\label{graded free projective} Let $R$ be an $n$-graded commutative ring and let $M,N,P$ be $n$-graded $R$-modules. 
\begin{enumerate}
\item Let $g:M \to N$ and $f:P \to N$ be graded $R$-module homomorphisms. If $h: P \to M$ is an $R$-module homomorphism such that $f=g \circ h$, then there exists a graded $R$-module homomorphism $\psi: P \to M$ such that $f=g \circ \psi$.
\item If $P$ is projecive, then $P$ is graded projective, i.e. if $g:M \to N$ is a surjective graded $R$-module homomorphism and if $f:P \to N$ is a graded $R$-module homomorphism, then there exists a graded $R$-module homomorphism ${\psi: P \to M}$ such that  $f=g \circ \psi$.
\item If $P$ is graded free, then $P$ is graded projective.
\end{enumerate}
\end{Prop}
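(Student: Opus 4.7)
The three parts clearly form a cascade: part 3 follows from part 2 (a graded free module is in particular free, hence projective), and part 2 follows from part 1 (projectivity produces an $R$-linear lift which part 1 then upgrades to a graded one). So the real work is in part 1, and my plan is to prove it by extracting a ``homogeneous projection'' of the given lift $h$.

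For part 1, the plan is as follows. Since $P$ is $n$-graded, it suffices to define $\psi$ on each homogeneous component $P_v$ and then extend additively. For $p \in P_v$, decompose $h(p) = \sum_{u \in \mathbb{N}^n} h(p)_u$ with $h(p)_u \in M_u$, only finitely many nonzero, and set
\begin{equation}
\psi(p) := h(p)_v \in M_v. \notag
\end{equation}
Extend $\psi$ to all of $P$ by $\psi\bigl(\sum_v p_v\bigr) := \sum_v \psi(p_v)$. By construction $\psi(P_v) \subseteq M_v$ for every $v$, so $\psi$ is graded; and $\psi$ is additive because taking the $v$-component of $h(-)$ commutes with addition. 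The main point to verify is $R$-linearity: for homogeneous $r \in R_w$ and $p \in P_v$ one has $rp \in P_{w+v}$ and $h(rp) = r h(p) = \sum_u r h(p)_u$ with $r h(p)_u \in M_{w+u}$; projecting to degree $w+v$ gives $\psi(rp) = r h(p)_v = r \psi(p)$, and $R$-linearity for arbitrary $r \in R$ then follows by decomposing $r$ into its homogeneous parts. Finally, to check $g \circ \psi = f$, fix $p \in P_v$. Since $g$ is graded, $g(h(p)_u) \in N_u$ for every $u$. But $\sum_u g(h(p)_u) = g(h(p)) = f(p) \in N_v$ because $f$ is graded, and the uniqueness of the decomposition in $N = \bigoplus_u N_u$ forces $g(h(p)_v) = f(p)$ and $g(h(p)_u) = 0$ for $u \neq v$. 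Hence $g(\psi(p)) = g(h(p)_v) = f(p)$, as required.

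For part 2, given a surjective graded morphism $g: M \to N$ and a graded morphism $f: P \to N$, projectivity of $P$ as a plain $R$-module gives an $R$-linear $h: P \to M$ with $g \circ h = f$; applying part 1 to $h$ produces the desired graded lift $\psi$. Part 3 is then immediate: a graded free module admits in particular an $R$-basis, hence is free and therefore projective, so part 2 applies.

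The only nontrivial step I foresee is the $R$-linearity verification in part 1; everything else is formal. The subtlety there is that $h$ is merely $R$-linear, not graded, so one cannot conclude $R$-linearity of $\psi$ directly from the definition on homogeneous elements without invoking the grading compatibility $R_w M_u \subseteq M_{w+u}$. Once one recognizes that extracting the degree-$v$ component is $R_w$-equivariant when shifted by $w$, the rest of the argument is a direct bookkeeping exercise.
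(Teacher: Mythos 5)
Your proof is correct and follows essentially the same route as the paper: both define the graded lift $\psi$ on a homogeneous $p\in P_v$ as the degree-$v$ component of $h(p)$, extend additively, and verify gradedness, $R$-linearity, and $g\circ\psi=f$ using uniqueness of homogeneous decompositions. Your $R$-linearity check (reduce to homogeneous $r\in R_w$, use that $u\mapsto w+u$ is injective so $rh(p)_v$ is the full $(w+v)$-component of $h(rp)$) is a somewhat tidier packaging of the same bookkeeping the paper spells out.
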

\begin{proof} Let us start with the proof of the first part: for this, let $y \in P$. Then $\psi(y) \in M$ is constructed as follows: we have $y= \sum_{v \in {\mathbb{N}^n}} y_v$ for unique  $y_v \in P_v$ with $y_v=0$ for all but finitely many $v \in {\mathbb{N}^n}$. Let $v \in {\mathbb{N}^n}$. Since $f$ is graded, we have $f(y_v) \in N_v$. It holds that $h(y_v)= \sum_{w \in {\mathbb{N}^n}} z_w$ for unique $z_w \in M_w$ with $z_w=0$ for all but finitely many $w \in {\mathbb{N}^n}$. Now we have 
\begin{equation}f(y_v)= g(h(y_v))= \sum_{w \in {\mathbb{N}^n}} g(z_w) \in N_v \notag
\end{equation}
which implies that (since $g$ is graded) $g(z_w)=0$ for all $v \neq w$. So, we have $f(y_v)=g(z_v)$. Now define $\psi(y_v):=z_v$ and \begin{equation}
\psi(y):=\sum_{v \in {\mathbb{N}^n}} \psi(y_v)=\sum_{v \in {\mathbb{N}^n}} z_v \notag.
\end{equation}
This defines a set theoretic map $\psi: P \to M$. We have
\begin{align}\label{all sums fin eq2}
f(y)&=f \left( \sum_{v \in {\mathbb{N}^n}} y_v \right)= \sum_{v \in {\mathbb{N}^n}} f(y_v) =\sum_{v \in {\mathbb{N}^n}} g(z_v)
\notag \\
&=\sum_{v \in {\mathbb{N}^n}} g(\psi(y_v))=g \left(\sum_{v \in {\mathbb{N}^n}} \psi(y_v) \right)=g\left( \psi \left(\sum_{v \in {\mathbb{N}^n}} y_v \right) \right)\overset{\textrm{Def. of } \psi}{=} g(\psi(y)) .
\end{align}
where all sums in (\ref{all sums fin eq2}) are finite. It remains to show that $\psi$ is a graded $R$-module homomorphism. $\psi$ is $R$-linear: let $r \in R$ and $y,y' \in P$. We have
\begin{equation}\label{all sums fin eq}
ry+y'= \sum_{v \in {\mathbb{N}^n}} r y_v + \sum_{w \in {\mathbb{N}^n}} y'_w = \sum_{w \in {\mathbb{N}^n}} \sum_{v \in {\mathbb{N}^n}} r_v y_w + \sum_{w \in {\mathbb{N}^n}} y'_w =\sum_{w \in {\mathbb{N}^n}} \sum_{v \in {\mathbb{N}^n}} (r_v y_w + y'_w)
\end{equation}
where all sums in (\ref{all sums fin eq}) are finite. We claim that if $\psi$ maps $y_w$ to $z_w$ and $y'_w$ to $z'_w$, then $\psi$ maps $r_v y_w+y'_w$ to $r_v z_w+z'_w$. To prove this claim, we go through the construction of $\psi$ step by step: We have $f(r_v y_w)=r_v f(y_w) \in N_{v+w}$ and 
\begin{equation}\label{sums finite5}
h(r_v y_w)=r_v h(y_w)= r_v \sum_{w \in {\mathbb{N}^n}} z_w=\sum_{w \in {\mathbb{N}^n}} r_v z_w
\end{equation}
where all sums in (\ref{sums finite5}) are finite. Now we claim that $\psi(r_v y_w)=r_v\psi(y_w)=r_v z_w$. We have $f(r_v y_w+y'_w)=r_v f(y_w)+f(y'_w) \in N_{v+w}$ and
\begin{equation}\label{all sums fin3}
h(r_v y_w+y'_w)=r_v h(y_w)+h(y'_w)= r_v \sum_{u \in {\mathbb{N}^n}} z_u+ \sum_{u \in {\mathbb{N}^n}} z'_u= \sum_{u \in {\mathbb{N}^n}} (r_v z_u + z'_u)
\end{equation} 
where all sums in (\ref{all sums fin3}) are finite. Hence,
\begin{equation} f(r_v y_w+y'_w)= g(h(r_v y_w+y'_w))= \sum_{u \in {\mathbb{N}^n}} g(r_v z_u+z'_u) \in N_{v+w} \notag
\end{equation}
which shows that (since $g$ is graded) $g(r_v z_u+z'_u)=0$ for all $u \neq w$. So, if $\psi$ maps $y_w$ to $z_w$ and $y'_w$ to $z'_w$, then $\psi$ maps $r_v y_w+y'_w$ to $r_v z_w+z'_w$ which shows that
\begin{align}\label{all sums fin4}
\psi(ry+y')&= \psi \left( \sum_{w \in {\mathbb{N}^n}} \sum_{v \in {\mathbb{N}^n}} r_v y_w+y'_w \right) \overset{\textrm{Def. of } \psi}{=}  \sum_{w \in {\mathbb{N}^n}} \sum_{v \in {\mathbb{N}^n}} \psi(r_v y_w +y'_w ) \notag \\
&=\sum_{w \in {\mathbb{N}^n}} \sum_{v \in {\mathbb{N}^n}} (r_v z_w +z'_w) = r \sum_{w \in {\mathbb{N}^n}}  z_w+ \sum_{w \in {\mathbb{N}^n}} z'_w  \overset{\textrm{Def. of } \psi}{=} r \psi(y)+ \psi(y')
\end{align}
where all sums in (\ref{all sums fin4}) are finite. Hence, $\psi$ is $R$-linear. $\psi$ is graded by construction.

The second part follows immediately from the first part. For the third part, assume that $P$ is graded free. Then $P$ is free as an $R$-module and hence projective. Thus, $P$ is graded projective by $2$.
\end{proof}

The following proposition can be found in \cite[Thm.~6]{Carlsson2009} (the authors do not give a proof).
\begin{Prop}[{{\cite[Thm.~6]{Carlsson2009}}}] \label{reflect isos} Let $F, F' \in \mathbf{Grf}_n(A_n^{\mathbb{F}})$ be graded free and let ${\psi: F \to F'}$ be a graded $A_n^{\mathbb{F}}$-module homomorphism. If 
\begin{equation}
\mathrm{id}_{\mathbb{F}} \otimes_{A_n^{\mathbb{F}}} \psi:\mathbb{F} \otimes_{A_n^{\mathbb{F}}} F \xlongrightarrow{\sim} \mathbb{F} \otimes_{A_n^{\mathbb{F}}} F' \notag
\end{equation}
is an isomorphism of $n$-graded $\mathbb{F}$-vector spaces, then $\psi$ is an isomorphism.
\end{Prop}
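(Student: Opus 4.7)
The plan is to first show that $\psi$ is surjective by applying the $n$-graded Nakayama Lemma (\cref{graded Naka}) to the cokernel, and then to deduce injectivity by reducing $\psi$ to a surjective endomorphism of a Noetherian module and invoking \cref{Surj Noeth}.

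For surjectivity, I would set $C := F'/\mathrm{im}(\psi)$. Since $\psi$ is graded, $\mathrm{im}(\psi) \subseteq F'$ is a graded submodule by \cref{basics about graded}, so $C \in \mathbf{Grf}_n(A_n^{\mathbb{F}})$. Right exactness of $\mathbb{F} \otimes_{A_n^{\mathbb{F}}} -$ applied to $F \xrightarrow{\psi} F' \to C \to 0$, together with the fact that $\mathrm{id}_{\mathbb{F}} \otimes_{A_n^{\mathbb{F}}} \psi$ is surjective by hypothesis, would force $\mathbb{F} \otimes_{A_n^{\mathbb{F}}} C = 0$, i.e. $C/\mathfrak{m}_n^{\mathbb{F}} C = 0$. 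The first part of \cref{graded Naka} applied to the empty set of (homogeneous) generators then yields $C = 0$, so $\psi$ is surjective.

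For injectivity, I would exploit that $F$ and $F'$ are abstractly isomorphic as graded modules. Indeed, since $\mathrm{id}_{\mathbb{F}} \otimes_{A_n^{\mathbb{F}}} \psi$ is a graded isomorphism and the canonical identifications $\mathbb{F} \otimes_{A_n^{\mathbb{F}}} \mathcal{F}_n^{\mathbb{F}}(\xi) \cong \mathcal{V}_n^{\mathbb{F}}(\xi)$ are graded, the uniqueness of the type in \cref{free hull vec} forces $\Xi_n^{\mathbb{F}}(F) = \Xi_n^{\mathbb{F}}(F')$, and \cref{graded free isom multiset} then provides a graded isomorphism $\varphi: F' \xlongrightarrow{\sim} F$. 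The composition $\varphi \circ \psi: F \to F$ is a surjective graded $A_n^{\mathbb{F}}$-module endomorphism; as $F$ is finitely generated over the Noetherian ring $A_n^{\mathbb{F}}$, it is Noetherian, so \cref{Surj Noeth} implies $\varphi \circ \psi$ is an automorphism, whence $\psi$ is injective. Combined with Step~1, this shows $\psi$ is an isomorphism.

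The routine part is the surjectivity argument, which is a direct application of graded Nakayama to the cokernel. The step I expect to require the most care is the identification of $F$ with $F'$: one must ensure that the tensor-level isomorphism is genuinely graded and that the uniqueness of the type (i.e.\ the fact that graded free modules are classified up to graded isomorphism by their multiset of generator degrees) is being legitimately used; once this is in place, the standard surjective-endomorphism-of-a-Noetherian-module trick closes the argument.
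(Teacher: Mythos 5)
Your proof is correct, but it takes a genuinely different route from the paper's. The paper picks a homogeneous $A_n^{\mathbb{F}}$-basis $B$ of $F$, pushes it through the commutative square relating $\psi$ and $\mathrm{id}_{\mathbb{F}} \otimes_{A_n^{\mathbb{F}}} \psi$, and shows via \cref{graded Naka} and a chain of cardinality (in)equalities that $\psi(B)$ is a minimal set of homogeneous generators of $F'$ with $\abs{\psi(B)} = \abs{B}$; it then invokes \cref{minimal free graded basis} to conclude that $\psi(B)$ is a basis, so $\psi$ is an isomorphism. You instead split the statement cleanly into surjectivity and injectivity. For surjectivity you run the standard cokernel argument: right exactness forces $\mathbb{F} \otimes_{A_n^{\mathbb{F}}} C = 0$ for $C = F'/\mathrm{im}(\psi)$, and \cref{graded Naka} (with the empty generating set) kills $C$; this is a more abstract packaging of the fact the paper extracts from its basis chase. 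For injectivity you first deduce from \cref{free hull vec} and \cref{graded free isom multiset} that $F$ and $F'$ share the same type, hence are abstractly graded-isomorphic, and then feed $\varphi \circ \psi$ into \cref{Surj Noeth}. That last step is the same engine the paper uses, just hidden one layer deeper inside \cref{minimal free graded basis}. What your approach buys is modularity: surjectivity and injectivity are handled by separate, reusable arguments, and the type-uniqueness machinery does the book-keeping that the paper handles by counting basis images. What the paper's approach buys is concreteness: it produces the basis $\psi(B)$ explicitly and never needs to invoke an auxiliary isomorphism $\varphi$ unrelated to $\psi$. Both are legitimate proofs resting on the same three pillars (right exactness, $n$-graded Nakayama, surjective endomorphisms of Noetherian modules).
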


\begin{proof} We have a commutative diagram in $\mathbf{Grf}_n(\mathbb{F})$
\begin{equation}
\begin{tikzcd}[row sep=1cm, column sep = 1.5cm]
\mathbb{F} \otimes_{A_n^{\mathbb{F}}} F \arrow[d,equals] \arrow[r, "\mathrm{id}_{\mathbb{F}} \otimes_{A_n^{\mathbb{F}}} \psi"] & \mathbb{F} \otimes_{A_n^{\mathbb{F}}} F' \arrow[d,equals] \\
A_n^{\mathbb{F}}/\mathfrak{m}^{\mathbb{F}}_n \otimes_{A_n^{\mathbb{F}}} F \arrow[r] \arrow[d] & A_n^{\mathbb{F}}/\mathfrak{m}^{\mathbb{F}}_n \otimes_{A_n^{\mathbb{F}}} F' \arrow[d] \\
F/\mathfrak{m}^{\mathbb{F}}_n F \arrow[r]  &  F' /\mathfrak{m}^{\mathbb{F}}_n F' 
\end{tikzcd} \notag
\end{equation}
where all arrows are graded isomorphisms. We can indentify $\mathbb{F} \otimes_{A_n^{\mathbb{F}}} F=F/\mathfrak{m}^{\mathbb{F}}_n F$ as well as $\mathbb{F} \otimes_{A_n^{\mathbb{F}}} F'=F'/\mathfrak{m}^{\mathbb{F}}_n F'$ along the vertical graded isomorphisms.
We denote by ${\pi: F \to F/\mathfrak{m}^{\mathbb{F}}_n F = \mathbb{F} \otimes_{A_n^{\mathbb{F}}} F}$ and $\pi':  F' \to  F'/\mathfrak{m}^{\mathbb{F}}_n F' =\mathbb{F} \otimes_{A_n^{\mathbb{F}}} F'$ the canonical projections. We obtain a commutative diagram
\begin{equation}
\begin{tikzcd}[row sep=1cm, column sep = 1.5cm]
F \arrow[d, "\pi"]\arrow[r,"\psi"] & F' \arrow[d, "\pi'"] \\
\mathbb{F} \otimes_{A_n^{\mathbb{F}}} F  \arrow[r, "\mathrm{id}_{\mathbb{F}} \otimes_{A_n^{\mathbb{F}}} \psi"] & \mathbb{F} \otimes_{A_n^{\mathbb{F}}} F' 
\end{tikzcd} \notag
\end{equation} 
Let $B \subseteq F$ be a homogeneous $A_n^{\mathbb{F}}$-basis. Then $B$ is mapped to a homogeneous $\mathbb{F}$-basis $\pi(B) \subseteq \mathbb{F} \otimes_{A_n^{\mathbb{F}}} F $ by \cref{graded Naka} and we have $\abs{\pi(B)}=\abs{B}$. Hence, 
$C:=(\mathrm{id}_{\mathbb{F}} \otimes_{A_n^{\mathbb{F}}} \psi)(\pi(B)) $
is a homogeneous $\mathbb{F}$-basis of $\mathbb{F} \otimes_{A_n^{\mathbb{F}}} F'$. By \cref{graded Naka}, $\psi(B)$ is a generating set of $F'$ since $\pi'(\psi(B))=C$ is an $\mathbb{F}$-basis of $\mathbb{F} \otimes_{A_n^{\mathbb{F}}} F'$. We have
\begin{equation}
\abs{B}  \geq \abs{\psi(B)}\geq \abs{\pi'(\psi(B))} \geq \abs{C}=\abs{(\mathrm{id}_{\mathbb{F}} \otimes_{A_n^{\mathbb{F}}} \psi)(\pi(B))}= \abs{\pi(B)}=\abs{B} \notag.
\end{equation}
Hence, $\abs{\psi(B)}=\abs{\pi'(\psi(B))}=\abs{C}=\abs{B}$. Using \cref{graded Naka}, this implies that $\psi(B)$ is minimal. Hence, $\psi(B)$ is a homogeneous $A_n^{\mathbb{F}}$-basis of $F'$ by \cref{minimal free graded basis}. Since $\abs{\psi(B)}=\abs{B}$ this shows that $\psi$ is a graded isomorphism.
\end{proof}

Now we are ready to prove the uniqueness of free hulls:

\begin{proof}[Proof of \ref{Free hull thm}, uniqueness] Let $(F,p),(F',p')$ be free hulls of $M$. Then $F$ and $F'$ are graded projective by \cref{graded free projective}. Thus, there exists a graded $A_n^{\mathbb{F}}$-module homomorphism $ {\psi: F \to F'}$ such that $p' \circ \psi =p$. Since $(F,p),(F',p')$ are free hulls of $M$, $\mathrm{id}_{\mathbb{F}} \otimes_{A_n^{\mathbb{F}}} p$ and $\mathrm{id}_{\mathbb{F}} \otimes_{A_n^{\mathbb{F}}} p'$ are isomorphisms of $n$-graded $\mathbb{F}$-vector spaces. Thus, $\mathrm{id}_{\mathbb{F}} \otimes_{A_n^{\mathbb{F}}} \psi=(\mathrm{id}_{\mathbb{F}} \otimes_{A_n^{\mathbb{F}}} p' )^{-1} \circ \mathrm{id}_{\mathbb{F}} \otimes_{A_n^{\mathbb{F}}} p$ is an ismorphism of $n$-graded $\mathbb{F}$-vector spaces. Hence, $\psi$ is a graded isomorphism by \cref{reflect isos}. 
\end{proof}

\subsection{Complete classification}\label{complete classification section}
Now that we have proven the main theorem on free hulls, we are ready to proceed with the complete classification of objects in $\mathbf{Grf}_n(A_n^{\mathbb{F}})/ _{\cong}$. Here $\mathbf{Grf}_n(A_n^{\mathbb{F}})/ _{\cong}$ denotes the class of all isomorphim classes $[M]$ of objects $M \in\mathbf{Grf}_n(A_n^{\mathbb{F}})$. To do this, we first need some preparations.
\subsubsection{A discrete invariant}
\begin{Prop}\label{is a set} $\mathbf{Grf}_n(A_n^{\mathbb{F}})/_{\cong}$ is a set.
\end{Prop}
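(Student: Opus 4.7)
The plan is to exhibit $\mathbf{Grf}_n(A_n^{\mathbb{F}})/_{\cong}$ as the image of a function whose domain is manifestly a set. The key input is the existence part of the main theorem on free hulls (\cref{Free hull thm}): every $M \in \mathbf{Grf}_n(A_n^{\mathbb{F}})$ admits a free hull $p: \mathcal{F}_n^{\mathbb{F}}(\xi) \to M$ for some finite $n$-dimensional multiset $\xi$, and in particular $M \cong \mathcal{F}_n^{\mathbb{F}}(\xi)/\ker(p)$ as $n$-graded $A_n^{\mathbb{F}}$-modules, where $\ker(p) \subseteq \mathcal{F}_n^{\mathbb{F}}(\xi)$ is a graded submodule by \cref{basics about graded}.

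First I would fix, once and for all, the underlying set of $\mathcal{F}_n^{\mathbb{F}}(\xi)$ as a concrete set (for instance the direct sum of shifted copies of $A_n^{\mathbb{F}}$ as defined in \cref{Defi Free objects}). Next I would observe that the collection
\begin{equation}
\mathcal{M}_n := \left\{\,\xi \,\middle|\, \xi \text{ is a finite $n$-dimensional multiset}\,\right\} \notag
\end{equation}
is a set, since each finite $n$-dimensional multiset is a finite subset of $\mathbb{N}^n \times \mathbb{N}_{\geq 1}$, and the collection of finite subsets of a fixed set is again a set. For each $\xi \in \mathcal{M}_n$, the collection
\begin{equation}
\mathcal{S}(\xi) := \left\{\,L \subseteq \mathcal{F}_n^{\mathbb{F}}(\xi) \,\middle|\, L \text{ is a graded $A_n^{\mathbb{F}}$-submodule}\,\right\} \notag
\end{equation}
is a subset of the power set $\mathcal{P}(\mathcal{F}_n^{\mathbb{F}}(\xi))$ and therefore a set.

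Then I would consider the disjoint union $T := \bigsqcup_{\xi \in \mathcal{M}_n} \mathcal{S}(\xi)$, which is again a set, and the assignment
\begin{equation}
\Phi: T \longrightarrow \mathbf{Grf}_n(A_n^{\mathbb{F}})/_{\cong}, \qquad (\xi, L) \longmapsto [\mathcal{F}_n^{\mathbb{F}}(\xi)/L]. \notag
\end{equation}
By \cref{Free hull thm} every class $[M] \in \mathbf{Grf}_n(A_n^{\mathbb{F}})/_{\cong}$ lies in the image of $\Phi$ (take $L = \ker(p)$ for a free hull $p:\mathcal{F}_n^{\mathbb{F}}(\xi) \to M$), so $\Phi$ is surjective. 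Since the image of a function with set-sized domain is itself a set (by the axiom of replacement), $\mathbf{Grf}_n(A_n^{\mathbb{F}})/_{\cong} = \mathrm{im}(\Phi)$ is a set.

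There is essentially no obstacle here: the statement is a standard size-bookkeeping argument, and all the genuine content is already packed into the existence of free hulls. The only subtlety worth stating explicitly is that one must choose the $\mathcal{F}_n^{\mathbb{F}}(\xi)$ to have a fixed underlying set (rather than leaving them as isomorphism classes) so that $\mathcal{P}(\mathcal{F}_n^{\mathbb{F}}(\xi))$ is a genuine set and not a class.
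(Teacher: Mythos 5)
Your proof is correct, and it rests on the same fundamental idea as the paper---parameterizing isomorphism classes by pairs $(\xi,L)$ with $L$ a graded submodule of the fixed model $\mathcal{F}_n^{\mathbb{F}}(\xi)$---but the way you run the size argument is genuinely different and slightly leaner. The paper organizes by type: for each finite $n$-dimensional multiset $\xi$ it defines $I_n^{\mathbb{F}}(\xi)$ (classes of type $\xi$) and $Z_n^{\mathbb{F}}(\xi)$ (graded submodules of $\mathcal{F}_n^{\mathbb{F}}(\xi)$), and builds a well-defined \emph{injection} $I_n^{\mathbb{F}}(\xi)\hookrightarrow Z_n^{\mathbb{F}}(\xi)$ sending $[M]\longmapsto \ker(p_M)$ for a free hull $p_M$. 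Establishing well-definedness of that map requires the \emph{uniqueness} part of \cref{Free hull thm}: one must verify that $\ker(p_M)$ does not depend on the choice of free hull nor on the chosen representative of $[M]$, which the paper does by a diagram chase using the lifting isomorphism. Your proof instead exhibits $\mathbf{Grf}_n(A_n^{\mathbb{F}})/_{\cong}$ as the image of a \emph{surjection} $\Phi$ from the set $T=\bigsqcup_\xi \mathcal{S}(\xi)$ and invokes replacement, which sidesteps any well-definedness issue and needs only the \emph{existence} of free hulls. In fact you could weaken the input further: by \cref{Homog generators} every $M\in\mathbf{Grf}_n(A_n^{\mathbb{F}})$ is a graded quotient of some $\mathcal{F}_n^{\mathbb{F}}(\xi)$, which already makes $\Phi$ surjective without any appeal to minimality. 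For the bare size claim your route is therefore the more economical one; what the paper's injection $[M]\mapsto\ker(p_M)$ buys is that it is exactly the observation reused in the classification bijection of \cref{classification bijection}, so the extra work there is not wasted.
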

\begin{proof} For an $n$-dimensional multiset $\xi $, let
\begin{equation}
I_n^{\mathbb{F}}(\xi):=\{ [M] \in \mathbf{Grf}_n(A_n^{\mathbb{F}})/_{\cong} \mid \Xi_n^{\mathbb{F}}(M)=\xi \} \notag
\end{equation}
and 
\begin{equation}
Z_n^{\mathbb{F}}(\xi):=\{ L \subseteq \mathcal{F}_n^{\mathbb{F}}(\xi) \mid L \textrm{ is a graded submodule} \} \notag
\end{equation}
For $[M] \in I(\xi)$, let $p_M: \mathcal{F}_n^{\mathbb{F}}(\xi) \to M$ be a free hull. If ${p_M': \mathcal{F}_n^{\mathbb{F}}(\xi) \to M}$ is another free hull, then, using \cref{Free hull thm}, there exists an automorphism $\psi \in \mathrm{Aut}(\mathcal{F}_n^{\mathbb{F}}(\xi)$ such that the diagram
\begin{equation}
\begin{tikzcd} [row sep=1.5cm, column sep = 1.5cm]
\mathcal{F}_n^{\mathbb{F}}(\xi) \arrow[rd,"p_M"'] \arrow[rr, "\psi", "\sim"'] && \mathcal{F}_n^{\mathbb{F}}(\xi)  \arrow[ld, "p'_M "]   \\
 &M&
\end{tikzcd} \notag
\end{equation}
commutes. Thus, $\mathrm{ker}(p_M)=\mathrm{ker}(p'_M)$. If $N \in [M]$, then there exists a graded isomorphism $\varphi:N \xlongrightarrow{\sim} M$. Let $p_N: \mathcal{F}_n^{\mathbb{F}}(\xi) \to N$ be a free hull. Then $\varphi \circ p_N$ is a free hull of $M$. Now, using \cref{Free hull thm}, there exists an automorphism $\psi \in \mathrm{Aut}(\mathcal{F}_n^{\mathbb{F}}(\xi)$ such that \begin{equation}
\begin{tikzcd}[row sep=1.5cm, column sep = 1.5cm]
      \mathcal{F}_n^{\mathbb{F}} (\xi_0) \arrow[r, "\psi", "\sim"']  \arrow[d,"p_N"]  &  \mathcal{F}_n^{\mathbb{F}}(\xi_0) \arrow[d,"p_M"]  \\
     N \arrow[r, "\varphi",  "\sim"'] &  M
\end{tikzcd} \notag
\end{equation}
commutes. This shows that $\mathrm{ker}(p_N)=\mathrm{ker}(p_M)$. If $[N] \neq [M]$, then we clearly have that $\mathrm{ker}(p_N) \neq \mathrm{ker}(p_M)$.

So, we obtain a well-defined class theoretic injection
\begin{equation}
I_n^{\mathbb{F}}(\xi) \longrightarrow Z_n^{\mathbb{F}}(\xi), \quad [M] \longmapsto \mathrm{ker}(p_M) \notag.
\end{equation}
Since $Z(\xi)$ is a set, $I(\xi)$ has to be a set. Hence, \begin{equation}\mathbf{Grf}_n(A_n^{\mathbb{F}})/_{\cong}= \bigsqcup_{\xi } I(\xi) \notag
\end{equation}
is a set.
\end{proof}

The next definition is as in \cite[Sec.~4.5]{Carlsson2009}. We just have added a some details.
Since free hulls are unique up to isomorphism by \cref{Free hull thm}, the next definition makes sense. 
\begin{Defi}\label{Defi type 0,1} Let $M \in \mathbf{Grf}_n(A_n^{\mathbb{F}})$, let $p_0: \mathcal{F}_n^{\mathbb{F}}(\xi_0) \to M$ be a free hull of $M$ and let $ p_1: \mathcal{F}_n^{\mathbb{F}}(\xi_1) \to \mathrm{ker}(p_0)$ be a free hull of $\mathrm{ker}(p_0)$. Define \begin{equation}\Xi_{n,0}^{\mathbb{F}}(M):=\Xi_{n}^{\mathbb{F}}(M)=\xi_0 \quad \textrm{and} \quad \Xi_{n,1}^{\mathbb{F}}(M):=\Xi_{n}^{\mathbb{F}}(\mathrm{ker}(p_0))=\xi_1 \notag.
\end{equation}
Denote by $\mathcal{T}_{n}$ the set of all pairs $(\xi_0, \xi_1)$ of finite $n$-dimensional multisets. Note that $\mathcal{T}_{n} $ is countable. Let $(\xi_0,\xi_1) \in \mathcal{T}_n$. Define \begin{align}
I_n^{\mathbb{F}}(\xi_0,\xi_1)&:= \{ [M]\in \mathbf{Grf}_n(A_n^{\mathbb{F}}) /_{\cong} \mid {\Xi_{n,0}^{\mathbb{F}}}(M) =\xi_0 \textrm{ and } {\Xi_{n,1}^{\mathbb{F}}}(M) =\xi_1  \} \notag.
\end{align}
Note that
\begin{equation}
\mathbf{Grf}_n(A_n^{\mathbb{F}})/_{\cong}=\bigsqcup_{ (\xi_0,\xi_1) \in \mathcal{T}_{n}} I_n^{\mathbb{F}}(\xi_0,\xi_1) \notag.
\end{equation}
For $\mathbb{F} \in \mathcal{K}$, define
\begin{equation}
 \boldsymbol{\Xi}_{n}^{\mathbb{F}}: \mathbf{Grf}_n(A_n^{\mathbb{F}})/_{\cong} \longrightarrow \mathcal{T}_{n}  , \quad [M] \longmapsto  \left(\Xi_{n,0}^{\mathbb{F}}(M) , \Xi_{n,1}^{\mathbb{F}}(M) \right)\notag.
\end{equation}
Then $\left \{  \boldsymbol{\Xi}_{n}^{\mathbb{F}} \right \}_{\mathbb{F} \in \mathcal{K}}$ defines a discrete class of invariants.
\end{Defi}

$\left \{ \boldsymbol{\Xi}_{n}^{\mathbb{F}} \right \}_{\mathbb{F} \in \mathcal{K}}$ can not expected to be complete. In \cref{Dim 2 Tensor Ex}, the incompleteness is illustrated for $n=2$. In dimension one, $\xi_0=\Xi_{1,0}^{\mathbb{F}}(M)$ and $\xi_1=\Xi_{1,1}^{\mathbb{F}}(M)$ may be viewed as the start and end points of the \textit{barcode} of $M$ (see \cref{One dim pers sec}). The \textit{barcode} is a complete and discrete invariant for $\mathbf{Grf}_n(A_n^{\mathbb{F}})/_{\cong}$ but needs in addition to $(\xi_0, \xi_1)$ information about which start point in $\xi_0$ corresponds to which end point in $\xi_1$ in order to determine the isomorphism class of $M$ completely. This is the reason why $\left \{ \boldsymbol{\Xi}_{1}^{\mathbb{F}} \right \}_{\mathbb{F} \in \mathcal{K}}$ is not complete.

\begin{Rem}\label{min grad res} Let $M\in \mathbf{Grf}_n(A_n^{\mathbb{F} })$. Let $p_0:\mathcal{F}_n^{\mathbb{F}}(\xi_0)\to M$ be a free hull of $M$ (see \cref{Free hull thm}). Let $K_{0}:= \mathrm{ker}(p_0)\in \mathbf{Grf}_n(A_n^{\mathbb{F} })$. Now let $p_1:\mathcal{F}_n^{\mathbb{F}}(\xi_1) \to K_0$ be a free hull of $K_0$.
If we proceed inductively, we obtain a minimal graded free resolution
\begin{equation}\label{min grad reseq}
\begin{tikzcd}
\dots \arrow[r,"p_{i+1}"] & \mathcal{F}_n^{\mathbb{F}}(\xi_{i}) \arrow[r,"p_{i}"]  & \dots \arrow[r,"p_{1}"] &\mathcal{F}_n^{\mathbb{F}}(\xi_0) \arrow[r,"p_0"] & M \arrow[r] & 0
\end{tikzcd}  
\end{equation}
of $M$, where minimal means that $(\mathcal{F}_n^{\mathbb{F}}(\xi_i),p_i)$ is a free hull of $\mathrm{im}(p_i)$. Using (\ref{min grad reseq}), we can assign a sequence of finite $n$-dimensional multisets $(\xi_0, \xi_1, \xi_2, \dots)$ to every $[M]\in \mathbf{Grf}_n(A_n^{\mathbb{F}})/_{\cong}$. Now the question is if (\ref{min grad reseq}) has finite length, where finite means that for some $m \in \mathbb{N}$, $\mathcal{F}_n^{\mathbb{F}}(\xi_i)=0$ for all $i > m$. By \cite[Thm.~3.37] {miller2004combinatorial}, this is true if $M$ is \textit{Cohen-Macaulay}. In this case, a bound of the length is given by $n-r$, where $r$ is the \textit{dimension} of $M$. Note that in the case $n=1$ (recall that $A_1^{\mathbb{F}} =\mathbb{F}[x]$), any minimal graded free resolution of a finitely generated one-graded $\mathbb{F}[x]$-module has length one. This due to the fact that $\mathbb{F}[x]$ is a principal ideal domain and that in this setting, every submodule of a free module is free, which implies that submodules of graded free $\mathbb{F}[x]$-modules are graded free (see \cref{graded free mult}). 
\end{Rem}
The goal for the rest of this section is to give a complete classification of $I_n^{\mathbb{F}}(\xi_0, \xi_1)$, which then leads to a complete classification of $ \mathbf{Grf}_n(A_n^{\mathbb{F}})/_{\cong}$ since 
\begin{equation}
\mathbf{Grf}_n(A_n^{\mathbb{F}})/_{\cong}=\bigsqcup_{ (\xi_0,\xi_1) \in \mathcal{T}_{n}} I_n^{\mathbb{F}}(\xi_0,\xi_1) \notag.
\end{equation}
Now one might ask the question why we do not consider 
\begin{align}
I_n^{\mathbb{F}}(\xi_0, \dots, \xi_i)&:=\{ [M]\in \mathbf{Grf}_n(A_n^{\mathbb{F}}) /_{\cong} \mid {\Xi_{n,j}^{\mathbb{F}}}(M) =\xi_j \, \forall \, j \in \{1, \dots, i\} \} \notag
\end{align}
for $i \in \mathbb{N}$ or
\begin{align}
I_n^{\mathbb{F}}(\xi_0, \xi_1, \xi_2, \dots)&:=\{ [M]\in \mathbf{Grf}_n(A_n^{\mathbb{F}}) /_{\cong} \mid {\Xi_{n,j}^{\mathbb{F}}}(M) =\xi_j \, \forall \, j \in \mathbb{N} \} \notag.
\end{align}
For $n=1$, $I_1^{\mathbb{F}}(\xi_0, \xi_1)$ is already the most accurate approach in terms of minimal graded free resolutions (see \cref{min grad res}). But what is about $i=0$? As we will see in \cref{One dim pers sec}, the space of \textit{barcodes over} $(\xi_0, \xi_1)$ yields a complete and discrete classification for $I_1^{\mathbb{F}}(\xi_0, \xi_1)$. Such a \textit{barcode} needs start and end points as input. If $i=0$ there would be no end points. In \cref{Chapter 2}, we will see that it is possible to obtain a nice parameterization of $I_n^{\mathbb{F}}(\xi_0, \xi_1)$ for all $n \in \mathbb{N}_{\geq 1}$. Nevertheless, it would be possible to accomplish the complete classification, carried out below for $I_n^{\mathbb{F}}(\xi_0,\xi_1)$, for $I_n^{\mathbb{F}}(\xi_0, \dots, \xi_i)$ and $I_n^{\mathbb{F}}(\xi_0, \xi_1, \xi_2, \dots)$ as well.

\subsubsection{The tensor-condition}\label{Tensor condition section}

For the  following, let $\xi_0=(V_0, \mu_0)$ and $\xi_1=(V_1, \mu_1)$ be two $n$-dimensional multisets.
\begin{Defi}[Tensor-condition]\label{defi tensor cond} Let $L \subseteq \mathcal{F}_n^{\mathbb{F}}(\xi_0)$ be a graded $A_n^{\mathbb{F}}$-submodule. We say that $L$ satisfies the \textit{tensor-condition} if \begin{equation}
\mathrm{im}\left(\mathrm{id}_{\mathbb{F}}\otimes_{A_n^{\mathbb{F}}} i_L : \mathbb{F} \otimes_{A_n^{\mathbb{F}}} L \longrightarrow \mathbb{F} \otimes_{A_n^{\mathbb{F}}} \mathcal{F}_n^{\mathbb{F}}(\xi_0) \right)=0 \notag
\end{equation}
where $i_L: L \lhook\joinrel\xlongrightarrow{\subseteq} \mathcal{F}_n^{\mathbb{F}}(\xi_0)$ denotes the canonical inclusion.
\end{Defi}
If $L \subseteq \mathcal{F}_n^{\mathbb{F}}(\xi_0)$ is a graded $A_n^{\mathbb{F}}$-submodule which satisfies the tensor-condition, then $(\mathcal{F}_n^{\mathbb{F}}(\xi_0), \pi_L)$ is a free hull of $\mathcal{F}_n^{\mathbb{F}}(\xi_0)/L$. Here $\pi_L:\mathcal{F}_n^{\mathbb{F}}(\xi_0) \to \mathcal{F}_n^{\mathbb{F}}(\xi_0)/L$ denotes the canonical projection. The reason is that if we tensor the graded short exact sequence
\begin{equation}
0 \longrightarrow L \xlongrightarrow{i_L} \mathcal{F}_n^{\mathbb{F}}(\xi_0) \longrightarrow \mathcal{F}_n^{\mathbb{F}}(\xi_0)/L \longrightarrow 0 \notag
\end{equation}
we obtain a graded short exact sequence
\begin{equation}
\mathbb{F} \otimes_{A_n^{\mathbb{F}}} L \xlongrightarrow{ \mathrm{id}_{\mathbb{F}} \otimes_{A_n^{\mathbb{F}}} i_{L}} \mathbb{F} \otimes_{A_n^{\mathbb{F}}} \mathcal{F}_n^{\mathbb{F}}(\xi_0)\xlongrightarrow{ \mathrm{id}_{\mathbb{F}} \otimes_{A_n^{\mathbb{F}}} \pi_{L}}  \mathbb{F} \otimes_{A_n^{\mathbb{F}}} \mathcal{F}_n^{\mathbb{F}}(\xi_0)/L \longrightarrow 0 \notag
\end{equation}
since tensoring is right exact. Now the tensor condition tells us that 
\begin{equation}
\mathrm{im}\left(\mathrm{id}_{\mathbb{F}} \otimes_{A_n^{\mathbb{F}}} i_{L}\right)= \mathrm{ker}\left( \mathrm{id}_{\mathbb{F}} \otimes_{A_n^{\mathbb{F}}} \pi_L \right)=0 \notag.
\end{equation}
Hence, $\mathrm{id}_{\mathbb{F}} \otimes_{A_n^{\mathbb{F}}} \pi_{L} $ has to be an isomorphism, which shows that $(\mathcal{F}_n^{\mathbb{F}}(\xi_0), \pi_L)$ is a free hull of $\mathcal{F}_n^{\mathbb{F}}(\xi_0)/L$. In \cref{counterex}, we will give an equivalent characterization of the tensor-condition.

\subsubsection{Complete classification: the main result}
The next definition is from \cite[Sec.~4.5]{Carlsson2009} with the difference that in \cite[Sec.~4.5]{Carlsson2009} the \textit{tensor-condition} is missing. 

\begin{Defi} Denote by $S_n^{\mathbb{F}}(\xi_0,\xi_1)$ the set of all graded $A_n^{\mathbb{F}}$-submodules \begin{equation}
L\subseteq \mathcal{F}_n^{\mathbb{F}}(\xi_0) \notag
\end{equation}
which satisfy the tensor-condition and $\Xi_{n,0}^{\mathbb{F}}(L)=\xi_1$.
\end{Defi}

For the following, we denote by $\mathrm{Aut}(\mathcal{F}_n^{\mathbb{F}}(\xi_0))$ the automorphism group of $\mathcal{F}_n^{\mathbb{F}}(\xi_0)$ in $\mathbf{Grf}_n(A_n^{\mathbb{F}})$. The next definition, proposition is from \cite[Sec.~4.5]{Carlsson2009}.

\begin{Defiprop}[{{\cite[Sec.~4.5]{Carlsson2009}}}]\label{classification surjective map} Assume that ${S_n^{\mathbb{F}}(\xi_0,\xi_1) \neq \emptyset}$. Then $\mathrm{Aut}(\mathcal{F}_n^{\mathbb{F}}(\xi_0))$ acts as a group on $S_n^{\mathbb{F}}(\xi_0,\xi_1)$ where
\begin{equation}f \cdot L := f(L) \notag
\end{equation}
for $ f \in \mathrm{Aut}(\mathcal{F}_n^{\mathbb{F}}(\xi_0))$ and ${L \in S_n^{\mathbb{F}}(\xi_0,\xi_1)}$. We obtain a well-defined set theoretic surjection
\begin{align}
\varrho^{\mathbb{F}}_{n}(\xi_0,\xi_1):S_n^{\mathbb{F}}(\xi_0,\xi_1) \longrightarrow I_n^{\mathbb{F}}(\xi_0,\xi_1), \quad L \longmapsto \left[{\mathcal{F}_n^{\mathbb{F}}(\xi_0)}/{L}\right]. \notag
\end{align}
\end{Defiprop}

Note that without the tensor-condition, the map in \cref{classification surjective map} would not be well-defined. This is due to the fact that without the tensor-condition $\Xi^{\mathbb{F}}_{n,0}(\mathcal{F}_n^{\mathbb{F}}(\xi_0)/L)$ is generally not equal to $\xi_0$. A counterexample is given in \cref{counterex}.

\begin{proof}[Proof of \cref{classification surjective map}] Let $\varrho:=\varrho^{\mathbb{F}}_{n}(\xi_0,\xi_1)$ and $\mathcal{F}_0:=\mathcal{F}_n^{\mathbb{F}}(\xi_0)$. First of all we have to check that $\varrho$ is well-defined: for this, let $L \in S_n^{\mathbb{F}}(\xi_0,\xi_1)$. The discussion after \cref{defi tensor cond} shows that $\mathcal{F}_0/L$ satisfies $\Xi_{n,0}^{\mathbb{F}}(\mathcal{F}_0/L)=\xi_0$. By definition of $S_n^{\mathbb{F}}(\xi_0, \xi_1)$, we have $\Xi_{n,0}^{\mathbb{F}}(L)=\xi_1$. Therefore, we obtain
\begin{align}
\Xi_{n,0}^{\mathbb{F}}([\mathcal{F}_0/L])&=\Xi_{n,0}^{\mathbb{F}}(\mathcal{F}_0/L)=\xi_0, \notag \\
\Xi_{n,1}^{\mathbb{F}}([\mathcal{F}_0/L])&=\Xi_{n,1}^{\mathbb{F}}(\mathcal{F}_0/L)=\Xi_{n,0}^{\mathbb{F}}(L)=\xi_1 \notag
\end{align}
which shows that $[\mathcal{F}_0/L] \in I_n^{\mathbb{F}}(\xi_0, \xi_1)$. Hence, $\varrho$ is well-defined.

$\varrho$ is surjective: let $[M]\in I_n^{\mathbb{F}}(\xi_0,\xi_1)$ and $p_0:\mathcal{F}_0 \to M$ be a free hull. Let $K_0:=\mathrm{ker}(p_0)$. We claim that $K_0 \in S_n^{\mathbb{F}}(\xi_0, \xi_1)$: since $[M] \in I_n^{\mathbb{F}}(\xi_0, \xi_1)$, we have \begin{equation}
\Xi_{n,1}(M)=\Xi_{n,0}(K_0)=\xi_1 \notag.
\end{equation}
So, it remains to show that $K_0$ satisfies the tensor condition.
$p_0$ induces a graded isomorphism $\overline{p_0}: \mathcal{F}_0/K_0 \to M$ such that the diagram \begin{equation}
\begin{tikzcd}
\mathcal{F}_0 \arrow[r, "p_0"] \arrow[d, "\pi_{K_0}"'] & M \\
\mathcal{F}_0/K_0 \arrow[ru," \overline{p_0}"'] 
\end{tikzcd} \notag
\end{equation}
commutes. Here $\pi_{K_0}: \mathcal{F}_0 \to \mathcal{F}_0/K_0$ denotes the canonical projection. Let ${q_0:=\overline{p_0}^{-1} \circ p_0}$. Then $(\mathcal{F}_0,q_0)$ is a free hull of $\mathcal{F}_0 / K_0$ and $\mathrm{ker}(q_0)=K_0$.
Thus, we obtain a graded short exact sequence
\begin{equation}
0 \longrightarrow K_0 \xlongrightarrow{i_{K_0}} \mathcal{F}_0 \xlongrightarrow{q_0} \mathcal{F}_0/K_0 \longrightarrow 0 \notag
\end{equation}
where $i_{K_0}: K_0 \lhook\joinrel\xlongrightarrow{\subseteq} \mathcal{F}_0$ denotes the canonical inclusion. Since tensoring is right exact, we obtain a graded short exact sequence
\begin{equation}
\mathbb{F} \otimes_{A_n^{\mathbb{F}}} K_0 \xlongrightarrow{ \mathrm{id}_{\mathbb{F}} \otimes_{A_n^{\mathbb{F}}} i_{K_0}} \mathbb{F} \otimes_{A_n^{\mathbb{F}}} \mathcal{F}_0 \xlongrightarrow{ \mathrm{id}_{\mathbb{F}} \otimes_{A_n^{\mathbb{F}}} q_0}  \mathbb{F} \otimes_{A_n^{\mathbb{F}}} \mathcal{F}_0/K_0 \longrightarrow 0 \notag. 
\end{equation}
 Since $(\mathcal{F}_0,q_0)$ is a free hull of $\mathcal{F}_0/K_0$,
\begin{equation}
\mathrm{im}\left(\mathrm{id}_{\mathbb{F}} \otimes_{A_n^{\mathbb{F}}} i_{K_0}\right)= \mathrm{ker}\left( \mathrm{id}_{\mathbb{F}} \otimes_{A_n^{\mathbb{F}}} q_0 \right)=0 \notag.
\end{equation}
Thus, $K_0$ satisfies the tensor condition and we conclude that $K_0 \in S^{\mathbb{F}}_{n} (\xi_0,\xi_1)$. Since $\mathcal{F}_0 /K_0 \cong M$, we have $\varrho(K_0)=[\mathcal{F}_0 /K_0]=[M]$.
\end{proof}
\newpage

With the next theorem we accomplish the main goal of this section: the complete classification of $\mathbf{Grf}_n(A_n^{\mathbb{F}})/_{\cong}$.

\begin{Thm}[{{\cite[Thm.~9]{Carlsson2009}}}]\label{classification bijection} Assume that $S_n^{\mathbb{F}}(\xi_0,\xi_1) \neq \emptyset$. The map $\varrho^{\mathbb{F}}_{n}(\xi_0,\xi_1)$ satisfies the formula \begin{equation}\varrho^{\mathbb{F}}_{n}(\xi_0,\xi_1)(f \cdot L)=\varrho^{\mathbb{F}}_{n}(\xi_0,\xi_1)(L) \notag
\end{equation}
for all $f \in \mathrm{Aut}(\mathcal{F}_n^{\mathbb{F}}(\xi_0))$ and all $L \in S_n^{\mathbb{F}}(\xi_0,\xi_1)$. Now $\varrho^{\mathbb{F}}_{n}(\xi_0,\xi_1)$ induces a set theoretic bijection 
\begin{ceqn}
\begin{align}
\overline{\varrho}^{\mathbb{F}}_{n}(\xi_0,\xi_1):S_n^{\mathbb{F}}(\xi_0,\xi_1)/{\mathrm{Aut}(\mathcal{F}_n^{\mathbb{F}}(\xi_0))}\xlongrightarrow{\sim} {I}_{n}^{\mathbb{F}}(\xi_0,\xi_1) \notag,
\end{align}
\end{ceqn}
where $S_n^{\mathbb{F}}(\xi_0,\xi_1)/{\mathrm{Aut}(\mathcal{F}_n^{\mathbb{F}}(\xi_0))}
:=\{\mathrm{Aut}(\mathcal{F}_n^{\mathbb{F}}(\xi_0))\cdot L \mid L \in S_n^{\mathbb{F}}(\xi_0,\xi_1)\}$
denotes the orbit space.
\end{Thm}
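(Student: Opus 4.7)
The plan is to establish three things in order: (i) that the map $\varrho := \varrho^{\mathbb{F}}_n(\xi_0,\xi_1)$ is constant on $\mathrm{Aut}(\mathcal{F}_n^{\mathbb{F}}(\xi_0))$-orbits, so that $\overline{\varrho}$ is well defined; (ii) that $\overline{\varrho}$ is surjective; (iii) that $\overline{\varrho}$ is injective. Surjectivity is essentially free from \cref{classification surjective map}, so the real content is the first and third points.

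Write $\mathcal{F}_0 := \mathcal{F}_n^{\mathbb{F}}(\xi_0)$ and denote by $\pi_L : \mathcal{F}_0 \to \mathcal{F}_0/L$ the canonical projection. For (i), given $f \in \mathrm{Aut}(\mathcal{F}_0)$ and $L \in S_n^{\mathbb{F}}(\xi_0,\xi_1)$, I would observe that $f$ restricts to a graded isomorphism $L \xlongrightarrow{\sim} f(L)$ and hence descends to a graded isomorphism $\mathcal{F}_0/L \xlongrightarrow{\sim} \mathcal{F}_0/f(L)$, so $\varrho(f \cdot L) = [\mathcal{F}_0/f(L)] = [\mathcal{F}_0/L] = \varrho(L)$. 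This shows $\varrho$ factors through the orbit space and yields $\overline{\varrho}$. Surjectivity of $\overline{\varrho}$ follows immediately from the surjectivity of $\varrho$ already proved in \cref{classification surjective map}.

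The main point is injectivity. Suppose $L, L' \in S_n^{\mathbb{F}}(\xi_0,\xi_1)$ satisfy $\varrho(L) = \varrho(L')$, i.e.\ there exists a graded isomorphism $\varphi : \mathcal{F}_0/L \xlongrightarrow{\sim} \mathcal{F}_0/L'$. Since $L$ and $L'$ both satisfy the tensor-condition, the discussion following \cref{defi tensor cond} tells us that $(\mathcal{F}_0, \pi_L)$ is a free hull of $\mathcal{F}_0/L$ and $(\mathcal{F}_0, \pi_{L'})$ is a free hull of $\mathcal{F}_0/L'$. Consequently $(\mathcal{F}_0, \varphi \circ \pi_L)$ is also a free hull of $\mathcal{F}_0/L'$ (composing a free hull with a graded isomorphism of the target yields another free hull, since the tensor functor is functorial and sends isomorphisms to isomorphisms). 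By the uniqueness clause of \cref{Free hull thm}, there exists a graded automorphism $f \in \mathrm{Aut}(\mathcal{F}_0)$ making the triangle
\begin{equation}
\begin{tikzcd}[row sep=1.4cm, column sep=1.4cm]
\mathcal{F}_0 \arrow[rd, "\varphi \circ \pi_L"'] \arrow[rr, "f", "\sim"'] && \mathcal{F}_0 \arrow[ld, "\pi_{L'}"] \\
& \mathcal{F}_0/L' &
\end{tikzcd} \notag
\end{equation}
commute, i.e.\ $\pi_{L'} \circ f = \varphi \circ \pi_L$.

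It remains to extract $f(L) = L'$ from this relation. Since $\varphi$ is an isomorphism, $\ker(\varphi \circ \pi_L) = \ker(\pi_L) = L$; on the other hand $\ker(\pi_{L'} \circ f) = f^{-1}(\ker \pi_{L'}) = f^{-1}(L')$. Hence $f^{-1}(L') = L$, so $f \cdot L = f(L) = L'$, and $L, L'$ lie in the same orbit. This proves injectivity of $\overline{\varrho}$, and combined with (i) and (ii) completes the argument. The only step I expect to require care is the invocation of free-hull uniqueness: one must verify that pre-composing a free hull with a graded automorphism of the source is still a free hull (immediate from functoriality of $\mathbb{F} \otimes_{A_n^{\mathbb{F}}} -$), which is what legitimizes applying \cref{Free hull thm} to produce $f$.
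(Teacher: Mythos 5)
Your proof is correct and follows essentially the same route as the paper: show $\varrho$ is orbit-constant, inherit surjectivity from \cref{classification surjective map}, and for injectivity lift a graded isomorphism $\mathcal{F}_0/L \xrightarrow{\sim} \mathcal{F}_0/L'$ to an automorphism of $\mathcal{F}_0$ via the uniqueness clause of \cref{Free hull thm}, then read off $f(L) = L'$ from kernels. The one difference is that you make explicit what the paper leaves implicit: that $(\mathcal{F}_0,\pi_L)$ and $(\mathcal{F}_0,\pi_{L'})$ are genuine free hulls (using the tensor-condition) and that post-composing a free hull with a graded isomorphism of the target stays a free hull, which is the precise justification for invoking free-hull uniqueness; your closing caveat mentions \emph{pre}-composing with an automorphism of the source, but what your argument actually uses (and needs) is post-composition with an isomorphism of the target, a harmless slip.
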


This theorem is stated in \cite[Thm.~9]{Carlsson2009}. In \cite{Carlsson2009}, the authors also give a proof for it. Our proof follows \cite{Carlsson2009}.

\begin{proof} Let $\varrho:=\varrho^{\mathbb{F}}_{n}(\xi_0,\xi_1) $ and $\mathcal{F}_0:=\mathcal{F}_n^{\mathbb{F}}(\xi_0)$.
For $f\in \mathrm{Aut}(\mathcal{F}_n^{\mathbb{F}}(\xi_0))$, we obtain a commutative diagram
\begin{equation}
\begin{tikzcd}[row sep=1.5cm, column sep = 1.5cm]
     \mathcal{F}_0 \arrow[r, "f", "\sim"'] \arrow[d, "\pi_L"]  &  \mathcal{F}_0 \arrow[d,"\pi_{f(L)}"]  \\
     \mathcal{F}_0/{L} \arrow[r, "{\overline{f}}", "\sim"'] &  \mathcal{F}_0/{f( L)}
\end{tikzcd} \notag 
\end{equation}
where $\overline{f}$ is an isomorphism. Thus, $\varrho(f\cdot L)=\varrho(f( L))=\varrho(L)$. 

$\overline{\varrho}$ is surjective since $\varrho$ is surjective. To show that $\overline{\varrho}$ is injective, suppose that we have ${L,L' \in S_n^{\mathbb{F}}(\xi_0,\xi_1)}$ with $\varrho(L)=\varrho(L')$, i.e. there exists an isomorphism \begin{equation}
f:\mathcal{F}_0/{L} \xlongrightarrow{\sim} \mathcal{F}_0/{L'} \notag.
\end{equation}
By \cref{Free hull thm}, $f$ lifts to $\widetilde{f} \in \mathrm{Aut}(\mathcal{F}_n^{\mathbb{F}}(\xi_0))$ such that
\begin{equation}
\begin{tikzcd}[row sep=1.5cm, column sep = 1.5cm]
     \mathcal{F}_0 \arrow[r, "\widetilde{f}", "\sim"']  \arrow[d,"\pi_{L}"]  &  \mathcal{F}_0 \arrow[d,"\pi_{L'}"]  \\
      \mathcal{F}_0/{L} \arrow[r, "{f}",  "\sim"'] & \mathcal{F}_0/{L'} 
\end{tikzcd} \notag
\end{equation}
commutes. Hence, $\widetilde{f} \cdot L=L'$.
\end{proof}
\newpage
\begin{Defi} For every $(\xi_0,\xi_1) \in \mathcal{T}_{n}$ and every $\mathbb{F} \in \mathcal{K}$,
\begin{align} \mathfrak{J}^{\mathbb{F}}_{n}(\xi_0,\xi_1): I_{n}^{\mathbb{F}} (\xi_0,\xi_1) \xlongrightarrow{\sim}  S_n^{\mathbb{F}}(\xi_0,\xi_1)/{\mathrm{Aut}(\mathcal{F}_n^{\mathbb{F}}(\xi_0))}, \quad [M] \longmapsto   \overline{\varrho}^{\mathbb{F}}_{n}(\xi_0,\xi_1)^{-1} ([M])  \notag
\end{align}
defines a complete invariant by \cref{classification bijection}. Since \begin{equation}
\mathbf{Grf}_n(A_n^{\mathbb{F}})/_{\cong}=\bigsqcup_{ (\xi_0,\xi_1) \in \mathcal{T}_{n}} I_n^{\mathbb{F}}(\xi_0,\xi_1) \notag
\end{equation}
we obtain a complete invariant
\begin{align} \mathfrak{J}^{\mathbb{F}}_{n}: \mathbf{Grf}_n(A_n^{\mathbb{F}})/_{\cong} & \xlongrightarrow{\sim}  \bigsqcup_{(\xi_0,\xi_1) \in \mathcal{T}_{n}} S_n^{\mathbb{F}}(\xi_0,\xi_1)/{\mathrm{Aut}(\mathcal{F}_n^{\mathbb{F}}(\xi_0))} \notag
\end{align}
for every $\mathbb{F}\in \mathcal{K}$. Thus, $\{ \mathfrak{J}^{\mathbb{F}}_{n}(\xi_0,\xi_1) \}_{\mathbb{F} \in \mathcal{K}} $ and $\{ \mathfrak{J}^{\mathbb{F}}_{n} \}_{\mathbb{F} \in \mathcal{K}} $ are complete.
\end{Defi}

One of our main goals for the rest of this article is to answer the question if there exists any class of invariants 
\begin{equation}\{f_n^{\mathbb{F}}: \mathbf{Grf}_n(A_n^{\mathbb{F}})/_{\cong} \longrightarrow Q_n^{\mathbb{F}}\}_{\mathbb{F}  \in \mathcal{K}} \notag
\end{equation}
which is complete and discrete. For $n=1$, the answer is given by the \textit{barcode} (see \cref{One dim pers sec}). In \cref{Chapter 2}, we will see that for $n \geq 2$ there exists no discrete and complete class of invariants. As any two complete classes of invariants are equivalent by \cref{Complete invariants equivalent}, it suffices to show that $\{\mathfrak{J}_n^{\mathbb{F}}\}_{\mathbb{F} \in \mathcal{K}}$ is not discrete for $n\geq 2$.

\subsection{Equivalent characterization of the tensor-condition and examples}\label{counterex}

The following proposition gives an equivalent characterization of the tensor-condition:
\begin{Prop}\label{Tensor condition} Recall that $\xi_0=(V_0, \mu_0)$ and $\xi_1=(V_1, \mu_1)$. Let $L \subseteq \mathcal{F}_n^{\mathbb{F}}(\xi_0)$ be a graded submodule with $\Xi_{n,0}^{\mathbb{F}}(L)=\xi_1$. Then $L$ satisfies the tensor-condition if and only if \begin{equation}\pi_v(L_v)=0 \notag
\end{equation}
for all $v \in V_0\cap V_1$, where ${\pi_v: \mathcal{F}_n^{\mathbb{F}}(\xi_0)_v  \to  A_n^{\mathbb{F}}(v)_v^{\mu_0(v)}}$ denotes the canonical projection. Note that ${A_n^{\mathbb{F}}(v)_v^{\mu_0(v)}=\mathbb{F}^{\mu_0(v)}}$. In particular, the tensor-condition is satisfied if ${\xi_1 \succ_D \xi_0}$.
\end{Prop}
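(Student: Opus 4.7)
The plan is to use the canonical graded isomorphism $\mathbb{F} \otimes_{A_n^{\mathbb{F}}} M \xlongrightarrow{\sim} M/\mathfrak{m}_n^{\mathbb{F}} M$ to rewrite the tensor-condition as $L \subseteq \mathfrak{m}_n^{\mathbb{F}} \mathcal{F}_n^{\mathbb{F}}(\xi_0)$, to translate this inclusion into a degreewise vanishing statement via the direct-sum decomposition of $\mathcal{F}_n^{\mathbb{F}}(\xi_0)$, and finally to invoke the assumption $\Xi_{n,0}^{\mathbb{F}}(L) = \xi_1$ to cut the degrees one has to inspect down to $V_0 \cap V_1$.

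First, under the canonical isomorphism recalled in the tensor-operation subsection, the map $\mathrm{id}_{\mathbb{F}} \otimes_{A_n^{\mathbb{F}}} i_L$ corresponds to the natural graded map $L/\mathfrak{m}_n^{\mathbb{F}} L \longrightarrow \mathcal{F}_n^{\mathbb{F}}(\xi_0)/\mathfrak{m}_n^{\mathbb{F}} \mathcal{F}_n^{\mathbb{F}}(\xi_0) = \mathcal{V}_n^{\mathbb{F}}(\xi_0)$ induced by $i_L$, so its image vanishes if and only if $L \subseteq \mathfrak{m}_n^{\mathbb{F}} \mathcal{F}_n^{\mathbb{F}}(\xi_0)$. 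Since both sides are graded submodules of $\mathcal{F}_n^{\mathbb{F}}(\xi_0)$, this condition may be checked in each degree. The decomposition
\begin{equation}
\mathcal{F}_n^{\mathbb{F}}(\xi_0)_v = \bigoplus_{w \in V_0,\, w \preceq v} (A_n^{\mathbb{F}})_{v-w}^{\mu_0(w)} \notag
\end{equation}
shows that every summand with $w \prec v$ already lies in $\mathfrak{m}_n^{\mathbb{F}} \mathcal{F}_n^{\mathbb{F}}(\xi_0)$, while the summand with $w = v$ (present exactly when $v \in V_0$) is the target $\mathbb{F}^{\mu_0(v)}$ of $\pi_v$. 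Hence the tensor-condition is equivalent to $\pi_v(L_v) = 0$ for every $v \in V_0$.

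It then remains to see that $\pi_v(L_v) = 0$ is automatic for $v \in V_0 \setminus V_1$. Because $\Xi_{n,0}^{\mathbb{F}}(L) = \xi_1$, \cref{equiv char free hulls} supplies a minimal system of homogeneous generators $\{b_j\}$ of $L$ with $\mathrm{deg}(b_j) \in V_1$. Any $y \in L_v$ has an $A_n^{\mathbb{F}}$-linear expression $y = \sum_j a_j b_j$, and homogeneity forces $a_j$ to be homogeneous of degree $v - \mathrm{deg}(b_j)$; this requires $\mathrm{deg}(b_j) \preceq v$, and for $v \notin V_1$ the inequality is strict, so $a_j \in \mathfrak{m}_n^{\mathbb{F}}$ and $y \in \mathfrak{m}_n^{\mathbb{F}} \mathcal{F}_n^{\mathbb{F}}(\xi_0)$. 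The addendum is then immediate: $\xi_1 \succ_D \xi_0$ gives $V_0 \cap V_1 = \emptyset$, so the tensor-condition holds vacuously. The whole argument is essentially bookkeeping in the $n$-grading; the only step requiring more than the decomposition of $\mathcal{F}_n^{\mathbb{F}}(\xi_0)$ is the reduction from $V_0$ to $V_0 \cap V_1$, which is handled by the free-hull characterization, so I do not foresee a real obstacle.
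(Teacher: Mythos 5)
Your argument is correct, and it takes a cleaner, more structural route than the paper's own proof. The paper manipulates explicit simple tensors $a \otimes y_v$, unpacks $y_v$ into its coordinates in $\bigoplus_{w \in V_0} A_n^{\mathbb{F}}(w)^{\mu_0(w)}_v$, and computes the annihilation using the $A_n^{\mathbb{F}}$-action on $\mathbb{F}$ term by term. You instead invoke the functorial identification $\mathbb{F} \otimes_{A_n^{\mathbb{F}}} M \cong M/\mathfrak{m}_n^{\mathbb{F}} M$ once, at the outset, to translate the tensor-condition into the containment $L \subseteq \mathfrak{m}_n^{\mathbb{F}} \mathcal{F}_n^{\mathbb{F}}(\xi_0)$, which is then a purely degree-wise matter: $(\mathfrak{m}_n^{\mathbb{F}} \mathcal{F}_n^{\mathbb{F}}(\xi_0))_v$ differs from $\mathcal{F}_n^{\mathbb{F}}(\xi_0)_v$ only in the $w=v$ summand, so the condition says exactly $\pi_v(L_v)=0$ for $v\in V_0$ and is vacuous otherwise. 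This eliminates all explicit tensor bookkeeping and makes both directions of the equivalence fall out of a single decomposition. Your treatment of the reduction from $V_0$ to $V_0\cap V_1$ is also more explicit than the paper's: the paper asserts it suffices to check $v\in V_0\cap V_1$ ``as the generators of $L$ are located in $\bigcup_{v\in V_1}L_v$'' without spelling out why, whereas you supply the missing argument (for $v\in V_0\setminus V_1$, homogeneity forces all coefficients $a_j$ in an expression $y=\sum a_j b_j$ to have degree $\succ 0$, so $y\in \mathfrak{m}_n^{\mathbb{F}}\mathcal{F}_n^{\mathbb{F}}(\xi_0)$ automatically). Both approaches rest on the same three ingredients — the $M/\mathfrak{m}M$ picture of the tensor-operation, the shift decomposition of $\mathcal{F}_n^{\mathbb{F}}(\xi_0)_v$, and the constraint $\Xi_{n,0}^{\mathbb{F}}(L)=\xi_1$ — but your ordering of them produces a shorter and more transparent proof.
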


This equivalent characterization of the tensor-condition is essential for our results in \cref{Chapter 2}.
Before we proceed with a proof of \cref{Tensor condition}, we give two examples which illustrate how this equivalence works. Moreover, \cref{Exa Dim  1  type} shows that the tensor-condition is necessary in the definition of $S_n^{\mathbb{F}}(\xi_0, \xi_1)$ if we want the map in \cref{classification surjective map} to be well-defined. 

 \begin{figure}[h!]
\centering
\includegraphics[scale=0.4]{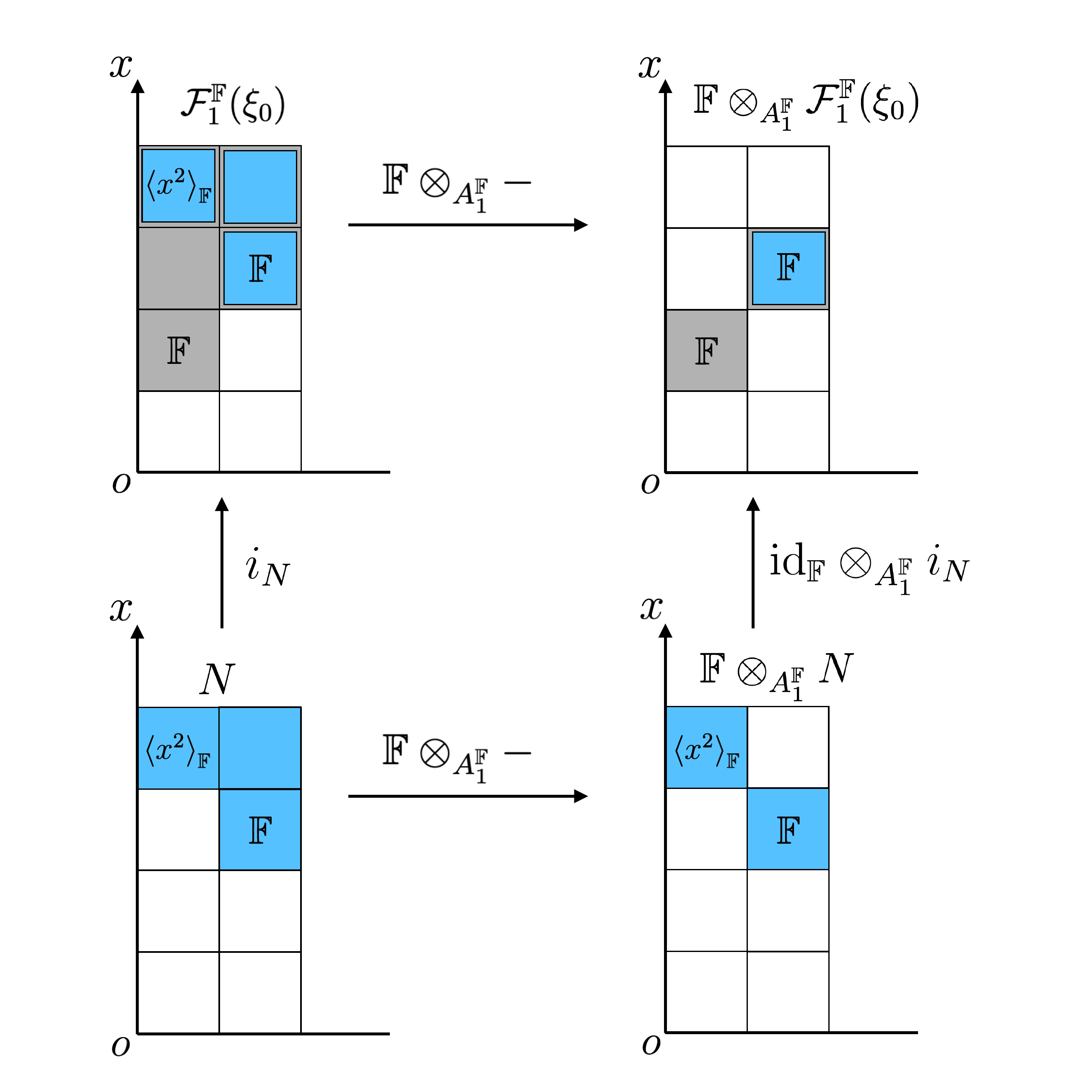} 
\caption{Illustration of \cref{Exa Dim  1  type}.}
\label{fig:Type Tensor dim1,N}
\end{figure}

\begin{Exa}\label{Exa Dim  1  type} This example is illustrated in Figures \ref{fig:Type Tensor dim1,N} and \ref{fig:Type Tensor dim1,L}. Let $n=1$. We have $A_1^{\mathbb{F}}=\mathbb{F}[x]$. Consider
\begin{align}
\xi_0&=\{ (1,1),(2,1)\}, \notag \\ \xi_1&=\{ (2,1),(3,1)\}  \notag
\end{align}
and define 
\begin{align}
L &:=  \braket{(x,0),(0,x)}_{\mathbb{F}[x]} \subseteq  \mathbb{F}[x](1) \oplus \mathbb{F}[x](2) =\mathcal{F}^\mathbb{F}_1(\xi_0), \notag \\
N &:=  \braket{(x^2,0), (0,1)}_{\mathbb{F}[x]} \subseteq  \mathbb{F}[x](1) \oplus \mathbb{F}[x](2)=\mathcal{F}^\mathbb{F}_1(\xi_0) \notag.
\end{align}
$L$ and $N$ are graded submodules of $\mathcal{F}_1^{\mathbb{F}}(\xi_0)$. Moreover, $L$ and $N$ are free as an $\mathbb{F}[x]$-modules since $\mathbb{F}[x]$ is principal ideal domain. Hence, they are graded free as $\mathbb{F}[x]$-modules by \cref{graded free mult}.
 We have $L_2=\braket{(x,0)}_{\mathbb{F}}$ and $L_3=  \braket{(x^2,0),(0,x) }_{\mathbb{F}} $ which means that the basis elements of $L$ are located in degree $2$ and $3$. Similary $N_2= \braket{(0, 1) }_{\mathbb{F}}$ and $N_3=  \braket{(x^2,0), (0,x)}_{\mathbb{F}}  $ and we see that the basis elements of $N$ are also located in degree $2$ and $3$. So, we obtain graded isomorphisms
 \begin{figure}[h!]
\centering
\advance\leftskip+1.5cm
\advance\rightskip-1.5cm

\includegraphics[scale=0.4]{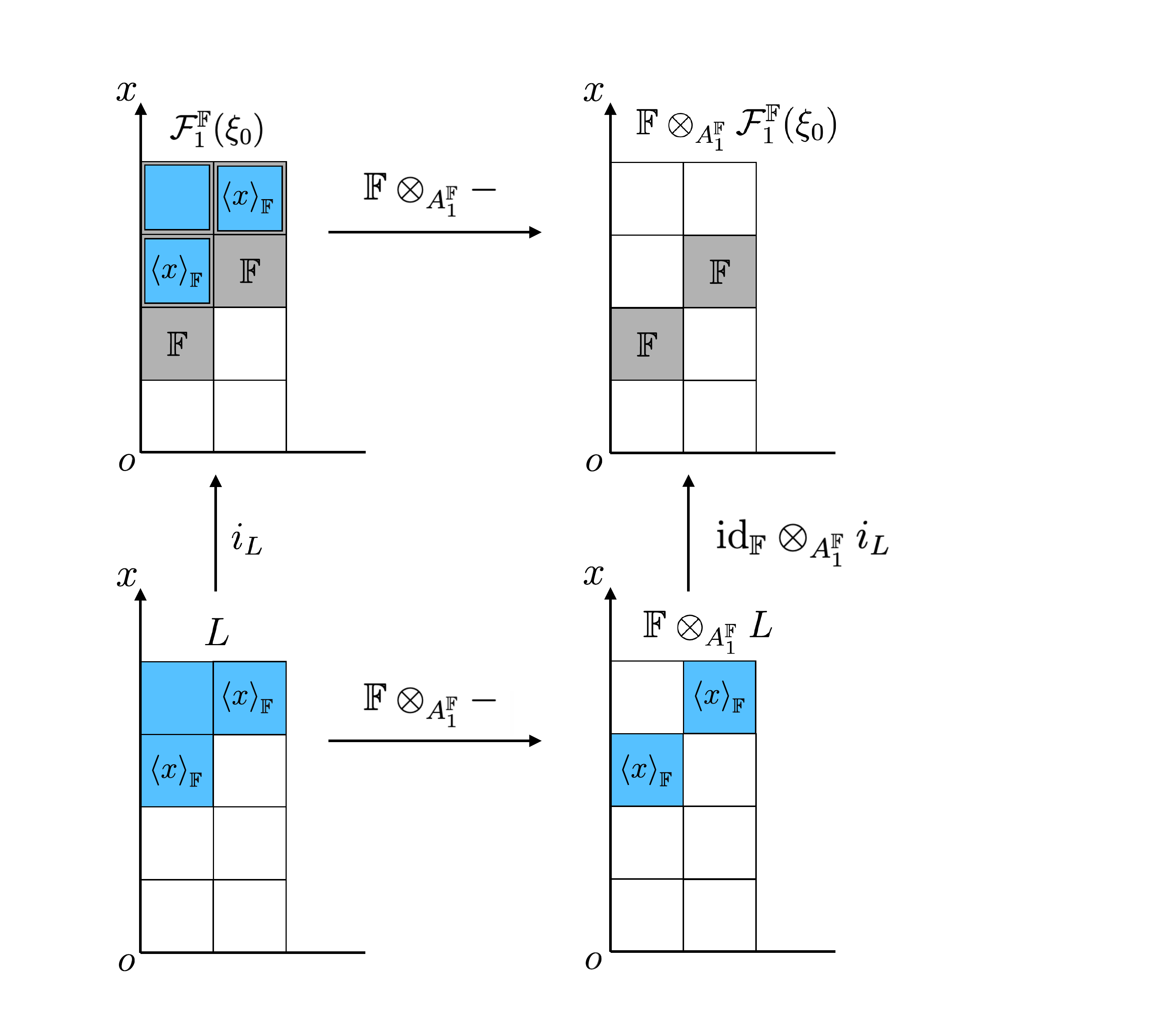} 
\caption{Illustration of \cref{Exa Dim  1  type}.}
\label{fig:Type Tensor dim1,L}
\end{figure}

\begin{align}
p_L: \mathbb{F}[x](2) \oplus \mathbb{F}[x](3) \longrightarrow L, &\quad (0,1) \longmapsto  (0,x) \notag \\
& \quad (1,0) \longmapsto  (x,0)  \notag \\ 
p_N: \mathbb{F}[x](2) \oplus \mathbb{F}[x](3) \longrightarrow N,  &\quad (1,0) \longmapsto  (0,1) \notag \\
& \quad (0,1) \longmapsto  (x^2,0)  \notag
\end{align}
which shows that $\Xi^{\mathbb{F}}_{1,0}(L)=\Xi^{\mathbb{F}}_{1,0}(N)=\xi_1$. As we can see, 
\begin{equation}
\mathcal{F}^\mathbb{F}_1(\xi_0)/L\cong \mathbb{F}(3) \oplus \mathbb{F}(2) \notag.
\end{equation}
Hence, $\Xi^{\mathbb{F}}_{1,0}(\mathcal{F}^\mathbb{F}_1(\xi_0)/L)=\xi_0$. But \begin{equation} \mathcal{F}^\mathbb{F}_1(\xi_0)/N \cong \mathbb{F}[x]/\braket{x^2}  (1) \notag.
\end{equation}
So, \begin{equation}\Xi^{\mathbb{F}}_{1,0} \left(\mathcal{F}^\mathbb{F}_1(\xi_0)/N \right) =\Xi^{\mathbb{F}}_{1,0}  \left(\mathbb{F}[x]/\braket{x^2}  (1) \right)=\{(1,1)\} \neq \xi_0 \notag.
\end{equation}
The reason is that $L$ satisfies the tensor-condition by \cref{Tensor condition} while $N$ does not. Moreover, we see that $\xi_1 \succ_D \xi_0$ does not hold but $S_{1}^{\mathbb{F}}(\xi_0,\xi_1)$ is still non-empty.
\end{Exa}

\begin{figure}[h]
\centering
\includegraphics[scale=0.29]{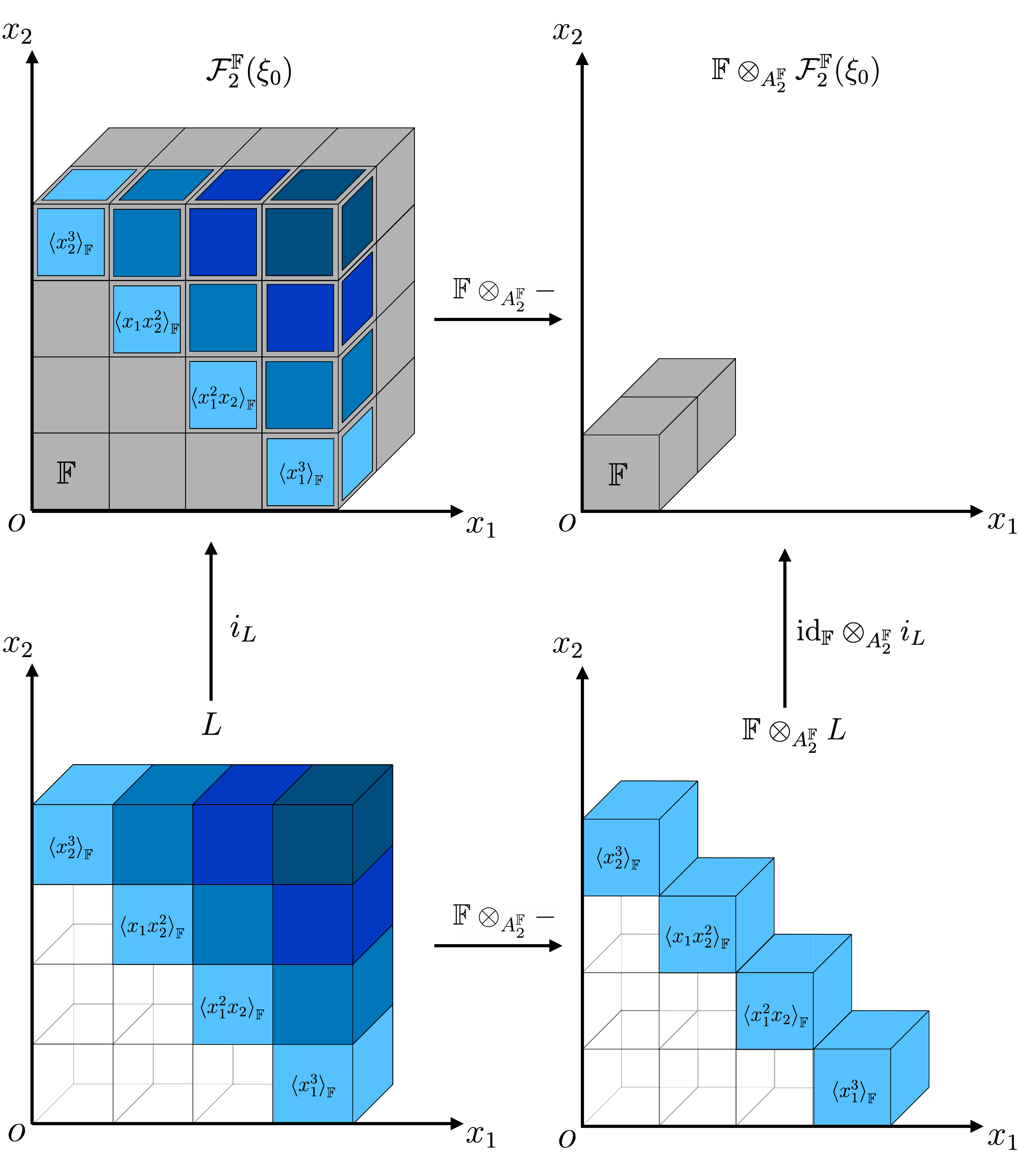} 
\caption{Illustration of \cref{Dim 2 Tensor Ex}.}
\label{fig:Type Tensor dim2.1}
\end{figure}

\begin{Exa}\label{Dim 2 Tensor Ex} This example is illustrated in Figure \ref{fig:Type Tensor dim2.1}.
Consider 
\begin{align}
\xi_0&=\{\textcolor{black}{((0,0),1), ((0,0),2)}\}, \notag \\
\xi_1 &=\{((3,0),1),((2,1),1),((1,2),1),((0,3),1)\} \notag 
\end{align}
and
\begin{align}
L&=\braket{(x_1^3,0), (x_1^2x_2,0),(x_1x_2^2,0),(x_2^3,0) }_{A^{\mathbb{F}}_2}\subseteq A^{\mathbb{F}}_2 \oplus A^{\mathbb{F}}_2 \notag,  \\
N&=\braket{(x_1^3,x_1^3), (x_1^2x_2,0) ,(x_1x_2^2,0) ,(0,x_2^3) }_{A^{\mathbb{F}}_2} \subseteq A^{\mathbb{F}}_2 \oplus A^{\mathbb{F}}_2 \notag.
\end{align}
We have $\xi_1 \succ_D \xi_0$. So, the tensor-condition is satisfied by \cref{Tensor condition} which is illustrated for $L$ in Figure \ref{fig:Type Tensor dim2.1}. We obtain
\begin{equation}
\boldsymbol{\Xi}_{2}^{\mathbb{F}}\left(\mathcal{F}_2^{\mathbb{F}}(\xi_0)/L \right)=\boldsymbol{\Xi}_{2}^{\mathbb{F}} \left(\mathcal{F}_2^{\mathbb{F}}(\xi_0)/N \right)=(\xi_0, \xi_1) \notag.
\end{equation}
Hence, $L,N \in S_{2}^{\mathbb{F}}(\xi_0, \xi_1)$. But $\mathcal{F}_2^{\mathbb{F}}(\xi_0)/L$ has elements without $A_2^{\mathbb{F}}$-torsion and in $\mathcal{F}_2^{\mathbb{F}}(\xi_0)/N$ every element has $A_2^{\mathbb{F}}$-torsion which shows that they are not isomorphic as $A_2^{\mathbb{F}}$-modules and therefore not as $n$-graded $A_2^{\mathbb{F}}$-modules. So, $\boldsymbol{\Xi}_{2}^{\mathbb{F}}$ is not complete. The multisets $\xi_0$ and $\xi_1$ will also appear later in \cref{cont inv} where we follow \cite[Sec.~5.2]{article} and \cite[Sec.~5.2]{Carlsson2009}.
\end{Exa}

\begin{proof}[Proof of \cref{Tensor condition}] Assume that for $v \in V_0\cap V_1$, $\pi_v(L_v)=0$. Let \begin{equation}
\sum_{j=1}^l a_j \otimes y_j \in \mathrm{im}\left(\mathrm{id}_{\mathbb{F}}\otimes_{A_n^{\mathbb{F}}} i_L \right) \notag.
\end{equation}
It suffices to show that $a_j \otimes y_j=0$ for all $j \in \{1, \dots, l\}$. So, let $j \in \{1, \dots, l\}$ and set $a:=a_j$ and $y:=y_j$. Then \begin{equation}y=\sum_{v \in {\mathbb{N}^n}} y_v \in \bigoplus_{v \in {\mathbb{N}^n}} L_v \subseteq \mathcal{F}_n^{\mathbb{F}}(\xi_0)  \notag.
\end{equation}
Recall that we have a graded isomorphism
\begin{equation} \mathbb{F}  \otimes_{A_n^{\mathbb{F}}} \mathcal{F}_n^{\mathbb{F}}(\xi_0)  \xlongrightarrow{\sim}  \mathcal{V}_n^{\mathbb{F}}(\xi_0)
\end{equation}
of $n$-graded $\mathbb{F}$-vector spaces. Hence, $a \otimes y_v =0$ for all $v \in {\mathbb{N}^n} \setminus V_0$. It remains to show that $a \otimes y_v =0  $ for all $v \in V_0$. Let $v \in V_0$. As the generators of $L$ are located in $\bigcup_{v \in V_1} L_v$, it suffices to show that $a \otimes y_v =0  $ for all $v \in V_0 \cap V_1$. Let $v \in V_0 \cap V_1$. We have 
\begin{equation}y_v \in L_v\subseteq \mathcal{F}_n^{\mathbb{F}}(\xi)_v=\bigoplus_{w \in V_0} A_n^{\mathbb{F}}(w)^{\mu(w)}_v \notag.
\end{equation}
$y_v$ is of the form
\begin{equation}
y_v=((z^{(w)}_1, \dots,  z^{(w)}_{\mu(w)}))_{w \in V_0} \notag
\end{equation}
where for $w \prec v$, $z^{(w)}_i=\lambda^{(w)}_{i}x^{v-w}$ for suitable $\lambda^{(w)}_{i} \in \mathbb{F}$ and $z^{(w)}_i=0$ if $w \npreceq v$. If $v=w$ we have $z^{(v)}_i=0$ since $\pi_v(L_v)=0$ by assumption. Enumerate $V_0=\{w_1, \dots, w_k\}$ and identify 
\begin{equation}
\mathcal{F}_n^{\mathbb{F}}(\xi)_v=\bigoplus_{w \in V_0} A_n^{\mathbb{F}}(w)^{\mu(w)}_v = \bigoplus_{j=1}^k A_n^{\mathbb{F}}(w_j)^{\mu(w_j)}_v \notag
\end{equation}
along this enumeration. Let $j \in \{1, \dots, k\}$. If $w_j \npreceq v$ set $\widetilde{z}^{(w_j)}_i:=z^{(w_j)}_i= 0$ for all $i \in \{1, \dots, \mu_0(w_j)\}$ and $x^{v-w_j}:=0$. If $w_j \preceq v$ set $\widetilde{z}^{(w_j)}_i:=\lambda^{(w_j)}_{i} $ for all $i \in \{1, \dots, \mu_0(w_j)\}$. By definition of the $A_n^{\mathbb{F}}$-module structure on $\mathbb{F}$, we obtain 
{\small \begin{align}
&a \otimes y_v =a \otimes ((z^{(w_1)}_1, \dots,  z^{(w_1)}_{\mu(w_1)}), \dots, (z^{(w_k)}_1, \dots,  z^{(w_k)}_{\mu(w_k)})) \notag \\
&= \sum_{j=1}^k a  \otimes ((0,\dots,0), \dots, (0, \dots, 0), \underset{j}{({z}^{(w_j)}_1, \dots, {z}^{(w_j)}_{\mu(w_j)})}, (0,\dots,0), \dots, (0, \dots, 0)) \notag \\
&= \sum_{j=1}^k a \cdot x^{v-w_j} \otimes ((0,\dots,0), \dots, (0, \dots, 0), \underset{j}{(\widetilde{z}^{(w_j)}_1, \dots, \widetilde{z}^{(w_j)}_{\mu(w_j)})}, (0,\dots,0), \dots, (0, \dots, 0)) \notag \\
&= \sum_{j=1}^k 0 \otimes ((0,\dots,0), \dots, (0, \dots, 0), \underset{j}{(\widetilde{z}^{(w_j)}_1, \dots, \widetilde{z}^{(w_j)}_{\mu(w_j)})}, (0,\dots,0), \dots, (0, \dots, 0))=0 \notag .
\end{align}}

Assume that $ \mathrm{id}_{\mathbb{F}}\otimes_{A_n^{\mathbb{F}}} i_L : \mathbb{F} \otimes_{A_n^{\mathbb{F}}} L \to \mathbb{F} \otimes_{A_n^{\mathbb{F}}} \mathcal{F}_n^{\mathbb{F}}(\xi_0) $ is the zero map. Let $v \in V_0\cap V_1$ and $y \in L_v$. We can write 
\begin{equation}
y=((z^{(w)}_{1} , \dots, z^{(w)}_{\mu_0(w)} ))_{w \in V_0}  \in \mathcal{F}_n^{\mathbb{F}}(\xi_0)_v= \bigoplus_{w \in V_0} A_n^{\mathbb{F}}(w)_v^{\mu_0(w)}\notag
\end{equation}
where $z^{(w)}_i \in \mathbb{F}$ if $w=v$. We have $1 \otimes y=0$ in $\mathbb{F} \otimes_{A_n^{\mathbb{F}}}\mathcal{F}_n^{\mathbb{F}}(\xi_0)_v$ by assumption. So, if $v=w$, then necessarily $z^{(w)}_i=0$ for all $i \in \{1, \dots, \mu_0(w)\}$.
\end{proof}

\subsection{One-dimensional persistence}\label{One dim pers sec}
Let $n=1$. Recall that $A_1^{\mathbb{F}} =\mathbb{F}[x]$. For the following, let $\xi_0=(V_0, \mu_0)$ and ${\xi_1=(V_1, \mu_1)} $ be two one-dimensional multisets. The following material about one-dimensional persistence is very common (see for example \cite{onedimpers}). Let $[M]  \in  I_{1}^{\mathbb{F}}(\xi_0,\xi_1)$. The \textit{the structure theorem} (see for example \cite[Thm.~2.1]{onedimpers} (our notation is different)), states that
\begin{align}\label{struc thm}
M  \cong  \bigoplus_{i=1}^{m} \bigoplus_{j=1}^{d_i}\mathbb{F}[x]/ \braket{x^{t_{i,j}}}_{\mathbb{F}[x]} (v_{i,j}) \oplus \bigoplus_{i=m+1}^{r} \bigoplus_{j=1}^{d_i} \mathbb{F}[x](v_{i,j})   
\end{align}
as one-graded $\mathbb{F}[x]$-modules for unique $r,m \in \mathbb{N}$ with $r \leq m$ and unique ${d_i, t_{i,j},v_{i,j}\in \mathbb{N}}$ with $v_{i,j}=v_{i,k}$ for all $j,k \in \{1, \dots, d_i\}$. Note that we generally do not have $t_{i,j}=t_{i,k}$ for all $j,k \in \{1, \dots, d_i\}$. Using the graded isomorphism in (\ref{struc thm}), we may define a finite multiset $B_1^{\mathbb{F}}(\xi_0, \xi_1)([M])$ via 
\begin{equation}
B_1^{\mathbb{F}}(\xi_0, \xi_1)([M]):=\bigcup_{i=1}^{m} \bigcup_{j=1}^{d_i} \{([v_{i,j},v_{i,j}+ t_{i,j}),j) \} \cup \bigcup_{i=m+1}^{r} \bigcup_{j=1}^{d_i} \{([v_{i,j},\infty),j) \} \notag
\end{equation}
where for $v \in \mathbb{R}$ and $t \in \mathbb{R} \cup \{\infty \}$ with $v < t$, $[v,t) \subseteq \mathbb{R}$ denotes a half-open interval.
$B_1^{\mathbb{F}}(\xi_0, \xi_1)([M])$ is called the \textit{barcode of} $[M]$ \textit{over} $(\xi_0, \xi_1)$. We have 
\begin{align}
\xi_0=\bigcup_{i=1}^{r}  \{ (v_{i,1},1), \dots, (v_{i,d_i},d_i) \} \notag 
\end{align}
and
\begin{align}
 \xi_1=\bigcup_{i=1}^{m'} \{(v'_{i,1}+ t'_{i,1},1), \dots, (v'_{i,d'_i}+ t'_{i,d'_i},d'_i) \} \notag.
\end{align}
where $v'_{i,j}=v_{l,k}$ and $t'_{i,j}=t_{l,k}$ after renumbering the $t_{i,j}$ and $v_{i,j}$ according to the multiplicity of the $t_{i,j}$, such that $t_{i,j}=t_{i,k}$ for all $j,k \in \{1, \dots, d'_i \}$ and ${\sum_{i=1}^{m'} d'_i=\sum_{i=1}^{m} d_i}$.
As we can see, $\xi_0$ and $\xi_1$ correspond to the start and end points of elements in $B_1^{\mathbb{F}}(\xi_0, \xi_1)([M])$. The intuitive interpretation of the barcode is that it captures the lifetime-length of homogeneous generators of $M$.

\begin{Exa}\label{Illustration of Barcodes} Figure \ref{fig:Barcode3} illustrates the barcode
\begin{align}
\{([0,\infty),1), ([0,2),1), ([0,1),1), ([1, \infty),1),([1,3),1),([1,3),2), ([2,3),1),([2,3),2) \}  \notag.
\end{align}
\end{Exa}

\begin{figure}
\centering
\includegraphics[scale=0.3]{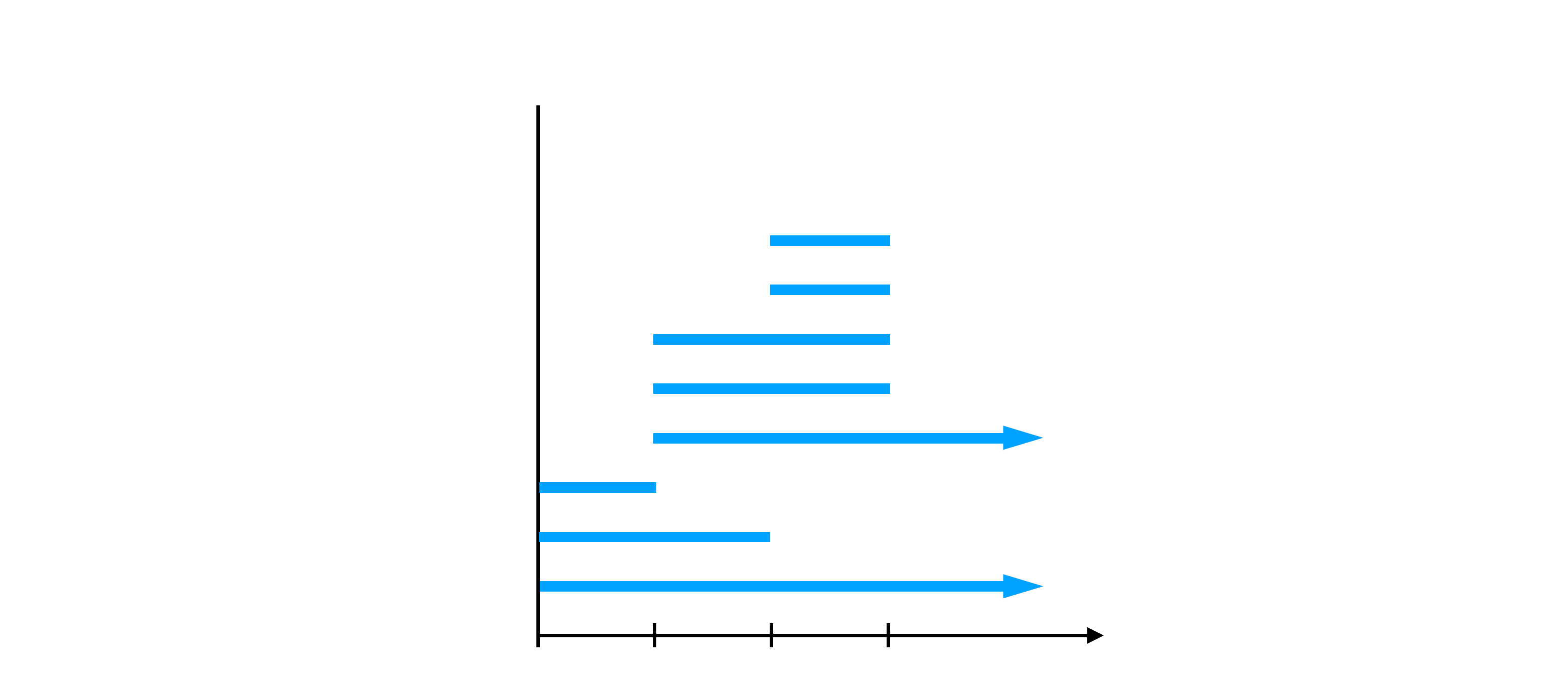} 
\caption{barcode diagram (illustration of \cref{Illustration of Barcodes}).}
\label{fig:Barcode3}
\end{figure}
Now define $\mathcal{B}(\xi_0, \xi_1)$ as the set of all mulitsets 
\begin{equation}
\bigcup_{i=1}^{m} \bigcup_{j=1}^{d_i} \{([v_{i,j},w_{i,j}),j) \} \cup \bigcup_{i=m+1}^{r} \bigcup_{j=1}^{d_i} \{([v_{i,j},\infty),j) \} \notag
\end{equation}
such that $v_{i,j} < w_{i,j}$,
\begin{align}
\xi_0=\bigcup_{i=1}^{r}  \{ (v_{i,1},1), \dots, (v_{i,d_i},d_i) \} \notag
\end{align}
and
\begin{align}
 \xi_1=\bigcup_{i=1}^{m'} \{(w'_{i,1},1), \dots, (w'_{i,d'_i},d'_i) \} \notag
\end{align}
where $w'_{i,j}=w_{l,k}$ after renumbering the $w_{l,k}$ according to their multiplicity, such that $w'_{i,j}=w'_{i,k}$ for all $j,k \in \{1, \dots, d'_i \}$ and $\sum_{i=1}^{m'} d'_i=\sum_{i=1}^{m} d_i$.
$\mathcal{B}(\xi_0, \xi_1)$ is called \textit{the space of barcodes over} $(\xi_0, \xi_1)$ and
\begin{equation}
\mathcal{B}:= \bigsqcup_{(\xi_0, \xi_1) \in \mathcal{T}_1} \mathcal{B}(\xi_0, \xi_1) \notag
\end{equation}
is called \textit{the space of barcodes}. We obtain a set theoretic bijection
\begin{equation}
B_1^{\mathbb{F}}(\xi_0, \xi_1):   I_{1}^{\mathbb{F}}(\xi_0,\xi_1) \xlongrightarrow{\sim} \mathcal{B}(\xi_0,  \xi_1), \quad [M] \longmapsto B_1^{\mathbb{F}}(\xi_0, \xi_1)([M]) \notag
\end{equation}
and $\{B_1^{\mathbb{F}}(\xi_0, \xi_1) \}_{\mathbb{F} \in \mathcal{K}}$ defines a discrete class of complete invariants. Since \begin{equation}
\mathbf{Grf}_1(A_1^{\mathbb{F}})/_{\cong}=\bigsqcup_{ (\xi_0,\xi_1) \in \mathcal{T}_{1}} I_1^{\mathbb{F}}(\xi_0,\xi_1) \notag
\end{equation}
we obtain a set theoretic bijection
\begin{equation} B^{\mathbb{F}}_1: \mathbf{Grf}_1(\mathbb{F}[x])/_{\cong} \xlongrightarrow{\sim}  \mathcal{B}   \notag
\end{equation}
and $\{B_1^{\mathbb{F}} \}_{\mathbb{F} \in \mathcal{K}}$ defines a discrete class of complete invariants, the so-called \textit{barcode}.  We have $\{B_1^{\mathbb{F}}\}_{\mathbb{F} \in \mathcal{K}}  \cong \{\mathfrak{J}_{1}^{\mathbb{F}} \}_{\mathbb{F} \in \mathcal{K}}$ by \cref{Complete invariants equivalent}. So, $ \{\mathfrak{J}_{1}^{\mathbb{F}} \}_{\mathbb{F} \in \mathcal{K}}$ is a complete class of discrete invariants.

Now one might ask the question if there are barcode-like invariants $B^{\mathbb{F}}_{n}$ for $n\geq 2 $ that are equivalent to $\mathfrak{J}^{\mathbb{F}}_{n}$. If $n \geq 2$ it is unfortunately not possible to find a class of invariants $\{f_n^{\mathbb{F}}: \mathbf{Grf}_n(A_n^{\mathbb{F}}) \to Q_n^{\mathbb{F}}\}_{\mathbb{F} \in \mathcal{K}}$, which is discrete and complete. This is one of the main results in \cref{Chapter 2}. In the next section, we present a discrete generalization of the barcode to dimension $n\geq 2$, the so called \textit{rank invariant}.
\subsection{The rank invariant}\label{Rank invariant section}
This section is based on \cite[Sec.~6]{article} and \cite[Sec.~6]{Carlsson2009}. Let \begin{equation}
\mathbb{D}^n:=\{(u,v)\mid u\in \mathbb{N}^n, \, v\in {\mathbb{N}^n},\, u\preceq v \} \subseteq \mathbb{N}^n \times {\mathbb{N}^n}  \notag
\end{equation}
be the subset above the diagonal.

\begin{Defi}[Rank invariant] The assignment 
\begin{align}
\rho_{n}^{\mathbb{F}}: \mathbf{Grf}_n(A_n^{\mathbb{F}})/_{\cong} & \longrightarrow  \mathrm{Hom}_{\mathrm{Sets}}(\mathbb{D}^n, \mathbb{N}^2), & \notag \\
[M] & \longmapsto \left(\mathbb{D}^n \longrightarrow \mathbb{N}^2,\, \,
(u,v) \longmapsto \left( \mathrm{rank}(x^{v-u}:M_u \to M_v), \mathrm{dim}_{\mathbb{F}}(M_u)  \right)\right) \notag
\end{align}
defines a discrete class of invariants $\{\rho_{n}^{\mathbb{F}}\}_{\mathbb{F} \in \mathcal{K}}$, the so-called \textit{rank invariant}. Note that $\mathrm{Hom}_{\mathrm{Sets}}(\mathbb{D}^n, \mathbb{N}^2)$ is uncountable, but $\mathrm{im}(\rho_{n}^{\mathbb{F}})$ is countable.
\end{Defi}
Note that in comparison to \cite{Carlsson2009} and \cite{article} our definition of the rank invariant does not map the value $(u, \infty)$. This is not a problem as our definition of the rank invariant already captures all of the information we want to capture. In addition to \cite{Carlsson2009} and \cite{article}, we also include the dimension in each degree. The next theorem is from \cite[Thm.~12]{Carlsson2009} and \cite[Thm.~5]{article} where we use our definition of the rank invariant.

\begin{Thm}[{{\cite[Thm.~12]{Carlsson2009}, \cite[Thm.~5]{article}}}]\label{Barcode rank equiv} We have $\{\rho_{1}^{\mathbb{F}}\}_{\mathbb{F} \in \mathcal{K}} \cong \{B_{1}^{\mathbb{F}}\}_{\mathbb{F} \in \mathcal{K}}$ which shows that $\{\rho_{1}^{\mathbb{F}}\}_{\mathbb{F} \in \mathcal{K}} $ is complete.
\end{Thm}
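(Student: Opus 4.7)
The plan is to prove the statement by verifying that $\{\rho_1^\mathbb{F}\}_{\mathbb{F} \in \mathcal{K}}$ is itself a complete class of invariants; the claimed equivalence then follows at once from \cref{Complete invariants equivalent}. Since $B_1^\mathbb{F}$ is a set-theoretic bijection onto $\mathcal{B}$, two modules with the same barcode are isomorphic and hence have the same rank invariant, so for each $\mathbb{F} \in \mathcal{K}$ it suffices to show the reverse implication: that $\rho_1^\mathbb{F}([M])$ determines $B_1^\mathbb{F}([M])$.

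First I would express $\rho_1^\mathbb{F}([M])$ explicitly in terms of the barcode. Writing $M$ via the structure theorem in (\ref{struc thm}), each summand $\mathbb{F}[x]/\langle x^{t}\rangle(a)$ corresponds to a finite bar $[a,a+t)$ and each summand $\mathbb{F}[x](a)$ corresponds to an infinite bar $[a,\infty)$. On such a cyclic summand, the multiplication map $x^{v-u}$ has rank $1$ precisely when both $u$ and $v$ lie in the associated interval, and rank $0$ otherwise. Summing over all summands, for $u \leq v$ one obtains
\begin{equation}
r(u,v) \,:=\, \mathrm{rank}(x^{v-u} : M_u \to M_v) \,=\, \#\big\{I \in B_1^\mathbb{F}([M]) : u \in I \text{ and } v \in I \big\}, \notag
\end{equation}
counted with multiplicity, and in particular $\dim_\mathbb{F}(M_u) = r(u,u)$.

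With this formula in hand, recovery of the barcode from $r(u,v)$ is a standard discrete Möbius inversion. For a finite bar $[u,v+1)$ with $u \leq v$, its multiplicity in $B_1^\mathbb{F}([M])$ equals
\begin{equation}
r(u,v) - r(u-1,v) - r(u,v+1) + r(u-1,v+1), \notag
\end{equation}
under the convention $r(-1, \cdot) := 0$; this is the inclusion-exclusion extracting intervals with $a = u$ and $b = v+1$ from those with $a \leq u$ and $b > v$, and one checks that the contribution from the infinite bars $[a,\infty)$ cancels. For the infinite bars, note that $v \mapsto r(u,v)$ is non-increasing in $v$ and, since $M$ is finitely generated, stabilises for large $v$ at the number of infinite intervals in $B_1^\mathbb{F}([M])$ containing $u$; calling this limit $r_\infty(u)$, the multiplicity of $[u,\infty)$ is then $r_\infty(u) - r_\infty(u-1)$.

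Together these formulas show that $\rho_1^\mathbb{F}([M])$ determines $B_1^\mathbb{F}([M])$, hence determines $[M]$ by completeness of the barcode; so $\rho_1^\mathbb{F}$ is complete and \cref{Complete invariants equivalent} yields $\rho_1^\mathbb{F} \cong B_1^\mathbb{F}$ for every $\mathbb{F} \in \mathcal{K}$, which gives the claimed class-level equivalence. The only delicate point is the bookkeeping that cleanly separates finite and infinite bars in the inversion and handles the boundary index $u = 0$; once $r(u,v)$ is rewritten as a sum of indicator-products over intervals as above, however, the verification reduces to elementary counting.
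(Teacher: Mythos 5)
Your proof is correct and is considerably more explicit than the paper's own sketch. Both arguments follow the same high-level strategy: invert the rank invariant back to the barcode and conclude completeness of $\rho_1^{\mathbb{F}}$ (the paper phrases this directly as a bijection $\mathcal{B}\to\mathrm{im}(\rho_1^{\mathbb{F}})$, whereas you argue completeness of $\rho_1^{\mathbb{F}}$ and recover the equivalence from \cref{Complete invariants equivalent}; the two framings are interchangeable). The paper's sketch reduces to the identity $\dim_{\mathbb{F}}(\ker(M_u\to M_v))=\dim_{\mathbb{F}}(M_u)-\mathrm{rank}(M_u\to M_v)$ and then only describes the recovery informally (``start in $M_0$ and determine the lifetime length of each homogeneous generator\ldots then do the same for $M_1$''), without writing down an actual inversion formula. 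Your observation that $r(u,v)=\#\{I\in B:\ u\in I,\ v\in I\}$ (counted with multiplicity), together with the closed M\"obius inversion $r(u,v)-r(u-1,v)-r(u,v+1)+r(u-1,v+1)$ for the multiplicity of the finite bar $[u,v+1)$ under the convention $r(-1,\cdot):=0$, and the stabilisation count $r_\infty(u)-r_\infty(u-1)$ for the infinite bars, supplies exactly the concrete content the paper leaves implicit; you also correctly note the cancellation of infinite-bar contributions inside the finite-bar formula, which is the point that makes the inversion clean. In short: same idea, but your version actually gives the reader formulas to check, and would have made a tighter proof than the sketch in the text.
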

\begin{proof}[Sketch of the proof]
As in \cite[Thm.~5]{article} and \cite[Thm.~12]{Carlsson2009}, the idea is to show that the assignment
\begin{align}
\mathcal{B}   \longrightarrow \mathrm{im}\left(\rho_1^{\mathbb{F}} \right), \notag
\end{align}
which sends a barcode $B \in \mathcal{B}$ to the function
\begin{align}\label{Barcode Rank bijection}
 \mathbb{D}^1 &\longrightarrow \mathbb{N}^2, \notag \\
(u,v) & \longmapsto \left( \left \vert \left \{ ([t,s),i) \in B \mid  [u,v) \subseteq [t,s) \right \} \right \vert, \left \vert \left \{ ([t,s),i) \in B \mid u \in [t,s) \right \} \right \vert   \right)  ,
\end{align}
is a bijection. Our proof goes a different way than the proof in \cite{article} and \cite{Carlsson2009}. 

The idea is as follows: for $u,v \in {\mathbb{N}}$ with $u \leq v$, we have
\begin{equation}\label{dimensionformula}
\mathrm{dim}_{\mathbb{F}}(\mathrm{ker}(M_u \to M_v))= \mathrm{dim}_{\mathbb{F}}(M_u) - \mathrm{rank}(M_u \to M_v)  
\end{equation}
which corresponds to the number of homogeneous generators in $M_u$ whose lifetime end in $M_v$. Using this formula, we may simply read off the rank invariant from a given barcode as described in (\ref{Barcode Rank bijection}) and the assignment is clearly well-defined and injective.
The intuition for surjectivity is as follows: let $C \in \mathrm{im}(\rho_1^{\mathbb{F}})$. Then $C=\rho_1^{\mathbb{F}}([M])$ for some $[M] \in \mathbf{Grf}_1(\mathbb{F}[x])/_{\cong}$. Let $(u,v) \in \mathbb{D}^1= \{ (u,v) \in {\mathbb{N}}^2 \mid u \leq v\}$. We know that $B(\psi_M)$ exists. The question is how  $B(\psi_M)$ is determined by $\rho_1^{\mathbb{F}}([M])$. Using (\ref{dimensionformula}), we see that $\rho_1^{\mathbb{F}}([M])$ gives us all the information about the lifetime length of homogeneous generators for each $M_v$ with $v \in {\mathbb{N}}$. So, the shape of $B(\psi_M)$ is determined as follows: we start in $M_0$ and determine the lifetime length of each homogeneous generator via (\ref{dimensionformula}). Then we do the same for $M_1$ and so on. This procedure terminates, since for some $\alpha \in {\mathbb{N}}$, $M_{\alpha} \to M_v$ has to be an isomorphism of $\mathbb{F}$-vector spaces for all $v \in \mathbb{N}_{\geq \alpha}$ (see \cite[Def.~3.3, Thm.~3.1]{onedimpers}).
\end{proof}
So, $\{\rho_{n}^{\mathbb{F}}\}_{\mathbb{F} \in \mathcal{K}}$ is a discrete generalization of $\{B_{1}^{\mathbb{F}}\}_{\mathbb{F} \in \mathcal{K}}$ to dimension $n \geq 2$.

\section{Parameterization}\label{Chapter 2}
 
The first goal of this section is to show that for $n\geq 2$, there exists no class \begin{equation}\left\{f_n^{\mathbb{F}}: \mathbf{Grf}_n(A_n^{\mathbb{F}})/_{\cong} \longrightarrow Q_n^{\mathbb{F}} \right \}_{\mathbb{F}  \in \mathcal{K}} \notag
\end{equation}
of invariants which is complete and discrete. As $\{\mathfrak{J}_{n}^{\mathbb{F}}\}_{\mathbb{F} \in \mathcal{K}}$ is complete, it suffices to show that $\{\mathfrak{J}_{n}^{\mathbb{F}}\}_{\mathbb{F} \in \mathcal{K}}$ is not discrete for $n\geq 2$ by \cref{Complete invariants equivalent}. Let $\xi_0$ and $\xi_1$ be two finite $n$-dimensional multisets. In Sections \ref{Relation Families section}-\ref{Framed relation families section}, we parameterize $S_{n}^{\mathbb{F}}(\xi_0, \xi_1)$ as a subset of a product of Grassmannians together with a group action of ${\mathrm{Aut}(\mathcal{F}_n^{\mathbb{F}}(\xi_0))} $. This approach follows \cite[Sec.~4.3]{article} and \cite[Sec.~5]{Carlsson2009}. However, the authors ommited proofs and details. We fill in the gaps and give detailed proofs. Using this parameterization, we show the non-existence of a discrete and complete class of invariants in \cref{cont inv}.

In \cref{The Moduli space}, we investigate the second goal of this section: that this subset corresponds to the set of $\mathbb{F}$-points of an algebraic variety together with an algebraic group action of ${\mathrm{Aut}(\mathcal{F}_n^{\mathbb{F}}(\xi_0))} $. In other words, we want to give an explicit parameterization of the moduli space of our classification problem. Here we follow \cite[Sec.~5]{article} and \cite[Sec.~5]{Carlsson2009}.
In \cref{param as subprevariety}, we construct a subprevariety $\mathcal{Y}_n^{\mathbb{F}}(\xi_0,\xi_1)$ of a product of Grassmann varieties (over $\mathbb{F}$) such that \begin{equation}
\mathcal{Y}_n^{\mathbb{F}}(\xi_0,\xi_1)(\mathbb{F})\cong S_{n}^{\mathbb{F}}(\xi_0, \xi_1) \notag.
\end{equation}
$\mathrm{GL}_m(\mathbb{F})$ can be realized as the set of $\mathbb{F}$-points of an affine algebraic group $\mathcal{GL}_m^{\mathbb{F}}$ over $\mathbb{F}$. Now it is possible to show that
$\mathrm{Aut}(\mathcal{F}_n^{\mathbb{F}}(\xi_0))$ corresponds to the set of $\mathbb{F}$-points of a closed subscheme 
\begin{equation}
\mathcal{GL}_{\preceq}^{\mathbb{F}}(\xi_0) \subseteq \mathcal{GL}_{m_0}^{\mathbb{F}} \notag
\end{equation}
where $m_0:=\abs{\xi_0}$. Assuming that $\mathcal{GL}_{\preceq}^{\mathbb{F}}(\xi_0)$ is  an algebraic group, the idea is now (using the functor of points approach) to define for every Noetherian $\mathbb{F}$-algebra $T$ a group action
\begin{equation}
\mathcal{GL}_{\preceq}^{\mathbb{F}}(\xi_0)(T) \times  \mathcal{Y}_n^{\mathbb{F}}(\xi_0,\xi_1)(T) \longrightarrow \mathcal{Y}_n^{\mathbb{F}}(\xi_0,\xi_1)(T) \notag
\end{equation}
which is natural in $T$. For this, we have to determine the $T$-points $\mathcal{GL}_{\preceq}^{\mathbb{F}}(\xi_0)(T) $ and $\mathcal{Y}_n^{\mathbb{F}}(\xi_0,\xi_1)(T)$. For $\mathcal{GL}_{\preceq}^{\mathbb{F}}(\xi_0)(T) $ this is not difficult, but for $\mathcal{Y}_n^{\mathbb{F}}(\xi_0,\xi_1)(T)$ it is. In \cref{Functor of points}, we give some ideas that provide evidence for how this can be done. For Grassmann varieties (and therefore also for products of Grassmann varieties) this problem is also technical but solved (see for instance \cite[(8.4)]{GW}). Since $\mathcal{Y}_n^{\mathbb{F}}(\xi_0,\xi_1)$ is constructed as a subprevariety of a product of Grassmann varieties, we use the approach for Grassmann varieties. In \cite[Sec.~5]{Carlsson2009}, the authors give a proof for the subprevariety-part which does not contain an interpretation of the tensor-condition in terms of polynomial equations. In \cite[Thm.~4]{article}, the subprevariety-part is stated where in comparison to \cite[Sec.~5]{Carlsson2009} a proof and conditions are missing. In \cite[Thm.~4]{article} and \cite[Sec.~5]{Carlsson2009}, the authors claim (without a proof) that the action is algebraic.

\subsection{Relation families}\label{Relation Families section}
For the  following let $\xi_0=(V_0, \mu_0)$ and $\xi_1=(V_1, \mu_1)$ be  two  finite $n$-dimensional multisets such that $S_{n}^{\mathbb{F}}(\xi_0,\xi_1)$ is non-empty. In this section, we start with the parameterization of $S_n^{\mathbb{F}}(\xi_0, \xi_1)/\mathrm{Aut}(\mathcal{F}_n^{\mathbb{F}}(\xi_0))$. The idea is to map $L \in S_n^{\mathbb{F}}(\xi_0, \xi_1)$ to the corresponding familiy of $\mathbb{F}$-vector spaces $(L_w)_{w \in V_1}$ and to determine the conditions under which such a family of $\mathbb{F}$-vector spaces yields an element in $S_n^{\mathbb{F}}(\xi_0, \xi_1)$. These conditions are given in the next definition.

\begin{Defi}[$\mathbb{F}$-relation family]\label{Defi Rel fam} An $\mathbb{F}$-\textit{relation family with respect to} $(\xi_0, \xi_1)$ is a family $(L_w  )_{w\in V_1}$ such that for all $w \in V_1$:
\begin{enumerate}
\item $L_w\subseteq \mathcal{F}_n^{\mathbb{F}}(\xi_0)_w$ is an $\mathbb{F}$-linear subspace.
\item $\mathrm{dim}_\mathbb{F}(L_w)=\mathrm{dim}_\mathbb{F} \left( \mathcal{F}_n^{\mathbb{F}}(\xi_1)_{w} \right)$.
\item if $v \in V_1$ with $v\prec w$, then $x^{w-v}L_{v}\subseteq L_w$.
\item if $w \in V_0 \cap V_1$, then we have  $\pi_w (L_w)=0$ where $\pi_w: \mathcal{F}_n^{\mathbb{F}}(\xi_0)_w \to A_n^{\mathbb{F}}(w)^{\mu_0(w)}_w $ denotes the canonical projection. Note that $A_n^{\mathbb{F}}(w)^{\mu_0(w)}_w=\mathbb{F}^{\mu_0(w)}$.
\item $\mathrm{dim}_\mathbb{F} \left( L_w / \sum_{v \prec w} x^{w-v} L_v \right) = \mu_1(w)$.
\end{enumerate}
$R_n^{\mathbb{F}}(\xi_0,\xi_1)$ denotes the set of all $\mathbb{F}$-\textit{relation families with respect to} $(\xi_0, \xi_1)$.
\end{Defi}

Our definition of $\mathbb{F}$-relation families is based on \cite[Def.~9] {article} and \cite[Sec.~5, p.~85]{Carlsson2009}. \cite{Carlsson2009} is more general than \cite[Def.~9]{article} and has $5$ as an extra condition, but condition $4$ is missing. In \cite{article}, conditions $4$ and $5$ are missing. The next theorem is based on and inspired by \cite[Thm.~3]{article} and \cite[Sec.~5]{Carlsson2009}. Both articles do not give a proof.

\begin{Thm}\label{Paramet thm1} The map
\begin{align}
\varphi_n^{\mathbb{F}}(\xi_0, \xi_1): S_{n}^{\mathbb{F}}(\xi_0,\xi_1) \xlongrightarrow{\sim} R_n^{\mathbb{F}}(\xi_0,\xi_1), \quad
L  \longmapsto (L_w)_{w\in V_1}\notag
\end{align}
is a set theoretic bijection with  inverse
\begin{align}
\phi_n^{\mathbb{F}}(\xi_0, \xi_1):R_n^{\mathbb{F}}(\xi_0,\xi_1) \xlongrightarrow{\sim}  S_{n}^{\mathbb{F}}(\xi_0,\xi_1) , \quad (L_w )_{w\in V_1} \longmapsto \left\langle \bigcup_{w\in V_1} L_w  \right\rangle _{A_n^{\mathbb{F}}}  \notag .
\end{align}
\end{Thm}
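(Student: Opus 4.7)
The plan is to prove that $\varphi := \varphi_n^{\mathbb{F}}(\xi_0,\xi_1)$ and $\phi := \phi_n^{\mathbb{F}}(\xi_0,\xi_1)$ are well-defined and mutually inverse. First I would check that $\varphi$ lands in $R_n^{\mathbb{F}}(\xi_0,\xi_1)$. Fix $L \in S_n^{\mathbb{F}}(\xi_0,\xi_1)$ and consider the family $(L_w)_{w \in V_1}$ of graded components. Conditions $1$ and $3$ of \cref{Defi Rel fam} are immediate from $L$ being a graded $A_n^{\mathbb{F}}$-submodule, and condition $4$ is precisely the equivalent characterization of the tensor-condition from \cref{Tensor condition}. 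For condition $5$ I would invoke the free hull $p_1 : \mathcal{F}_n^{\mathbb{F}}(\xi_1) \to L$: since $L$ is generated by homogeneous elements located in degrees $V_1$, one has $(\mathfrak{m}_n^{\mathbb{F}} L)_w = \sum_{v \prec w} x^{w-v} L_v$ for every $w$, and the graded Nakayama lemma (\cref{graded Naka}) identifies $\dim_{\mathbb{F}}(L_w / (\mathfrak{m}_n^{\mathbb{F}} L)_w)$ with $\mu_1(w)$ for $w \in V_1$. Condition $2$ is then read off from the explicit direct-sum description of $\mathcal{F}_n^{\mathbb{F}}(\xi_1)_w$ combined with the behavior of $p_1$ in the relevant degrees.

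For the well-definedness of $\phi$, given $(L_w)_{w \in V_1} \in R_n^{\mathbb{F}}(\xi_0,\xi_1)$, I would set $L := \langle \bigcup_{w \in V_1} L_w \rangle_{A_n^{\mathbb{F}}}$. This is automatically a graded submodule by \cref{basics about graded}(1), and the tensor-condition for $L$ is inherited from condition $4$ via \cref{Tensor condition}. To see $\Xi_{n,0}^{\mathbb{F}}(L) = \xi_1$, I would pick, for each $w \in V_1$, a lift in $L_w$ of a basis of $L_w / \sum_{v \prec w} x^{w-v} L_v$ (of dimension $\mu_1(w)$ by condition $5$); the union of these lifts forms a minimal set of homogeneous generators of $L$ by \cref{graded Naka}, so the associated graded surjection $\mathcal{F}_n^{\mathbb{F}}(\xi_1) \to L$ is a free hull of type $\xi_1$ by \cref{equiv char free hulls}.

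For the two composition identities: if $L \in S_n^{\mathbb{F}}(\xi_0,\xi_1)$, the minimal homogeneous generators of $L$ produced by any free hull all sit in $\bigcup_{w \in V_1} L_w$ (since they live in degrees $V_1$), so they generate $L$, giving $\phi(\varphi(L)) = L$. Conversely, for a relation family $(L_w)_{w \in V_1}$, the graded component in degree $w \in V_1$ of $L := \phi((L_w)_{w \in V_1})$ equals $\sum_{v \in V_1,\, v \preceq w} x^{w-v} L_v$, which by condition $3$ is contained in $L_w$; since the $v = w$ summand is $L_w$ itself, equality holds, so $\varphi(\phi((L_w)_{w \in V_1})) = (L_w)_{w \in V_1}$.

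The hard part will be the simultaneous verification of conditions $2$ and $5$ in the well-definedness of $\varphi$, together with the proof that the lifted generators in the construction of $\phi$ really assemble into a minimal homogeneous generating set whose associated free hull has type exactly $\xi_1$. Both rely on a careful bookkeeping argument mixing \cref{graded Naka}, the explicit grading of $\mathcal{F}_n^{\mathbb{F}}(\xi_1)$ as a direct sum of shifted copies of $A_n^{\mathbb{F}}$, and the equivalent characterization of free hulls in \cref{equiv char free hulls}.
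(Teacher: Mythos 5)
Your overall plan---verify well-definedness of $\varphi$ and $\phi$, then check both compositions---is the same as the paper's, and several of your steps are valid and even cleaner than what is written there: deriving condition~5 from the identification $L_w/\bigl(\sum_{v \prec w} x^{w-v} L_v\bigr) = (L/\mathfrak{m}_n^{\mathbb{F}}L)_w \cong \mathcal{V}_n^{\mathbb{F}}(\xi_1)_w$ (using that $L$ is generated in degrees $V_1$ to compute $(\mathfrak{m}_n^{\mathbb{F}}L)_w$), and proving $\varphi(\phi((L_w))) = (L_w)$ directly from condition~3 of \cref{Defi Rel fam} rather than via the paper's dimension count through the free hull of $N$. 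Your construction of a minimal homogeneous generating set from lifts of bases of the quotients $L_w/\sum_{v\prec w} x^{w-v}L_v$, followed by \cref{graded Naka} and \cref{equiv char free hulls}, is likewise a legitimate compression of the paper's descending induction.

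However, your deferral of condition~2 of \cref{Defi Rel fam} as something ``read off from the explicit direct-sum description of $\mathcal{F}_n^{\mathbb{F}}(\xi_1)_w$ combined with the behavior of $p_1$'' hides a step that actually fails (and the paper's invocation of ``minimality'' at the corresponding point has the same problem). The free hull $p_1 \colon \mathcal{F}_n^{\mathbb{F}}(\xi_1) \to L$ need not be injective in degree $w$ even for $w \in V_1$, so one can have $\dim_{\mathbb{F}}(L_w) < \dim_{\mathbb{F}}\bigl(\mathcal{F}_n^{\mathbb{F}}(\xi_1)_w\bigr)$. Concretely, take $n=2$, $\xi_0 = \{((0,0),1),((0,0),2)\}$, and
\begin{equation}
L := \bigl\langle (x_1,0),\ (x_2,0),\ (0,x_1x_2) \bigr\rangle_{A_2^{\mathbb{F}}} \subseteq A_2^{\mathbb{F}} \oplus A_2^{\mathbb{F}} = \mathcal{F}_2^{\mathbb{F}}(\xi_0). \notag
\end{equation}
These three generators are minimal and homogeneous of degrees $(1,0)$, $(0,1)$, $(1,1)$, so $\Xi_{2,0}^{\mathbb{F}}(L) = \xi_1 := \{((1,0),1),((0,1),1),((1,1),1)\}$, and the tensor-condition holds trivially because $V_0 \cap V_1 = \emptyset$, whence $L \in S_2^{\mathbb{F}}(\xi_0,\xi_1)$. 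Yet the syzygy $x_2\cdot(x_1,0) = x_1\cdot(x_2,0)$ forces $L_{(1,1)} = \bigl\langle (x_1x_2,0),(0,x_1x_2)\bigr\rangle_{\mathbb{F}}$ to be $2$-dimensional, while $\mathcal{F}_2^{\mathbb{F}}(\xi_1)_{(1,1)}$ has dimension $3$. So $\varphi_2^{\mathbb{F}}(\xi_0,\xi_1)(L)$ violates condition~2, i.e.\ $\varphi$ does not land in $R_2^{\mathbb{F}}(\xi_0,\xi_1)$ as defined. The gap is therefore not one of bookkeeping: condition~2 is not a consequence of $L \in S_n^{\mathbb{F}}(\xi_0,\xi_1)$, and any complete proof of \cref{Paramet thm1} would have to either weaken condition~2 (e.g.\ retain only condition~5, which does determine the type of $L$) or restrict the class of $(\xi_0,\xi_1)$ under consideration.
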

Before we proceed with a proof of \cref{Paramet thm1}, we give two examples for $\mathbb{F}$-relation families:
\begin{figure}[h]
\centering
\includegraphics[scale=0.6]{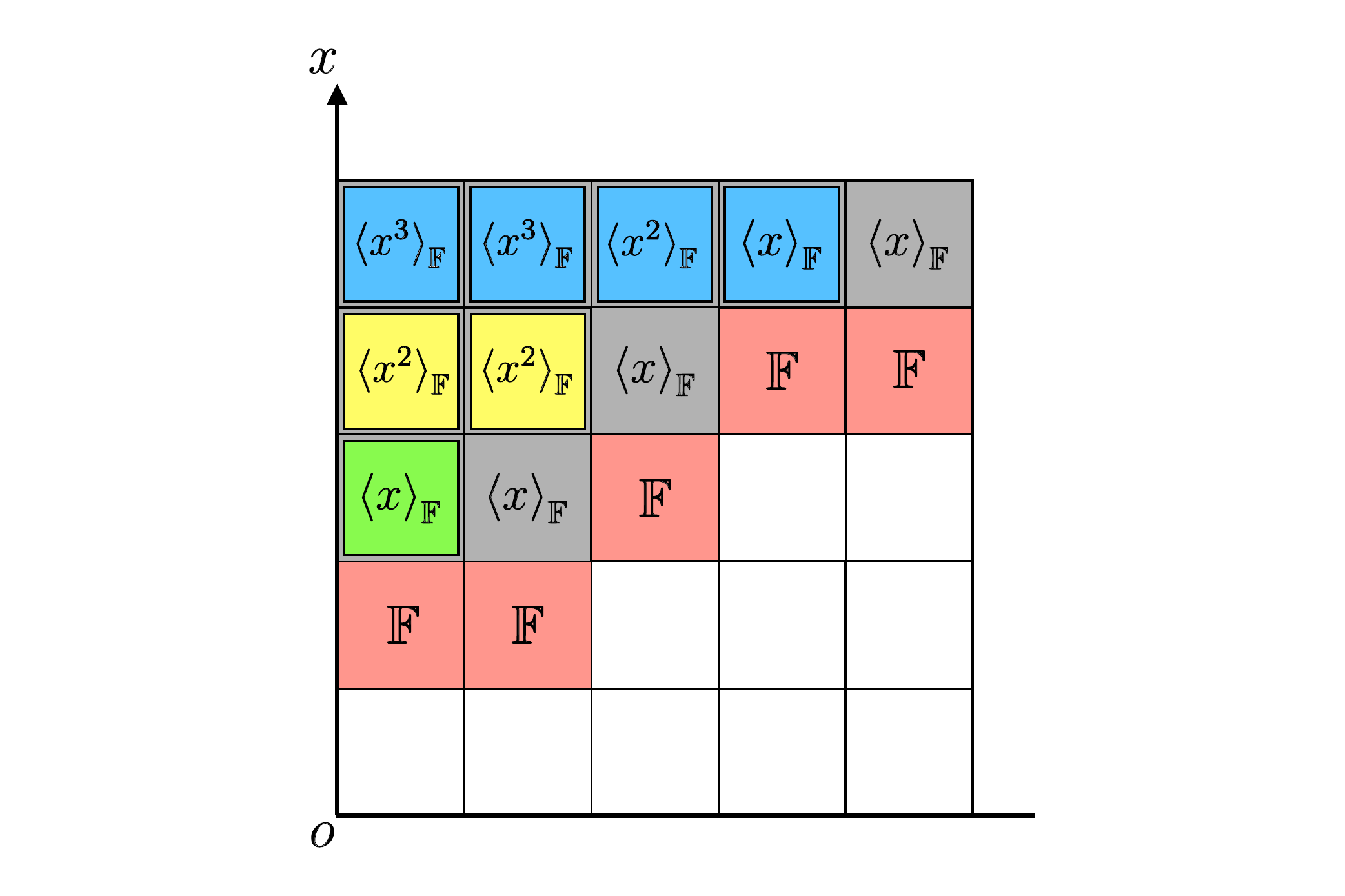} 
\caption{$\mathbb{F}$-relation family in dimension $n=1$ (illustration of \cref{Exa Rel fam1}).}
\label{fig:Relation Families1}
\end{figure}
\begin{Exa}\label{Exa Rel fam1} Let $n=1$. Consider \begin{align}
\xi_0 & := \{(1,1), (1,2), (2,1) ,(3,1), (3,2) \} \notag \\
\xi_1 &:= \{(2,1), (3,1), (4,1),(4,2)\} \notag
\end{align}
and let
\begin{align}
L_{4} & := \left\langle (x^3,0,0,0,0), (0,x^3,0,0,0),(0,0,x^2,0,0), (0,0,0,x,0)\right\rangle_{\mathbb{F}} \subseteq \mathcal{F}_1^{\mathbb{F}}(\xi_0)_{4} \notag, \\
L_{3} & := \left\langle (x^2,0,0,0,0), (0,x^2,0,0,0) \right\rangle_{\mathbb{F}} \subseteq \mathcal{F}_1^{\mathbb{F}}(\xi_0)_{3}, \notag \\
L_{2} & := \left\langle (x,0,0,0,0) \right\rangle_{\mathbb{F}} \subseteq \mathcal{F}_1^{\mathbb{F}}(\xi_0)_{2}  \notag.
\end{align}
Then $(L_j)_{j \in \{2,3,4\}} \in R_1^{\mathbb{F}}(\xi_0,\xi_1)$.
Figure \ref{fig:Relation Families1} illustrates $(L_j)_{j \in \{2,3,4\}}$. The blue blocks correspond to $L_4$, the yellow blocks correspond to $L_3$ and the green block corresponds to $L_2$. The red blocks are the "forbidden" blocks that we get from condition $4$ in $R_1^{\mathbb{F}}(\xi_0, \xi_1)$. By using Theorems \ref{classification bijection} and \ref{Paramet thm1}, we obtain a set theoretic bijection
\begin{equation}
R_1^{\mathbb{F}}(\xi_0, \xi_1)/\mathrm{Aut}(\mathcal{F}_1^{\mathbb{F}}(\xi_0) \cong I_1^{\mathbb{F}}(\xi_0, \xi_1)  \notag.
\end{equation}
\end{Exa}

\begin{figure}[h]
\centering
\includegraphics[scale=0.4]{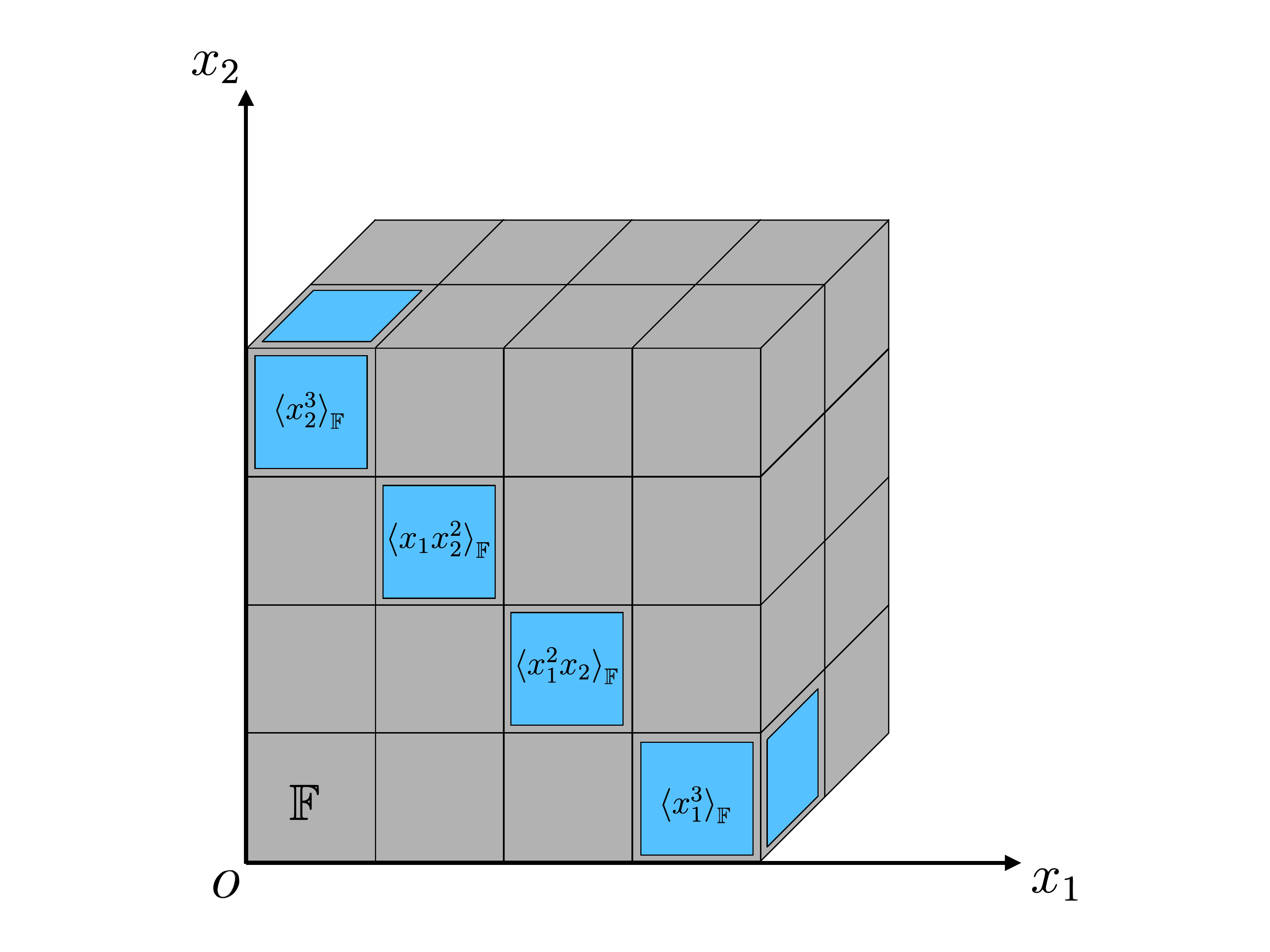} 
\caption{$\mathbb{F}$-relation family in dimension $n=2$ (illustration of \cref{Exa Rel fam2}).}
\label{fig:Relation Families2}
\end{figure}
\newpage
\begin{Exa}\label{Exa Rel fam2} This example can be found in \cite[Sec.~5.2]{article}, \cite[Sec.~5.2]{Carlsson2009} and will be also discussed later in \cref{cont inv}. Let $n=2$. As in \cref{Dim 2 Tensor Ex}, consider 
\begin{align}
\xi_0&:=\{\textcolor{black}{((0,0),1), ((0,0),2)}\}, \notag \\
\xi_1 &:=\{((3,0),1),((2,1),1),((1,2),1),((0,3),1)\}. \notag 
\end{align}
Then conditions 3-5 in $R_2^{\mathbb{F}}(\xi_0, \xi_1)$ are trivial and we obtain
\begin{equation}
R_2^{\mathbb{F}}(\xi_0, \xi_1)=\prod_{w \in V_1} \left \{L_w\subseteq \mathcal{F}_2^{\mathbb{F}}(\xi_0)_w \mid \mathrm{dim}_\mathbb{F}(L_w)=\mathrm{dim}_\mathbb{F} \left( \mathcal{F}_2^{\mathbb{F}}(\xi_1)_{w} \right)=1 \right\} \notag
\end{equation}
Since $\mathcal{F}_2^{\mathbb{F}}(\xi_0)_w \cong \mathbb{F}^2$ for all $w \in V_1$, we have
\begin{equation}
R_2^{\mathbb{F}}(\xi_0, \xi_1) \cong \mathbb{P}_1(\mathbb{F})^4 \notag
\end{equation}
as sets, where $\mathbb{P}_1(\mathbb{F})$ denotes the projective line over $\mathbb{F}$ (i.e. the set of all one-dimensional subspaces of $\mathbb{F}^2$). Figure \ref{fig:Relation Families2} illustrates $(L_w)_{w \in V_1} \in R_2^{\mathbb{F}}(\xi_0, \xi_1)$ where 
\begin{align}
&L_{(3,0)}:=\braket{(x_1^3,0)}_{\mathbb{F}} \subseteq \mathcal{F}_2^{\mathbb{F}}(\xi_0)_{(3,0)}, \notag \\  &L_{(2,1)}:=\braket{(x_1^2x_2,0)}_{\mathbb{F}} \subseteq \mathcal{F}_2^{\mathbb{F}}(\xi_0)_{(2,1)}, \notag \\
&L_{(1,2)}:=\braket{(x_1x_2^2,0)}_{\mathbb{F}} \subseteq \mathcal{F}_2^{\mathbb{F}}(\xi_0)_{(1,2)}, \notag \\ &L_{(0,3)}:=\braket{(x_2^3,0)}_{\mathbb{F}} \subseteq \mathcal{F}_2^{\mathbb{F}}(\xi_0)_{(0,3)} \notag.
\end{align}
\end{Exa}

\begin{proof}[Proof of \cref{Paramet thm1}] $\varphi_n^{\mathbb{F}}(\xi_0, \xi_1)$ is well-defined: let $L \in S_{n}^{\mathbb{F}}(\xi_0, \xi_1)$. Since $L$ is graded, condition $1$ and $3$ are satisfied. Condition $4$ is equivalent to the tensor-condition by \cref{Tensor condition}. It remains to show that condition $2$ and $5$ are satisfied. Let ${p_1: \mathcal{F}_n^{\mathbb{F}}(\xi_1) \to L}$ be a free hull of $L$ and $E_{\xi_1}:=\bigcup_{v \in V_1}E_{v}$ be the standard basis of $\mathcal{F}_n^{\mathbb{F}}(\xi_1)$ as $A_n^{\mathbb{F}}$-module where for $v \in V_1$, $E_v$ corresponds to the block $A_n^{\mathbb{F}}(v)^{\mu_1(v)}_v=\mathbb{F}^{\mu_0(v)}$.  Let $w \in V_1$ and $E_{v \preceq w}:=\bigcup_{v \preceq w }E_{v}$. Then $\mathcal{F}_n^{\mathbb{F}}(\xi_1)_w= \braket{E_{v \preceq w}}_{\mathbb{F}}$ and $p_1(E_{\xi_1})$ is a minimal set of homogeneous generators of $L$ by \cref{equiv char free hulls} and $p_1(E_{v \preceq w})$ is an $\mathbb{F}$-basis of $L_w$. We have \begin{equation}
p_1(\braket{E_{v \preceq w}}_{\mathbb{F}})=p_1(\mathcal{F}_n^{\mathbb{F}}(\xi_1)_{w})=L_w  \notag
\end{equation}
which shows that $\mathrm{dim}_{\mathbb{F}}(\mathcal{F}_n^{\mathbb{F}}(\xi_1)_w)\geq \mathrm{dim}_{\mathbb{F}}(L_w)$ and equality must hold due to minimality. Thus, condition $2$ is satisfied. It remains to show that condition $5$ is satisfied: since $\mathcal{F}_n^{\mathbb{F}}(\xi_0)$ is graded free and $L \subseteq \mathcal{F}_n^{\mathbb{F}}(\xi_0)$ is graded, multiplication by $x^{w-v} $ is an $\mathbb{F}$-vector space monomorphism from $L_v$ to $L_w$ for all $v,w \in {\mathbb{N}^n}$ with $v \preceq w$. Therefore,
\begin{align}
\mathrm{dim}_{\mathbb{F}}(L_w/x^{w-v} L_v)&=\mathrm{dim}_{\mathbb{F}}(L_w)- \mathrm{dim}_{\mathbb{F}}(L_v)=\abs{E_{u \preceq w}}- \abs{E_{u \preceq v}} \notag \\
&= \sum_{u \preceq w} \mu_1(u)- \sum_{u \preceq v} \mu_1(u) = \sum_{v \prec u \preceq w} \mu_1(u)\notag.
\end{align}
If we procced inductively we see that condition $5$ is satisfied. So, we conlude that $\varphi_n^{\mathbb{F}}(\xi_0, \xi_1)$ is well-defined. 

$\phi_n^{\mathbb{F}}(\xi_0, \xi_1)$ is well-defined: let $(L_w)_{ w\in V_1} \in R_n^{\mathbb{F}}(\xi_0, \xi_1)$ and \begin{equation}
L:= \left\langle \bigcup_{w \in V_1} L_w  \right\rangle _{A_n^{\mathbb{F}}} \subseteq \mathcal{F}_n^{\mathbb{F}}(\xi_0) \notag.
\end{equation}
Then $L \subseteq \mathcal{F}_n^{\mathbb{F}}(\xi_0)$ is a graded $A_n^{\mathbb{F}}$-submodule since it is generated by homogeneous elements (see \cref{basics about graded}).
Condition $4$ is equivalent to the tensor-condition by \cref{Tensor condition}. So, it remains to show that $L$ has type $\xi_1$:
\begin{enumerate}[leftmargin=2cm]
\item[Step $1$:] let $V_{1,1}^{\mathrm{max}} \subseteq V_1$ be the set of all maximal $v \in V_1$ with respect to $\preceq$. Then 
 $\mathrm{dim}_\mathbb{F} \left( L_w / \sum_{v \prec w} x^{w-v} L_v \right) = \mu_1(w)$ for all $w \in V_{1,1}^{\mathrm{max}}$. For all $w \in V_{1,1}^{\mathrm{max}}$, let $B_{1,w} \subseteq L_w$ with $\abs{B_{w}}=\mu_1(w)$ such that 
\begin{equation} \left \langle B_{w} \cup \sum_{v \prec w} x^{w-v} L_v \right \rangle_{\mathbb{F}}=L_w  \notag
\end{equation}
As  $\mathrm{dim}_\mathbb{F}(L_w)=\mathrm{dim}_\mathbb{F} \left( \mathcal{F}_n^{\mathbb{F}}(\xi_1)_{w} \right)$ we see that \begin{equation}\mathrm{dim}_{\mathbb{F}} \left(\sum_{v \prec w} x^{w-v} L_v \right)= \mathrm{dim}_\mathbb{F} \left( \mathcal{F}_n^{\mathbb{F}}(\xi_1)_{w} \right)- \mu_1(w) \notag
\end{equation}
for all $w \in V_{1,1}^{\mathrm{max}}$.
\item[Step $2$:] let $V_{2,1}^{\mathrm{max}} \subseteq V_1 \setminus V_{1,1}^{\mathrm{max}} $ be the set of all maximal $v \in V_1 \setminus V_{1,1}^{\mathrm{max}}$ with respect to $\preceq$. Then 
 $\mathrm{dim}_\mathbb{F} \left( L_w / \sum_{v \prec w} x^{w-v} L_v \right) = \mu_1(w)$ for all $w \in V_{2,1}^{\mathrm{max}}$. For all $w \in V_{2,1}^{\mathrm{max}}$, let $B_{w} \subseteq L_w$ with $\abs{B_{w}}=\mu_1(w)$ such that 
\begin{equation} \left \langle B_{w} \cup \sum_{v \prec w} x^{w-v} L_v \right \rangle_{\mathbb{F}}=L_w  \notag
\end{equation}
As  $\mathrm{dim}_\mathbb{F}(L_w)=\mathrm{dim}_\mathbb{F} \left( \mathcal{F}_n^{\mathbb{F}}(\xi_1)_{w} \right)$ we see that \begin{equation}\mathrm{dim}_{\mathbb{F}} \left(\sum_{v \prec w} x^{w-v} L_v \right)= \mathrm{dim}_\mathbb{F} \left( \mathcal{F}_n^{\mathbb{F}}(\xi_1)_{w} \right)- \mu_1(w) \notag
\end{equation}
for all $w \in V_{2,1}^{\mathrm{max}}$.
\end{enumerate}
If we proceed inducutively this way, there exists an $m \in \mathbb{N}$ such that $V_{m,1}^{\mathrm{max}} \neq \emptyset$ and $V_{m+1,1}^{\mathrm{max}} =\emptyset$
\begin{enumerate}[leftmargin=2cm]
\item[Step $m$:] as $V_{m+1,1}^{\mathrm{max}} =\emptyset$, we have $\mathrm{dim}_{\mathbb{F}}(L_w)=\mu_1(w)$ for all $w \in V_{m,1}^{\mathrm{max}} $. Now let $B_{w} \subseteq L_w$ be an $\mathbb{F}$-basis. Then $\abs{B_{w}}=\mu_1(w)=\mathrm{dim}_{\mathbb{F}}(L_w)$.
\end{enumerate}
By construction, we have 
 \begin{equation}\label{has type eq 1}
L_w=\left \langle \bigcup_{v \preceq w} x^{w-v} B_{v} \right \rangle_{\mathbb{F}} .
\end{equation} 
Since multiplication by $x^{w-v}$ is an $\mathbb{F}$-vector space monomorphism from $\mathcal{F}_n^{\mathbb{F}}(\xi_0)_v$ to $\mathcal{F}_n^{\mathbb{F}}(\xi_0)_w$, it holds that \begin{equation}\abs{x^{w-v} B_v}=\abs{B_v} \notag.
\end{equation}
So, we have
\begin{align}\label{has type eq 2}
\abs{\bigcup_{v \preceq w} x^{w-v}B_{v}}& \leq \sum_{v \preceq w} \abs{x^{w-v}B_{v}} = \sum_{v \preceq w} \abs{B_{v}} \notag \\
&=\sum_{v \preceq w} \mu_1(v)=\mathrm{dim}_{\mathbb{F}} \left( \mathcal{F}_n^{\mathbb{F}}(\xi_1)_{w} \right)=\mathrm{dim}_{\mathbb{F}}(L_w).
\end{align}
Now (\ref{has type eq 1}) and (\ref{has type eq 2}) imply that $\bigcup_{v \preceq w} x^{w-v}B_{v}$ is an $\mathbb{F}$-basis of $L_w$. Therefore,\begin{equation}
B:=\bigcup_{v \in V_1} B_v \notag
\end{equation}
is a finite system of homogeneous generators of $L$. Note that the union $\bigcup_{v \in V_1} B_v$ is disjoint since the $B_v$ live in different degrees.  Since \begin{equation}
\abs{B}= \sum_{v \in V_1} \mu_1(v)=\abs{\xi_1}=\mathrm{dim}_{\mathbb{F}}(\mathbb{F} \otimes_{A_n^{\mathbb{F}}} L)=\mathrm{dim}_{\mathbb{F}}(L/\mathfrak{m}_n^{\mathbb{F}} L) \notag
\end{equation}
and since $\pi(B)$ generates $L/\mathfrak{m}_n^{\mathbb{F}} L$ (where $\pi: L \to L/\mathfrak{m}_n^{\mathbb{F}} L$ denotes the canonical projection), we conclude that $\pi(B)$ is an $\mathbb{F}$-basis of $L/\mathfrak{m}_n^{\mathbb{F}} L$. Hence, $B$ is minimal by \cref{graded Naka}.

Recall that $E_{\xi_1}=\bigcup_{v \in V_1} E_v$ is the standard basis of $\mathcal{F}_n^{\mathbb{F}}(\xi_1)$ as an $n$-graded $A_n^{\mathbb{F}}$-module. We have $\abs{E_v}=\mu_1(v)=\abs{B_v}$. So, there are set theoretic bijections $\iota_v: E_v \xrightarrow{\sim} B_v$. Thus, 
\begin{equation}
p_1: \mathcal{F}_n^{\mathbb{F}}(\xi_1) \longrightarrow L, \quad  E_v \ni b \longmapsto \iota_v(b) \notag
\end{equation}
defines a surjective morphism of $n$-graded $A_n^{\mathbb{F}}$-modules. Since $B$ is minimal, $(\mathcal{F}_n^{\mathbb{F}}(\xi_1),p_1)$ is a free hull of $L$ by \cref{equiv char free hulls} and $L$ has type $\xi_1$.

$\varphi_n^{\mathbb{F}}(\xi_0, \xi_1)$ and $\phi_n^{\mathbb{F}}(\xi_0, \xi_1)$ are inverse to each other:
Let $L \in S_n^{\mathbb{F}}(\xi_0, \xi_1)$. Since, $L$ has type $\xi_1$ the generators of $L$ are located in $\bigcup_{w \in V_1} L_w$ and we obtain
\begin{align}
\phi_n^{\mathbb{F}}(\xi_0, \xi_1)(\varphi_n^{\mathbb{F}} (\xi_0, \xi_1)(L))=\phi_n^{\mathbb{F}}(\xi_0, \xi_1)((L_w)_{w \in V_1})= \left \langle \bigcup_{w \in V_1} L_w  \right \rangle_{A_n^{\mathbb{F}}} =L \notag.
\end{align}
Now let $(L_w)_{w \in V_1} \in R_n^{\mathbb{F}} (\xi_0, \xi_1)$ and let
\begin{equation}
N:=\left \langle \bigcup_{w \in V_1} L_w  \right \rangle_{A_n^{\mathbb{F}}}  \notag.
\end{equation}
Let $w \in V_1$. Then $L_w \subseteq N_w$. Since $\phi_n^{\mathbb{F}}(\xi_0, \xi_1)$ is well-defined, we know that $N$ has type $\xi_1$. So, there exists a surjective graded $A_n^{\mathbb{F}}$-module homomorphism $p_1: \mathcal{F}_n^{\mathbb{F}}(\xi_1) \to N$. Since $(L_w)_{w \in V_1} \in R_n^{\mathbb{F}}(\xi_0, \xi_1)$, we have $\mathrm{dim}_{\mathbb{F}} (\mathcal{F}_n^{\mathbb{F}}(\xi_1)_w)= \mathrm{dim}_{\mathbb{F}} (L_w)$. Thus, we obtain
\begin{equation}
\mathrm{dim}_{\mathbb{F}} (L_w) \leq \mathrm{dim}_{\mathbb{F}} (N_w) \leq  \mathrm{dim}_{\mathbb{F}} (\mathcal{F}_n^{\mathbb{F}}(\xi_1)_w)= \mathrm{dim}_{\mathbb{F}} (L_w) \notag
\end{equation}
which shows that $\mathrm{dim}_{\mathbb{F}} (L_w)= \mathrm{dim}_{\mathbb{F}} (N_w)$. Since $L_w \subseteq N_w$, we have $L_w=N_w$. Therefore,
\begin{equation}
\varphi_n^{\mathbb{F}}(\xi_0, \xi_1)(\phi_n^{\mathbb{F}}(\xi_0, \xi_1)((L_w)_{w \in V_1}))=\varphi_n^{\mathbb{F}}(\xi_0, \xi_1)(N)=(L_w)_{w \in V_1} \notag.
\end{equation}
So, $\varphi_n^{\mathbb{F}}(\xi_0, \xi_1)$ and $\phi_n^{\mathbb{F}}(\xi_0, \xi_1)$ are inverse to each other
\end{proof}

\begin{Defiprop} $\mathrm{Aut}(\mathcal{F}_n^{\mathbb{F}}(\xi_0))$ acts as a group on $R_n^{\mathbb{F}}(\xi_0,\xi_1)$ where for $(L_w  )_{w\in V_1} \in R_n^{\mathbb{F}}(\xi_0,\xi_1)$ and $f \in \mathrm{Aut}(\mathcal{F}_n^{\mathbb{F}}(\xi_0))$, \begin{equation} f \cdot (L_w )_{w\in V_1}:=\left  ( f (L_w) \right )_{w\in V_1} \notag.
\end{equation}
\end{Defiprop}
\begin{proof}
We have to check that conditions $1$-$5$ in $R_n^{\mathbb{F}}(\xi_0, \xi_1)$ are preserverd under this action. So, let $(L_w)_{w \in V_1} \in R_n^{\mathbb{F}}(\xi_0, \xi_1)$ and $f \in \mathrm{Aut}(\mathcal{F}_n^{\mathbb{F}}(\xi_0)$. Then 
\begin{equation}
L:= \left\langle \bigcup_{w \in V_1} L_w  \right\rangle _{A_n^{\mathbb{F}}}  = \varphi_n^{\mathbb{F}}(\xi_0, \xi_1) ((L_w)_{w \in V_1})\in S_{n}^{\mathbb{F}}(\xi_0, \xi_1) \notag
\end{equation}
by \cref{Paramet thm1}. We have $f(L) \in S_{n}^{\mathbb{F}}(\xi_0, \xi_1)$ and since $f$ is a graded automorphism, we have $f(L)_w=f(L_w)$ for all $w \in {\mathbb{N}^n}$. Using \cref{Paramet thm1}, we therefore conclude that 
\begin{equation}
(f(L_w))_{w \in V_1}=(f(L)_w)_{w \in V_1}=\phi_n^{\mathbb{F}}(\xi_0, \xi_1)(f(L)) \in  R_n^{\mathbb{F}}(\xi_0, \xi_1) \notag.
\end{equation}
\end{proof}
\begin{Thm}\label{Paramet thm} $\varphi_n^{\mathbb{F}}(\xi_0, \xi_1)$ and $\phi^{\mathbb{F}}(\xi_0, \xi_1)$ are $\mathrm{Aut}(\mathcal{F}_n^{\mathbb{F}}(\xi_0))$-equivariant and they induce mutually inverse set theoretic bijections
\begin{equation}
\overline{\varphi}_n^{\mathbb{F}}(\xi_0, \xi_1): S_{n}^{\mathbb{F}}(\xi_0,\xi_1)/\mathrm{Aut}(\mathcal{F}_n^{\mathbb{F}}(\xi_0)) \longrightarrow R_n^{\mathbb{F}}(\xi_0,\xi_1)/\mathrm{Aut}(\mathcal{F}_n^{\mathbb{F}}(\xi_0)) \notag
\end{equation}
and
\begin{align}
\overline{\phi}_n^{\mathbb{F}}(\xi_0, \xi_1):R_n^{\mathbb{F}}(\xi_0,\xi_1)/\mathrm{Aut}(\mathcal{F}_n^{\mathbb{F}}(\xi_0)) \longrightarrow  S_{n}^{\mathbb{F}}(\xi_0,\xi_1)/\mathrm{Aut}(\mathcal{F}_n^{\mathbb{F}}(\xi_0)) \notag.
\end{align}
Thus, 
\begin{equation} I_{n}^{\mathbb{F}}(\xi_0,\xi_1)  \cong R_{n}^{\mathbb{F}}(\xi_0,\xi_1)/  \mathrm{Aut}(\mathcal{F}_n^{\mathbb{F}}(\xi_0)) \notag
\end{equation}
as sets.
\end{Thm}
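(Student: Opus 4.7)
The plan is to deduce everything from \cref{Paramet thm1} together with the classification bijection of \cref{classification bijection}. First I would verify equivariance separately for $\varphi_n^{\mathbb{F}}(\xi_0,\xi_1)$ and $\phi_n^{\mathbb{F}}(\xi_0,\xi_1)$. For $\varphi_n^{\mathbb{F}}(\xi_0,\xi_1)$, given $L \in S_n^{\mathbb{F}}(\xi_0,\xi_1)$ and $f \in \mathrm{Aut}(\mathcal{F}_n^{\mathbb{F}}(\xi_0))$, the key point is that $f$ is a graded $A_n^{\mathbb{F}}$-module automorphism, so $f(L_w) = f(L)_w$ for every $w \in V_1$. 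This gives
\begin{equation}
\varphi_n^{\mathbb{F}}(\xi_0,\xi_1)(f\cdot L) = (f(L)_w)_{w\in V_1} = (f(L_w))_{w\in V_1} = f \cdot \varphi_n^{\mathbb{F}}(\xi_0,\xi_1)(L) \notag.
\end{equation}
For $\phi_n^{\mathbb{F}}(\xi_0,\xi_1)$, given $(L_w)_{w\in V_1} \in R_n^{\mathbb{F}}(\xi_0,\xi_1)$, the fact that $f$ is an $A_n^{\mathbb{F}}$-module homomorphism yields $f(\langle \bigcup_{w\in V_1} L_w \rangle_{A_n^{\mathbb{F}}}) = \langle \bigcup_{w\in V_1} f(L_w) \rangle_{A_n^{\mathbb{F}}}$, hence $\phi_n^{\mathbb{F}}(\xi_0,\xi_1)(f\cdot (L_w)_{w\in V_1}) = f\cdot \phi_n^{\mathbb{F}}(\xi_0,\xi_1)((L_w)_{w\in V_1})$.

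Next I would pass to orbit spaces. Since $\varphi_n^{\mathbb{F}}(\xi_0,\xi_1)$ is equivariant, it sends orbits to orbits, so it descends to a well-defined map $\overline{\varphi}_n^{\mathbb{F}}(\xi_0,\xi_1)$ between the orbit spaces, and analogously for $\phi_n^{\mathbb{F}}(\xi_0,\xi_1)$. By \cref{Paramet thm1}, $\varphi_n^{\mathbb{F}}(\xi_0,\xi_1)$ and $\phi_n^{\mathbb{F}}(\xi_0,\xi_1)$ are mutually inverse, and this identity descends to the orbit spaces, showing that $\overline{\varphi}_n^{\mathbb{F}}(\xi_0,\xi_1)$ and $\overline{\phi}_n^{\mathbb{F}}(\xi_0,\xi_1)$ are mutually inverse bijections.

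Finally, the isomorphism $I_n^{\mathbb{F}}(\xi_0,\xi_1) \cong R_n^{\mathbb{F}}(\xi_0,\xi_1)/\mathrm{Aut}(\mathcal{F}_n^{\mathbb{F}}(\xi_0))$ follows by composition: \cref{classification bijection} gives the bijection $\overline{\varrho}_n^{\mathbb{F}}(\xi_0,\xi_1): S_n^{\mathbb{F}}(\xi_0,\xi_1)/\mathrm{Aut}(\mathcal{F}_n^{\mathbb{F}}(\xi_0)) \xlongrightarrow{\sim} I_n^{\mathbb{F}}(\xi_0,\xi_1)$, and composing with $\overline{\phi}_n^{\mathbb{F}}(\xi_0,\xi_1)$ yields a set theoretic bijection $R_n^{\mathbb{F}}(\xi_0,\xi_1)/\mathrm{Aut}(\mathcal{F}_n^{\mathbb{F}}(\xi_0)) \xlongrightarrow{\sim} I_n^{\mathbb{F}}(\xi_0,\xi_1)$, as required.

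There is no genuine obstacle here: the content of the theorem is already packaged in \cref{Paramet thm1} and \cref{classification bijection}, and the only thing to check by hand is that the two bijections respect the respective group actions, which reduces to the graded $A_n^{\mathbb{F}}$-linearity of elements of $\mathrm{Aut}(\mathcal{F}_n^{\mathbb{F}}(\xi_0))$. The mild bookkeeping point worth being careful about is that in the definition of $\phi_n^{\mathbb{F}}(\xi_0,\xi_1)$ one takes the $A_n^{\mathbb{F}}$-submodule generated by $\bigcup_{w\in V_1} L_w$, so equivariance of $\phi_n^{\mathbb{F}}(\xi_0,\xi_1)$ uses $A_n^{\mathbb{F}}$-linearity rather than just gradedness, but this is immediate from $f \in \mathrm{Aut}(\mathcal{F}_n^{\mathbb{F}}(\xi_0))$.
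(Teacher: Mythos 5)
Your proof is correct and follows essentially the same approach as the paper: the paper's proof is just a one-line remark that equivariance holds ``by definition of the action'' (the well-definedness of the $\mathrm{Aut}(\mathcal{F}_n^{\mathbb{F}}(\xi_0))$-action on $R_n^{\mathbb{F}}(\xi_0,\xi_1)$ is itself established earlier by exactly the computation $f(L)_w = f(L_w)$ that you spell out), after which the conclusion follows from \cref{Paramet thm1} and \cref{classification bijection}. You merely fill in the routine details, and also note — correctly — that once $\varphi_n^{\mathbb{F}}(\xi_0,\xi_1)$ is known to be equivariant, equivariance of its inverse $\phi_n^{\mathbb{F}}(\xi_0,\xi_1)$ is automatic, though you also verify it directly via $A_n^{\mathbb{F}}$-linearity.
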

\begin{proof} Both $\varphi_n^{\mathbb{F}}(\xi_0, \xi_1)$ and $\phi_n^{\mathbb{F}}(\xi_0, \xi_1)$ are $\mathrm{Aut}(\mathcal{F}_n^{\mathbb{F}}(\xi_0))$-equivariant by definition of the $\mathrm{Aut}(\mathcal{F}_n^{\mathbb{F}}(\xi_0))$-action. The rest is clear by \cref{Paramet thm1} and \cref{classification bijection}.
\end{proof}

\subsection{The automorphism group}
Recall that $\xi_0=(V_0, \mu_0)$ is a finite $n$-dimensional mulitset. In this section, we give an explicit description of $\mathrm{Aut}(\mathcal{F}_n^{\mathbb{F}}(\xi_0))$ in terms of transformation matrices.

Our next definition, proposition is from \cite[Lem.~3]{article} (see also \cite[Sec.~4.3]{Carlsson2009}).
\begin{Defiprop} Define 
\begin{equation}
\mathrm{Aut}_{\preceq}^{\mathbb{F}}(\xi_0)\subseteq \mathrm{Aut}\left(\bigoplus_{v \in V_0} \mathbb{F}^{\mu_0(v)} \right) \notag
\end{equation}
as the subgroup of all $\mathbb{F}$-vector space automorphisms \begin{equation}
f:\bigoplus_{v \in V_0} \mathbb{F}^{\mu_0(v)} \xlongrightarrow{\sim} \bigoplus_{v \in V_0} \mathbb{F}^{\mu_0(v)} \notag
\end{equation}
such that for all $v \in V_0$, 
\begin{equation}
f \left( \mathbb{F}^{\mu_0(v)} \right) \subseteq \bigoplus_{w \preceq v} \mathbb{F}^{\mu_0(w)} \notag.
\end{equation}
\end{Defiprop}

\begin{proof} We have to show that $\mathrm{Aut}_{\preceq}^{\mathbb{F}}(\xi_0)\subseteq \mathrm{Aut}\left(\bigoplus_{v \in V_0} \mathbb{F}^{\mu_0(v)} \right)$ is a subgroup. We clearly have $ \mathrm{id} \in\mathrm{Aut}_{\preceq}^{\mathbb{F}}(\xi_0)$ and for $f,g \in \mathrm{Aut}_{\preceq}^{\mathbb{F}}(\xi_0)$, $f \circ g \in \mathrm{Aut}_{\preceq}^{\mathbb{F}}(\xi_0)$. It remains to show that $f^{-1} \in \mathrm{Aut}_{\preceq}^{\mathbb{F}}(\xi_0)$. Let $v \in V_0$. It holds that \begin{equation}f \left( \mathbb{F}^{\mu_0(v)} \right) \subseteq \bigoplus_{w \preceq v} \mathbb{F}^{\mu_0(w)} \notag. 
\end{equation}
Hence, \begin{equation}
f \left( \bigoplus_{w \preceq v}  \mathbb{F}^{\mu_0(w)} \right) \subseteq \bigoplus_{w \preceq v}  \mathbb{F}^{\mu_0(w)} \notag.
\end{equation}
Since $f$ is an automorphism of $\mathbb{F}$-vector spaces, this implies that \begin{equation}
f \left( \bigoplus_{w \preceq v}  \mathbb{F}^{\mu_0(w)}  \right) = \bigoplus_{w \preceq v}  \mathbb{F}^{\mu_0(w)} \notag. 
\end{equation}
Thus, \begin{equation}
f^{-1} \left(\bigoplus_{w \preceq v} \mathbb{F}^{\mu_0(w)} \right) =  \bigoplus_{w \preceq v} \mathbb{F}^{\mu_0(w)} \notag
\end{equation} 
and in particular 
\begin{equation}
f^{-1} \left(\mathbb{F}^{\mu_0(v)} \right) \subseteq  \bigoplus_{w \preceq v} \mathbb{F}^{\mu_0(w)} \notag.
\end{equation}
\end{proof}

 Before we proceed, we need to introduce further notations:
\begin{Defi}\label{def of taus} Recall that $\xi_0=(V_0, \mu_0)$. Let $v \in {\mathbb{N}^n}$ and $w \in V_0$:
\begin{enumerate}
\item if $v \succeq w$, we have $A_n^{\mathbb{F}}(w)^{\mu_0(w)}_v= \braket{x^{v-w}}_{\mathbb{F}}^{\mu_0(w)}$. So, 
\begin{equation}
 \{(x^{v-w},0, \dots, 0), (0,x^{v-w},0, \dots, 0), \dots, (0, \dots, 0,x^{v-w}) \} \notag
\end{equation}
is an $\mathbb{F}$-basis of $A_n^{\mathbb{F}}(w)^{\mu_0(w)}_v$. Now let
\begin{equation}
\tau(w)_v:   A_n^{\mathbb{F}}(w)^{\mu_0(w)}_v \xlongrightarrow{\sim}  \mathbb{F}^{\mu_0(w)} \notag
\end{equation}
be the $\mathbb{F}$-linear isomorphism which maps the basis element \begin{equation}
(0, \dots ,0,\underset{j}{x^{v-w}},0, \dots,0) \in A_n^{\mathbb{F}}(w)^{\mu_0(w)}_v \notag
\end{equation}
to the $j$-th standard basis vector \begin{equation}(0, \dots, 0,\underset{j}{1},0, \dots, 0) \in \mathbb{F}^{\mu_0(w)} \notag.
\end{equation}
\item if $v \nsucceq w$, let 
\begin{equation}\tau(w)_v: 0= A_n^{\mathbb{F}}(w)_v^{\mu_0(w)} \longrightarrow \mathbb{F}^{\mu_0(w)} \notag
\end{equation}
be the zero map. 
\item For $v \in \mathbb{N}^n$, we obtain an $\mathbb{F}$-vector space monomorphism \begin{equation}\tau_v: \mathcal{F}_n^{\mathbb{F}}(\xi_0)_v \longrightarrow \bigoplus_{w \in V_0} \mathbb{F}^{\mu_0(w)}, \quad x \longmapsto (\tau(w)_v(x))_{w\in V_0} \notag.
\end{equation}
\end{enumerate}
\end{Defi}
\begin{Exa} \, \begin{enumerate}
\item Consider $\xi_0=\{(0,1), (1,1)\}$. Let
\begin{equation}
\mathcal{F}:=\mathcal{F}_1^{\mathbb{F}}(\xi_0)=A_1^{\mathbb{F}} \oplus A_1^{\mathbb{F}}(1) \notag.
\end{equation}
Then $(x,1) \in \mathcal{F}_1$ and \begin{equation}
\tau_1(x,1)=(1,1)\in \mathbb{F}^2 \notag
\end{equation}
\item Consider $\xi_0=\{((0,0),1), ((1,2),1), ((1,2),2)\}$. Let
\begin{equation}\mathcal{F}:=\mathcal{F}_2^{\mathbb{F}}(\xi_0)=A_2^{\mathbb{F}} \oplus A_2^{\mathbb{F}}(1,2)\oplus A_2^{\mathbb{F}}(1,2) \notag.
\end{equation}
Then $(x_1 x_2^2,0,1) \in \mathcal{F}_{(1,2)}$ and \begin{equation}
\tau_{(1,2)}(x_1 x_2^2,0,1)=(1,0,1) \in \mathbb{F}^3 \notag.
\end{equation}
\end{enumerate}
\end{Exa}

\begin{Prop}[{{\cite[Lem.~3]{article}, \cite[Thm.~5]{Carlsson2009}}}]\label{first Autgroup isom}  There exists an isomorphism of groups 
\begin{equation} \nu_n^{\mathbb{F}}(\xi_0):\mathrm{Aut}(\mathcal{F}_n^{\mathbb{F}}(\xi_0)) \xlongrightarrow{\sim} \mathrm{Aut}_{\preceq}^{\mathbb{F}}(\xi_0)  \notag.
\end{equation} 
\end{Prop}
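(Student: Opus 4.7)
The plan is to construct mutually inverse group homomorphisms between $\mathrm{Aut}(\mathcal{F}_n^{\mathbb{F}}(\xi_0))$ and $\mathrm{Aut}_{\preceq}^{\mathbb{F}}(\xi_0)$. For each $v \in V_0$ let $\sigma_v : \mathbb{F}^{\mu_0(v)} \hookrightarrow \mathcal{F}_n^{\mathbb{F}}(\xi_0)_v$ denote the natural inclusion into the summand $A_n^{\mathbb{F}}(v)^{\mu_0(v)}$, sending the $j$-th standard basis vector to the homogeneous basis element $e_{j,v}$; note that $\tau_v \circ \sigma_v$ is then simply the inclusion of the $v$-th summand in $\bigoplus_{w \in V_0} \mathbb{F}^{\mu_0(w)}$. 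Given $f \in \mathrm{Aut}(\mathcal{F}_n^{\mathbb{F}}(\xi_0))$, the fact that $f$ is graded lets us restrict to an $\mathbb{F}$-linear endomorphism $f_v$ of each $\mathcal{F}_n^{\mathbb{F}}(\xi_0)_v$; I then define $\nu_n^{\mathbb{F}}(\xi_0)(f)$ to be the $\mathbb{F}$-linear endomorphism of $\bigoplus_{v \in V_0} \mathbb{F}^{\mu_0(v)}$ whose restriction to the summand $\mathbb{F}^{\mu_0(v)}$ is $\tau_v \circ f_v \circ \sigma_v$. Because $\tau(w)_v = 0$ whenever $v \nsucceq w$ by \cref{def of taus}, the image of $\mathbb{F}^{\mu_0(v)}$ automatically lies in $\bigoplus_{w \preceq v} \mathbb{F}^{\mu_0(w)}$, so the filtration condition in $\mathrm{Aut}_{\preceq}^{\mathbb{F}}(\xi_0)$ will hold as soon as bijectivity is established.

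Next I would verify that $\nu_n^{\mathbb{F}}(\xi_0)$ is a group homomorphism. Every $y \in \mathcal{F}_n^{\mathbb{F}}(\xi_0)_v$ decomposes uniquely as $y = \sum_{w \preceq v,\, w \in V_0} x^{v-w} \sigma_w(y_w)$ with $y_w \in \mathbb{F}^{\mu_0(w)}$, and combining this decomposition with $A_n^{\mathbb{F}}$-linearity, which forces $f_v(x^{v-w} z) = x^{v-w} f_w(z)$ for $z \in \mathcal{F}_n^{\mathbb{F}}(\xi_0)_w$, a short block-matrix calculation yields $\nu_n^{\mathbb{F}}(\xi_0)(f \circ g) = \nu_n^{\mathbb{F}}(\xi_0)(f) \circ \nu_n^{\mathbb{F}}(\xi_0)(g)$.

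To establish bijectivity I would write down an explicit inverse $\psi$. Given $g \in \mathrm{Aut}_{\preceq}^{\mathbb{F}}(\xi_0)$, let $\psi(g)$ be the unique graded $A_n^{\mathbb{F}}$-linear endomorphism of $\mathcal{F}_n^{\mathbb{F}}(\xi_0)$ determined on the homogeneous basis by $\psi(g)(e_{j,v}) := \tau_v^{-1}(g(s_{j,v}))$, where $s_{j,v}$ denotes the $j$-th standard basis vector of $\mathbb{F}^{\mu_0(v)}$; this is well-defined because $g(\mathbb{F}^{\mu_0(v)}) \subseteq \bigoplus_{w \preceq v} \mathbb{F}^{\mu_0(w)} = \mathrm{im}(\tau_v)$. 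The identities $\nu_n^{\mathbb{F}}(\xi_0) \circ \psi = \mathrm{id}$ and $\psi \circ \nu_n^{\mathbb{F}}(\xi_0) = \mathrm{id}$ then fall out of the definitions, using that $\tau_v \circ \sigma_v$ is the canonical inclusion of the $v$-th summand.

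The main obstacle will be to show that $\psi(g)$ is genuinely an automorphism rather than just a graded endomorphism. For this I would invoke \cref{reflect isos}: it suffices to check that $\mathrm{id}_{\mathbb{F}} \otimes_{A_n^{\mathbb{F}}} \psi(g)$ is an isomorphism of $n$-graded $\mathbb{F}$-vector spaces. Under the canonical identification $\mathbb{F} \otimes_{A_n^{\mathbb{F}}} \mathcal{F}_n^{\mathbb{F}}(\xi_0) \cong \mathcal{V}_n^{\mathbb{F}}(\xi_0) = \bigoplus_{v \in V_0} \mathbb{F}^{\mu_0(v)}$, the induced map becomes the block-diagonal part of $g$: the off-diagonal contributions with $w \prec v$ are of the form $x^{v-w}\sigma_w(\cdot)$, which vanish after tensoring with $\mathbb{F}$ since $x_i \cdot 1 = 0$ in $\mathbb{F}$ by definition of the $A_n^{\mathbb{F}}$-module structure. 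Choosing any linear extension of $\preceq$ on $V_0$ displays $g$ as a block-triangular matrix, and such a matrix is invertible precisely when all its diagonal blocks are invertible; hence $g \in \mathrm{Aut}_{\preceq}^{\mathbb{F}}(\xi_0)$ forces $\mathrm{id}_{\mathbb{F}} \otimes \psi(g)$ to be an isomorphism, completing the proof.
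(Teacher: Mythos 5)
Your proposal is correct, and the skeleton is the same as the paper's: define $\nu$ degree by degree via the $\tau_v$, write down a candidate inverse on the homogeneous $A_n^{\mathbb{F}}$-basis by $e_{j,v}\mapsto\tau_v^{-1}(g(s_{j,v}))$, then check that the two are mutually inverse group homomorphisms. The genuine point of divergence is how one knows the candidate inverse $\psi(g)$ is actually an automorphism and not merely a graded endomorphism. The paper builds $\widetilde{f}_v=\tau_v^{-1}\circ f_v\circ\tau_v$ as an explicit $\mathbb{F}$-linear automorphism of each graded piece $\mathcal{F}_v$ (using the earlier observation that $f\in\mathrm{Aut}_\preceq^{\mathbb{F}}(\xi_0)$ preserves each filtration piece $\bigoplus_{w\preceq v}\mathbb{F}^{\mu_0(w)}$), from which degree-wise bijectivity of the graded map $\widetilde{f}$ follows immediately; this is essentially asserted rather than spelled out. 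You instead tensor down to $\mathbb{F}$, identify $\mathrm{id}_{\mathbb{F}}\otimes\psi(g)$ with the block-diagonal of $g$, observe that $g$ is block-triangular in a linear extension of $\preceq$ so that its diagonal blocks are invertible, and then invoke \cref{reflect isos}. Both routes are sound. Yours is slightly less direct degree by degree, but it has the advantage of reusing an already-established lemma in the paper and of making explicit exactly what bijectivity is being checked and why it suffices; the paper's route makes the per-degree automorphism structure transparent without appealing to \cref{reflect isos}. One cosmetic point: to justify the block-triangularity you should note that $V_0$ is finite, so a linear extension of $\preceq$ on $V_0$ exists; the paper avoids this by never passing through a linear order.
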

This result is stated (without a proof) in \cite[Lem.~3]{article}. In \cite[Thm.~5]{Carlsson2009}, the authors give a proof where they also try to construct homomorphisms that are inverse to each other. But their homomorphism from $\mathrm{Aut}(\mathcal{F}_n^{\mathbb{F}}(\xi_0))$ to $\mathrm{Aut}_{\preceq}^{\mathbb{F}}(\xi_0)$ which sends $\psi$ to $\mathrm{id}_{\mathbb{F}} \otimes_{A_n^{\mathbb{F}}} \psi$ has image contained in $\mathrm{Aut}(\mathcal{V}_n^{\mathbb{F}}(\xi_0))$. $\mathrm{Aut}(\mathcal{V}_n^{\mathbb{F}}(\xi_0))\subseteq \mathrm{Aut}_{\preceq}^{\mathbb{F}}(\xi_0)$ is a subgroup and the inclusion is proper in many cases as we will see later (see \cref{proper incl}). Moreover, \cref{Exa tensoring dangerous} shows that \begin{equation}\mathrm{id}_{\mathbb{F}} \otimes_{A_n^{\mathbb{F}}} (-):\mathrm{Aut}(\mathcal{F}_n^{\mathbb{F}}(\xi_0)) \to \mathrm{Aut}_{\preceq}^{\mathbb{F}}(\xi_0) \notag
\end{equation}
is generally not injective. 

Our proof is inspired by \cite{Carlsson2009} but goes a different way for the inverse homomorphism.
\begin{proof} Let $\mathcal{F}:=\mathcal{F}_n^{\mathbb{F}}(\xi_0)$. Let $g \in \mathrm{Aut}(\mathcal{F})$. For every $v \in V_0$, 
\begin{equation}
 g_{v}:\mathcal{F}_{v} \xlongrightarrow{\sim} \mathcal{F}_{v}, \quad x \longmapsto g(x) \notag
\end{equation}
is an automorphism of $\mathbb{F}$-vector spaces. The map $\nu=\nu_n^{\mathbb{F}}(\xi_0)$ is constructed as follows. Let $v \in V_0$. We have
 \begin{equation}
\mathcal{F}_{v}=   \bigoplus_{w \in V_0} A_n^{\mathbb{F}}(w)_v^{\mu_0(w)}  \notag.
\end{equation}
If $v \succeq w$, we have $A_n^{\mathbb{F}}(w)_v^{\mu_0(w)}=\braket{x^{v-w}}_{\mathbb{F}}^{\mu_0(w)}$ and if $v \nsucceq w$, we have $A_n^{\mathbb{F}}(w)_v^{\mu_0(w)}=0$. So, we may identify
\begin{equation}
\mathcal{F}_{v}=   \bigoplus_{w \preceq v} \braket{x^{v-w}}_{\mathbb{F}}^{\mu_0(w)}  \notag.
\end{equation}
Now
\begin{equation}
\nu_{v}(g):=\tau_{v} \circ g_{v}\circ \tau_{v}^{-1}: \bigoplus_{w \preceq v} \mathbb{F}^{\mu_0(w)} \xlongrightarrow{\sim} \bigoplus_{w \preceq v} \mathbb{F}^{\mu_0(w)} \notag
\end{equation}
defines an automorphism of $\mathbb{F}$-vector spaces for all $v \in V_0$. Define \begin{equation}\nu(g):=\bigoplus_{v \in V_0} \nu_v(g) \notag.
\end{equation}
Then $\nu(g)\left( \bigoplus_{w \preceq v} \mathbb{F}^{\mu_0(v)} \right) = \bigoplus_{w \preceq v} \mathbb{F}^{\mu_0(w)} $
and in particular, $\nu(g)( \mathbb{F}^{\mu_0(w)}) \subseteq \bigoplus_{w \preceq v} \mathbb{F}^{\mu_0(w)} $.
$\nu$ is clearly injective.

Conversely, if $f \in \mathrm{Aut}_{\preceq}^{\mathbb{F}}(\xi_0)$, then $f \left( \mathbb{F}^{\mu_0(v)} \right) \subseteq \bigoplus_{w \preceq v}\mathbb{F}^{\mu_0(w)}$ for all $v \in V_0$ implies that $f$ induces $\mathbb{F}$-vector space automorphisms
\begin{equation}
f_v:  \bigoplus_{w \preceq v} \mathbb{F}^{\mu_0(w)} \xlongrightarrow{\sim} \bigoplus_{w \preceq v} \mathbb{F}^{\mu_0(w)}, \quad x \longmapsto f(x) \notag
\end{equation}
for all $v \in V_0$. Thus, we  obtain $\mathbb{F}$-vector  space automorphisms
\begin{equation}
\widetilde{f}_v:= \tau_v^{-1} \circ f_v \circ \tau_v: \mathcal{F}_{v} \xlongrightarrow{\sim} \mathcal{F}_{v} \notag.
\end{equation}
for all $v \in V_0$. For $y=\sum_{w \in {\mathbb{N}^n}} y_w \in \mathcal{F}$, we can write $y_w$ uniquely as \begin{equation} y_w= \left( \sum_{i=1}^{\mu_0(v)} \lambda_{i,w,v} x^{w-v} e_{i,v} \right)_{v \in V_0} \notag
\end{equation}
where $\lambda_{i,w,v} \in \mathbb{F}$, where we set $x^{w-v}=0$ if $w \nsucceq v$ and where $e_{i,v}$ denotes the $i$-th standard basis vector
\begin{equation}
e_{i,v}=(0, \dots, 0, \underset{i}{1}, 0, \dots, 0 ) \notag.
\end{equation}
of length $\mu_0(v)$. Then
\begin{align}
\widetilde{f}: \mathcal{F} & \xlongrightarrow{\sim} \mathcal{F}, \notag \\
 \sum_{w \in {\mathbb{N}^n}}\left( \sum_{i=1}^{\mu_0(v)} \lambda_{i,w,v} x^{w-v} e_{i,v} \right)_{v \in V_0} &\longmapsto \sum_{w \in {\mathbb{N}^n}}\left( \sum_{i=1}^{\mu_0(v)} \lambda_{i,w,v} x^{w-v} f(e_{i,v}) \right)_{v \in V_0} \notag
\end{align}
is a graded automorphism and we have $\nu(\widetilde{f})=f$ by construction. This shows that $\nu$ is surjective. That $\nu$ is a group homomorphism is due to the fact that for $f,g \in \mathrm{Aut}(F)$, 
\begin{align}
\nu_{v}(g  \circ f)&=\tau_{v} \circ (g\circ f)_v \circ \tau_{v}^{-1} \notag \\
&=\tau_{v} \circ g_v \circ f_v \circ \tau_{v}^{-1} \notag \\
&=\tau_{v} \circ g_v \circ \tau_{v}^{-1} \circ \tau_{v} \circ f_v \circ \tau_{v}^{-1} \notag \\
&=\tau_{v} \circ g \circ \tau_{v}^{-1} \circ \tau_{v} \circ f \circ \tau_{v}^{-1} \notag \\ 
&= \nu_{v}(g) \circ \nu_{v}(f) \notag
\end{align}
for all $v \in V_0$.
\end{proof}

In \cite{Carlsson2009} and \cite{article}, the authors do not give an explicit description of the transformation matrices of elements in $\mathrm{Aut}_{\preceq}^{\mathbb{F}}(\xi_0)$. The goal for the rest of this subsection is to give such an explicit description. For technical reasons, we fix an enumeration of \begin{equation}
V_0=\{v_{1}, \dots, v_{s_0}\} \notag.
\end{equation}
Note that this enumeration has nothing to do with the ordering $\preceq$ on $V_0$ except only if $n=1$. In this case, we may assume that $v_{i} < v_{j}$ if $i<  j$. For the following, let \begin{equation}m_0:=\abs{\xi_0}=\sum_{v \in V_0} \mu_0(v) \notag.
\end{equation}
We identify
\begin{equation}
\bigoplus_{v \in V_0} \mathbb{F}^{\mu_0(v)}=\mathbb{F}^{m_0} \notag
\end{equation}
along our enumeration of $V_0=\{v_{1}, \dots, v_{s_0}\}$. Now $\tau_v$ (see \cref{def of taus}) defines an $\mathbb{F}$-vector space monomorphism
\begin{equation}
\tau_v: \mathcal{F}_n^{\mathbb{F}}(\xi_0)_v \longhookrightarrow \mathbb{F}^{m_0} \notag
\end{equation}
for all $v \in \mathbb{N}^n$. 

\begin{Defi}\label{transformation matrix defi} Any $M \in \mathrm{GL}_{m_0}(\mathbb{F})$ can be written as
\begin{equation}
M=\begin{pmatrix}
M(v_{1},v_{1}) & \dots &M(v_{1},v_{s_0}) \\
\vdots & & \vdots \\
M(v_{s_0},v_{1}) & \dots & M(v_{s_0},v_{s_0}) \notag
\end{pmatrix}
\end{equation}
where $M(v_{i},v_{j}) \in M_{\mu_0(v_{i}) \times \mu_0(v_{j})}(\mathbb{F})$ for $1\leq i,j \leq s_0$.
Let \begin{equation}
Z(\xi_0):=\{(i,j) \in \{1, \dots, s_0\}^2 \mid v_i \npreceq v_j \} \notag 
\end{equation}
and define
\begin{equation}
\mathrm{GL}^{\mathbb{F}}_{\preceq}(\xi_0):= \left \{ M\in \mathrm{GL}_{m_0}(\mathbb{F}) \mid M(v_{i},v_{j})=0 \, \forall (i,j) \in Z(\xi_0)  \right \} \notag.
\end{equation} 
\end{Defi}

\begin{Prop}\label{second Autgroup isom}
The assignment
\begin{equation} \omega_n^{\mathbb{F}}(\xi_0):\mathrm{Aut}_{\preceq}^{\mathbb{F}}(\xi_0)  \xlongrightarrow{\sim} \mathrm{GL}_{m_0}(\mathbb{F}), \quad  f \longmapsto \mathrm{Mat}_{E_{m_0}}(f) \notag
\end{equation} 
is an injective group homomorphism where $E_{m_0}$ denotes the standard basis of $\mathbb{F}^{m_0}$ and $\mathrm{Mat}_{E_{m_0}}(f)$ the transformation matrix of $f$ with respect to $E_{m_0}$. We have \begin{equation}
\mathrm{im}(\omega_n^{\mathbb{F}}(\xi_0))=\mathrm{GL}^{\mathbb{F}}_{\preceq}(\xi_0) \notag.
\end{equation}
So, $\mathrm{GL}^{\mathbb{F}}_{\preceq}(\xi_0)$ 
is a group in particular and $\psi_n^{\mathbb{F}}(\xi_0):= \omega_n^{\mathbb{F}}(\xi_0) \circ \nu_n^{\mathbb{F}}(\xi_0)$ defines an isomorphism of groups
\begin{equation}
\psi_n^{\mathbb{F}}(\xi_0): \mathrm{Aut}(\mathcal{F}_n^{\mathbb{F}}(\xi_0)) \xlongrightarrow{\sim} \mathrm{GL}_{\preceq}^{\mathbb{F}}(\xi_0) \notag.
\end{equation}
\end{Prop}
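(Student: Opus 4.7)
The plan is to view $\omega := \omega_n^{\mathbb{F}}(\xi_0)$ as the restriction of the canonical group isomorphism
\begin{equation}
\mathrm{Aut}_{\mathbb{F}}(\mathbb{F}^{m_0}) \xlongrightarrow{\sim} \mathrm{GL}_{m_0}(\mathbb{F}), \quad f \longmapsto \mathrm{Mat}_{E_{m_0}}(f) \notag
\end{equation}
to the subgroup $\mathrm{Aut}_{\preceq}^{\mathbb{F}}(\xi_0) \subseteq \mathrm{Aut}_{\mathbb{F}}(\mathbb{F}^{m_0})$. Since this canonical assignment is a well-known group isomorphism, its restriction $\omega$ is automatically an injective group homomorphism. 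All the work is then in identifying $\mathrm{im}(\omega)$ explicitly, and in deducing that $\mathrm{GL}_{\preceq}^{\mathbb{F}}(\xi_0)$ is a group.

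To determine $\mathrm{im}(\omega)$, first I would fix the identification $\bigoplus_{v \in V_0} \mathbb{F}^{\mu_0(v)} = \mathbb{F}^{m_0}$ induced by the chosen enumeration $V_0 = \{v_1, \dots, v_{s_0}\}$. Under this identification, $E_{m_0}$ is partitioned blockwise, with the $j$-th block being the standard basis of the summand $\mathbb{F}^{\mu_0(v_j)}$. Thus for $f \in \mathrm{Aut}_{\mathbb{F}}(\mathbb{F}^{m_0})$ the transformation matrix $M = \mathrm{Mat}_{E_{m_0}}(f)$ decomposes into blocks $M(v_i, v_j) \in M_{\mu_0(v_i) \times \mu_0(v_j)}(\mathbb{F})$ as in Definition \ref{transformation matrix defi}, where the $j$-th block column describes $f\bigl|_{\mathbb{F}^{\mu_0(v_j)}}$ in terms of the direct sum decomposition. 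In particular, the block $M(v_i, v_j)$ encodes the projection of $f(\mathbb{F}^{\mu_0(v_j)})$ onto the summand $\mathbb{F}^{\mu_0(v_i)}$.

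The key observation is then that $f \in \mathrm{Aut}_{\preceq}^{\mathbb{F}}(\xi_0)$ if and only if $f(\mathbb{F}^{\mu_0(v_j)}) \subseteq \bigoplus_{v_i \preceq v_j} \mathbb{F}^{\mu_0(v_i)}$ for every $j \in \{1,\dots,s_0\}$, which by the block interpretation above is equivalent to $M(v_i, v_j) = 0$ for every pair $(i,j)$ with $v_i \npreceq v_j$, i.e., for every $(i,j) \in Z(\xi_0)$. Hence $M \in \mathrm{GL}_{\preceq}^{\mathbb{F}}(\xi_0)$, and conversely any $M \in \mathrm{GL}_{\preceq}^{\mathbb{F}}(\xi_0)$ defines an $f \in \mathrm{Aut}_{\mathbb{F}}(\mathbb{F}^{m_0})$ satisfying the corresponding invariance conditions. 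This gives $\mathrm{im}(\omega) = \mathrm{GL}_{\preceq}^{\mathbb{F}}(\xi_0)$.

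Finally, since $\omega$ is an injective group homomorphism, its image $\mathrm{GL}_{\preceq}^{\mathbb{F}}(\xi_0) \subseteq \mathrm{GL}_{m_0}(\mathbb{F})$ is automatically a subgroup, and $\omega$ restricts to a group isomorphism $\mathrm{Aut}_{\preceq}^{\mathbb{F}}(\xi_0) \xlongrightarrow{\sim} \mathrm{GL}_{\preceq}^{\mathbb{F}}(\xi_0)$. Composing with the isomorphism $\nu_n^{\mathbb{F}}(\xi_0)$ from Proposition \ref{first Autgroup isom} then yields the desired group isomorphism $\psi_n^{\mathbb{F}}(\xi_0) = \omega \circ \nu_n^{\mathbb{F}}(\xi_0) : \mathrm{Aut}(\mathcal{F}_n^{\mathbb{F}}(\xi_0)) \xlongrightarrow{\sim} \mathrm{GL}_{\preceq}^{\mathbb{F}}(\xi_0)$. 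The only step requiring genuine care, rather than being routine, is verifying the equivalence between the invariance condition on the subspaces $\mathbb{F}^{\mu_0(v_j)}$ and the blockwise vanishing condition defining $\mathrm{GL}_{\preceq}^{\mathbb{F}}(\xi_0)$; but this is essentially bookkeeping once the direct sum decomposition is aligned correctly with the enumeration of $V_0$.
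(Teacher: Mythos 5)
Your proof is correct and follows essentially the same route as the paper's: both identify $\omega$ as (the restriction of) the standard isomorphism $f \mapsto \mathrm{Mat}_{E_{m_0}}(f)$, then characterize the image by matching the block decomposition of the transformation matrix with the direct-sum decomposition $\mathbb{F}^{m_0} = \bigoplus_{v \in V_0} \mathbb{F}^{\mu_0(v)}$, so that the invariance condition $f(\mathbb{F}^{\mu_0(v_j)}) \subseteq \bigoplus_{v_i \preceq v_j} \mathbb{F}^{\mu_0(v_i)}$ translates precisely into the vanishing of the blocks $M(v_i,v_j)$ for $(i,j) \in Z(\xi_0)$. The only cosmetic difference is that you invoke the canonical isomorphism $\mathrm{Aut}_{\mathbb{F}}(\mathbb{F}^{m_0}) \cong \mathrm{GL}_{m_0}(\mathbb{F})$ up front to get injectivity and multiplicativity for free, whereas the paper states these two facts directly; the substance is the same.
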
 

\begin{Rem}\label{proper incl}
Note that 
\begin{align}
\mathrm{Aut}(\mathcal{V}_n^{\mathbb{F}}(\xi_0)) \cong \prod_{v \in V_0} \mathrm{GL}_{\mu_0(v)} (\mathbb{F}) \subseteq \mathrm{GL}_{\preceq}^{\mathbb{F}}(\xi_0) \notag
\end{align}
and that $\prod_{v \in V_0} \mathrm{GL}_{\mu_0(v)} (\mathbb{F}) \subseteq \mathrm{GL}_{\preceq}^{\mathbb{F}}(\xi_0) $ is a proper inclusion in many cases which is illustrated in Examples \ref{Exa Aut 1dim}, \ref{dim one Aut Exa}, \ref{dim 2 Aut Exa1} and \ref{dim 2 Aut Exa2}.
\end{Rem}

Before we proceed with a proof of \cref{second Autgroup isom}, we give some examples.

\begin{Exa}\label{Exa Aut 1dim} Let $n=1$ and consider $\xi_0 = \{(1,1), (2,1) \} $. Then $m_0=2$. Let $v_{1}:=1$ and $v_{2}:=2$. So, we have $Z(\xi_0)=\{(2,1)\}$. Hence, $M(v_{2}, v_{1})=0$ and we therefore obtain
\begin{align}
\mathrm{GL}_{\preceq}^{\mathbb{F}}(\xi_0)= \left \{  \begin{pmatrix}
a & b \\ 0 & c \end{pmatrix}   \in \mathrm{GL}_2(\mathbb{F} )
 \right \} \notag.
\end{align}
\end{Exa}

\begin{Exa}\label{dim one Aut Exa} Let $n=1$ and consider \begin{equation}\xi_0 := \{(1,1), (1,2),(2,1),(3,1),(3,2)
 \} \notag.
\end{equation}
Then $m_0=5$. Let $v_{1}=1$, $v_{2}=2$ and $v_{3}=3$. Then 
\begin{equation} Z(\xi_0)=\{(2,1), (3,1), (3,2)\} \notag.
\end{equation}
So, we have \begin{equation}
M(v_{2}, v_{1})= \begin{pmatrix} \textcolor{cyan}{0} & \textcolor{cyan}{0}
 \end{pmatrix}, \quad M(v_{3}, v_{1})=\begin{pmatrix} \textcolor{Blue}{0} & \textcolor{Blue}{0} \\ \textcolor{Blue}{0} & \textcolor{Blue}{0}
 \end{pmatrix}, \quad M(v_{3}, v_{2})=\begin{pmatrix} \textcolor{RedOrange}{0}  \\ \textcolor{RedOrange}{0}
 \end{pmatrix} \notag
\end{equation}
and we therefore obtain
\begin{equation}
\mathrm{GL}_{\preceq}^{\mathbb{F}}(\xi_0)= \left \{ \begin{pmatrix}
\textcolor{Black}{a_{11}} & \textcolor{Black}{a_{12}} & \textcolor{ForestGreen}{a_{13}} & \textcolor{Sepia}{a_{14}} & \textcolor{Sepia}{a_{15}} \\
\textcolor{Black}{a_{21}} & \textcolor{Black}{a_{22}} & \textcolor{ForestGreen}{a_{23}} & \textcolor{Sepia}{a_{24}} & \textcolor{Sepia}{a_{25}} \\
\textcolor{Cyan}{0} & \textcolor{Cyan}{0} & \textcolor{Fuchsia}{a_{33}} & \textcolor{YellowOrange}{a_{34}}& \textcolor{YellowOrange}{a_{35}} \\
\textcolor{Blue}{0} & \textcolor{Blue}{0} & \textcolor{RedOrange}{0} & \textcolor{Gray}{a_{44}} & \textcolor{Gray}{a_{45}} \\
\textcolor{Blue}{0} & \textcolor{Blue}{0} & \textcolor{RedOrange}{0} & \textcolor{Gray}{a_{54}} & \textcolor{Gray}{a_{55}}
\end{pmatrix} \in \mathrm{GL}_{5}(\mathbb{F}) \right \} \notag
\end{equation}
\end{Exa}

\begin{Exa}\label{Aut dim 2} Consider $\xi_0 = \{(v,1)\} $ with $v \in \mathbb{N}^n$. Then $m_0=\mu_0(v)$. Let $v_{1}:=v$. Then $Z(\xi_0)=\emptyset$ and therefore
\begin{align}
\mathrm{GL}_{\preceq}^{\mathbb{F}}(\xi_0)=\mathrm{GL}_{\mu_0(v)} (\mathbb{F}) \notag.
\end{align}
\end{Exa}
\newpage
\begin{Exa}\label{dim 2 Aut Exa1} Let $n=2$ and consider $\xi_0 = \{((1,0), 1), ((0,1), 2)  \} $. Then $m_0=2$. Let $v_{1}:=(1,0)$ and $v_2=(0,1)$. Then
\begin{equation} Z(\xi_0)=\{(1,2), (2,1)\} \notag
\end{equation} 
and we therefore obtain $M(v_1, v_2)=M(v_2, v_1)=0$ which shows that
\begin{align}
\mathrm{GL}_{\preceq}^{\mathbb{F}}(\xi_0)=\left\{ \begin{pmatrix}
a & 0 \\
0 & b
\end{pmatrix} \in
\mathrm{GL}_2 (\mathbb{F}) \right\} \notag.
\end{align}
\end{Exa}

\begin{Exa}\label{dim 2 Aut Exa2} Let $n=2$ and consider
\begin{equation}
\xi_0:=\{((0,3),1),((1,3),1), ((1,3),2), ((0,4),1) \} \notag.
\end{equation}
Then $m_0=4$. Let \begin{align}
 v_{1}:=(0,3), \quad v_{2}:=(1,3), \quad 
v_{3}:=(0,4) \notag .
\end{align}
Then
\begin{equation} \{ (i,j) \in \{1,2,3\}^2 \mid v_i \npreceq v_j \}=\{(2,1), (2,3), (3,1), (3,2)\} \notag.
\end{equation}
Hence, 
\begin{equation}  M(v_{2}, v_{1})= \begin{pmatrix} \textcolor{Cyan}{0} \\ \textcolor{Cyan}{0} \end{pmatrix}, \quad M(v_{2}, v_{3})= \begin{pmatrix} \textcolor{YellowOrange}{0} \\ \textcolor{YellowOrange}{0} \end{pmatrix}, \quad  M(v_{3}, v_{1})=\begin{pmatrix} \textcolor{Fuchsia}{0}\end{pmatrix}, \quad  M(v_{3},v_{2})=\begin{pmatrix}
\textcolor{Blue}{0} & \textcolor{Blue}{0}
\end{pmatrix} \notag
\end{equation}
 and we obtain
\begin{equation}
\mathrm{GL}_{\preceq}^{\mathbb{F}}(\xi_0)= \left \{ \begin{pmatrix}
a_{11} & \textcolor{ForestGreen}{a_{12}} & \textcolor{ForestGreen}{a_{13}} & \textcolor{Sepia}{a_{14}}   \\
\textcolor{Cyan}{0} & \textcolor{Gray}{a_{22}} & \textcolor{Gray}{a_{23}} & \textcolor{YellowOrange}{0}  \\
\textcolor{Cyan}{0} & \textcolor{Gray}{a_{32}} & \textcolor{Gray}{a_{33}}& \textcolor{YellowOrange}{0}  \\
\textcolor{Fuchsia}{0} & \textcolor{Blue}{0} & \textcolor{Blue}{0} & \textcolor{RedOrange}{a_{44}}
\end{pmatrix} \in \mathrm{GL}_{4}(\mathbb{F}) \right \} \notag
\end{equation}
\end{Exa}

\begin{proof}[Proof of \cref{second Autgroup isom}] We clearly have $\mathrm{Mat}_{E_{m_0}}(f \circ g)=\mathrm{Mat}_{E_{m_0}}(f) \cdot \mathrm{Mat}_{E_{m_0}}(g)$ for all ${f,g \in \mathrm{Aut}_{\preceq}^{\mathbb{F}}(\xi_0)}$ and $f=g$ if $\mathrm{Mat}_{E_{m_0}}(f)=\mathrm{Mat}_{E_{m_0}}(g)$ which shows that $\omega_n^{\mathbb{F}}(\xi_0)$ is an injective group homomorphism. The next step is to show that $\mathrm{im}(\omega_n^{\mathbb{F}}(\xi_0))=\mathrm{GL}_{\preceq}^{\mathbb{F}}(\xi_0)$. Let $f \in \mathrm{Aut}_{\preceq}^{\mathbb{F}}(\xi_0)$ and $M:=\mathrm{Mat}_{E_{m_0}}(f)$. We can write $E_{m_0}=\bigcup_{i=1}^{s_0} E_{m_0,i}$ with
\begin{equation}
E_{m_0,i}=\left \{e_j : \sum_{k=1}^{i-1} \mu_0(v_{k}) < j \leq  \sum_{k=1}^{i} \mu_0(v_{k}) \right \} \notag.
\end{equation}
for $i \in \{1, \dots, s_0\}$ where \begin{equation}e_j=(0,\dots,0,\underset{j}{1},0,\dots,0)^T \notag
\end{equation}
denotes the $j$-th standard column vector in $\mathbb{F}^{m_0}$. By construction, $\mathbb{F}^{\mu_0(v_{j})}$ corresponds to $E_{m_0,j}$. Now
\begin{equation}
M \cdot \braket{E_{m_0,j}}_{\mathbb{F}} \subseteq \bigoplus_{ v_{i} \preceq v_{j} } \braket{E_{m_0,i} }_{\mathbb{F}} \notag
\end{equation}
is equivalent to that 
\begin{equation}
M(v_{i},v_{j})=0 \notag
\end{equation}
for all $i \in \{1, \dots, s_0 \}$ such that $v_i \npreceq v_j$.
This shows that the transformation matrix $M=\mathrm{Mat}_{E_{m_0}}(f)$ of $f$ with respect to $E_{m_0}$ has the desired form. So, $\omega_n^{\mathbb{F}}(\xi_0)$ is well-defined. On the other hand this equivalence shows that for $M \in \mathrm{GL}_{\preceq}^{\mathbb{F}}(\xi_0)$,
\begin{equation}
M \cdot \braket{E_{m_0,j}}_{\mathbb{F}} \subseteq \bigoplus_{ v_{i} \preceq v_{j} } \braket{E_{m_0,i} }_{\mathbb{F}} \notag
\end{equation}
for all $j \in \{1, \dots, s_0\}$.
As $\mathbb{F}^{\mu_0(v_{i})}$ corresponds to $E_{m_0,i}$ this implies that the $\mathbb{F}$-vector space automorphism $f: \mathbb{F}^{m_0} \xrightarrow{\sim} \mathbb{F}^{m_0}$, which is determined by $M$, satifies 
\begin{equation}
f\left( \mathbb{F}^{\mu_0(v)}\right) \subseteq \bigoplus_{w \preceq v} \mathbb{F}^{\mu_0(w)} \notag
\end{equation}
for all $v \in V_1$. So, we have $f \in \mathrm{Aut}_{\preceq}^{\mathbb{F}}(\xi_0)$ and $M= \mathrm{Mat}_{E_{m_0}}(f)$ by construction.
\end{proof}

\subsection{Framed relation families}\label{Framed relation families section}
Recall that $\xi_0=(V_0, \mu_0)$ and $\xi_1=(V_1, \mu_1)$ are  two  finite $n$-dimensional multisets such that $S_{n}^{\mathbb{F}}(\xi_0,\xi_1)$ is non-empty.
In the last section, we have explicitly determined the automorphism group of $\mathcal{F}_n^{\mathbb{F}}(\xi_0)$ in terms of transformation matrices. The idea is now to proceed with the parameterization of $S_n^{\mathbb{F}}(\xi_0, \xi_1)$ by choosing $\mathbb{F}$-bases for the set of all $\mathbb{F}$-relation families $R_n^{\mathbb{F}}(\xi_0, \xi_1)$ in order to \textit{frame} $R_n^{\mathbb{F}}(\xi_0, \xi_1)$ as a subset $Y_n^{\mathbb{F}}(\xi_0, \xi_1)$ of a product of \textit{Grassmannians} together with a group action of $\mathrm{GL}_{\preceq}^{\mathbb{F}}(\xi_0)$, such that 
\begin{equation}
R_n^{\mathbb{F}}(\xi_0, \xi_1)/\mathrm{Aut}(\mathcal{F}_n^{\mathbb{F}}(\xi_0)) \cong Y_n^{\mathbb{F}}(\xi_0, \xi_1)/\mathrm{GL}_{\preceq}^{\mathbb{F}}(\xi_0) \notag
\end{equation}
as sets.
\begin{Defi}[Grassmannian] Let $d,m \in \mathbb{N}$ with $d \leq m$. Denote by
\begin{equation}
\mathrm{G}_{\mathbb{F}}(d,m):=\{ N \subseteq \mathbb{F}^m \mid \mathrm{dim}_{\mathbb{F}}(N)=d \} \notag
\end{equation}
the \textit{Grassmannian (with respect to} $d,m$ \textit{and} $\mathbb{F}$).
\end{Defi}

Recall that \begin{equation} m_0=\abs{\xi_0}=\sum_{v \in V_0} \mu_0(v) \notag
\end{equation}
and that we identify \begin{equation}\bigoplus_{w \in V_0} \mathbb{F}^{\mu_0(w)}=\mathbb{F}^{m_0} \notag
\end{equation}
along our enumeration of $V_0=\{v_{1}, \dots, v_{s_0}\}$. The next definition is based on and inspired by \cite[Sec.~5, p.~86]{Carlsson2009}. The definition  in \cite{Carlsson2009} is more general, but condition $1$ is missing. 

\begin{Defi}[Framed $\mathbb{F}$-relation family]\label{framed Relfam} Let \begin{equation}
\delta_1: {\mathbb{N}^n} \longrightarrow {\mathbb{N}}, \quad w \longmapsto \mathrm{dim}_{\mathbb{F}} (\mathcal{F}_n^{\mathbb{F}}(\xi_1)_w) \notag .
\end{equation}
A \textit{framed} $\mathbb{F}$-\textit{relation familiy with respect to} $(\xi_0, \xi_1)$ is a family
\begin{equation}
(L_w)_{w  \in  V_1} \in \prod_{w  \in V_1} \mathrm{G}_{\mathbb{F}}(\delta_1(w), m_0) \notag
\end{equation}
such that for all $w \in V_1$:
\begin{enumerate}
\item $\pi_v(L_w)=0$ for all $v \in V_0$ with $v \nprec w$  where $\pi_v: \mathbb{F}^{m_0} \to  \mathbb{F}^{\mu_0(v)}$ denotes the canonical projection.
\item if $v \in V_1$ with $v\prec w$, then $L_{v}\subseteq L_w$. 
\item $\mathrm{dim}_{\mathbb{F}}\left( L_w / \sum_{v \prec w} L_v \right)=\mu_1(w)$.
\end{enumerate} 
$Y_{n}^{\mathbb{F}}(\xi_0, \xi_1) \subseteq \prod_{w \in V_1} \mathrm{G}_{\mathbb{F}}(\delta_1(w), m_0)$ denotes the set of all \textit{framed} $\mathbb{F}$-\textit{relation families with respect to} $(\xi_0,\xi_1)$.
\end{Defi}

\begin{Defi} $\mathrm{GL}_{\preceq}^{\mathbb{F}}(\xi_0)$ acts as a group on $\prod_{w\in V_1} \mathrm{G}_{\mathbb{F}}(\delta_1(w), m_0)$ where for \begin{equation}{(L_w)_{w\in V_1} \in \prod_{w\in V_1} \mathrm{G}_{\mathbb{F}}(\delta_1(w), m_0)} \notag
\end{equation}
and $M \in \mathrm{GL}_{\preceq}^{\mathbb{F}}(\xi_0)$,
\begin{equation}
M\cdot (L_w)_{w\in V_1}:=\left(  M\cdot  L_w \right)_{w\in V_1} \notag.
\end{equation}
\end{Defi}
Recall that we have $\mathbb{F}$-vector space monomorphisms
\begin{equation}
\tau_v: \mathcal{F}_n^{\mathbb{F}}(\xi_0)_v \longhookrightarrow \mathbb{F}^{m_0} \notag
\end{equation}
for all $v \in \mathbb{N}^n$ (see \cref{def of taus}), where we use the identification \begin{equation}\bigoplus_{v \in V_0} \mathbb{F}^{\mu_0(v)}=\mathbb{F}^{m_0} \notag
\end{equation}
along the enumeration of $V_0=\{v_1, \dots, v_{s_0}\}$. 
The next theorem is stated implicitly in \cite[p.~86]{Carlsson2009}, where condition 1 in $Y_n^{\mathbb{F}}(\xi_0, \xi_1)$ is missing.

\begin{Thm}\label{all on sets prop} 
The assignment
\begin{equation}
\theta_n^{\mathbb{F}}(\xi_0, \xi_1): R_n^{\mathbb{F}}(\xi_0,\xi_1) \longrightarrow \prod_{w\in V_1} \mathrm{G}_{\mathbb{F}}(\delta_1(w), m_0), \quad (L_w  )_{w \in V_1 } \longmapsto  (\tau_w(  L_w)  )_{w \in V_1} \notag
\end{equation}
is a well-defined set theoretic injection, such that for all ${(L_w)_{w \in V_1} \in R_n^{\mathbb{F}}(\xi_0,\xi_1) }$ and all $f \in \mathrm{Aut}(\mathcal{F}^{\mathbb{F}}_n(\xi_0))$,
\begin{equation} \theta^{\mathbb{F}}_n(\xi_0, \xi_1)(f \cdot (L_w )_{w \in V_1})=\psi_n^{\mathbb{F}}(\xi_0)(f) \cdot \theta^{\mathbb{F}}_n(\xi_0, \xi_1)( (L_w)_{w \in V_1})  \notag
\end{equation} 
with $\psi_n^{\mathbb{F}}(\xi_0)$ from \cref{second Autgroup isom}. We have \begin{equation}
\mathrm{im}(\theta_n^{\mathbb{F}}(\xi_0, \xi_1))=Y_{n}^{\mathbb{F}}(\xi_0,\xi_1) \notag.
\end{equation}
Thus, $Y_{n}^{\mathbb{F}}(\xi_0,\xi_1)\subseteq \prod_{w\in V_1} \mathrm{G}_{\mathbb{F}}(\delta_1(w), m_0)$ is $\mathrm{GL}_{\preceq}^{\mathbb{F}}(\xi_0)$-invariant in particular. Therefore, $\theta_n^{\mathbb{F}}(\xi_0, \xi_1)$ induces a set theoretic bijection
\begin{equation}
 \overline{\theta}_n^{\mathbb{F}}(\xi_0, \xi_1): R_n^{\mathbb{F}}(\xi_0,\xi_1)/ \mathrm{Aut}(\mathcal{F}^{\mathbb{F}}_n(\xi_0)) \xlongrightarrow{\sim} Y_{n}^{\mathbb{F}}(\xi_0,\xi_1)/ \mathrm{GL}_{\preceq}^{\mathbb{F}}(\xi_0) \notag
\end{equation}
and we have \begin{equation}
I^\mathbb{F}_n(\xi_0, \xi_1) \cong  Y_{n}^{\mathbb{F}}(\xi_0,\xi_1)/ \mathrm{GL}_{\preceq}^{\mathbb{F}}(\xi_0)\notag
\end{equation}
as sets.
\end{Thm}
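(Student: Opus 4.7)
The plan is to reduce everything to the single identity
\begin{equation}
\tau_w(x^{w-v} y) = \tau_v(y) \quad \text{in } \mathbb{F}^{m_0}, \qquad y \in \mathcal{F}_n^{\mathbb{F}}(\xi_0)_v, \; v \preceq w, \notag
\end{equation}
which follows by unwinding \cref{def of taus}: multiplication by $x^{w-v}$ shifts the monomial exponent from $v - u$ to $w - u$ on each $u$-block while leaving the $\mathbb{F}$-coefficients untouched, and $\tau$ strips off the monomial in both cases. Well-definedness and injectivity of $\theta_n^{\mathbb{F}}(\xi_0,\xi_1)$ are then immediate: each $\tau_w$ is $\mathbb{F}$-linear and injective, so $\dim_\mathbb{F}(\tau_w(L_w)) = \dim_\mathbb{F}(L_w) = \delta_1(w)$ by condition $2$ of \cref{Defi Rel fam}, and distinct families $(L_w)_{w \in V_1}$ produce distinct images.

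Next I will identify the image with $Y_n^{\mathbb{F}}(\xi_0,\xi_1)$ by matching the defining conditions term by term. Condition $2$ in \cref{framed Relfam} ($L_v \subseteq L_w$ whenever $v \in V_1$, $v \prec w$) translates into condition $3$ in \cref{Defi Rel fam} ($x^{w-v} L_v \subseteq L_w$) via the key identity, and similarly the dimension condition $3$ in \cref{framed Relfam} translates into condition $5$ in \cref{Defi Rel fam} because $\tau_w$ is an $\mathbb{F}$-linear injection. Condition $1$ in \cref{framed Relfam} requires a trichotomy of ``$v \nprec w$'': the case $v \npreceq w$ is vacuous since $\tau(v)_w = 0$ by construction, while the case $v = w \in V_0 \cap V_1$ is exactly condition $4$ in \cref{Defi Rel fam} (the tensor-condition in the sense of \cref{Tensor condition}). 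Conversely, condition $1$ forces any $L'_w \in Y_n^{\mathbb{F}}(\xi_0,\xi_1)$ to lie in the subspace $\bigoplus_{v \preceq w,\, v \in V_0} \mathbb{F}^{\mu_0(v)} = \mathrm{im}(\tau_w)$, so the inverse assignment $L'_w \mapsto \tau_w^{-1}(L'_w)$ is well defined on all of $Y_n^{\mathbb{F}}(\xi_0,\xi_1)$, and the five conditions of \cref{Defi Rel fam} can be read back off from the three conditions of \cref{framed Relfam} by the same translations.

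For the equivariance step I expand an arbitrary $y \in \mathcal{F}_n^{\mathbb{F}}(\xi_0)_w$ as an $\mathbb{F}$-linear combination $y = \sum_{v \preceq w,\, j} \lambda_{j,v}\, x^{w-v} e_{j,v}$ of shifted standard generators $e_{j,v} \in \mathcal{F}_n^{\mathbb{F}}(\xi_0)_v$. Using $A_n^{\mathbb{F}}$-linearity and gradedness of $f$, the key identity, and the relation $\nu_v(f)(\tau_v(e_{j,v})) = \tau_v(f(e_{j,v}))$ from the proof of \cref{first Autgroup isom}, both sides $\tau_w(f(y))$ and $\nu_n^{\mathbb{F}}(\xi_0)(f)(\tau_w(y))$ unwind to the same $\mathbb{F}$-linear combination of the vectors $\tau_v(f(e_{j,v})) \in \mathbb{F}^{m_0}$. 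Passing to transformation matrices via $\omega_n^{\mathbb{F}}(\xi_0)$ replaces $\nu_n^{\mathbb{F}}(\xi_0)(f)$ by $\psi_n^{\mathbb{F}}(\xi_0)(f) \cdot (-)$, which gives the claimed equivariance formula and shows in particular that $\mathrm{GL}_{\preceq}^{\mathbb{F}}(\xi_0)$ preserves $Y_n^{\mathbb{F}}(\xi_0,\xi_1)$.

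Finally, the descent to orbit spaces is formal: $\theta_n^{\mathbb{F}}(\xi_0,\xi_1)$ is an equivariant bijection with respect to the group isomorphism $\psi_n^{\mathbb{F}}(\xi_0)$ of \cref{second Autgroup isom}, hence it induces a set-theoretic bijection $\overline{\theta}_n^{\mathbb{F}}(\xi_0,\xi_1)$ on quotients, and composing with the bijections from \cref{classification bijection} and \cref{Paramet thm} gives the displayed identification $I_n^{\mathbb{F}}(\xi_0,\xi_1) \cong Y_n^{\mathbb{F}}(\xi_0,\xi_1) / \mathrm{GL}_{\preceq}^{\mathbb{F}}(\xi_0)$. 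The main obstacle is bookkeeping rather than conceptual: one must keep careful track of the three sub-cases of ``$v \nprec w$'' ($v \npreceq w$, $v = w$, and $v \succeq w$ with $v \neq w$, which however reduces to $v \npreceq w$ for $n \geq 2$), and one must resist conflating the defining identity $\nu_v(f) = \tau_v \circ f|_{\mathcal{F}_v} \circ \tau_v^{-1}$ with the global action of $\nu_n^{\mathbb{F}}(\xi_0)(f)$ on $\mathbb{F}^{m_0}$, testing the latter on standard basis vectors so that the two notions coincide.
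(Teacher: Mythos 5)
Your proposal is correct and takes essentially the same route as the paper: it relies on injectivity of the $\tau_w$ for well-definedness and injectivity of $\theta_n^{\mathbb{F}}(\xi_0,\xi_1)$, on the key identity $\tau_w(x^{w-v}y)=\tau_v(y)$ to match the defining conditions of $R_n^{\mathbb{F}}(\xi_0,\xi_1)$ and $Y_n^{\mathbb{F}}(\xi_0,\xi_1)$ term by term, and on equivariance to descend to orbit spaces. The only substantive differences are presentational: the paper constructs the preimage explicitly (replacing each entry $z_{i,j}$ of $z\in L_w$ by $x^{w-v_j}z_{i,j}$), whereas you pass through $\tau_w^{-1}$, and you spell out the equivariance step that the paper dismisses as ``clear by construction of $\psi_n^{\mathbb{F}}(\xi_0)$.'' One very minor slip: the case $v\succ w$ already implies $v\npreceq w$ by antisymmetry of $\preceq$ for every $n\geq 1$, not only for $n\geq 2$; this does not affect the argument.
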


Before we proceed with a proof of \cref{all on sets prop} we give two examples:

\begin{Exa}\label{Exa Rel fam framed} Let $n=1$. This example is illustrated in Figure \ref{fig:Relation Families}. As in \cref{Exa Rel fam2}, let \begin{align}\xi_0 &:= \{(1,1), (1,2), (2,1) ,(3,1), (3,2) \}, \notag \\
\xi_1 & := \{(2,1), (3,1), (4,1),(4,2)\} \notag 
\end{align} 
and
\begin{align}
L_{4} & := \left\langle (x^3,0,0,0,0), (0,x^3,0,0,0),(0,0,x^2,0,0), (0,0,0,x,0)\right\rangle_{\mathbb{F}} \subseteq \mathcal{F}_1^{\mathbb{F}}(\xi_0)_{4} \notag, \\
L_{3} & := \left\langle (x^2,0,0,0,0), (0,x^2,0,0,0) \right\rangle_{\mathbb{F}} \subseteq \mathcal{F}_1^{\mathbb{F}}(\xi_0)_{3}, \notag \\
L_{2} & := \left\langle (x,0,0,0,0) \right\rangle_{\mathbb{F}} \subseteq \mathcal{F}_1^{\mathbb{F}}(\xi_0)_{2}  \notag.
\end{align}
Then $(L_2,L_3,L_4)\in R_1^{\mathbb{F}}(\xi_0,\xi_1)$. We have \begin{equation}\theta_1^{\mathbb{F}}(\xi_0, \xi_1)(L_2,L_3,L_4)=\left( \tau_2(L_2),\tau_3(L_3),\tau_4(L_4)\right) \notag
\end{equation}
\begin{figure}
\centering
\includegraphics[scale=0.5]{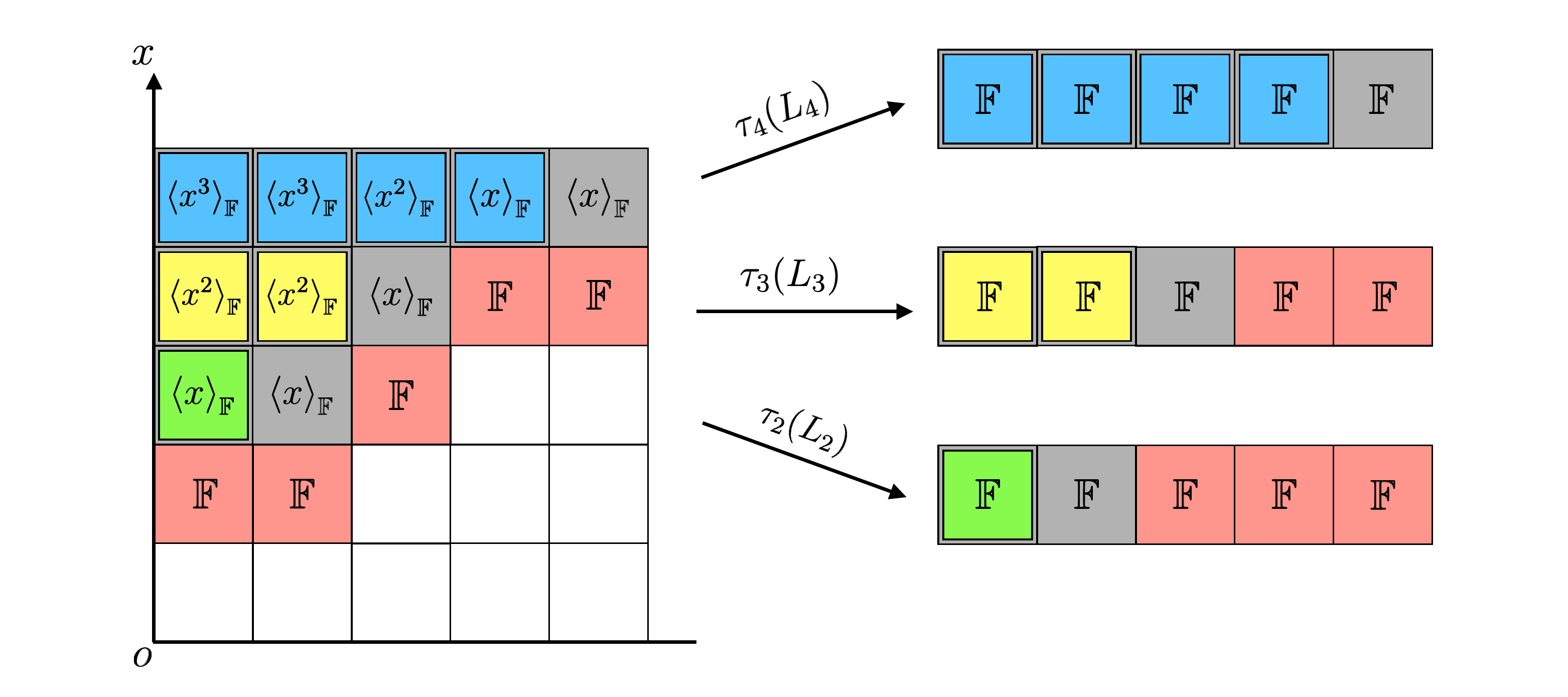} 
\caption{ $\mathbb{F}$-relation families and $\mathbb{F}$-framed relation families in dimension $n=1$ (illustration of \cref{Exa Rel fam framed}).}
\label{fig:Relation Families}
\end{figure}
where
\begin{align}
\tau_2(L_2)&= \left\langle \tau_2(x,0,0,0,0) \right\rangle_{\mathbb{F}} \notag \\ & =\left\langle (1,0,0,0,0) \right \rangle_{\mathbb{F}} \subseteq \mathbb{F}^5 ,  \notag \\
\tau_3(L_3)&=\left\langle  \tau_3(x^2,0,0,0,0), \tau_3 (0,x^2,0,0,0) \right\rangle_{\mathbb{F}} \notag \\&=\left\langle (1,0,0,0,0), (0,1,0,0,0) \right \rangle_{\mathbb{F}} \subseteq \mathbb{F}^5 , \notag \\
\tau_4(L_4)&= \left\langle \tau_4(x^3,0,0,0,0), \tau_4(0,x^3,0,0,0),\tau_4(0,0,x^2,0,0), \tau_4(0,0,0,x,0)\right\rangle_{\mathbb{F}}\notag \\ & = \left\langle (1,0,0,0,0), (0,1,0,0,0),(0,0,1,0,0), (0,0,0,1,0)\right\rangle_{\mathbb{F}} \subseteq \mathbb{F}^5 \notag
\end{align}
Figure \ref{fig:Relation Families} illustrates $\theta_1^{\mathbb{F}}(\xi_0, \xi_1)((L_j)_{j \in \{2,3,4\}})$. On the left: the blue blocks correspond to $L_4$, the yellow blocks correspond to $L_3$ and the green block corresponds to $L_2$. The red blocks are the "forbidden" blocks that we get from condition $4$ in $R_1^{\mathbb{F}}(\xi_0, \xi_1)$. On the right: the blue blocks correspond to the image of $L_4$ under $\tau_4$, the yellow blocks correspond to the image of $L_3$ under $\tau_3$ and the green block corresponds to the image of $L_2$ under $\tau_2$. The red books are the "forbidden" blocks that we  get from condition $1$ in $Y_1^{\mathbb{F}}(\xi_0, \xi_1)$.
\end{Exa}

\begin{Exa}\label{Exa Rel fam2 vs framed} 
This example can be found in \cite[Sec.~5.2]{article}, \cite[Sec.~5.2]{Carlsson2009} and will be also discussed later in \cref{cont inv}. Let $n=2$. As in \cref{Exa Rel fam2}, consider 
\begin{align}
\xi_0&:=\{\textcolor{black}{((0,0),1), ((0,0),2)}\}, \notag \\
\xi_1 &:=\{((3,0),1),((2,1),1),((1,2),1),((0,3),1)\}. \notag 
\end{align}
Figure \ref{fig:Relfamvis3 vs framed} illustrates 
\begin{align}&\theta_2^{\mathbb{F}}(\xi_0, \xi_1)(L_{(3,0)}, L_{(2,1)}, L_{(1,2)}, L_{(3,0)}) \notag \\
&=(\tau_{(3,0)}(L_{(3,0)}), \tau_{(2,1)}(L_{(2,1)}),\tau_{(1,2)}( L_{(1,2)}), \tau_{(3,0)}(L_{(3,0)})) \notag
\end{align}
\begin{figure}
\centering
\includegraphics[scale=0.34]{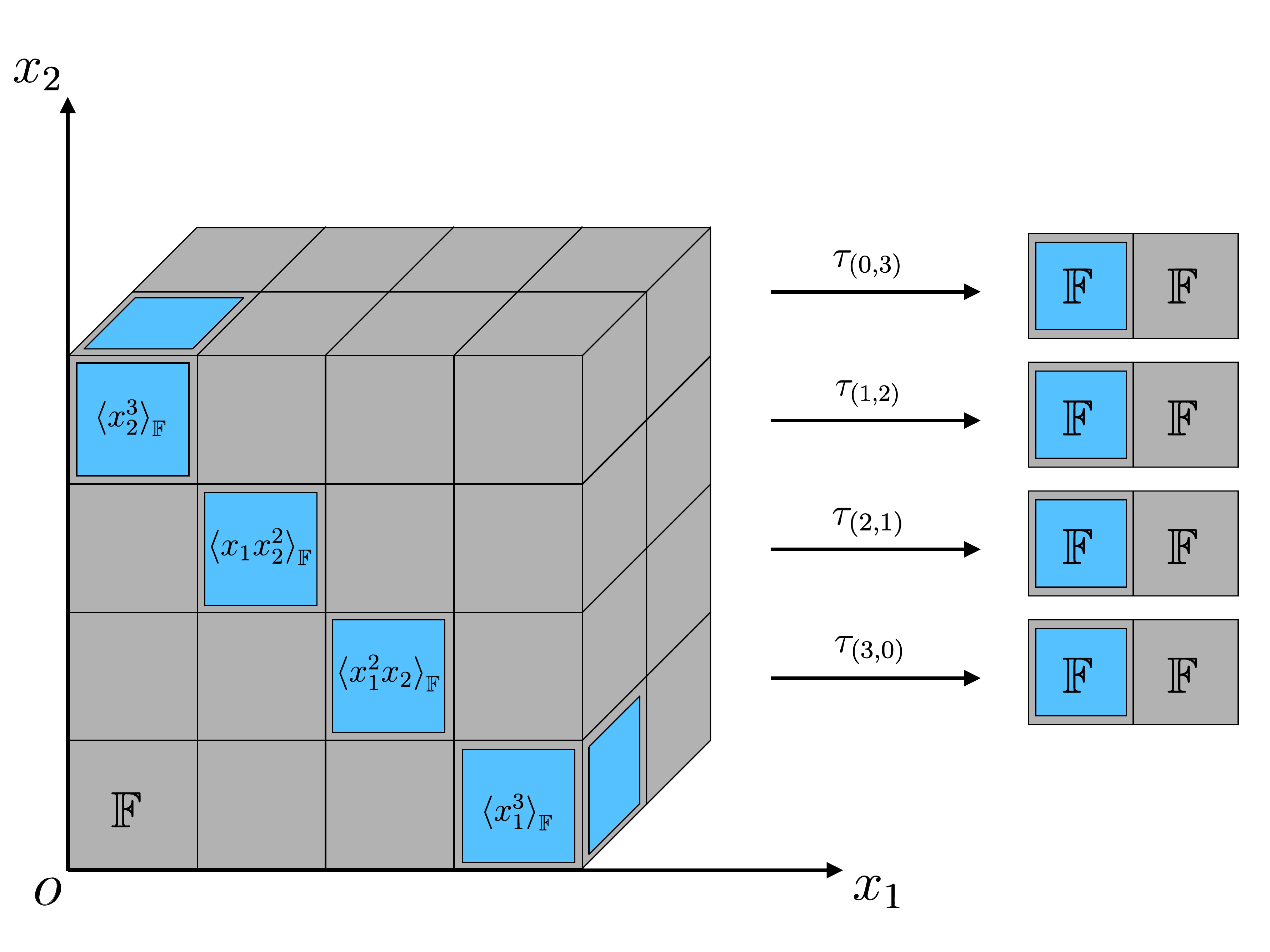} 
\caption{ $\mathbb{F}$-relation families and $\mathbb{F}$-framed relation families in dimension $n=2$ (illustration of \cref{Exa Rel fam2 vs framed}).}
\label{fig:Relfamvis3 vs framed}
\end{figure}
where 
\begin{align}
&L_{(3,0)}:=\braket{(x_1^3,0)}_{\mathbb{F}} \subseteq \mathcal{F}_2^{\mathbb{F}}(\xi_0)_{(3,0)}, \notag \\  &L_{(2,1)}:=\braket{(x_1^2x_2,0)}_{\mathbb{F}} \subseteq \mathcal{F}_2^{\mathbb{F}}(\xi_0)_{(2,1)}, \notag \\
&L_{(1,2)}:=\braket{(x_1x_2^2,0)}_{\mathbb{F}} \subseteq \mathcal{F}_2^{\mathbb{F}}(\xi_0)_{(1,2)}, \notag \\ &L_{(0,3)}:=\braket{(x_2^3,0)}_{\mathbb{F}} \subseteq \mathcal{F}_2^{\mathbb{F}}(\xi_0)_{(0,3)} \notag.
\end{align}
We have 
\begin{align}
&\tau_{(3,0)}(L_{(3,0)})=\braket{\tau_{(3,0)}(x_1^3,0)}_{\mathbb{F}} =\braket{(1,0)}_{\mathbb{F}} \subseteq \mathbb{F}^2, \notag \\  &\tau_{(2,1)}(L_{(2,1)})=\braket{\tau_{(2,1)}(x_1^2x_2,0)}_{\mathbb{F}} =\braket{(1,0)}_{\mathbb{F}} \subseteq \mathbb{F}^2, \notag \\
&\tau_{(1,2)}(L_{(1,2)})=\braket{\tau_{(1,2)}(x_1x_2^2,0)}_{\mathbb{F}}= \braket{(1,0)}_{\mathbb{F}}\subseteq \mathbb{F}^2, \notag \\ &\tau_{(0,3)}(L_{(0,3)})=\braket{\tau_{(0,3)}(x_2^3,0)}_{\mathbb{F}}= \braket{(1,0)}_{\mathbb{F}} \subseteq \mathbb{F}^2\notag.
\end{align}
Conditions 1-3 in $Y_2^{\mathbb{F}}(\xi_0, \xi_1)$ are trivial and we obtain
\begin{align}
 Y_{2}^{\mathbb{F}}(\xi_0, \xi_1) =\mathrm{G}_{\mathbb{F}}(1,2)^4 = \mathbb{P}_1 (\mathbb{F})^4 \notag
\end{align}
where $\mathbb{P}_1(\mathbb{F})$ denotes the projective line over $\mathbb{F}$ (the set of all one-dimensional subspaces of $\mathbb{F}^{2}$). Since $\mathrm{GL}_{\preceq}^{\mathbb{F}}(\xi_0)=\mathrm{GL}_2 (\mathbb{F})$
(see \cref{Aut dim 2}), we have
\begin{equation}
 I_2^{\mathbb{F}}(\xi_0, \xi_1)\cong Y_{2}^{\mathbb{F}}(\xi_0, \xi_1)/\mathrm{GL}_{\preceq}^{\mathbb{F}}(\xi_0) =  \mathbb{P}_1 (\mathbb{F})^4/\mathrm{GL}_2(\mathbb{F}) \notag
\end{equation}
as sets.
\end{Exa}

\begin{proof}[Proof of \cref{all on sets prop}]  Since $\tau_w$ is injective for all $w\in V_1$  by construction, $\theta_n^{\mathbb{F}}(\xi_0, \xi_1)$ is well-defined and injective. So, let us show that $\mathrm{im}(\theta_n^{\mathbb{F}}(\xi_0, \xi_1))=Y_{n}^{\mathbb{F}}(\xi_0,\xi_1)$. 

$\mathrm{im}(\theta_n^{\mathbb{F}}(\xi_0, \xi_1))\subseteq Y_{n}^{\mathbb{F}}(\xi_0,\xi_1)$: for this, let $(L_w)_{w \in V_1}\in R_n^{\mathbb{F}}(\xi_0, \xi_1)$. We have to show that $(\tau_w(L_w))_{w \in V_1}$ satisfies conditions 1-3 in $Y_n^{\mathbb{F}}(\xi_0, \xi_1)$. Let $w \in V_1$.
\begin{enumerate}
\item By construction of the $\tau_w$, it is clear that  $\pi_v(\tau_w(L_w))=0$ for all $v \in V_0$ with $v \nprec w$.
\item By construction of the $\tau_v$, we have $\tau_v(L_v)=\tau_w(x^{w-v} L_v)$ for all $v \in V_1$ with $v \prec w$. Hence,
\begin{align}
\tau_v(L_v)=\tau_{w} (x^{w-v} L_v )\subseteq \tau_{w} (L_w ) \notag
\end{align}
for all $v \in V_1$ with $v \prec w$.
\item We have  $ \tau_w (L_w) \cong L_w$ as $\mathbb{F}$-vector spaces for all $w \in V_1$ and therefore, \begin{equation}{\sum_{v \prec w} \tau_v (L_v) \cong \sum_{v \prec w} L_v } \notag
\end{equation}
as $\mathbb{F}$-vector spaces. We have $\sum_{v \prec w} \tau_v (L_v) \subseteq \tau_w (L_w)$ by condition 2. Thus, we conclude that \begin{equation}
\mu_1(w)=\mathrm{dim}_{\mathbb{F}}\left(L_w /\sum_{v \prec w} L_v \right)=\mathrm{dim}_{\mathbb{F}} \left(\tau_w (L_w) /\sum_{v \prec w} \tau_v (L_v) \right) \notag.
\end{equation}
\end{enumerate}

$Y_{n}^{\mathbb{F}}(\xi_0,\xi_1) \subseteq \mathrm{im}(\theta_n^{\mathbb{F}}(\xi_0, \xi_1))$: let $(L_w)_{w \in V_1} \in Y_{n}^{\mathbb{F}}(\xi_0,\xi_1)$. Let $w \in V_1$ and $z \in L_w$. Then 
\begin{equation}
z=(z_{1,1}, \dots , z_{1, \mu_0(v_1)}, \dots, z_{s_0,1}, \dots , z_{s_0, \mu_0(v_{s_0})}) \notag
\end{equation}
for suitable $z_{i,j} \in \mathbb{F}$ where for all $i \in \{1, \dots, s_0\}$, 
\begin{align} 
 v_i \nprec w \Longrightarrow \forall \, j \in \{1, \dots, \mu_0(v_i) \}: z_{i,j}=0   \notag.
\end{align}
Let $i \in \{1, \dots, s_0\} $.  If $ v_i \prec w$, define $\widetilde{z}_{i,j}:=x^{w-v_j}z_{i,j}$ for all $j \in \{1, \dots, \mu_0(v_i)\}$ and if $ v_i \nprec w$, define $\widetilde{z}_{i,j}:=0$ for all $j \in \{1, \dots, \mu_0(v_i)\}$. Let 
\begin{align}
\widetilde{z} :=(\widetilde{ z}_{1,1}, \dots , \widetilde{z}_{1, \mu_0(v_1)}, \dots, \widetilde{z}_{s_0,1}, \dots , \widetilde{z}_{s_0, \mu_0(v_{s_0})}) \notag
\end{align}
and define $\widetilde{ L}_w:=\{\widetilde{ z} \mid z \in L_w \}$. Then $(\widetilde{L}_w)_{w \in V_1} \in R_n^{\mathbb{F}}(\xi_0, \xi_1)$ and 
\begin{equation}
\theta_n^{\mathbb{F}}(\xi_0, \xi_1)((\widetilde{L}_w)_{w \in V_1} )=(L_w)_{w \in V_1} \notag.
\end{equation}
Thus, we conclude that $Y_{n}^{\mathbb{F}}(\xi_0,\xi_1) \subseteq \mathrm{im}(\theta_n^{\mathbb{F}}(\xi_0, \xi_1))$. The rest is clear by construction of $\psi_n^{\mathbb{F}}(\xi_0)$.
\end{proof}

\subsection{The non-existence of a discrete and complete invariant}\label{cont inv}
Using the parameterization result of the last section, we are now able to give a universal counterexample for any $n \geq 2$, which proves the non-existence of a discrete class of complete invariants \begin{equation}\left\{f_n^{\mathbb{F}}: \mathbf{Grf}_n(A_n^{\mathbb{F}})/_{\cong} \longrightarrow Q_n^{\mathbb{F}} \right\}_{\mathbb{F}  \in \mathcal{K}} \notag.
\end{equation}

The following counter example can be found in \cite[Sec.~5.2]{article} and\cite[Sec.~5.3]{Carlsson2009} (see also \cref{Exa Rel fam2 vs framed}). We just generalize it in a very natural way from $n=2$ to $n\geq 2$. For $n=2$, define  \begin{align}
v_{2,1}:=(3,0) ,\quad  v_{2,2}:=(2,1),  \quad v_{2,3}:=(1,2),\quad
v_{2,4}:=(0,3) \notag
\end{align}
and for $n\geq 3$, define
\begin{align}
v_{n,1}&:=(3,0,0, \dots,0), \quad v_{n,2} :=(2,1,0, \dots, 0), \notag \\ v_{n,3}&:=(1,2,0, \dots, 0),\quad v_{n,4}:=(0,3,0,\dots, 0) \notag.
\end{align}
Now let 
\begin{align}
\xi_{n,0}&:=\{((0, \dots, 0),1),((0, \dots, 0),2) \}, \notag\\
\xi_{n,1}&:=\{(v_{n,1},1),(v_{n,2},1),(v_{n,3},1),(v_{n,4},1) \} \notag.
\end{align}
\begin{Prop}\label{hilfsprop} If $\mathbb{F}$ is finite we have
\begin{align}
\infty >\abs{I_{n}^{\mathbb{F}}(\xi_{n,0}, \xi_{n,1})} \geq \abs{\mathbb{F}}-2 \notag.
\end{align}
If  $\mathbb{F}$ is countable $I_{n}^{\mathbb{F}}(\xi_{n,0}, \xi_{n,1})$ is countable and if $\mathbb{F}$ is uncountable $I_{n}^{\mathbb{F}}(\xi_{n,0}, \xi_{n,1})$ is uncountable.
\end{Prop}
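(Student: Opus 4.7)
The plan is to reduce the computation of $I_n^{\mathbb{F}}(\xi_{n,0}, \xi_{n,1})$ to a concrete orbit problem via the parameterization developed in Sections~\ref{Relation Families section}--\ref{Framed relation families section}. Combining \cref{Paramet thm} and \cref{all on sets prop} gives a set-theoretic bijection
\begin{equation}
I_n^{\mathbb{F}}(\xi_{n,0}, \xi_{n,1}) \cong Y_n^{\mathbb{F}}(\xi_{n,0}, \xi_{n,1}) / \mathrm{GL}_{\preceq}^{\mathbb{F}}(\xi_{n,0}) \notag,
\end{equation}
so it suffices to unravel both sides explicitly. Since $V_0 = \{(0,\ldots,0)\}$ and $\mu_0(0) = 2$, we have $m_0 = 2$, $Z(\xi_{n,0}) = \emptyset$, and hence $\mathrm{GL}_{\preceq}^{\mathbb{F}}(\xi_{n,0}) = \mathrm{GL}_2(\mathbb{F})$ (cf.\ \cref{Aut dim 2}). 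For the Grassmannian side, each $w \in V_1$ satisfies $w \succ 0$, so condition~$1$ in \cref{framed Relfam} is vacuous; the four elements of $V_1$ all have coordinate sum $3$, so no two are $\preceq$-comparable, rendering conditions~$2$ and~$3$ vacuous as well. Together with $\delta_1(w) = \mu_1(w) = 1$, this identifies $Y_n^{\mathbb{F}}(\xi_{n,0}, \xi_{n,1}) = \mathrm{G}_{\mathbb{F}}(1, 2)^4 = \mathbb{P}_1(\mathbb{F})^4$, carrying the diagonal $\mathrm{GL}_2(\mathbb{F})$-action, exactly as sketched in \cref{Exa Rel fam2 vs framed}; the additional zero coordinates for $n \geq 3$ play no role.

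The remaining work is to count $\mathrm{GL}_2(\mathbb{F})$-orbits on $\mathbb{P}_1(\mathbb{F})^4$. The key input is the classical sharp $3$-transitivity of $\mathrm{PGL}_2(\mathbb{F})$ on $\mathbb{P}_1(\mathbb{F})$: on the open locus $U \subseteq \mathbb{P}_1(\mathbb{F})^4$ of tuples with pairwise distinct entries, every orbit has a unique representative of the form $(0, 1, \infty, t)$ with $t \in \mathbb{P}_1(\mathbb{F}) \setminus \{0, 1, \infty\}$, the parameter $t$ being the cross-ratio. Consequently the set of $\mathrm{GL}_2(\mathbb{F})$-orbits meeting $U$ is in bijection with $\mathbb{F} \setminus \{0, 1\}$, which has cardinality $\abs{\mathbb{F}} - 2$ in the finite case and is (un)countable precisely when $\mathbb{F}$ is. Coupling this lower bound with the trivial upper bound that $\mathbb{P}_1(\mathbb{F})^4$ itself is finite (resp.\ countable) whenever $\mathbb{F}$ is, all three claims follow: finiteness together with $\abs{I_n^{\mathbb{F}}(\xi_{n,0}, \xi_{n,1})} \geq \abs{\mathbb{F}} - 2$ for finite $\mathbb{F}$, countability for countable $\mathbb{F}$, and uncountability for uncountable $\mathbb{F}$.

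The only step requiring care is the cross-ratio argument, which can either be cited as sharp $3$-transitivity of $\mathrm{PGL}_2(\mathbb{F})$ or established directly in two lines of linear algebra: any ordered triple of distinct points in $\mathbb{P}_1(\mathbb{F})$ is sent to $(0, 1, \infty)$ by a unique element of $\mathrm{PGL}_2(\mathbb{F})$. Beyond this, everything is a direct specialization of \cref{Paramet thm} and \cref{all on sets prop} to the pair $(\xi_{n,0}, \xi_{n,1})$.
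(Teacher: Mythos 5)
Your proof is correct and follows essentially the same route as the paper: specialize Theorems~\ref{Paramet thm1} and~\ref{all on sets prop} to reduce to $\mathbb{P}_1(\mathbb{F})^4/\mathrm{GL}_2(\mathbb{F})$, then restrict to the locus of pairwise-distinct tuples and identify its orbit space with $\mathbb{F}\setminus\{0,1\}$. The paper carries out the cross-ratio normalization by writing down the two change-of-basis matrices explicitly rather than citing sharp $3$-transitivity of $\mathrm{PGL}_2(\mathbb{F})$, but this is the same computation.
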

\begin{proof}
We have $m_0=2$, $\mathrm{GL}_{\preceq}^{\mathbb{F}}(\xi_{n,0})=\mathrm{GL}_{2}(\mathbb{F})$ (see \cref{Aut dim 2}) and  $\delta_1(v_{n,i})=1$  for all $i\in \{1, \dots, 4\}$. Conditions $1$-$3$ in \cref{framed Relfam} are all trivial. Therefore,
\begin{align}
 Y_{n}^{\mathbb{F}}(\xi_0, \xi_1) =\mathrm{G}_{\mathbb{F}}(1,2)^4 = \mathbb{P}_1 (\mathbb{F})^4 \notag
\end{align}
where $\mathbb{P}_1(\mathbb{F})$ denotes the projective line over $\mathbb{F}$ (the set of all one-dimensional subspaces of $\mathbb{F}^{2}$). By \cref{all on sets prop}, we obtain a set theoretic bijection
\begin{align}
I_{n}^{\mathbb{F}}(\xi_{n,0}, \xi_{n,1}) \cong {\mathbb{P}_1(\mathbb{F})^4}/{\mathrm{GL}_2(\mathbb{F})}  \notag.
\end{align}
Now let \begin{equation}
\Omega:=\{(l_1,l_2,l_2,l_4)\in \mathbb{P}_1(\mathbb{F})^4 \mid l_i\neq l_j \, \mathrm{for} \, i\neq j \}\subseteq \mathbb{P}_1(\mathbb{F})^4 \notag
\end{equation}
be the subspace of pairwise distinct lines. $\Omega$ is clearly $\mathrm{GL}_2(\mathbb{F})$ invariant and we have $ {\mathbb{P}_1(\mathbb{F})^4}/{\mathrm{GL}_2(\mathbb{F})}\supseteq {\Omega}/{\mathrm{GL}_2(\mathbb{F})}$. 

We show that $\Omega/\mathrm{GL}_2(\mathbb{F}) \cong  \mathbb{F}\setminus \{0,1\}$ as sets: let \begin{equation}
((a_1:a_2),(b_1:b_2),(c_1:c_2),(d_1:d_2)) \in \Omega \notag.
\end{equation}
As $(a_1:a_2) \neq (b_1:b_2)$, we have 
$\mathbb{F}^2=\braket{(a_1,a_2), (b_1,b_2)}_{\mathbb{F}}$ which shows that 
\begin{equation}
M:=\begin{pmatrix}
a_1 & b_1 \\
a_2 & b_2
\end{pmatrix} \in \mathrm{GL}_2(\mathbb{F}) \notag .
\end{equation}
Hence, \begin{align}
&(M^{-1}(a_1:a_2),M^{-1}(b_1:b_2),M^{-1}(c_1:c_2),M^{-1}(d_1:d_2)) \notag \\
=&((1:0),(0:1),M^{-1}(c_1:c_2),M^{-1}(d_1:d_2)) \notag.
\end{align}
Now let $(\widetilde{c_1}:\widetilde{c_2}):=M^{-1} (c_1:c_2)$. As $(\widetilde{c_1}:\widetilde{c_2}) \neq (1:0), (0:1)$, 
\begin{equation}
D:=\begin{pmatrix}
\frac{1}{c_1} & 0 \\
0 & \frac{1}{c_2}
\end{pmatrix} \in \mathrm{GL}_2(\mathbb{F}) \notag
\end{equation}
is well-defined and we obtain
\begin{align}
&(D(1:0),D(0:1),D(\widetilde{c_1}:\widetilde{c_2}),DM^{-1}(d_1:d_2)) \notag \\
=&((1:0),(0:1),(1:1),DM^{-1}(d_1:d_2)) \notag.
\end{align}
Thus, we have
\begin{align}
\Omega/\mathrm{GL}_2(\mathbb{F})=\{ & \mathrm{GL}_2(\mathbb{F}) \cdot (l_1,l_2,l_3,l_4)  \mid \notag \\
&  l_1=(1:0), l_2=(0:1), l_3=(1:1), l_4=(x:y) \neq l_1,l_2,l_3 \} \notag
\end{align}
which shows that the assignment
\begin{align}
\Omega/\mathrm{GL}_2(\mathbb{F})   \xlongrightarrow{\sim} \mathbb{P}_1(\mathbb{F})\setminus \{(1:0), (0:1), (1:1) \} , \quad
 \mathrm{GL}_2(\mathbb{F}) \cdot (l_1,l_2,l_3,l_4)  \longmapsto l_4 \notag
\end{align}
is a bijection of sets. Since
\begin{ceqn}
\begin{align}
{\mathbb{P}_1(\mathbb{F})^4}/{\mathrm{GL}_2(\mathbb{F})}\supseteq {\Omega}/{\mathrm{GL}_2(\mathbb{F})} \cong \mathbb{P}_1(\mathbb{F})
\setminus \{(1:0), (0:1), (1:1) \}  \cong \mathbb{F} \setminus \{0,1\} \notag
\end{align}
as sets, the statement follows.
\end{ceqn}
\end{proof}
This shows that for $n\geq 2$, there exists no discrete class of complete invariants 
\begin{equation}\{f_{\mathbb{F}}: \mathbf{Grf}_n(A_n^{\mathbb{F}})/_{\cong} \longrightarrow Q_n^{\mathbb{F}} \}_{\mathbb{F}  \in \mathcal{K}} \notag.
\end{equation}

\subsection{The moduli space}\label{The Moduli space}
Let us start with recalling some standard material: Let 
$\mathcal{D}$ be a category, \begin{equation}
F:\mathcal{D}^\mathrm{opp} \longrightarrow \mathrm{Sets} \notag
\end{equation}
a functor and $(S, \beta)$ a pair with $S\in \mathcal{D}$ and $\beta \in F(S)$. If the assignment\begin{equation}
\mathrm{Hom}_{\mathcal{D}}(T, S) \longrightarrow F(T), \quad f \longmapsto F(f)(\beta) \notag
\end{equation}
is a bijection for all $T \in \mathcal{D}$, then $F$ is called \textit{representable} by $(S, \beta)$. Note that if there exists an isomorphism $g: \mathrm{Hom}_{\mathcal{D}}(-, S) \to F$, then $F$ is representable by $(S, g(\mathrm{id}_S))$ and we just say that $F$ is representable by $S$ if there exists such an isomorphism. Let $S$ be a scheme. An $S$-\textit{scheme} $X$ is a scheme $X$ together with a morphism of schemes $X \to S$. A \textit{morphism} of $S$-schemes $X \to Y$ is a morphism of schemes such that the diagram
\begin{equation}
\begin{tikzcd}
X \arrow[rd] \arrow[rr] && Y\arrow[ld] \\
&S&
\end{tikzcd} \notag
\end{equation}
commutes. Denote by 
\begin{enumerate}
\item $\mathrm{LNSch}_{\mathbb{F}}$ the category of locally Noetherian $\mathbb{F}$-schemes,
\item $\mathrm{NAff}_{\mathbb{F}}$ the category of Noetherian affine $\mathbb{F}$-schemes and
\item $\mathrm{NAlg}_{\mathbb{F}}$ the category of Noetherian $\mathbb{F}$-algebras.
\end{enumerate}
Any $X \in \mathrm{LNSch}_{\mathbb{F}}$ defines a functor (the so-called functor of points) \begin{equation}
h_X: \mathrm{LNSch}_{\mathbb{F}}^{\mathrm{opp}} \longrightarrow \mathrm{Sets}, \quad Y \longmapsto \mathrm{Hom}_{\mathbb{F}}(Y, X) \notag
\end{equation}
We often use the notation $h_X(-):=X(-)$. By \textit{Yoneda‘s Lemma}, the scheme $X$ is completely determined by its functor of points $h_X$. Moreover, $h_X$ is completey determined by its restriction to $\mathrm{NAff}_{\mathbb{F}}^{\mathrm{opp}}$ (or $\mathrm{NAlg}_{\mathbb{F}}$ since $\mathrm{NAff}_{\mathbb{F}}^{\mathrm{opp}}$ is equivalent to $\mathrm{NAlg}_{\mathbb{F}}$) (see \cite[Prop.~VI-2, Ex.~VI-3]{eisenbud2006geometry}).

\begin{Defi} Let $S$ be a scheme and $X,Y$ be $S$-schemes. $Y$ is called an \textit{open/closed subscheme} of $X$ if  there exists a morphism of $S$-schemes $Y \to X$ which is an open/closed immersion. In this case, we often simply say that $Y \subseteq X$ is an open/closed subscheme.
\end{Defi}

The following definition is standard material about schemes. See for example \cite[Def.~3.9]{GW} where it is formulated for $\mathrm{Spec}({\mathbb{Z}})$-schemes (for $S$-schemes everything is analogous).

\begin{Defi}[Gluing Datum] Let $S$ be a scheme. A \textit{gluing datum} of $S$-schemes consists of the following data:
\begin{enumerate}
\item an index set $I$,
\item for all $i \in I$, an $S$-scheme $U_i$,
\item for all $i,j \in I$ an open subset $U_{i,j} \subseteq U_i$ (we consider $U_{ij}$ as open subschemes of $U_i$),
\item for all $i,j \in I$ an isomorphism $\varphi_{ji}: U_{ij} \to U_{ji}$ of $S$-schemes,
\end{enumerate}
such that 
\begin{enumerate}
\item $U_{ii}=U_i$ for all $i \in I$
\item the \textit{cocycle condition} holds: $\varphi_{kj} \circ \varphi_{ji}=\varphi_{ki}$ on $U_{ij} \cap U_{ik}$ for all $i,j,k \in I$.
\end{enumerate}
Note that we implicitly assume in the cocycle condition that $\varphi_{ji}(U_{ij} \cap U_{ik}) \subseteq U_{jk}$ (such that the composition is meaningful). For $i = j = k$, the cocycle condition implies that
$\varphi_{ii} =\mathrm{id}_{U_i} $ and (for $i=k$), $\varphi_{ji}:U_{ij}  \xrightarrow{\sim} U_{ji}$ is an isomorphism of $S$-schemes with  $\varphi_{ij}^{-1} =\varphi_{ji}$.
\end{Defi}

The following proposition is standard material about schemes. See for example \cite[Prop.~3.10]{GW} where it is formulated for $\mathrm{Spec}({\mathbb{Z}})$-schemes (for $S$-schemes everything is analogous).

\begin{Prop}[Gluing Schemes]\label{Gluing schemes} Let $S$ be a scheme and \begin{equation}
((U_i)_{i\in I}, (U_{ij})_{i,j\in I}, (\varphi_{ij})_{i,j \in I}) \notag
\end{equation}
a gluing datum of $S$-schemes. Then there exists an $S$-scheme $X$ together with $S$-morphisms $\psi_i: U_i \to X$ for all $i \in I$ such that 
\begin{enumerate}
\item for all $i \in I$, the map $\psi_i$ yields an $S$-isomorphism from $U_i$ onto an open subscheme of $X$,
\item $\psi_j \circ \varphi_{ji} = \psi_i$ for all $i,j \in I$,
\item $X=\bigcup_{i \in I} \psi_i(U_i)$,
\item $\psi_i(U_i) \cap \psi_j(U_j)=\psi_i(U_{ij})=\psi_j(U_{ji}) $ for all $i,j \in I$.
\end{enumerate}
Furthermore, $X$ together with the $\psi_i$ is uniquely determined up to unique $S$-isomorphism. We usually identify $U_i=\psi_i(U_i) \subseteq X$ and write  $X=\bigcup_{i \in I} U_i$ as well as \begin{equation}
U_i \cap U_j= \psi_i(U_i) \cap \psi_j(U_j)=\psi_{i}(U_{ij})=\psi_{j}(U_{ij})=U_{ij} \notag.
\end{equation}
\end{Prop}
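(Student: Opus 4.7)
The plan is to follow the standard gluing construction, which proceeds in three stages: assemble a topological space, equip it with a structure sheaf, and check the required universal/uniqueness properties. First I would form the disjoint union $Z := \bigsqcup_{i \in I} U_i$ as a topological space (in fact as an $S$-scheme), and define a relation $\sim$ on the underlying point set of $Z$ by declaring $x \sim y$ whenever $x \in U_{ij}$, $y \in U_{ji}$ and $\varphi_{ji}(x) = y$ for some $i,j \in I$. Reflexivity comes from $U_{ii} = U_i$ with $\varphi_{ii} = \mathrm{id}_{U_i}$, symmetry from $\varphi_{ij} = \varphi_{ji}^{-1}$, and transitivity from the cocycle condition $\varphi_{kj} \circ \varphi_{ji} = \varphi_{ki}$ on $U_{ij} \cap U_{ik}$, once one checks that the implicit inclusion $\varphi_{ji}(U_{ij} \cap U_{ik}) \subseteq U_{jk}$ propagates through the relation correctly. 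Let $X := Z/{\sim}$ with quotient topology and denote by $\psi_i: U_i \to X$ the composition of the canonical inclusion $U_i \hookrightarrow Z$ with the projection $Z \to X$.

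Next I would verify on the topological level that each $\psi_i$ is an open embedding. Concretely, one shows that $\psi_i(U_i) \subseteq X$ is open and that $\psi_i: U_i \to \psi_i(U_i)$ is a homeomorphism, using the definition of the quotient topology and the fact that the $\varphi_{ji}$ are homeomorphisms between the open sets $U_{ij}$ and $U_{ji}$. Once this is in place, the intersections satisfy $\psi_i(U_i) \cap \psi_j(U_j) = \psi_i(U_{ij}) = \psi_j(U_{ji})$ by construction, and $\psi_j \circ \varphi_{ji} = \psi_i$ holds on $U_{ij}$ by definition of $\sim$. To put a sheaf of rings on $X$, I would use that $X$ is covered by the opens $\psi_i(U_i) \cong U_i$, and glue the structure sheaves $\mathcal{O}_{U_i}$ along the isomorphisms induced by the $\varphi_{ji}$; the sheaf gluing lemma applies because the cocycle condition for the $\varphi_{ji}$ translates directly into the cocycle condition required for sheaf gluing. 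This produces a ringed space $(X, \mathcal{O}_X)$ such that each $(\psi_i(U_i), \mathcal{O}_X|_{\psi_i(U_i)})$ is isomorphic to $(U_i, \mathcal{O}_{U_i})$ as a locally ringed space, so $X$ is a scheme and each $\psi_i$ is an open immersion of schemes.

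For the $S$-scheme structure, the individual structure morphisms $U_i \to S$ agree on overlaps because the $\varphi_{ji}$ are $S$-morphisms; hence they glue to a unique morphism $X \to S$, and each $\psi_i$ becomes a morphism of $S$-schemes. Properties $1$--$4$ are then immediate from the construction. For uniqueness, suppose $(X', (\psi_i')_{i \in I})$ is another such datum. On each $U_i$, the morphisms $\psi_i$ and $\psi_i'$ yield $S$-isomorphisms onto open subschemes of $X$ and $X'$ respectively, and condition $(4)$ together with $\psi_j \circ \varphi_{ji} = \psi_i$ (and likewise for $\psi'$) ensures that the candidate isomorphism $X \to X'$ defined locally by $\psi_i' \circ \psi_i^{-1}$ is well-defined on overlaps; one then invokes the gluing of morphisms of schemes to obtain a unique $S$-isomorphism $X \xrightarrow{\sim} X'$ compatible with all $\psi_i, \psi_i'$.

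The main obstacle will be the bookkeeping in the cocycle condition: one must carefully justify the implicit inclusions $\varphi_{ji}(U_{ij} \cap U_{ik}) \subseteq U_{jk}$ and verify that the sheaf gluing datum on $X$ inherits its cocycle condition from the scheme gluing datum without inconsistencies on triple overlaps $\psi_i(U_i) \cap \psi_j(U_j) \cap \psi_k(U_k)$. Once this is settled, the remainder of the proof is a formal unwinding of the universal property of quotients and of sheaf gluing, and no further substantive difficulty arises.
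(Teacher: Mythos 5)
Your proposed proof is the standard gluing construction and is essentially correct; note, however, that the paper does not prove this proposition itself but simply cites it as standard material from Görtz--Wedhorn \cite[Prop.~3.10]{GW}, and your sketch matches the argument given there (disjoint union, quotient by the equivalence relation induced by the $\varphi_{ji}$, verification that each $\psi_i$ is an open embedding, sheaf gluing via the cocycle condition, gluing of the structure morphisms to $S$, and a uniqueness argument via gluing of morphisms). No gap in substance; the one place that deserves a bit more care than your sketch gives it is the verification that $\sim$ is actually an equivalence relation, specifically transitivity, where you must use both the cocycle condition \emph{and} the implicit requirement $\varphi_{ji}(U_{ij}\cap U_{ik})\subseteq U_{jk}$ together to show that if $x\sim y$ (via $i,j$) and $y\sim z$ (via $j,k$), then $x\in U_{ik}$, $z\in U_{ki}$, and $\varphi_{ki}(x)=z$ — this is exactly the bookkeeping you flag at the end, and it is indeed where the content of the hypotheses is used.
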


\begin{Defi} Let $m \in \mathbb{N}$. Denote by \begin{enumerate}
\item  $\mathbb{A}_\mathbb{F}^m= \mathrm{Spec}(\mathbb{F}[x_1, \dots, x_m])$ the affine space of dimension $m$ as $\mathbb{F}$-scheme,
\item $\mathbb{P}_\mathbb{F}^m= \mathrm{Proj}(\mathbb{F}[x_0, \dots, x_m])$ the projective space of dimension $m$ as $\mathbb{F}$-scheme and
\item $\mathbb{P}_m(\mathbb{F})$ the projective space of dimension $m$ as the set of all one-dimensional subspaces of $\mathbb{F}^{m+1}$.
\end{enumerate}
Note that we have $\mathbb{A}_\mathbb{F}^m(\mathbb{F})=\mathbb{F}^m$ and $\mathbb{P}_\mathbb{F}^m(\mathbb{F})=\mathbb{P}_m(\mathbb{F})$.
\end{Defi}

\begin{Defi}[Projective $\mathbb{F}$-scheme] An $\mathbb{F}$-scheme $X$ is called a \textit{projective} $\mathbb{F}$-\textit{scheme} if there exists a natural number $m \in \mathbb{N}$ such that $X \subseteq \mathbb{P}^m_\mathbb{F}$ is a closed subscheme.
\end{Defi}

\begin{Defi} Let $X$ be an $\mathbb{F}$-scheme.
\begin{enumerate}
\item $X$ is called an \textit{affine} $\mathbb{F}$-\textit{variety} if $X=\mathrm{Spec}(R)$ for a finitely generated $\mathbb{F}$-alegbra $R$.
\item $X$ is called an $\mathbb{F}$-\textit{variety} if $X= \bigcup_{i=1}^m X_i$ where $X_i \subseteq X$ is open and an affine variety for all $i\in \{1, \dots, m\}$.
\item $X$ is called a \textit{projective} $\mathbb{F}$-variety if $X$ is an $\mathbb{F}$-variety and a projective $\mathbb{F}$-scheme. 
\item Let $X,Y$ be $\mathbb{F}$-varieties. $Y$ is called an \textit{open/closed subvariety} of $X$ if $Y$ is an open/closed subscheme of $X$.
\item Let $X,Y$ be $\mathbb{F}$-varieties. Then $Y$ is called a \textit{subprevariety} of $X$ if there exists a closed subvariety $Z \subseteq X$ such that $Y \subseteq Z$ is an open subvariety.
\end{enumerate}
\end{Defi}

\subsubsection{Parameterization as a subprevariety}\label{param as subprevariety}

We begin this section with the realization of $\mathrm{G}_{\mathbb{F}}(d,m)$ as the set of $\mathbb{F}$-points of a projective $\mathbb{F}$-scheme $\mathcal{G}_{\mathbb{F}}(d,m)$, the so-called \textit{Grassmann scheme}. There are different ways to construct $\mathcal{G}_{\mathbb{F}}(d,m)$. Our construction follows that in \cite[III.2.7]{eisenbud2006geometry} and focuses on the technical details that are missing in \cite{eisenbud2006geometry}.

\begin{Thm}\label{Grassmann scheme}  Let $d,m \in \mathbb{N}$ with $d \leq m$. Then there exists a projective $\mathbb{F}$-scheme $\mathcal{G}_{\mathbb{F}}(d,m)$ (the \textit{Grassmann scheme}) together with a set theoretic bijection
\begin{equation}
 \mathcal{G}_{\mathbb{F}}(d,m)(\mathbb{F}) \xlongrightarrow{\sim}  \mathrm{G}_{\mathbb{F}}(d,m) \notag .
\end{equation}
If $d > m$, we set $\mathcal{G}_{\mathbb{F}}(d,m):= \emptyset$. 
\end{Thm}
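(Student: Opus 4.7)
The plan is to construct $\mathcal{G}_{\mathbb{F}}(d,m)$ by covering it with affine charts indexed by $d$-element subsets $I \subseteq \{1,\dots,m\}$ and gluing them via \cref{Gluing schemes}, then to embed the result into projective space via the Plücker embedding. First, for every $d$-element subset $I$, I would introduce the affine chart $U_I := \mathrm{Spec}(\mathbb{F}[x^I_{ij} : i \in \{1,\dots,m\}\setminus I,\ j \in \{1,\dots,d\}]) \cong \mathbb{A}^{d(m-d)}_{\mathbb{F}}$. The intuition is that $U_I(\mathbb{F})$ parameterizes those $d$-dimensional subspaces $L \subseteq \mathbb{F}^m$ for which the projection $\pi_I: L \to \mathbb{F}^d$ onto the coordinates in $I$ is an isomorphism; such an $L$ is represented uniquely by an $m \times d$ matrix $X^I$ whose $I$-rows form the $d \times d$ identity and whose remaining rows carry the coordinates $x^I_{ij}$. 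For the gluing data, let $U_{IJ} \subseteq U_I$ be the distinguished open complement of the vanishing locus of the determinant polynomial $\Delta_{IJ}$ formed by the $J$-rows of $X^I$; the transition $\varphi_{JI}: U_{IJ} \to U_{JI}$ is the morphism whose effect on $\mathbb{F}$-points is to right-multiply $X^I$ by the inverse of its $J$-block, so that the $J$-rows become the identity. The cocycle condition follows from the associativity of matrix multiplication and the uniqueness of reduced column form, whence \cref{Gluing schemes} produces an $\mathbb{F}$-scheme $\mathcal{G}_{\mathbb{F}}(d,m)$ that is visibly an $\mathbb{F}$-variety.

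Second, for the bijection on $\mathbb{F}$-points I would observe that $\mathcal{G}_{\mathbb{F}}(d,m)(\mathbb{F}) = \bigcup_{I} U_I(\mathbb{F})$ (identified along the gluing). Every $L \in \mathrm{G}_{\mathbb{F}}(d,m)$ lies in some $U_I(\mathbb{F})$ because any $m \times d$ matrix of rank $d$ representing $L$ admits at least one $d \times d$ invertible block, and the map is well defined and injective across charts precisely by the choice of the $\varphi_{JI}$. The case $d > m$ reduces to the trivial equality $\mathrm{G}_{\mathbb{F}}(d,m) = \emptyset$.

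The main step, and the main obstacle, is showing that $\mathcal{G}_{\mathbb{F}}(d,m)$ is projective. I would construct the Plücker morphism $\iota: \mathcal{G}_{\mathbb{F}}(d,m) \to \mathbb{P}^{N}_{\mathbb{F}}$ with $N = \binom{m}{d}-1$ chart-wise: on $U_I$, $\iota$ is defined by the $\mathbb{F}$-algebra homomorphism sending the projective coordinate $p_K$ (indexed by $d$-subsets $K \subseteq \{1, \dots, m\}$) to the $K$-minor of the universal matrix $X^I$, so in particular $p_I \mapsto 1$. Compatibility across charts reduces to the multiplicativity of the determinant under the transition matrices, hence these local morphisms glue to a global $\iota$. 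To conclude that $\iota$ is a closed immersion, I would verify that on each standard affine open $\{p_I \neq 0\} \subseteq \mathbb{P}^N_{\mathbb{F}}$ the preimage is exactly $U_I$, and that $U_I$ is cut out there as the vanishing locus of the Plücker relations. The combinatorial core of the argument (and its hardest step) is the double verification that the Plücker relations hold on $\iota(\mathcal{G}_{\mathbb{F}}(d,m))$ and that, together with $p_I = 1$, they recover precisely the polynomial ring of $U_I$; this is a standard but non-trivial identity for minors of $d \times m$ matrices, and it is the only place where I would have to do real work rather than bookkeeping.
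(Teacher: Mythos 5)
Your proposal follows essentially the same route as the paper's proof: you build $\mathcal{G}_{\mathbb{F}}(d,m)$ by gluing affine charts $\cong \mathbb{A}^{d(m-d)}_{\mathbb{F}}$ indexed by $d$-element subsets $I \subseteq \{1,\dots,m\}$, with transition maps given by normalizing the $J$-block to the identity, and you verify the cocycle condition and produce the bijection on $\mathbb{F}$-points in the same way (the only cosmetic difference is that you use column-span of $m\times d$ matrices with identity $I$-block, while the paper uses row-span of $d\times m$ matrices modulo the $\mathrm{GL}_d(\mathbb{F})$ left action). The one place you diverge is projectivity: the paper defers this to cited references (\cite[Ex.~III-49]{eisenbud2006geometry}, \cite[(8.10)]{GW}), whereas you sketch the Plücker-embedding argument that those references carry out in detail. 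Your sketch is sound --- the Plücker coordinates transform by a common scalar $\det(X^I_J)^{-1}$ across charts so $\iota$ glues, the preimage of $D_+(p_I)$ is $U_I$, and the $D_+(p_I)$ cover $\mathbb{P}^{N}_{\mathbb{F}}$ so chart-wise closed immersions assemble to a global one --- but as you acknowledge, the verification that the Plücker relations cut out exactly $U_I$ inside $D_+(p_I)$ is the nontrivial combinatorial core, and that is precisely what the paper outsources to the references.
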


\begin{proof} Let $L \in \mathrm{G}_{\mathbb{F}}(d,m)$. Pick an $\mathbb{F}$-basis $B_L$ of $L$ and define $M(B_L)\in M_{d \times m}(\mathbb{F})$ as the matrix which has the transposed basis vectors as entries. Let \begin{equation}
M_{d \times m}(\mathbb{F})_d:= \{ M \in M_{d \times m}(\mathbb{F}) \mid \mathrm{rank}(M)=d \} \notag.
\end{equation}
For $M,N \in M_{d \times m}(\mathbb{F})_d$, define $M \sim N$ if and only if $A \cdot M = N$ for some $A \in \mathrm{GL}_d(\mathbb{F})$. Then 
\begin{equation}
\eta: \mathrm{G}_{\mathbb{F}}(d,m) \longrightarrow M_{d \times m } (\mathbb{F})_d/_\sim, \quad L \longmapsto [M(B_L)] \notag
\end{equation}
is a well-defined bijection. Define $\mathcal{I}:=\{(i_1< \dots < i_d) \mid 1 \leq i_j \leq m\}$. Let \begin{equation}[M] \in M_{d \times m } (\mathbb{F})_d/_\sim \notag.
\end{equation}
Since $\mathrm{rk}(M)=d$ there exists an $I \in \mathcal{I}$ such that 
$\mathrm{det}(M_{I})\neq 0$. Hence, $(M_{I}^{-1} M)_I=1_d$. Here $M_I$ denotes the $I$-th submatrix of $M$. $\mathrm{det}(M_I)$ is also called the $I$-th minor of $M$. For $I \in \mathcal{I}$, let 
\begin{align}
U(I)&:= \{ [M] \in M_{d \times m}(\mathbb{F})/_{\sim} \mid  \mathrm{det}(M_I) \neq 0 \} \notag, \\
V(I)&:= \{ L \in \mathrm{G}_{\mathbb{F}}(d,m) \mid L \oplus \braket{\{e_i\}_{i \in I^{c}}} = \mathbb{F}^m \} \notag, \\
W(I) & :=\{ M \in M_{d \times m}(\mathbb{F})\mid M_I= {1}_d \} \notag.
\end{align}
Then $\bigcup_{I \in \mathcal{I}} V(I)= \mathrm{G}_{\mathbb{F}}(d,m)$, $\bigcup_{I \in \mathcal{I}} U(I)= M_{d \times m}(\mathbb{F})/_{\sim}$ and  $\eta|_{V(I)}=U(I)$. We obtain a well-defined bijection
\begin{equation}
\lambda(I): U(I) \longrightarrow W(I), \quad [M] \longmapsto M_I^{-1} M \notag
\end{equation}
Note that $W(I) \cong \mathbb{A}^{m(m-d)}_\mathbb{F} (\mathbb{F})$ as sets. For $I,J \in \mathcal{I}$, define
\begin{equation}
W(I,J):=\{ M \in W(I) \mid \mathrm{det}(M_J)\neq 0 \} \notag
\end{equation}
Then we have $W(I,I)=W(I)$ and set theoretic bijections
\begin{equation}
\varphi_{IJ}: W(I,J) \longrightarrow W(J,I), \quad M \longmapsto M_J^{-1} M \notag.
\end{equation}
Now let
$X:=\bigcup_{I \in \mathcal{I}} W(I)/_{\sim}$ where for $x \in W(I)$ and $y \in W(J)$, $x \sim y$ if $x \in W(I,J)$, $y \in W(J,I)$ and $\varphi_{IJ}(x)=y$. We obtain a well-defined set theoretic bijection
\begin{equation}
\lambda: \bigcup_{I \in \mathcal{I}} U(I) \longrightarrow X, \quad U(I) \ni x \longmapsto \lambda(I)(x) \notag
\end{equation}
and hence a set theoretic bijection
\begin{equation}
\lambda \circ \eta: \mathrm{G}_{\mathbb{F}}(d,m) \longrightarrow X \notag.
\end{equation}
The next step is to translate everything into polynomial equations. Let ${R:=\mathbb{F}[(x_{ij})]}$ be the polynomial ring in $d m$ variables. Then $\mathcal{W}:=\mathrm{Spec}(R) \cong \mathbb{A}_\mathbb{F}^{dm}$. Let ${K(I):=\braket{(x_{i,j})_I- 1_d }}$ and $R(I):=R / K(I)$.
Now
\begin{equation}
\mathcal{W}(I):=\mathrm{Spec}(R(I)) \notag
\end{equation}
is a closed subscheme of $\mathcal{W}$ with $\mathcal{W}(I)(\mathbb{F})=W(I)$. For $J\in \mathcal{I}$, define 
\begin{equation}
R(I,J):=R(I) \left[\mathrm{det}((x_{ij})_J)^{-1} \right] \notag
\end{equation}
and
\begin{equation}\mathcal{W}(I,J):=
\mathrm{Spec}(R(I,J)) \notag.
\end{equation}
$\mathcal{W}(I,J)$ is an open  subscheme of $\mathcal{W}(I)$. We have $\mathcal{W}(I,I)=\mathcal{W}(I)$
and for any $T \in \mathrm{NAlg}_{\mathbb{F}}$, we obtain
\begin{align}
\mathcal{W}(I)(T)=\mathrm{Hom}_\mathbb{F}( R(I), T) =\{ M \in M_{d \times m}(T) \mid M_I=1_d \} \notag
\end{align}
and 
\begin{equation}
\mathcal{W}(I,J)(T)=\mathrm{Hom}_\mathbb{F}( R(I,J), T)= \{M \in \mathcal{W}(I)(S) \mid \mathrm{det}(M_J) \in T^\times \} \notag.
\end{equation}
The assignment 
\begin{equation}
\varphi_{JI}(T): \mathcal{W}(I,J)(T) \longrightarrow \mathcal{W}(J,I)(T), \quad M \longmapsto M_J^{-1} M \notag.
\end{equation}
is a set theoretic bijection and natural in $T$. Hence, we obtain an isomorphism of $\mathbb{F}$-schemes
\begin{equation}
\varphi_{JI}: \mathcal{W}(I,J) \longrightarrow \mathcal{W}(J,I)\notag.
\end{equation}
We have for all $M \in \mathcal{W}(I,J)(T) \cap \mathcal{W}(I,K)(T) $, 
\begin{align}
(\varphi_{KJ}(T) \circ \varphi_{JI}(T)) (M)&=\varphi_{KJ}(T)(M_I^{-1} M)= M_K^{-1} M_I^{-1} M \notag \\
&\overset{M_I=1_d}{=} M_K^{-1} M=\varphi_{KI}(T)(M) \notag
\end{align}
which shows that the cocycle condition is satisfied. Now use \cref{Gluing schemes} to define $\mathcal{G}_{\mathbb{F}}(d,m)$ as the $\mathbb{F}$-scheme that one obtains by gluing the $\mathcal{W}(I)$ along the $\varphi_{JI}$. By Construction, we have $\mathcal{G}_{\mathbb{F}}(d,m)(\mathbb{F})=X$ 
and $(\eta \circ \lambda)^{-1}$ is a set theoretic bijection from $\mathcal{G}_{\mathbb{F}}(d,m)(\mathbb{F})$ to $\mathrm{G}_{\mathbb{F}}(d,m)$. So, from an intuitive point of view, this construction is the correct one if we consider $\mathbb{F}$-points. That $\mathcal{G}_{\mathbb{F}}(d,m)$ is a projective variety follows from \cite[Ex.~III-49] {eisenbud2006geometry} or \cite[(8.10)]{GW}.
\end{proof}

The next theorem is based on and inspired by \cite[Sec.~5]{Carlsson2009} and \cite[Thm.~4]{article}. In order to realize $Y_{n}^{\mathbb{F}}(\xi_0, \xi_1)$ as a subprevariety of a product of Grassmann varieties, we have to translate conditions 1-3 in the definition of $Y_{n}^{\mathbb{F}}(\xi_0, \xi_1)$ (see \cref{framed Relfam}) into polynomial equations. This translation is spelled out in \cite[Sec.~5]{Carlsson2009} (without the first condition in the definition of $Y_{n}^{\mathbb{F}}(\xi_0, \xi_1)$) while in \cite{article} the authors only state the theorem (here the first and third condition in $Y_{n}^{\mathbb{F}}(\xi_0, \xi_1)$ are missing). Our proof follows \cite[Sec.~5]{Carlsson2009} with a focus on the technical details and on the translation of the first condition in the definition of $Y_{n}^{\mathbb{F}}(\xi_0, \xi_1)$ into polynomial equations.

\begin{Thm}\label{Paramet Thm on F points} There exists a subprevariety \begin{equation}\mathcal{Y}_n^{\mathbb{F}}(\xi_0, \xi_1) \subseteq \prod_{w\in V_1} \mathcal{G}_{\mathbb{F}}(\delta_1(w), m_0) \notag
\end{equation}
and a set theoretic bijection
\begin{equation}
\mathcal{Y}_n^{\mathbb{F}}(\xi_0, \xi_1)(\mathbb{F}) \xlongrightarrow{\sim} Y_{n}^{\mathbb{F}}(\xi_0, \xi_1)\notag.
\end{equation}
\end{Thm}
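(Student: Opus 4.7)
The plan is to construct $\mathcal{Y}_n^{\mathbb{F}}(\xi_0,\xi_1)$ chart-by-chart inside the product $\mathcal{G}:=\prod_{w\in V_1}\mathcal{G}_{\mathbb{F}}(\delta_1(w),m_0)$, by translating each of the three conditions defining $Y_n^{\mathbb{F}}(\xi_0,\xi_1)$ (see \cref{framed Relfam}) into a combination of closed and open subscheme conditions. Recall from the proof of \cref{Grassmann scheme} that $\mathcal{G}_{\mathbb{F}}(\delta_1(w),m_0)$ is covered by affine charts $\mathcal{W}(I_w)$, indexed by $I_w\in\mathcal{I}_w$, whose $T$-points are the $\delta_1(w)\times m_0$ matrices $M^{(w)}$ over $T$ with $(M^{(w)})_{I_w}=1_{\delta_1(w)}$. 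Hence $\mathcal{G}$ is covered by the product charts $\mathcal{W}(\underline{I}):=\prod_{w\in V_1}\mathcal{W}(I_w)$, and it suffices to cut out a locally closed subscheme of each $\mathcal{W}(\underline{I})$ and check that these glue along the overlaps $\mathcal{W}(\underline{I})\cap\mathcal{W}(\underline{J})$; naturality of the gluing maps $\varphi_{JI}$ in $T$ makes the gluing automatic once the defining equations are stated intrinsically in terms of the underlying subspace data.

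First I would treat condition (1): under the identification $\mathbb{F}^{m_0}=\bigoplus_{v\in V_0}\mathbb{F}^{\mu_0(v)}$, requiring $\pi_v(L_w)=0$ for every $v\in V_0$ with $v\nprec w$ is exactly the vanishing of all columns of the representative matrix $M^{(w)}$ lying in the $v$-block. On the chart $\mathcal{W}(\underline{I})$ this gives linear equations in the affine coordinates $(x^{(w)}_{ij})$, hence a closed subscheme $\mathcal{Y}^{(1)}(\underline{I})\subseteq \mathcal{W}(\underline{I})$. Note that compatibility across charts is automatic because the underlying condition $\pi_v(L_w)=0$ is formulated on the subspace $L_w$ itself.

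Next I would handle condition (2): for $v,w\in V_1$ with $v\prec w$, the inclusion $L_v\subseteq L_w$ is equivalent to $\mathrm{rank}\begin{pmatrix} M^{(v)} \\ M^{(w)}\end{pmatrix}\leq \delta_1(w)$, which is cut out by the vanishing of all $(\delta_1(w)+1)\times(\delta_1(w)+1)$ minors of the stacked matrix. These are polynomial equations in the chart coordinates, and intersecting with $\mathcal{Y}^{(1)}(\underline{I})$ yields a closed subscheme $\mathcal{Y}^{(1,2)}(\underline{I})$. Finally, condition (3) breaks into two parts after using condition (2): $\sum_{v\prec w}L_v\subseteq L_w$, so $\dim\bigl(L_w/\sum_{v\prec w}L_v\bigr)=\mu_1(w)$ is equivalent to $\dim\sum_{v\prec w}L_v=\delta_1(w)-\mu_1(w)$. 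Stacking the matrices $M^{(v)}$ for $v\prec w$ into a single matrix $N^{(w)}$, the upper bound $\mathrm{rank}(N^{(w)})\leq \delta_1(w)-\mu_1(w)$ is the vanishing of all $(\delta_1(w)-\mu_1(w)+1)$-minors (closed), while the lower bound $\mathrm{rank}(N^{(w)})\geq \delta_1(w)-\mu_1(w)$ is the non-vanishing of at least one $(\delta_1(w)-\mu_1(w))$-minor (open, realized as the union of principal opens). Combining, $\mathcal{Y}_n^{\mathbb{F}}(\xi_0,\xi_1)\cap \mathcal{W}(\underline{I})$ is defined as a locally closed subscheme (closed inside an open) of $\mathcal{W}(\underline{I})$, which is exactly the structure of a subprevariety.

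The remaining bookkeeping is to verify that these local definitions agree on overlaps, so that \cref{Gluing schemes} produces a well-defined subprevariety $\mathcal{Y}_n^{\mathbb{F}}(\xi_0,\xi_1)\subseteq\mathcal{G}$, and to exhibit the bijection $\mathcal{Y}_n^{\mathbb{F}}(\xi_0,\xi_1)(\mathbb{F})\xlongrightarrow{\sim}Y_n^{\mathbb{F}}(\xi_0,\xi_1)$. The latter is immediate from the construction: on $\mathbb{F}$-points the chart $\mathcal{W}(I_w)(\mathbb{F})$ parameterises exactly those $L_w\in\mathrm{G}_{\mathbb{F}}(\delta_1(w),m_0)$ with $L_w\oplus\langle e_i\rangle_{i\in I_w^c}=\mathbb{F}^{m_0}$, and the rank equations above recover conditions (1)--(3) in \cref{framed Relfam} verbatim. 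The main obstacle I anticipate is the open part of condition (3): one has to argue carefully that the mixed closed/open cut is stable under the transition maps $\varphi_{JI}$, which reduces to observing that the rank of the stacked matrix $N^{(w)}$ is an intrinsic invariant of the subspace sum $\sum_{v\prec w}L_v$ and is therefore independent of the chosen chart.
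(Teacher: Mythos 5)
Your proposal takes essentially the same approach as the paper: translate condition~(1) into the vanishing of coordinate blocks, condition~(2) into the vanishing of $(\delta_1(w)+1)$-minors of the stacked matrices, and condition~(3) into a closed minor-vanishing condition together with an open minor-non-vanishing condition, chart by chart, and then glue via the transition maps $\varphi_{JI}$. One small remark: the paper's definition of subprevariety requires open-inside-closed rather than your stated ``closed inside an open,'' so to match the definition you should (as the paper does) first glue together the closed subschemes cut out by all the vanishing conditions into a closed $\mathcal{Z}\subseteq\prod_{w\in V_1}\mathcal{G}_{\mathbb{F}}(\delta_1(w),m_0)$ and only then realize $\mathcal{Y}_n^{\mathbb{F}}(\xi_0,\xi_1)$ as an open subscheme of $\mathcal{Z}$; also note you correctly use the rank $\delta_1(w)-\mu_1(w)$ for the stacked matrix in condition~(3), where the paper writes $m_0-\mu_1(w)$.
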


\begin{proof} 
Before we start with the proof, we introduce some notations:
\begin{enumerate}
\item recall that $\xi_0=(V_0, \mu_0)$, $\xi_1=(V_1, \mu_1)$, ${\delta_1(w)=\mathrm{dim}_{\mathbb{F}} (\mathcal{F}_n^{\mathbb{F}}(\xi_1)_w)}$ for all $w \in \mathbb{N}^n$, ${m_0=\abs{\xi_0}= \sum_{v \in V_0}\mu_0(v)}$ and that we indentify $\mathbb{F}^{m_0}= \bigoplus_{v \in V_0} \mathbb{F}^{\mu_0(v)}$ along our enumeration of ${V_0=\{v_1, \dots, v_{s_0}\}}$.  For $w \in V_1$, let \begin{equation}
I_{w}=\left(i^{(w)}_1 < \dots < i^{(w)}_{\delta_1(w)} \right) \notag
\end{equation}
with $1 \leq i^{(w)}_t \leq m_0$. Denote by $\mathcal{I}(w)$ the set of all such sets $I_{w}$ and define \begin{equation}
{\mathcal{J}:=\prod_{w \in V_1} \mathcal{I}(w)} \notag.
\end{equation}
We have an open covering of $\prod_{w\in V_1} \mathcal{G}_{\mathbb{F}}(\delta_1(w), m_0)$ by affines that are isomorphic to $\mathcal{W}(I)$
where $\mathcal{W}(I):=\prod_{w \in V_1} \mathcal{W}(I_{w})$ with ${I=(I_{w})_{w \in V_1} \in \mathcal{J}}$ (see proof of \cref{Grassmann scheme}). We have \begin{equation}
\mathcal{W}(I)(\mathbb{F})=\prod_{w \in V_1} \mathcal{W}(I_w)(\mathbb{F})=\prod_{w \in V_1} W(I_w) \notag.
\end{equation}
Define $W(I):=\prod_{w \in V_1} W(I_w)$.
\item for $w\in  V_1$, let $\underline{x}(w):=\left(x^{(w)}_{ij} \right)$ where $1 \leq i \leq \delta_1(w)$, $1 \leq j \leq m_0$. By adapting the notations of the proof of \cref{Grassmann scheme}, define \begin{equation}
R:=\mathbb{F}\left[(\underline{x}({w}))_{w \in V_1} \right], \quad K\left(I \right):=  \left \langle\bigcup_{w \in  V_1} K\left(I_{w} \right) \right \rangle, \quad R(I):=R/K(I) \notag
\end{equation}
where $I=(I_w)_{w \in V_1} \in \mathcal{J}$. Then $\mathcal{W}\left(I \right)= \mathrm{Spec}(R(I))$. For $J \in \mathcal{J}$, $\mathcal{W}(I,J)$ is defined analogously and we obtain canonical isomorphisms 
\begin{equation}\varphi_{JI}:\mathcal{W}(I,J) \to \mathcal{W}(J,I) \notag
\end{equation}
such that $\prod_{w \in V_1} \mathcal{G}_{\mathbb{F}}(\delta_1(w), m_0)$ results by gluing the $\mathcal{W}(I)$ along the $\varphi_{JI}$. 
\item for $w \in V_1$, let 
\begin{equation}
\eta_w: \mathrm{G}_{\mathbb{F}}(\delta_1(w),m_0) \longrightarrow M_{\delta_1(w) \times m_0 } (\mathbb{F})_{\delta_1(w)}/_\sim \notag
\end{equation}
and 
\begin{equation}
\lambda_{I_w}: U(I_w) \longrightarrow W(I_w) \notag
\end{equation}
as before (see proof of \cref{Grassmann scheme}). We have a set theoretic bijection
\begin{equation}
\gamma_{I_w}:=\lambda_{I_w} \circ \eta_w|_{V(I_w)}: V(I_w) \longrightarrow W(I_w) \notag.
\end{equation}
\end{enumerate}
To show the existence of a subprevariety \begin{equation}\mathcal{Y}_n^{\mathbb{F}}(\xi_0, \xi_1) \subseteq \prod_{w\in V_1} \mathcal{G}_{\mathbb{F}}(\delta_1(w), m_0) \quad \textrm{with} \quad {Y}_n^{\mathbb{F}}(\xi_0, \xi_1) \cong \mathcal{Y}_n^{\mathbb{F}}(\xi_0, \xi_1)(\mathbb{F}) \notag
\end{equation}
we have to interprete conditions 1-3 in \cref{framed  Relfam} via polynomial equations. Recall that $(L_w)_{w  \in  V_1} \in \prod_{w\in V_1} \mathrm{G}_{\mathbb{F}}(\delta_1(w), m_0)$ is in $Y_{n}^{\mathbb{F}}(\xi_0, \xi_1)$ if and only if for all $w \in V_1$:
\begin{enumerate}
\item $\pi_v(L_w)=0$ for all $v \in V_0$ with $v \nprec w$ where $\pi_v: \mathbb{F}^{m_0} \to  \mathbb{F}^{\mu_0(v)}$ denotes the canonical projection.
\item if $v \in V_1$ with $v\prec w$, then $L_{v}\subseteq L_w$. 
\item $\mathrm{dim}_\mathbb{F} \left( L_w / \sum_{v \prec w} L_v \right) = \mu_1(w)$.
\end{enumerate}
Note that every $ (L_w)_{w \in V_1} \in \prod_{w \in V_1} \mathrm{G}_{\mathbb{F}}(\delta_1(w), m_0)$ corresponds to a familiy of matrices $(\gamma_{I_w}(L_w))_{w \in V_1} \in W(I)$ for some $I=(I_w)_{w \in V_1} \in \mathcal{J}$.

\begin{enumerate}

\item For $w\in V_1$, let $Q_0(w):=\{ v \in V_0 \mid v \nprec w \}$. Condition $1$ tells us that if $v \in Q_0(w)$ we have  $\pi_v(L_w)=0$ where $\pi_v: \mathbb{F}^{m_0} \to  \mathbb{F}^{\mu_0(v)}$ denotes the canonical projection. In particular, $\mathbb{F}^{\mu_0(v)}$ corresponds to a subsummand of \begin{equation}
\mathbb{F}^{m_0}=\bigoplus_{v\in V_0} \mathbb{F}^{\mu_0(v)}=\bigoplus_{j=1}^{s_0} \mathbb{F}^{\mu_0(v_j)} \notag.
\end{equation}
Define
\begin{equation}
T(v_j):=\left \{ \left(\sum_{i=1}^{ \mu_0 (v_{j-1})} \mu_0(i) \right) +1, \dots, \mu_0 (v_j) \right \} \notag
\end{equation}
Then
\begin{equation}
\mathbb{F}^{m_0}=\bigoplus_{v\in V_0} \mathbb{F}^{\mu_0(v)}=\bigoplus_{j=1}^{s_0} \mathbb{F}^{\mu_0(v_j)} =\bigoplus_{j=1}^{s_0} \bigoplus_{i \in T(v_j)} \mathbb{F}_i \notag
\end{equation}
where $\mathbb{F}_i=\mathbb{F}$. For $(\gamma_{I_w}(L_w))_{w \in V_1} \in W(I)$, condition $1$ now means that \begin{align}
\forall \, w \in V_1 \, \forall \, k \in \{1, \dots, s_0\} \, \forall \, j \in \{1, \dots, m_0\} \,  \forall \, i \in \{1, \dots, \delta_1(w)\}:  \notag \\ v_k \in Q_0(w) , \,  j \in T(v_k) \Longrightarrow (\gamma_{I_w}(L_w))_{i,j}=0 \notag .
\end{align}
Define
\begin{equation} P:=\bigcup_{w \in V_1 }  \bigcup_{v_k \in Q_0(w) }  \left \{x^{(w)}_{i,j} \mid j \in T(v_k), \,  i=1, \dots, \delta_1(w) \right \} \notag.
\end{equation}
Then 
\begin{equation}
(\gamma_{I_w}(L_w))_{w \in V_1} \in \mathrm{Spec}\left( R/\braket{P, K(I)} \right) (\mathbb{F}) \subseteq W(I) \notag
\end{equation}
if and only if $(L_w)_{w \in V_1}$ satisfies condition $1.$

\item Now we interprete condition $2$ via polynomial equations. So, let $v,w \in V_1$ with $v \prec w$. We wish to find a condition in terms of polynomial equations which captures the property $ L_v \subseteq L_{w}$. $ L_v \subseteq L_{w}$ is equivalent to that each row of  $\gamma_{I_v}(L_v)$ lies in the $\mathbb{F}$-span  of the rows of $\gamma_{I_w}(L_w)$. Since the rank of
\begin{equation}
A:=\begin{pmatrix}
\gamma_{I_w}(L_w) \\
\gamma_{I_v}(L_v)
\end{pmatrix} \notag
\end{equation}
is at least $\delta_1(w)$ this is equivalent to that all $\delta_1(w)+1$ minors of $A$ vanish. This minor vanishing condition can interpreted via polynomial equations as follows: let
\begin{equation}
M(v,w):= \begin{pmatrix}
x_{1,1}^{(w)} & \dots & x_{1, m_0}^{(w)} \\ 
\vdots &  & \vdots \\
x_{\delta_1(w),1}^{(w)} & \dots & x_{\delta_1(w) ,m_0}^{(w)} \\
x_{1,1}^{(v)}  ,& \dots &, x_{1, m_0}^{(v)} \\
\vdots &  & \vdots \\
 x_{\delta_1(v),1}^{(v)}  ,& \dots &,  x_{\delta_1(v), m_0}^{(v)}  
\end{pmatrix} \notag
\end{equation}
Let $\mathcal{D}(v,w)$ be the set of all $J=(i_1 < \dots <  i_{\delta_1(w)+1})$ with $1 \leq i_j \leq i_{m_0}$ such that $M(v,w)_{J}$ is a submatrix of  $M(v,w)$. Now define
\begin{align}
M &:=  \{ \mathrm{det}(M(v,w)_{J} )\mid v,w \in V_1: v \prec w,  J \in \mathcal{D}(v,w) \} \notag , \\
X(I) & := R/\braket{P,M, K(I)}, \notag \\
\mathcal{X}(I)&:= \mathrm{Spec}(X(I)). \notag
 \notag
\end{align}
Then $\mathcal{X}(I)$ is a closed subscheme of $\mathcal{W}(I)$ and \begin{equation}
(\gamma_{I_w}(L_w))_{w \in V_1} \in \mathcal{X}(I)(\mathbb{F}) \subseteq W(I) \notag
\end{equation}
if and only if $(L_w)_{w \in V_1}$ satisfies conditions $2$ and $1.$

\item Recall that condition $3$ is that $\mathrm{dim}_\mathbb{F} \left( L_w / \sum_{v \prec w} L_v \right) = \mu_1(w)$ for all $w \in V_1$. Note that this condition already includes the condition that $L_v \subseteq L_w$ for all $v \prec w$. Let us assume that $(L_w)_{w \in V_1}$ satisfies condition $2$
and let $w \in V_1$. Define ${Q_1(w):=\{v  \in V_1 \mid v \prec  w\}}$ and enumerate \begin{equation}
Q_1(w)=\{v(w)_1, \dots, v(w)_u\} \notag
\end{equation}
where $u=\abs{Q_1(w)}$. Now $\mathrm{dim}_\mathbb{F} \left( L_w / \sum_{v \prec w} L_v \right) = \mu_1(w)$ is equivalent to that
\begin{equation}
N(w):=\begin{pmatrix}
\gamma_{I_{v(w)_1}}(L_{v(w)_1}) \\
\vdots \\
\gamma_{I_{v(w)_u}}(L_{v(w)_u})
\end{pmatrix} \notag
\end{equation}
has rank $m_0-\mu_1(w)$. This is  equivalent to  that all the $m_0-\mu_1(w)+1$ minors vanish and that some $m_0-\mu_1(w)$ minor does not vanish. So, consider the matrix
\begin{equation}
H(w):= \begin{pmatrix}
 x_{1,1}^{(v(w)_1)}  ,& \dots &, x_{1, m_0}^{(v(w)_1)}  \\
\vdots &  & \vdots \\
 x_{\delta_1(v(w)_1),1}^{(v(w)_1)}  ,& \dots &,  x_{\delta_1(v(w)_1), m_0}^{(v(w)_1)}  \\
\vdots &  & \vdots \\
 x_{1,1}^{(v(w)_u)}  ,& \dots &,  x_{1, m_0}^{(v(w)_u)}  \\
\vdots &  & \vdots \\
 x_{\delta_1(v(w)_u),1}^{(v(w)_u)}  ,& \dots &,  x_{\delta_1(v(w)_u), m_0}^{(v(w)_u)}  \\
\end{pmatrix} \notag
\end{equation}
amd define $\mathcal{A}(w)$ as the set of all $J=(i_1<\dots < i_{m_0-\mu_1(w)+1})$ with $1 \leq i_j \leq m_0$ such that $H(w)_J$ is a submatrix of $H(w)$. Furthermore, define $\mathcal{B}(w)$ as the set of all $J=(i_1<\dots < i_{m_0-\mu_1(w)})$ with $1 \leq i_j \leq m_0$ such that $H(w)_J$ is a submatrix of $H(w)$. Define 
\begin{align} N_1 &  := \bigcup_{w \in V_1 }  \{ \mathrm{det}(Z(w)_J) \mid J \in \mathcal{A}(w)\} \notag , \\
N_2 & :=  \bigcup_{w \in V_1 }   \{ \mathrm{det}(Z(w)_J) \mid J \in \mathcal{B}(w)\} \notag.
\end{align}
Now define \begin{enumerate}
\item $Z(I)   := R/\braket{P,M,N_1, K(I)}$ and 
$\mathcal{Z}(I)   := \mathrm{Spec}(Z(I))$.
\item $Y(I)  :=R/\braket{P,M,N_1, K(I)} \left[N_2^{-1} \right] $ and
$\mathcal{Y}(I) := \mathrm{Spec}(Y(I)) $.
\end{enumerate}
$\mathcal{Y}(I)$ is an open subscheme of $\mathcal{Z}(I)$ and $\mathcal{Z}(I)$ is a closed subscheme of $\mathcal{X}(I)$ which is a closed subscheme of $\mathcal{W}(I)$. So, $\mathcal{Z}(I)$ is a closed subscheme of $\mathcal{W}(I)$. We have
\begin{equation}(\gamma_{I_{w}}(L_w))_{w \in V_1} \in \mathcal{Y}(I)(\mathbb{F}) \subseteq W(I) \notag
\end{equation}
if and only if $(L_w)_{w \in V_1}$ satisfies conditions $1$, $2$ and $3.$ 
\end{enumerate}
The next step is to show that we can glue everything to a  subprevariety of \begin{equation}
\prod_{w \in V_1} \mathcal{G}_{\mathbb{F}} (\delta_1(w), m_0) \notag.
\end{equation}
To see that we indeed have a subprevariety, we divide the gluing construction in two steps:
\begin{enumerate}
\item first of all, we show that we can glue the $\mathcal{Z}\left(I \right)$ together. For this, pick another familiy $ J\in \mathcal{J}$ and let
\begin{equation}
\mathcal{Z}\left(I, J \right):= \mathrm{Spec} \left( Z(I)\left [K(J)^{-1} \right] \right) \notag
\end{equation}
Then the $\varphi_{JI}$ induce canonical isomorphisms 
\begin{equation}
\psi_{JI}:\mathcal{Z}(I,J) \longrightarrow \mathcal{Z}(J,I) \notag.
\end{equation}
The cocycle condtion is satisfied as it is satisfied for $\varphi_{JI}$. Now use \cref{Gluing schemes} and let $\mathcal{Z} \subseteq \prod_{w\in V_1} \mathcal{G}_{\mathbb{F}}(\delta_1(w), m)$ be the closed subscheme that one obtains by gluing the $\mathcal{Z}(I)$ along the $\psi_{JI}$. 
\item the next is step is to show that we can glue the $\mathcal{Y}(I)$ together. For this, pick another familiy $J\in \mathcal{J}$ and let
\begin{equation}
\mathcal{Y}(I,J):= \mathrm{Spec}\left(Y(I) \left[K(J)^{-1}  \right] \right) \notag
\end{equation}
Then the $\psi_{JI}$ induce canonical isomorphisms
\begin{equation}
\vartheta_{JI}:\mathcal{Y}(I,J) \longrightarrow \mathcal{Y}(J,I) \notag
\end{equation} 
The cocycle condition is satisfied as it is satisfied for $\psi_{JI}$. Now use \cref{Gluing schemes} and let $\mathcal{Y}_n^{\mathbb{F}}(\xi_0, \xi_1) \subseteq \mathcal{Z}$ be the open subscheme that one obtains by gluing the $\mathcal{Y}(I)$ along the $\vartheta_{JI}$.
\end{enumerate}
So, $\mathcal{Y}_n^{\mathbb{F}}(\xi_0, \xi_1) \subseteq \mathcal{Z}$ is an open subscheme and $\mathcal{Z} \subseteq \prod_{w \in V_1} \mathcal{G}_{\mathbb{F}} (\delta_1(w), m_0)$ is a closed subscheme which shows that
$\mathcal{Y}_n^{\mathbb{F}}(\xi_0, \xi_1) \subseteq \prod_{w \in V_1} \mathcal{G}_{\mathbb{F}} (\delta_1(w), m_0)$ is a subprevariety. The set theoretic bijection $(\eta \circ \lambda)^{-1}:\mathcal{G}_{\mathbb{F}}(d,m_0) \xlongrightarrow{\sim} \mathrm{G}_{\mathbb{F}}(d,m_0)$ induces a set theoretic bijection 
\begin{equation}
\prod_{w\in V_1} \mathcal{G}_{\mathbb{F}}(\delta_1(w), m_0)(\mathbb{F}) \xlongrightarrow{\sim} \prod_{w\in V_1} \mathrm{G}_{\mathbb{F}}(\delta_1(w), m_0) \notag
\end{equation}
which induces a set theoretic bijection
\begin{equation}
\mathcal{Y}_n^{\mathbb{F}}(\xi_0, \xi_1)(\mathbb{F}) \xlongrightarrow{\sim} Y_{n}^{\mathbb{F}}(\xi_0, \xi_1) \notag
\end{equation}
by construction.
\end{proof}

\newpage

\subsubsection{Algebraic group action}\label{Functor of points}

In this section, we give some ideas that provide evidence why the action of $\mathrm{GL}_{\preceq}^{\mathbb{F}}(\xi_0)$ on $Y_{n}^{\mathbb{F}}(\xi_0, \xi_1)$ is algebraic. In this section, we use the same notations as in the proofs of Theorems \ref{Grassmann scheme} and \ref{Paramet Thm on F points}.

First of all note that for every $m\in \mathbb{N}$, there exists an algebraic $\mathbb{F}$-group
$\mathcal{GL}^\mathbb{F}_m$ such that for any $T  \in \mathrm{NAlg}_{\mathbb{F}}$
\begin{equation}
\mathcal{GL}^\mathbb{F}_m(T)= \mathrm{GL}_m(T). \notag
\end{equation}
$\mathcal{GL}^\mathbb{F}_m$ is given as the affine Noetherian $\mathbb{F}$-scheme
\begin{equation}
\mathcal{GL}^\mathbb{F}_m=\mathrm{Spec}\left( \mathbb{F}[(x_{ij})][\mathrm{det}(x_{ij})^{-1}] \right) \notag.
\end{equation}
$\mathcal{GL}^\mathbb{F}_m$ is called the \textit{general linear group scheme} (see \cite[4.4.3]{GW}). Recall that \begin{equation}
Z(\xi_0)=\{(i,j) \in \{1, \dots, s_0\}^2 \mid v_i \npreceq v_j \} \notag
\end{equation}
(see \cref{transformation matrix defi}). For $T \in \mathrm{NAlg}_{\mathbb{F}}$, let
\begin{equation}
\mathrm{GL}^{T}_{\preceq}(\xi_0):= \left \{ M\in \mathrm{GL}_{m_0}(T) \mid  M(v_{i},v_{j})=0 \, \forall \, (i,j) \in Z(\xi_0)  \right \} \notag.
\end{equation} 
Then there exists a closed subscheme $\mathcal{GL}_{\preceq}^{\mathbb{F}}(\xi_0) \subseteq \mathcal{GL}_{m_0}^{\mathbb{F}}$ such that for any $T \in \mathrm{NAlg}_{\mathbb{F}}$,
\begin{equation} \mathcal{GL}_{\preceq}^{\mathbb{F}}(\xi_0)(T) = \mathrm{GL}_{\preceq}^{T}(\xi_0) \notag
\end{equation}
which we obtain by setting the variables in $\mathbb{F}[(x_{ij})][\mathrm{det}(x_{ij})^{-1}]$, which correspond to $M(v_{i},v_{j})$, equal to zero.
It remains to show that $\mathcal{GL}_{\preceq}^{\mathbb{F}}(\xi_0)\subseteq \mathcal{GL}_{m_0}^{\mathbb{F}}$ is an algebraic subgroup. 

Now we turn our attention to the determination of  $\mathcal{Y}_n^{\mathbb{F}}(\xi_0, \xi_1)(T)$  for $T \in \mathrm{NAlg}_{\mathbb{F}}$. In the following, we often refer to \cite[Ex.~VI-18]{eisenbud2006geometry}. Note that our notations are different and that we are in the relative setting over $\mathbb{F}$. The so-called \textit{Grassmann functor} (or \textit{Grassmannian functor} (see \cite[Ex.~VI-18]{eisenbud2006geometry})) is given by
\begin{equation}
\mathrm{G}_{(-)}(d,m):\mathrm{NAlg}_{\mathbb{F}} \longrightarrow \mathrm{Sets}, \quad T \longmapsto \mathrm{G}_T(d,m) \notag
\end{equation}
where 
\begin{equation}
\mathrm{G}_T(d,m):=\{ N \subseteq T^m \mid T^m/N \textrm{ is locally free of rank } m-d \} \notag
\end{equation}
(\cite[Ex.~VI-18]{eisenbud2006geometry} gives a different but equivalent definition for $G_{(-)}(d,m)$) and for a morphism $f: T \to S$ in $\mathrm{NAlg}_{\mathbb{F}}$,
\begin{equation}
\mathrm{G}_f(d,m):\mathrm{G}_{T}(d,m) \longrightarrow \mathrm{G}_{S}(d,m), \quad N \longmapsto N \otimes_T S \notag
\end{equation}
where $N \otimes_T S$ denotes the scalar extension from $N$ to $S$ along $f$. By \cite[Ex.~VI-18]{eisenbud2006geometry}, there exists an isomorphism of functors
\begin{equation}
\mathrm{G}_{(-)}(d,m) \cong \mathcal{G}_{\mathbb{F}}(d,m)(-) \notag,
\end{equation}
i.e. the Grassmann scheme represents the Grassmann functor.
The proof idea is as follows: following \cite[Ex.~VI-18]{eisenbud2006geometry}, $\mathrm{G}_{(-)}(d,m)$ is a sheaf in the zariski topology and for every $I \in \mathcal{I}$,
\begin{align}
\mathrm{G}_{I}: \mathrm{NAlg}_{\mathbb{F}}\longrightarrow \mathrm{Sets}, \quad T  \longmapsto \{  N  \subseteq T^m \mid  N \textrm{ free of rank } d, \,  N \oplus \braket{\{e_i\}_{i \in I^{c}}} = T^m \} \notag
\end{align}
defines a subfunctor of $\mathrm{G}_{(-)}(d,m)$ (\cite[Ex.~VI-18]{eisenbud2006geometry} gives a different but equivalent definition for $\mathrm{G}_I$). The next step is to show that $\mathrm{G}_{I}$ is represented by 
\begin{equation}\mathbb{F}[\mathcal{W}(I)]= \mathbb{F}[(x_{i,j})]/\braket{(x_{i,j})_I- 1_d } \notag.
\end{equation}
Now we have for every field $\mathbb{E} \in \mathrm{NAlg}_{\mathbb{F}}$, 
\begin{equation}
\mathcal{G}_{\mathbb{F}}(d,m)(\mathbb{E})\overset{(*)}=\bigcup_{I \in \mathcal{I}}\mathcal{W}(I)(\mathbb{E})=\bigcup_{I \in \mathcal{I}} \mathrm{G}_I(\mathbb{E}) \notag.
\end{equation}
Note that $(*)$ is true for fields, but not generally. Now the idea is to apply \cite[Thm.~VI.14]{eisenbud2006geometry}\footnote{where we have to claim that also a relative version of  \cite[Thm.~VI.14]{eisenbud2006geometry} over $\mathbb{F}$ is true} in order to show that $\mathrm{G}_{(-)}(d,m)$ is represented by a Noetherian $\mathbb{F}$-scheme\footnote{note that it is not directly clear that the scheme obtained from \cite[Thm.~VI.14]{eisenbud2006geometry} is the same as the scheme obtained from the construction in the proof of \cref{Grassmann scheme}}. The idea is now to define an action of $\mathcal{GL}_{\preceq}^{\mathbb{F}}(\xi_0)$ on $\prod_{w \in V_1} \mathcal{G}_{\mathbb{F}}(\delta_1(w), m_0)$. This  could be done as follows: 

for $T \in \mathrm{NAlg}_{\mathbb{F}}$, $M \in \mathcal{GL}_{\preceq}^{\mathbb{F}}(\xi_0)(T)$ and ${(L_w)_{w \in V_1} \in \prod_{w \in V_1} \mathcal{G}_{\mathbb{F}}(\delta_1(w), m_0)(T)}$, define
\begin{align}
\sigma_T: \mathcal{GL}_{\preceq}^{\mathbb{F}}(\xi_0)(T) \times \prod_{w \in V_1} \mathcal{G}_{\mathbb{F}}(\delta_1(w), m_0)(T)  &\longrightarrow \prod_{w \in V_1} \mathcal{G}_{\mathbb{F}}(\delta_1(w), m_0)(T) , \notag \\
 (M, (L_w)_{w\in V_1})  &\longmapsto (M \cdot L_w)_{w \in V_1} \notag.
\end{align}
$\sigma_T$ is clearly  natural in $T$ and we therefore obtain a morphism 
\begin{align}
\sigma: \mathcal{GL}_{\preceq}^{\mathbb{F}}(\xi_0) \times\prod_{w \in V_1} \mathcal{G}_{\mathbb{F}}(\delta_1(w), m_0) \longrightarrow \prod_{w \in V_1} \mathcal{G}_{\mathbb{F}}(\delta_1(w), m_0) \notag
\end{align}
in $\mathrm{LNSch}_{\mathbb{F}}$. It remains to show that this action is algebraic. 
The next step is to determine for $T \in\mathrm{NAlg}_{\mathbb{F}} $ the set of $T$-points $\mathcal{Y}_n^{\mathbb{F}}(\xi_0,\xi_1)(T)$. For $T \in\mathrm{NAlg}_{\mathbb{F}} $, let \begin{equation}Y_n^{T}(\xi_0,\xi_1) \subseteq \prod_{w\in V_1} \mathrm{G}_{T}(\delta_1(w), m_0) \notag
\end{equation}
be the set of families $(L_w)_{w  \in  V_1} \in \prod_{w \in V_1} \mathrm{G}_{T}(\delta_1(w), m_0)$ such that for all $w \in V_1$:
\begin{enumerate}
\item $\pi_v(L_w)=0$ for all $v \in V_0$ with $v \nprec w$ where $\pi_v: T^{m_0} \to  T^{\mu_0(v)}$ denotes the canonical projection.
\item if $v \in V_1$ with $v\prec w$, then $L_{v}\subseteq L_w$. 
\item $ L_w / \sum_{v \prec w} L_v $ is locally free of rank $\mu_1(w)$.
\end{enumerate}
Now we propose the following: the assignment
\begin{equation}
Y_n^{(-)}(\xi_0,\xi_1):\mathrm{NAlg}_{\mathbb{F}} \longrightarrow \mathrm{Sets}, \quad T \longmapsto Y_n^T(\xi_0, \xi_0) \notag
\end{equation}
defines a subfunctor of $\prod_{w \in V_1} \mathrm{G}_{(-)}(\delta_1(w), m_0)$ and we have a natural isomorphism of functors
\begin{equation}\label{Y points}
Y_n^{(-)}(\xi_0,\xi_1) \cong \mathcal{Y}_n^{\mathbb{F}}(\xi_0,\xi_1)(-).
\end{equation}
As a proof strategy for (\ref{Y points}), we propose the same as in the case of Grassmannians: for $I=(I_w)_{w \in V_1} \in \mathcal{J}$, define subunctors 
\begin{equation}
Y_I:\mathrm{NAlg}_{\mathbb{F}} \longrightarrow \mathrm{Sets}, \quad T \longmapsto Y_n^{T}(\xi_0,\xi_1) \cap \prod_{w \in V_1} \mathrm{G}_{I_w}(T) \notag
\end{equation}
and show that $Y_I$ is represented by $\mathbb{F}[\mathcal{Y}(I)]$. Then the rest is analogously to the case of Grassmannians. Now the claim is that $\mathcal{Y}_n^{\mathbb{F}}(\xi_0,\xi_1)$ is invariant under the action of $\mathcal{GL}_{\preceq}^{\mathbb{F}}(\xi_0)$ on $\prod_{w \in V_1} \mathcal{G}_{\mathbb{F}}(\delta_1(w), m_0)$.

\newpage

\bibliographystyle{amsalpha}
\bibliography{mybib}

\newcommand{\etalchar}[1]{$^{#1}$}
\providecommand{\bysame}{\leavevmode\hbox to3em{\hrulefill}\thinspace}
\providecommand{\MR}{\relax\ifhmode\unskip\space\fi MR }
\providecommand{\MRhref}[2]{%
  \href{http://www.ams.org/mathscinet-getitem?mr=#1}{#2}
}
\providecommand{\href}[2]{#2}
\begin{thebibliography}{BBC{\etalchar{+}}21}

\bibitem[BBC{\etalchar{+}}21]{AndreasOtt}
U.~Bauer, M.~Bleher, M.~Carri\`{e}re, L.~Hahn, A.~Ott, J.~\'{A}.
  Pati\~{n}o{-}Galindo, and R.~Rabad\'{a}n, \emph{Topology identifies emerging
  adaptive mutations in {SARS}-{C}o{V}-2}, arXiv (2021).

\bibitem[CZ05]{onedimpers}
G.~Carlsson and A.~Zomorodian, \emph{{Computing persistent homology.}},
  {Discrete Comput. Geom.} \textbf{33} (2005), no.~2, 249--274.

\bibitem[CZ07]{article}
\bysame, \emph{The theory of multidimensional persistence}, Proceedings of the
  Twenty-Third Annual Symposium on Computational Geometry (New York, NY, USA),
  SCG '07, Association for Computing Machinery, 2007, p.~184–193.

\bibitem[CZ09]{Carlsson2009}
\bysame, \emph{The theory of multidimensional persistence}, Discrete {\&}
  Computational Geometry \textbf{42} (2009), no.~1, 71--93.

\bibitem[EH00]{eisenbud2006geometry}
D.~Eisenbud and J.~Harris, \emph{The geometry of schemes}, Graduate Texts in
  Mathematics, Springer New York, 2000.

\bibitem[Eis05]{eisenbud2005geometry}
D.~Eisenbud, \emph{The geometry of syzygies: A second course in algebraic
  geometry and commutative algebra}, Graduate Texts in Mathematics, Springer,
  2005.

\bibitem[GW10]{GW}
U.~Görtz and T.~Wedhorn, \emph{Algebraic geometry: Part i: Schemes. with
  examples and exercises}, Advanced Lectures in Mathematics, no. Bd. 1,
  AMERICAN MATHEMATICA, 2010.

\bibitem[HOST19]{harrington2019stratifying}
H.~A. Harrington, N.~Otter, H.~Schenck, and U.~Tillmann, \emph{Stratifying
  multiparameter persistent homology}, arXiv (2019).

\bibitem[MS04]{miller2004combinatorial}
E.~Miller and B.~Sturmfels, \emph{Combinatorial commutative algebra}, Graduate
  Texts in Mathematics, Springer New York, 2004.

\bibitem[Mun18]{munkres2018elements}
J.~R. Munkres, \emph{Elements of algebraic topology}, CRC Press, 2018.

\end{thebibliography}

\end{document}